\renewcommand{\Im}{\operatorname{Im}}
\renewcommand{\Re}{\operatorname{Re}}
\newcommand{\C}{\mathbb{C}}
\newtheoremstyle{break}
  {\topsep}{\topsep}
  {\itshape}{}
  {\bfseries}{}
  {\newline}{}
\theoremstyle{break}
\newtheorem{satz}{Satz} \numberwithin{satz}{section}
\newtheorem{lemma}[satz]{Lemma}
\newtheorem{korollar}[satz]{Corollary}
\newtheorem{corollary}[satz]{Corollary}
\newtheorem{proposition}[satz]{Proposition}
\newtheorem{theorem}[satz]{Theorem}
\newtheorem{conjecture}[satz]{Conjecture}
\newtheoremstyle{glatt} 
  {\topsep}{\topsep}
  {\normalfont}{}
  {\bfseries}{}
  {\newline}{}
\theoremstyle{glatt}
\newtheorem{definition}[satz]{Definition}
\newtheorem{remark}[satz]{Remark}
\mathchardef\ordinarycolon\mathcode`\: 
\title{Hyperkähler metrics on the moduli space of weakly parabolic Higgs bundles}
\author{Maximilian Holdt}
\date{}
\begin{document}

\vbadness=10000

\hypersetup{pageanchor=false}

\begin{titlingpage}
\maketitle
\begin{abstract}
	We use the theory of Gaiotto, Moore and Neitzke to construct a set of Darboux coordinates on the moduli space $\mathcal{M}$ of weakly parabolic 
	$SL(2,\mathbb{C})$-Higgs bundles. For generic Higgs bundles ($\mathcal{E},R\Phi)$ with $R\gg 0$ the coordinates are shown to be dominated by a leading term that is given by the coordinates for a corresponding simpler space of limiting configurations and we prove that the deviation from the limiting term is given by a remainder that is exponentially suppressed in $R$.
	
	We then use this result to solve an associated Riemann-Hilbert problem and construct a twistorial hyperkähler metric $g_{\text{twist}}$ on $\mathcal{M}$. Comparing this metric to the simpler semiflat metric $g_{\text{sf}}$, we show that their difference is $g_{\text{twist}}-g_{\text{sf}}=O\left(e^{-\mu R}\right)$, where $\mu$ is a minimal period of the determinant of the Higgs field.
	
\end{abstract}
\end{titlingpage}

\tableofcontents

\hypersetup{pageanchor=true}
\newpage
\section{Introduction}

	In 2009 D. Gaiotto, G. Moore and A. Neitzke published a theory which they argued would lead to a very concrete way of constructing hyperkähler metrics on 
	a large class of different manifolds using twistor theory on integrable systems. The general theory was first presented in \cite{Gaiotto.2010} and in 
	\cite{Gaiotto.2013} an explicit adaptation for the 
	moduli space $\mathcal{M}$ of weakly parabolic Higgs bundles on a compact Riemann surface $\mathcal{C}$ was developed which also introduced the notion of 
	spectral networks that were further 
	developed in \cite{Gaiotto.2013b}. While their general theory remained conjectural, a lot of progress has been made since then, proving some of their 
	conjectures 
	in certain special cases. In 2017 C. Garza was able to show in \cite{Garza.2017b} how the construction works for the so called Pentagon 
	case, an integrable system with torus fibers that exhibit some nodal structure on a certain locus, very similar to the known behavior of the Higgs bundle 
	moduli space. Notably he was able to show how the Ooguri-Vafa metric is a suitable model for the singular fibers. The most recent step in this direction 
	was 
	taken by I. Tulli, who showed in \cite{Tulli.2019} how the construction works on a specific moduli space of framed wild harmonic bundles on 
	$\mathbb{CP}^1$. Thus the progress in the general theory has mostly been made for very specific examples while the conjectures were intended for a much 
	broader context. 
	
	One of the most prominent conjectures of Gaiotto, Moore and Neitzke (GMN) concerns the comparison of the natural $L^2$-metric $g_{L^2}$ which is defined on all of $\mathcal{M}$ and the semiflat metric $g_{\text{sf}}$ for large Higgs fields, which is defined on the regular locus $\mathcal{M}'\subset\mathcal{M}$ where determinants of the Higgs field have only simple zeroes. GMN argued for the exponential decay of the difference of these to metrics on $\mathcal{M}'$ for large Higgs fields, i. e. when considering generic 
	Higgs bundles $(\mathcal{E},R\Phi)$ for large $R$ the difference of the two metrics should be $g_{L^2}-g_{\text{sf}}=O\left(e^{-\gamma R}\right)$. Additionally they 
	conjectured that $\gamma$ is given by the length of the shortest saddle connection in the metric on $\mathcal{C}$ induced by Det$\Phi$.
	This problem was later also tackled by R. Mazzeo, J. Swoboda, H. Weiß and F. Witt using different methods for the case of regular 
	$\text{SU}(2)$-Higgs bundles in \cite{Mazzeo.2012}, \cite{Mazzeo.2016} and \cite{Mazzeo.2017}. They were able to prove a polynomial decay by introducing the notion of 
	limiting configurations. For large Higgs fields these almost solve Hitchin's equations, leaving only an exponentially small remainder term which allows for 
	good approximations of the natural metric. The decay rate was later on improved to be an actual exponential decay by D. Dumas and A. Neitzke for a 
	special case in \cite{Dumas.2019} and in generality by L. Fredrickson in \cite{Fredrickson.2020b}, who also expanded the theory for the $\text{SU}(N)$ 
	case in \cite{Fredrickson.2018}.
	
	Although it seemed reasonable to suspect that the two theories (twistorial construction on the integrable system and limiting configurations) could 
	complement each other they weren't put together on a general class of Higgs bundle moduli spaces until now, as the twistorial construction needs to 
	consider Higgs bundles with singularities while the limiting configurations until recently were only developed for regular Higgs bundles. But this has 
	changed now as L. Fredrickson, R. Mazzeo, J. Swoboda and H. Weiß (FMSW) were able to develop their theory for the weakly and strongly parabolic Higgs 
	bundles in 
	\cite{Fredrickson.2020}. This finally allows to merge both theories and show that they fit together quite nicely as they give very concrete expressions 
	and good approximations for a lot of important objects on $\mathcal{M}$, namely a set of Darboux coordinates, a holomorphic symplectic form and a 
	hyperkähler metric. Thus we will argue in this article that the conjectural picture of GMN works for moduli spaces of weakly parabolic Higgs bundles of 
	rank $2$ on 
	Riemann surfaces of arbitrary genus with an arbitrary number of parabolic points. In particular a hyperkähler metric can be constructed in a 
	twistorial way whose difference to the semiflat metric decays exponentially with the conjectured optimal decay rate.
	
	Let us briefly summarize the key steps in the construction which will serve as a guide for our endeavor. One starts with the moduli space of weakly 
	parabolic Higgs bundles regarded as a space of Higgs pairs $(\Phi,A)$ consisting of a Higgs field $\Phi$ and a connection $A$ on some fixed rank $2$ vector bundle $V$ 
	over a Riemann surface $\mathcal{C}$. The basic theory of this moduli space will be explained in section \ref{parabolic_higgs} where we mostly follow 
	the set up of \cite{Fredrickson.2020}, as their results are fundamental for our work here. 
	
	To each Higgs pair one then associates a decorated triangulation of $\mathcal{C}$. This entails a triangulation of $\mathcal{C}$ that is given by certain curves that are geodesics when considered in the metric on $\mathcal{C}$ induced by Det$\Phi$. On the other hand the decoration consists of a choice of a $1$-dimensional subspace of $V$ at each vertex of the triangulation. In our setting this subspace if given by a solution $s$ to the flatness equation $\nabla(\zeta)s=0$ where $\nabla(\zeta)=\tfrac{R}{\zeta}\Phi+A+R\zeta\Phi^\ast$ (for $R$ positive and $\zeta$ a complex valued parameter) is a connection on $V$ that is flat for bundles of parabolic degree $0$. We will constrain ourselves to this case and describe the restriction later on in more detail. As V. Fock and A. Goncharov proved in \cite{Fock.2006}, one can construct a coordinate system for the moduli space of flat connections out of these decorated triangulations. Following \cite{Gaiotto.2013} section \ref{dec_and_coord} will be concerned with constructing the triangulation and explaining how the coordinates of Fock and Goncharov relate to coordinates of the Higgs bundle moduli space.
	
	The main difficulty in this construction comes from solving the flatness equation. Locally this is a non-autonomous linear ODE whose components are only partially known. For the construction to work, one has to be able to chose certain solutions in a unique way (up to scalar multiplication) such that they have the right asymptotics for $\zeta\to0$ and $\zeta\to\infty$. The existence of such sections, called \textit{small flat sections}, was conjectured in \cite{Gaiotto.2013}, but has not been rigorously proven until now. In section \ref{local_desc} we will use the results of \cite{Fredrickson.2020} to obtain a local description of Higgs pairs as the sum of a diagonal leading term and a remainder that is exponentially suppressed, the general idea being that if the connection matrix of $\nabla(\zeta)$ has this composite form, then the solutions of $\nabla(\zeta)=0$ should also be determined by an explicitly known leading term and some remainder that is exponentially suppressed. 
	
	In \cite{Gaiotto.2013} GMN already used an argument along these lines, where they used the WKB approximation to conjecture the right behaviour of the 
	solutions. Here we will not use this approach but instead develop a rigorous theory of "initial value problems at infinity" in section \ref{init_prob} which 
	expands the existing theory of Volterra integral equations. Explicitly we show the unique existence, as well as the continuous and differentiable 
	dependence on parameters of solutions to Volterra equations of the second kind on unbounded intervals for a general class of integral kernels. This 
	theory hadn't been developed before and might also be useful in different contexts. 
	
	\newpage
	
		With the tools of this theory at hand we then proceed to section \ref{const_on_mod}, where we study the solutions to the flatness equation and obtain our 
	first main result.
	
	\begin{theorem}
	At each weakly parabolic point of a Higgs bundle in $\mathcal{M}'$ there exists a solution of the flatness equation that is unique up to scalar multiplication and s. t. the Fock-Goncharov coordinates constructed out of these decorations are of the form
	\[
		\mathcal{X}=(1+r)\exp\left(\frac{R}{\zeta}\pi Z+i\theta+R\zeta\pi \overline{Z}\right),
	\]
	where $r$ is exponentially suppressed in $R$ and $Z$ and $\theta$ are coordinates of the Hitchin fibration.
	\end{theorem}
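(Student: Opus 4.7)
The plan is to combine the local description of Higgs pairs from Section~\ref{local_desc} with the theory of Volterra-type initial value problems at infinity developed in Section~\ref{init_prob}, and then to translate the resulting asymptotics into Fock--Goncharov coordinates using the construction from Section~\ref{dec_and_coord}.

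First I would fix a weakly parabolic point $p$ of $(\mathcal{E},R\Phi)\in\mathcal{M}'$ and work in a local frame adapted to the decomposition coming from the limiting configuration. By the results imported from \cite{Fredrickson.2020}, the connection $\nabla(\zeta)=\tfrac{R}{\zeta}\Phi+A+R\zeta\Phi^\ast$ can be written as a diagonal leading term plus an off-diagonal remainder that is pointwise exponentially suppressed in $R$ away from the zeros of $\det\Phi$. This turns the flatness equation $\nabla(\zeta)s=0$ into a linear ODE system whose diagonal part is explicitly integrable, so making the gauge ansatz $s=s_{\mathrm{diag}}\cdot(\mathbf{1}+v)$ transforms the equation into a fixed-point problem of Volterra type for $v$ along a path tending to $p$.

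Next I would apply the abstract existence and uniqueness theorem for initial value problems at infinity from Section~\ref{init_prob} to this fixed-point problem. The kernel is controlled by the exponentially small remainder in the local decomposition of $\nabla(\zeta)$, so the Volterra iteration converges and produces a distinguished small flat section that is unique up to scalar multiplication once its asymptotic class is prescribed. The parameter-dependence statements proved in that section give the required control of the correction $v$, in particular a bound $\|v\|=O(e^{-c R})$ with $c>0$ controlled by a local period of $\Phi$. This is where the analytic heart of the argument sits, and it is the step I expect to be the main obstacle: one has to keep simultaneous track of the decay along the path, of the uniformity in $\zeta$ on sectors near $0$ and $\infty$, and of the normalization that fixes the small flat section up to scalar.

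Finally I would feed these small flat sections into the Fock--Goncharov construction. The coordinate $\mathcal{X}$ is built as a cross-ratio of the distinguished lines $[s]$ at the vertices of the decorated triangulation, and substituting $s=s_{\mathrm{diag}}(\mathbf{1}+v)$ factors $\mathcal{X}$ into an explicit leading exponential times $1+r$ with $r=O(e^{-\mu R})$. The leading exponential is a sum of integrals of the diagonalized connection along the edges, and by the construction of the Hitchin fibration coordinates $(Z,\theta)$, together with the identification of the period of the eigenform of $\Phi$ with $\pi Z$, this sum reduces precisely to $\tfrac{R}{\zeta}\pi Z+i\theta+R\zeta\pi\overline{Z}$. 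Combining this identification with the Volterra estimate for $r$ yields the claimed formula.
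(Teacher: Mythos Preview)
Your sketch is correct and follows essentially the same route as the paper: set up the standard frame so that $\nabla(\zeta)$ splits into a diagonal leading part plus an exponentially small error, detach the leading term to obtain a Volterra equation at infinity along a $\vartheta$-trajectory, invoke the existence/uniqueness theory of Section~\ref{init_prob} to produce the small flat section with remainder $O(e^{-\delta R})$, and then assemble the cross-ratio so that the leading exponentials combine into the period integrals $Z_\gamma,\theta_\gamma$. The only points where the paper does noticeably more work than your sketch indicates are the specific use of $\vartheta$-trajectories (whose logarithmic-spiral form near the pole is what makes the kernel integrable), the parallel transport of a small flat section from one side of the quadrilateral to the adjacent side via an auxiliary $\widetilde\vartheta$-trajectory (Proposition~\ref{connecting_coord}), and the handling of the non-tracefree part of the Chern connection via Liouville's formula when moving the wedge products to a common evaluation point; you should make sure your ``sum of integrals along the edges'' step accounts for these.
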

	
	We note here that the calculations necessary for this theorem show that the combination of our theory of initial value problems at infinity and the 
	theory of geodesics for quadratic differentials is a very good tool for the general problem of parallel transport.
	
	The $\mathcal{X}$-coordinates thus constructed are defined on $\mathcal{M}'$ for generic values of $\zeta\in\mathbb{C}^\ast$ and can be associated to elements of the lattice $\Gamma_q\subset H_1(\Sigma'_q;\mathbb{Z})$ on the spectral cover $\Sigma_q$ of $\mathcal{C}$ corresponding to the derminant $q$ of the Higgs field. For specific rays in $\mathbb{C}^\ast$ the underlying triangulation on $\mathcal{C}$ changes, which leads to jumps of the coordinates. For a certain generic subset $\widetilde{\mathcal{M}}'_{R,c}\subset\mathcal{M}'$ the jumps are completely understood and can be used to formulate a Riemann-Hilbert problem which is then solved by the $\mathcal{X}$-coordinates. We show how this is done in section \ref{sec_R_H}. Our main Theorem in that section then concerns a certain integral equation which was proposed by GMN for the general construction of Hyperkähler metrics on integrable system, where the jumps are determined by certain so called (generalized) Donaldson-Thomas invariants $\Omega$. 
	
	\begin{theorem}
		The $\mathcal{X}$-coordinates are solutions to the integral equation
		\[
		\mathcal{X}_\gamma(\zeta)=A_\gamma\mathcal{X}_\gamma^{\text{sf}}(\zeta)\text{exp}\left[-\frac{1}{4\pi i}\sum_{\beta\in\Gamma_q}\Omega(\beta;q)\left\langle\gamma,\beta\right\rangle\int_{l_{\beta}(q)}\frac{d\zeta'}{\zeta'}\frac{\zeta'+\zeta}{\zeta'-\zeta}\log\left(1-\mathcal{X}_\beta(x,\zeta')\right)\right],
		\]
		for some constant $A_\gamma\in\mathbb{C}$ with $\left\|A_\gamma\right\|=1$ which is exponentially close $1$ for large $R$.
	\end{theorem}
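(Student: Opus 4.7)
The plan is to recognise the integral equation as the analytic form of a Riemann--Hilbert problem whose jump data were assembled in section~\ref{sec_R_H}. I introduce
\[
\Psi_\gamma(\zeta):=\log\!\left(\frac{\mathcal{X}_\gamma(\zeta)}{\mathcal{X}^{\text{sf}}_\gamma(\zeta)}\right),
\]
which by the preceding theorem equals $\log(1+r)$; for $R$ large enough $r$ is uniformly small, so this logarithm has a canonical single-valued branch. The essential singularities of $\mathcal{X}^{\text{sf}}_\gamma$ at $\zeta=0,\infty$ cancel, and $\Psi_\gamma$ extends to a piecewise holomorphic function on $\mathbb{C}^\ast$ that is bounded near $0$ and $\infty$.

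Two further ingredients must then be combined. First, the jump analysis of section~\ref{sec_R_H} shows that $\mathcal{X}_\gamma$ is discontinuous only across the BPS rays $l_\beta(q)$, where it transforms by the Kontsevich--Soibelman symplectomorphism $\mathcal{X}_\gamma\mapsto\mathcal{X}_\gamma(1-\mathcal{X}_\beta)^{\Omega(\beta;q)\langle\gamma,\beta\rangle}$; since $\mathcal{X}^{\text{sf}}_\gamma$ is continuous everywhere, $\Psi_\gamma$ acquires the additive jump $\Omega(\beta;q)\langle\gamma,\beta\rangle\log(1-\mathcal{X}_\beta)$ across $l_\beta(q)$. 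Second, the GMN kernel $\tfrac{1}{4\pi i}\tfrac{d\zeta'}{\zeta'}\tfrac{\zeta'+\zeta}{\zeta'-\zeta}$ admits the partial-fraction decomposition $-\tfrac{1}{\zeta'}+\tfrac{2}{\zeta'-\zeta}$, so a Cauchy-type integral built with it reproduces precisely the prescribed additive jumps via Plemelj--Sokhotski, while remaining bounded as $\zeta\to 0$ and $\zeta\to\infty$. Uniqueness of the scalar Riemann--Hilbert problem up to an additive constant therefore yields
\[
\Psi_\gamma(\zeta)=-\frac{1}{4\pi i}\sum_{\beta\in\Gamma_q}\Omega(\beta;q)\langle\gamma,\beta\rangle\int_{l_\beta(q)}\frac{d\zeta'}{\zeta'}\frac{\zeta'+\zeta}{\zeta'-\zeta}\log(1-\mathcal{X}_\beta(\zeta'))+C_\gamma,
\]
with an additive constant $C_\gamma$ not fixed by the jump data alone. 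Exponentiating and setting $A_\gamma:=e^{C_\gamma}$ gives the stated equation.

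The main obstacle is pinning down $A_\gamma$. To establish $\|A_\gamma\|=1$ I would invoke the real-structure $\overline{\mathcal{X}_\gamma(\zeta)}=\mathcal{X}_{-\gamma}(-1/\bar\zeta)$ built into the Fock--Goncharov construction of section~\ref{dec_and_coord}: the GMN kernel is antisymmetric under $\zeta'\mapsto-1/\bar\zeta'$ coupled with complex conjugation, so the integral is purely imaginary and $\operatorname{Re}C_\gamma=0$. To show that $A_\gamma$ is exponentially close to $1$ I would evaluate $\Psi_\gamma$ along a ray in $\mathbb{C}^\ast$ transverse to every active $l_\beta(q)$. On the left, the preceding theorem gives $|\Psi_\gamma|=|\log(1+r)|=O(e^{-\mu R})$. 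On the right, the exponential form of $\mathcal{X}_\beta$ shows that on each BPS ray $|\log(1-\mathcal{X}_\beta)|=O(e^{-\mu R})$ uniformly, and summability over $\beta\in\Gamma_q$ follows from the standard growth bounds on the Donaldson--Thomas invariants $\Omega(\beta;q)$ recalled in section~\ref{sec_R_H}; the integrals themselves are uniformly bounded since the evaluation ray is separated from each $l_\beta(q)$. Matching the two sides forces $|C_\gamma|=O(e^{-\mu R})$, so $A_\gamma=1+O(e^{-\mu R})$. This last estimate is the delicate step: control of the tail of the sum over $\beta$ and of the integrals near the endpoints $\zeta'=0,\infty$ of the BPS rays is where convergence has to be argued carefully, and I expect it to be the main technical burden of the proof.
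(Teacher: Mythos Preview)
Your approach is essentially the paper's: both recast the statement as a scalar Riemann--Hilbert problem on $\mathbb{C}^\ast$, use Plemelj--Sokhotski to show that the Cauchy-type integral with the GMN kernel reproduces the prescribed jumps, and invoke Liouville to conclude uniqueness up to a ($\zeta$-independent) constant, which is then pinned down by comparing the two sides in a limit. The only cosmetic differences are that the paper works multiplicatively (defining $\mathcal{Y}_\gamma$ directly via the integral and forming $\mathcal{X}_\gamma/\mathcal{Y}_\gamma$) rather than additively through $\Psi_\gamma$, and that it handles the accumulation of infinitely many BPS rays near ring domains more explicitly via the unsealed-spectrum hypothesis; your reality-condition argument for $\lvert A_\gamma\rvert=1$ is a reasonable addition that the paper does not spell out.
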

	
	In our setting the consideration of this integral equation has the advantage that we obtain additional analytic details regarding certain asymptotics of the $\mathcal{X}$-coordinates.
	
	Coming back to the general construction, the idea proposed by GMN is that the $\mathcal{X}$ coordinates fit into the picture of \cite{Gaiotto.2010}, 
	i. e. one can use them to construct a holomorphic symplectic form from which one can construct the hyperkähler metric on the moduli space by use of the twistor theorem of N. J. Hitchin, A. Karlhede, U. Lindström and M. Ro$\check{\text{c}}$ek. We will explain this general construction of GMN in the final section \ref{twist_metric_all}. Here we just note that it is the use of the twistor theorem for which the $\zeta$-asymptotics of the $\mathcal{X}$-coordinates and their derivatives have to be known. Only then one may obtain such a twistorial hyperkähler metric. As it turns out our theory is strong enough to also obtain these right asymptotics and thus obtain the final result.
	
	\begin{theorem}
		There is a hyperkähler metric on $\widetilde{\mathcal{M}}'_{R,c}$ constructed by the twistorial method whose difference to the semiflat metric decays exponentially in $R$. Furthermore the decay rate is given by the minimal period $2\pi R|Z_\mu|$, where $Z_\mu$ is the minimal period integral $\oint_\mu q^{1/2}$ for $\mu\in\Gamma_q$ with $\Omega(\mu)\neq0$.
	\end{theorem}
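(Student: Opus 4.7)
The plan is to assemble the twistorial hyperkähler metric from the Darboux coordinates produced by the previous two theorems and then track the exponentially small remainder through each step of the construction.

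First I would build, for each generic $\zeta\in\mathbb{C}^\ast$, the holomorphic symplectic form
\[
\varpi(\zeta)=\frac{1}{4\pi^2 R}\sum_i\frac{d\mathcal{X}_{\gamma_i}(\zeta)}{\mathcal{X}_{\gamma_i}(\zeta)}\wedge\frac{d\mathcal{X}_{\gamma^i}(\zeta)}{\mathcal{X}_{\gamma^i}(\zeta)}
\]
in terms of a local basis $\{\gamma_i\}$ of $\Gamma_q$ and its dual $\{\gamma^i\}$ under the intersection pairing. The jumps of the $\mathcal{X}$-coordinates controlled in Section~\ref{sec_R_H} translate into the statement that $\varpi(\zeta)$ is piecewise holomorphic in $\zeta$ and changes across each BPS ray by a symplectomorphism generated by the Donaldson-Thomas invariants $\Omega$. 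Combined with the explicit $e^{\pi R(Z/\zeta+\overline{Z}\zeta)}$ factor from the first theorem, this yields the Laurent asymptotic $\varpi(\zeta)=\zeta^{-1}\omega_+ + \omega_3 + \zeta\,\omega_- + \ldots$ together with a reality condition $\overline{\varpi(-1/\bar\zeta)}=\varpi(\zeta)$ inherited from the behavior of small flat sections under complex conjugation. This is precisely the input required by the twistor theorem of Hitchin, Karlhede, Lindström and Roček, which then produces the hyperkähler metric $g_{\text{twist}}$ on $\widetilde{\mathcal{M}}'_{R,c}$.

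Next I would compare $g_{\text{twist}}$ to the semiflat metric $g_{\text{sf}}$, which is obtained by the same recipe from $\mathcal{X}^{\text{sf}}_\gamma(\zeta)=\exp(\pi R Z/\zeta+i\theta+\pi R\overline{Z}\zeta)$. Writing $\mathcal{X}_\gamma = A_\gamma\,\mathcal{X}^{\text{sf}}_\gamma\,e^{\epsilon_\gamma}$ as in Theorem~2, the $A_\gamma$ are unit-modulus constants, so
\[
\frac{d\mathcal{X}_\gamma}{\mathcal{X}_\gamma} = \frac{d\mathcal{X}^{\text{sf}}_\gamma}{\mathcal{X}^{\text{sf}}_\gamma} + d\epsilon_\gamma,
\]
and the difference $\varpi(\zeta)-\varpi^{\text{sf}}(\zeta)$ is a finite sum of bilinear expressions each carrying a factor $d\epsilon_\gamma$. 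The correction $\epsilon_\gamma$ is the contour integral in Theorem~2, whose integrand $\log(1-\mathcal{X}_\beta(\zeta'))$ satisfies $|\mathcal{X}_\beta(\zeta')|\leq e^{-2\pi R|Z_\beta|}$ on the BPS ray $l_\beta$. Summing over $\beta\in\Gamma_q$ with $\Omega(\beta)\neq 0$ and picking out the dominant term produces $|\epsilon_\gamma|=O(e^{-2\pi R|Z_\mu|})$ with $Z_\mu$ the minimal period. The metric coefficients are read off from the Laurent expansion of $\varpi(\zeta)$, and the bound transfers to $g_{\text{twist}}-g_{\text{sf}}=O(e^{-2\pi R|Z_\mu|})$.

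The main obstacle is to make the necessary derivatives uniformly controlled on $\widetilde{\mathcal{M}}'_{R,c}$. Differentiating the integral equation in $\zeta$ brings in the Cauchy kernel $(\zeta'+\zeta)/(\zeta'-\zeta)$ evaluated near the BPS ray, while differentiating in the base coordinates of $\mathcal{M}'$ requires control of how the triangulation and the BPS spectrum deform; one must verify that no additional non-exponentially-suppressed contributions arise near walls of marginal stability or near the discriminant locus where BPS rays accumulate. Once these uniform estimates are in place, the identification of the decay rate with $2\pi R|Z_\mu|$ is immediate, since the slowest-decaying exponential in the BPS sum saturates the bound, matching the GMN prediction that the length of the shortest saddle connection in the flat metric induced by $q$ on $\mathcal{C}$ sets the optimal rate.
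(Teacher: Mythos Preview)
Your outline follows essentially the same route as the paper: build $\varpi$ from the $\mathcal{X}$-coordinates, verify the twistor hypotheses (jumps are symplectomorphisms, simple poles at $0$ and $\infty$, reality), invoke Hitchin--Karlhede--Lindstr\"om--Ro\v{c}ek, and then read off the decay from the integral equation of Theorem~2. The detailed estimates for $d\epsilon_\gamma$ that you flag as an obstacle are carried out in the paper exactly as you anticipate, by differentiating under the contour integral and bounding the three resulting pieces by modified Bessel functions $K_0$ and $K_1$ via the substitution $\zeta'=-Z_\beta e^{y}$, $s=e^x$.

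There is one genuine gap. You write ``the $A_\gamma$ are unit-modulus constants, so $d\mathcal{X}_\gamma/\mathcal{X}_\gamma = d\mathcal{X}^{\text{sf}}_\gamma/\mathcal{X}^{\text{sf}}_\gamma + d\epsilon_\gamma$.'' Theorem~2 only asserts that $A_\gamma$ is independent of $\zeta$; it comes from the uniqueness of solutions to the Riemann--Hilbert problem at a \emph{fixed} point of the moduli space. Nothing there prevents $A_\gamma$ from varying along $\widetilde{\mathcal{M}}'_{R,c}$, and if it did, $d\log A_\gamma$ would contribute an extra term whose exponential suppression is a priori only the crude one from Section~\ref{const_on_mod}, not the sharp $2\pi R|Z_\mu|$. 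The paper closes this with a separate lemma: using that $\mathcal{X}_\gamma(\zeta)$ is holomorphic in the complex structure $I_\zeta$ for every $\zeta$, one extracts from the limits $\zeta\to 0$ and $\zeta\to\infty$ two linear relations which combine, after comparing the true $K$ with the semiflat $K^{\text{sf}}$, to force $dA_\gamma=0$. You should either supply this argument or weaken your conclusion to the crude rate and then explain why the sharp rate still emerges.
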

	
	We note here that we do not yet know that our twistorial hyperkähler metric is the natural $L^2$ metric on the moduli space. It is natural to conjecture this at this point and we will sketch an outline of a possible proof at the end of the last section, which was communicated to us by Andrew Neitzke.
	
	
	\vspace{1em}
	
	\textbf{Acknowledgments:} I thank my advisor, Harmut Weiß, for many helpful discussions and advices. I also want to thank Andrew Neitzke for clarifying important aspects of the general theory. I received reimbursement of travel expenses from the DFG SPP 2026.
	
\section{Parabolic Higgs bundles}\label{parabolic_higgs}
	
	In this section we start by developing the basic constructions for the moduli space we are interested in. The presentation here 
	mostly follows \cite{Fredrickson.2020}. For the general construction and further details see also \cite{Nakajima.1996} and \cite{BODEN.2012}. 
	We will start by giving an overview of parabolic Higgs bundles in subsection \ref{parabolic_higgs}, which can roughly be described as Higgs bundles that 
	may have singularities at certain fixed points. There are some different notions that will have to be separated there concerning the structure of the 
	poles. One idea is to allow simple poles and residues of the Higgs field which are nilpotent. Those were introduced in \cite{Konno.1993} and are called 
	strongly parabolic with regular singularities. As it turns out our construction does at this moment not seem to work on the corresponding moduli 
	space for these bundles. Nevertheless a thorough investigation of the possibilities of adapting the method presented here to that case would be highly 
	interesting. Instead we will focus on weakly parabolic bundles which also have simple poles but allow for diagonalizable residues of the Higgs 
	field. Finally there also is 
	a moduli space for bundles with singularities of order $>1$. These are called wild or irregular, were presented in \cite{Biquard.2004} and may also 
	be strongly or weakly parabolic. For us they are of importance as an application of the theory presented here to a special case of wild bundles on the  
	sphere was carried out in \cite{Tulli.2019}, which shows that the theory can actually be adapted to different settings.

	\subsection{The moduli space of parabolic Higgs bundles.}

		Let $\mathcal{C}$ be a compact Riemann surface of genus $g$ with a metric $g_C$ and Kähler form $\omega_C$ and $E$ a $\mathcal{C}^{\infty}$-vector bundle over 
		$\mathcal{C}$ of rank $r$ and degree $d$. This is the data that stays fixed throughout all of the following constructions. We also fix a holomorphic 
		structure $\overline{\partial}_{\text{Det}E}$ on the complex line bundle $\text{Det}E=\Lambda^rE$ with associated Hermitian-Einstein metric 
		$h_{\text{Det}E}$. This metric induces the Chern connection $A$ on this holomorphic Hermitian line bundle and is characterized by the curvature
		\[
			F_A=-\sqrt{-1}\text{deg}E\frac{2\pi\omega_{\mathcal{C}}}{\text{vol}_{g_{\mathcal{C}}}(\mathcal{C})}\text{Id}_{\text{Det}E}
		\]
		We can then equip $E$ with a holomorphic structure $\overline{\partial}_{E}$ demanding that $\overline{\partial}_{E}$ induces 
		$\overline{\partial}_{\text{Det}E}$ on $\text{Det}E$ and denote by $\mathcal{E}:=(E,\overline{\partial}_E)$ the emerging 
		holomorphic vector bundle. 
		
		We now introduce the parabolic data which starts with a divisor $D=p_1+\ldots+p_n$ on $\mathcal{C}$, i. e. we fix $n$ distinct points 
		on the surface. To these points we associate data that fixes the behavior of certain objects near them.
		
		\begin{definition}
			A \textit{parabolic structure} on $E$ consists of a choice of weighted flags $\mathcal{F}(p)=F_{\bullet}(p)$, i. e.
			\[
				\begin{split}
					E_p&=F_1(p)\supset\ldots\supset F_{s_p}(p)\supset 0\\
					0&\leq\alpha_1(p)<\ldots<\alpha_{s_p}(p)<1
				\end{split}
			\]
			for each $p\in D$ with $\sum_{i=1}^n\alpha_i=1$. A vector bundle with parabolic structure is called a \textsl{parabolic bundle}.
		\end{definition}
			
		Now let $K_{\mathcal{C}}$ be the canonical line bundle. We are interested Higgs fields that have simple poles at the marked points, which are defined 
		as follows:
			
		\begin{definition}
			A \textit{parabolic Higgs field} is a holomorphic bundle map $\Phi:\mathcal{E}\rightarrow\mathcal{E}\otimes K_{\mathcal{C}}(D)$ w. r. t. the 
			induced holomorphic structure of $\overline{\partial}_E$. The Higgs field is said 
			to \textit{preserve} the parabolic structure if, for all $i$ and $p$ the residue preserves the flags, i. e.:
			\[
				\text{res}_p\Phi(F_i(p))\subseteq F_i(p).
			\]
			A \textit{parabolic Higgs bundle} is a holomorphic vector bundle $\mathcal{E}$ together with a parabolic structure and a parabolic Higgs field $\Phi$ 
			preserving the 
			parabolic structure.
		\end{definition}
		
		Note that the twist by $D$ in the definition amounts to $\Phi$ having simple poles, so we will also call the Higgs field and associated objects 
		meromorphic.
		At this point there are two possibilities for the residue of $\Phi$. It may be nilpotent in which case the Higgs field is called 
		\textit{strongly parabolic} or it may be diagonalizable in which case it is called \textit{weakly parabolic}. In the case of weakly parabolic bundles 
		the residue data, i. e. the eigenvalues of the residue at each parabolic point shall also be fixed.
		
		In the following we will only consider weakly parabolic bundles of rank $2$. Furthermore we restrict ourselves to the $SL(2,\mathbb{C})$ case. For the definition 
		we state all the fixed data.
		
		\begin{definition}
			Let $E$ be a complex rank $2$ vector bundle over a compact Riemann surface of degree $d$, together with a divisor $D=p_1+\ldots+p_n$ and holomorphic 
			structure $\overline{\partial}_{\text{Det}E}$ on $\text{Det}E$. Furthermore for each $p\in D$ let a weight vector 
			$\vec{\alpha}(p)=(\alpha_1(p),\alpha_2(p))\in[0,1)^2$ with $\alpha_1(p)+\alpha_2(p)=1$ be given, as well as $\sigma_p\in\mathbb{C}^\times$.
			
			A weakly parabolic $SL(2,\mathbb{C})$-Higgs bundle over $(\mathcal{C},D)$ is a triple 
			$(\overline{\partial}_E,\left\{\mathcal{F}(p)\right\}_{p\in D},\Phi)$, where $\overline{\partial}_E$ is a holomorphic structure on $E$ inducing 
			$\overline{\partial}_{\text{Det}E}$, 
			$\mathcal{F}(p)$ is a complete flag for each $p\in D$ and $\Phi:\mathcal{E}\rightarrow\mathcal{E}\otimes K_{\mathcal{C}}(D)$ is a Higgs field 
			which is traceless with residue eigenvalues $\sigma_p$ and $-\sigma_p$ at each $p\in D$.
		\end{definition}
		
		It is useful to write $\mathcal{E}$ for the holomorphic bundle $(E,\overline{\partial}_E)$ 
		together with the flag, s. t. a parabolic Higgs bundle may be expressed as a pair $(\mathcal{E},\Phi)$ just as in the regular case.
		
		To obtain a well behaving moduli space a notion of stability is necessary. For this one defines the (weight depended) parabolic degree of $\mathcal{E}$ 
		as
		
		\[
			\text{pdeg}_{\vec{\alpha}}(\mathcal{E})=\text{deg}(\mathcal{E})+\sum_{p\in D}(\alpha_1(p)+\alpha_2(p)).
		\]
		
		A parabolic structure on $\mathcal{E}$ induces a parabolic structure on its holomorphic subbundles and so one defines $(\mathcal{E},\Phi)$ to be 
		$\vec{\alpha}$-stable iff
		\[
			\mu(\mathcal{E}):=\frac{\text{pdeg}_{\vec{\alpha}}(\mathcal{E})}{\text{rank}(\mathcal{E})}>\text{pdeg}_{\vec{\alpha}}\mathcal{L}.
		\]
		for every holomorphic line subbundle $\mathcal{L}$ preserved by $\Phi$. Here $\mu(\mathcal{E})$ is the \textit{parabolic slope} of $\mathcal{E}$. The notion of semistability follows if instead of $>$ one allows $\geq$ and the weights are called \textit{generic} if any $\vec{\alpha}$-semistable bundle is $\vec{\alpha}$-stable.
		
		For the definition of the moduli space we start by fixing the residues $\sigma=\left\{\sigma(p)\right\}_{p\in D}$ and the flags $\mathcal{F}(p)$ for each $p\in D$. We denote by SL$(E)$ the bundle of automorphisms of $E$ that induce the identity on Det$(E)$ and by $\mathfrak{sl}=\text{End}_0E$ the bundle of tracefree endomorphisms. Now denote by $\mathcal{A}_0$ the affine space of all holomorphic structures on $E$ that induce the fixed holomorphic structure $\overline{\partial}_{\text{Det}E}$ on Det$E$ and by $\mathcal{P}_{0,\sigma}\subset\Omega^{1,0}(\mathcal{C},\text{End}_0E)$ the set of sections of $\text{End}_0E\otimes K_C(D)$ (i. e. tracefree endomorphisms with simple poles), which are compatible with the flag and with residues $\sigma(p)$ and $-\sigma(p)$ at each $p\in D$.
		
		We then start with the space
		\[
			\mathcal{H}:=\left\{(\overline{\partial}_E,\Phi)\in\mathcal{A}_0\times \mathcal{P}_{0,\sigma}: \Phi\text{ is meromorphic w. r. t. to }\overline{\partial}_E\right\}.
		\]
		
		Now we fix a generic weight vector $\vec{\alpha}(p)$ at each $p\in D$ and denote by $\mathcal{H}_{\vec{\alpha}}$ the subspace of pairs $(\overline{\partial}_E,\Phi)\in\mathcal{H}$ which are $\vec{\alpha}$-stable.
		For the gauge group let ParEnd$(E)$ be the bundle of automorphisms of $E$ that preserve the flag at each $p\in D$. Our gauge group will then be
		\[
			\mathcal{G}_{\mathbb{C}}:=\Gamma(\text{SL}(E)\cap\text{ParEnd}(E)).
		\]
		
		Finally we obtain the moduli space of SL$(2,\mathbb{C})$-Higgs bundles $\mathcal{M}_{\text{Higgs}}:=\mathcal{H}_{\vec{\alpha}}/\mathcal{G}_{\mathbb{C}}$, which is a Hyperkähler manifold of dimension 
	\[
		\text{dim}_{\mathbb{C}}\mathcal{M}_{\text{Higgs}}=6(g-1)+2n
	\]
	for $g$ the genus of $\mathcal{C}$ (cf. \cite{BODEN.2012}). For further details see also \cite{YOKOGAWA.1993} and \cite{Konno.1993}.


	\subsection{Nonabelian Hodge correspondence}
	
	Although we usually talk about the Higgs bundle moduli space, the objects we are actually working with are most of the time not the Higgs bundles $(\mathcal{E},\Phi)$ but their 
	associated Higgs pairs $(\Phi,D(\overline{\partial}_E))$, which we'll introduce now.
	
	Let $(\mathcal{E},\Phi)$ be a parabolic Higgs bundle. By adding a Hermitian metric $h$ on $E$ the holomorphic structure $\overline{\partial}_E$ 
	canonically induces a connection $D(\overline{\partial}_E,h)$ on $E$, the so called Chern connection. After choosing a base connection 
	$D(\overline{\partial}_E,h)$ can be identified with an endomorphism 
	valued $1$-form $A$ and may also be written as $d_A$. Explicitly if $\overline{\partial}_E$ is locally 
	$\overline{\partial}+A_{\overline{z}}d\overline{z}$ in an $h$-unitary frame then 
	$d_A$ is simply given by $D(\overline{\partial}_E,h)=d_A=d+A=d+A_{\overline{z}}d\overline{z}-(A_{\overline{z}})^{\ast_h}dz$ where $d$ is the standard differential. Depending on the situation the connection is referred to as $D(\overline{\partial}_E,h)$, $d_A$ or simply $A$. We 
	can thus search for a hermitian metric $h$, such that the pair $(\Phi,D(\overline{\partial}_E,h))$ satisfies 
	Hitchin's equation	
	\begin{equation}\label{Hitchin_equ}
		F^{\bot}_{D(\overline{\partial}_E,h)}+\left[\Phi,\Phi^{\ast_h}\right]=0,
	\end{equation}
	where
	\[
		F^{\bot}_{D(\overline{\partial}_E,h)}-F_{D(\overline{\partial}_E,h)}=\sqrt{-1}\mu(E)\frac{2\pi\varpi_{\mathcal{C}}}{\text{vol}_{g_{\mathcal{C}}}(\mathcal{C})}\text{Id}_{\text{Det}E}.
	\]
	Note that the right hand side matches the curvature of the fixed Chern connection on $\text{Det}E$. In this way $F^{\bot}_{D(\overline{\partial}_E,h)}$ is the trace-free part of the curvature of $D(\overline{\partial}_E,h)$.\footnote{As \cite{Fredrickson.2020} is our primary source we point out an important difference in notation. Here we use the Symbol $\Phi$ for a parabolic Higgs bundle and when considering Hitchin's equation as an equation for the harmonic metric $h$. This is the holomorphic point of view and FMSW use the symbol $\varphi$ to denote the Higgs field in this picture. They, on the other hand, use the symbol $\Phi$ when considering a fixed background metric $h_0$ considering Hitchin's equation as an equation for Higgs pairs consisting of a Higgs field and an $h_0$-unitary connection. This is the unitary point of view. We will almost always work in the holomorphic picture and reserve $\varphi$ for the matrix part of the Higgs field $\Phi$.}
	
	As stated in \cite{Fredrickson.2020} in the parabolic setting one expects the Hermitian metric to match the parabolic structure in a suitable sense. For this one has to consider filtrations of the sections of 
	$\mathcal{E}\otimes\mathcal{O}_{\mathcal{C}}(\ast D)$, for $\mathcal{O}_{\mathcal{C}}(\ast D)$ the sheaf of algebras of rational functions with poles at 
	$D$. A parabolic structure determines such a filtration, as does a Hermitian metric, and if these two filtrations coincide the Hermitian metric is said to be \textit{adapted} to the parabolic structure. This can be expressed locally by demanding that near each $p\in D$ for a holomorphic basis of sections $e_1,e_2$ with $e_i\in F_i(p)$ one has $h(s,s)\sim |z|^{2\alpha_i(p)}$. For further details on this see \cite{Mochizuki.2006}.
	
	With these constraints one can now uniquely solve Hitchin's equation \eqref{Hitchin_equ} in the class of Hermitian metrics $h$, adapted to the parabolic 
	structure on $\mathcal{E}$ (cf. \cite{Simpson.1990}). Then one obtains one part of the nonabelian Hodge correspondence
	\[
		\mathcal{M}_{\text{Higgs}}\to\mathcal{M}, \enspace (\mathcal{E},\Phi)\mapsto(\mathcal{E},\Phi,h).
	\]
	
	Here $\mathcal{M}$ is the space of solutions to Hitchin's equations modulo gauge equivalence, where we consider the gauge group 
	\[
		\mathcal{G}=\Gamma(\text{SU}(E)\cap\text{ParEnd(E))}.
	\]
	of $h_0$-unitary transformations.
	We note the action of the gauge group as
	\[
		g\cdot\overline{\partial}_E=g\circ\overline{\partial}_E\circ g^{-1},
		\enspace g\cdot \Phi=g\Phi g^{-1}\enspace\text{and}\enspace
		(g\cdot h)(v,w)=h(gv,gw).
	\]
	The action on $\overline{\partial}_E$ then leads to an induced action on the Chern connection $A$.
	
	On the other side if we assume $\text{pdeg}_{\vec{\alpha}}(\mathcal{E})=0$ for simplicity then for every solution $h$ of Hitchin's equation there is a flat 
	$SL(2,\mathbb{C})$-connection 
	$A(\overline{\partial}_E,h)+\Phi+\Phi^{\ast}$. For our construction of the metric we will use a correspondence quite similar to this, with the difference 
	being a scaling in Hitchin's equation and a twist by the complex parameter $\zeta$.

\subsection{Spectral data and integrable systems}

	\begin{figure}[!htbp] 
		\centering
		\scalebox{0.4}{\includegraphics{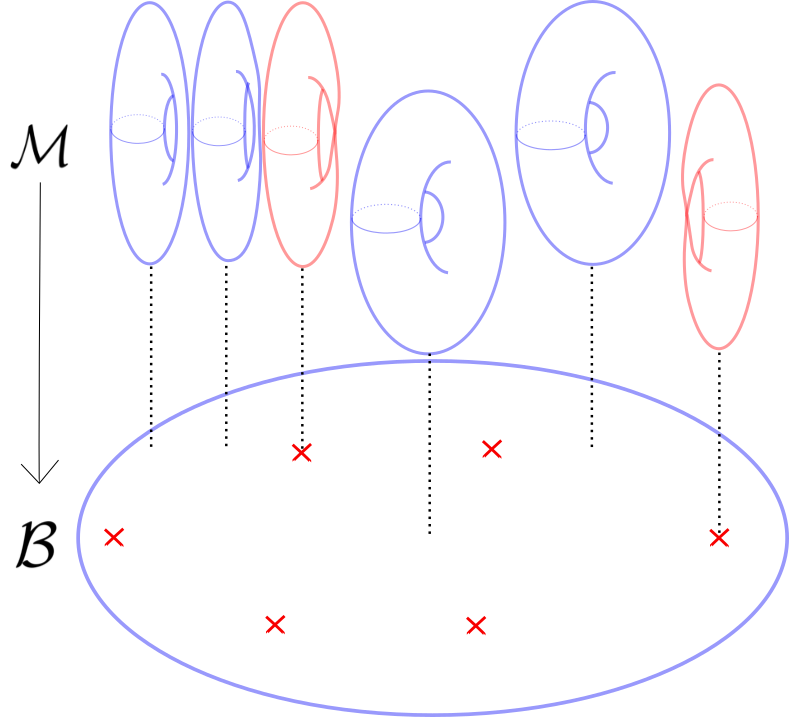}}
		\caption[A subset of the Hitchin fibration in dimension $4$.]{A subset of the Hitchin fibration in dimension $4$. The red crosses in the Hitchin base signify the elements of the discriminant locus over which the torus fibration becomes nodal.}
		\label{Hitchin_fibration}
		\end{figure}

	Before we can start with the general idea of the construction of the Darboux coordinates on $\mathcal{M}_R$ we have to introduce a final feature, again following \cite{Fredrickson.2020} where additional references are given.	
	To every Higgs bundle $(\mathcal{E},\Phi)$ one can associate the characteristic polynomial $\text{char}_{\Phi}(\lambda)$ which does not depend on the 
	parabolic structure. This gives rise to the Hitchin map
	\[
		\text{Hit}:\mathcal{M}\to\mathcal{B},\quad (\mathcal{E},\Phi)\mapsto\text{char}_{\Phi}(\lambda).
	\]
	Here $\mathcal{B}$ denotes the Hitchin base, identified with the affine space of coefficients of $\text{char}_{\Phi}(\lambda)$, so elements in $\mathcal{B}$ are collections of differential forms. In the case of 
	SL$(2,\mathbb{C})$ bundles it holds $\text{tr}\Phi=0$ and thus the characteristic polynomial is just the determinant of $\Phi$. 
	
	For weakly parabolic bundles Det$\Phi$ is a meromorphic quadratic differential on $\mathcal{C}$ with a double pole of the the form $-\sigma^2\tfrac{dz^2}{z^2}$ 
	near any weakly parabolic point $p$. One can associate a \textit{spectral} curve $\Sigma\subset\text{Tot}\left(K_{\mathcal{C}}(D)\right)$ to
	each point in $\mathcal{B}$ which is a $2:1$ ramified cover of $\mathcal{C}$ in $K_{\mathcal{C}}(D)$, where each sheet represents a different eigenvalue 
	of the Higgs field.
	
	The subset $\mathcal{B}'\subset\mathcal{B}$ of differentials in $\mathcal{B}$ that are generic, i. e. have only 
	simple zeroes, are the ones for which 
	$\Sigma$ is smooth. The following constructions will focus on the subset $\mathcal{M}':=\text{Hit}^{-1}(\mathcal{B}')$ (and the corresponding $\mathcal{M}'_R$) which is called the \textit{regular locus}. Its complement is called the \textit{discriminant locus}, cf. figure \ref{Hitchin_fibration}.
	
	Lastly we want to mention the result going back to \cite{Hitchin.1987b} for the regular case that the Hitchin map equips $\mathcal{M}$ with the 
	structure of an algebraically completely integrable system. Notably every fiber $\text{Hit}^{-1}(q)$ for each $q\in\mathcal{B}'$ forms a torus which implies the notion of action 
	angle-coordinates $Z,\theta$. Recovering these coordinates will be one of the main parts in the twistorial construction of the hyperkähler metrics.

\section{Decorated triangulations and their coordinates}\label{dec_and_coord}

	In this section we want to show how to build a (version of a) system of coordinates on the moduli space of flat connections on a Riemann surface 
	$\mathcal{C}$ that was introduced by Fock and Goncharov in \cite{Fock.2006}. These coordinates are constructed via the data of triangulations on 
	$\mathcal{C}$ together with a certain choice of decoration at each vertex. One crucial idea of GMN was, that the corresponding triangulations can be 
	obtained from quadratic differentials which appear as the determinants of the Higgs fields and that the decorations correspond to solutions of a flatness 
	equation for a certain connection build out of a solution of Hitchin's equation. We stress that the entirety of this section is a summary of the construction of \cite{Gaiotto.2013}, and no new results are presented in the first three sections.
	
	The theory presented in this section can mostly be considered independently of the theory of Higgs bundles and is interesting in its own right. So 
	ignoring their possible origin as determinants we start in subsection \ref{ca_triangulations} by introducing trajectories for arbitrary quadratic 
	differentials and how a triangulation of $\mathcal{C}$ is build out of them. Most of this theory was developed by K. Strebel in \cite{Strebel.1984}. We 
	follow \cite{Gaiotto.2013} in showing how to construct a triangulation 
	with these trajectories and which phenomena may occur that are important for us. It should also be mentionend that these constructions and phenomena may 
	be seen as an introduction to the theory of so called \textit{spectral networks} that were introduced by GMN in \cite{Gaiotto.2013b}. Since then they 
	have become widely used and generalized especially in the physics literature. The theory shown here is just their simplest example which already entails 
	quite a lot of the important structure of the general theory.
	
	In subsection \ref{sec_dec} we continue to present the work of \cite{Gaiotto.2013}, now turning to the construction of the decorations of the 
	triangulation and how to build the Fock-Goncharov coordinate system out of them. Finally we will introduce the spectral curve
	in subsection \ref{sec_hom} to obtain the full picture of the coordinate system. This is necessary to single out a certain subset of coordinates by use 
	of the homology group of the spectral curve in order to obtain a set of coordinates that can later be identified as a sort of Darboux coordinate system.
	
	Finally we have to consider what happens when we vary $\vartheta$. At certain values of $\vartheta$ the triangulation jumps which leads to corresponding jumps in the $\mathcal{X}$-coordinates. For the construction to work these jumps have to be of a certain kind and accordingly GMN carried out the analysis for the most important jumps in \cite{Gaiotto.2013}. Still a certain phenomenon may appear if $\mathcal{C}$ has genus greater then $0$ which may pose a problem. We will introduce the important notions in section \ref{spec_diff} and, using the work of T. Bridgeland and I. Smith \cite{Bridgeland.2015}, fill in some small gaps in the general theory.
	
\subsection{\texorpdfstring{$\vartheta$}{Theta}-trajectories and triangulations}\label{ca_triangulations}

	We begin this section by recalling the construction of the $\vartheta$-triangulations which were introduced in \cite{Gaiotto.2013} where they were called WKB 
	triangulations.\footnote{Here and in the following we will use the notions $\vartheta$-trajectory, triangulation, etc. instead of WKB curve, triangulation, 
	etc. which were used in \cite{Gaiotto.2013}. This is in part due to the fact that it is somewhat more in line with the mathematical literature concerning 
	quadratic 
	differentials, but also as of the fact that "WKB" is a reference to a certain approximation method used by \cite{Gaiotto.2013}. It is a rather important 
	part of the 
	results presented here that we refrain from using the WKB method. So while it may be inconvenient to differ from the language introduced in the 
	primary source for this construction it would be mathematically misleading not to do so.} In the following $\mathcal{C}$ will always denote a 
	compact Riemann surface. The foundational work for this section is \cite{Strebel.1984} where all relevant facts are already given. We present the 
	relevant results in this section and only mention those details which help getting an intuitive understanding of the phenomena that may occur. So nothing 
	new is presented here and the representation mostly follows \cite{Gaiotto.2013}.
	
	\begin{definition}
		Given a meromorphic quadratic differential $q$ on (a subset of) $\mathcal{C}$ and $\vartheta\in\mathbb{R}/2\pi\mathbb{Z}$ a 
		$\vartheta$-\textit{trajectory} of $q$ is a curve $\gamma:I\to\mathcal{C}$ for some interval $I\subset\mathbb{R}$, s. t. 
		\[
			q\left(\gamma',\gamma'\right)\in e^{2i\vartheta}\mathbb{R}_{>0}.
		\]
		Alternatively if $q$ has a well defined square root $q^{1/2}$ a $\vartheta$-\textit{trajectory} of $q$ may be defined as any curve 
		$\gamma:I\to\mathcal{C}$ for some interval $I\subset\mathbb{R}$, s. t. 
		\[
			q^{1/2}\left(\gamma'\right)\in\mathbb{R}^\times e^{i\vartheta}.
		\]
	\end{definition}
	
	Note that the second definition is more in line with calling the curve $\vartheta$-trajectory and is well defined, although the square root of $q$ is 
	only defined up to sign. This amounts to the fact, that the $\vartheta$-trajectories are not oriented, i. e. if $\gamma(t)$ is a $\vartheta$-trajectory 
	for $q$, so is $\gamma(-t)$ going in the opposite direction.
	
	As we'll explain below $\vartheta$-trajectories can be considered as straight lines with the inclination given by $\vartheta$ if considered in a suitable 
	neighborhood.  When talking about $\vartheta$-trajectories for varying $\vartheta$ it is also useful to have a notion that is independent of the specific 
	angle and just reflects that each one is of such a constant angle.
	
	\begin{definition}
		Let $q$ be a meromorphic quadratic differential. A curve $\gamma:I\to\mathcal{C}$ shall be called a $ca$-\textit{trajectory} of $q$ if there exists some 
		$\vartheta\in\mathbb{R}/2\pi\mathbb{Z}$ s. t. $\gamma$ is a $\vartheta$-trajectory.
	\end{definition}
	
	One important aspect of $ca$-trajectories is their behavior near zeroes and poles of the quadratic differential, which are called critical points in 
	\cite{Franco.2008}. In the work of GMN the zeroes of $q$ are also called turning points and we'll also use this notion at some 
	places. In the following we are interested in a certain behavior that is exhibited by the $ca$-trajectories only if they have a specific kind 
	of critical points, so we restrict our self to these differentials that are exactly the ones in the regular locus of the Hitchin base. Note however that 
	a lot is also known about the $ca$-trajectories for differentials in the discriminant locus, so even though the following structures breaks down at these 
	points, one may be able to get at least some information as to how the structure breaks down by further studies.
	
	\begin{definition}
		A \textit{generic differential} is a meromorphic quadratic differential $q$ on $\mathcal{C}$, which has only poles of order $2$ and simple 
		zeros.
	\end{definition}
	
	When talking about certain standard behaviors (e. g. as growing or decaying) along a $ca$-trajectory it is useful to fix a standard for the 
	parametrization, which we can always obtain.
	
	\begin{lemma}
		If $q$ is a generic differential there always exists a parametrization of its $\vartheta$-trajectories in which
		\[
			q^{1/2}\left(\gamma'\right)\in\pm e^{i\vartheta}.
		\]
		This shall be called the \textit{standard parametrization} of $\gamma$.
	\end{lemma}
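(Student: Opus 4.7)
The plan is to reduce the assertion to a one-dimensional reparametrization problem. Given a $\vartheta$-trajectory $\gamma\colon I\to\mathcal{C}$ of the generic differential $q$, the first observation is that along the interior of $\gamma$ the critical set of $q$ is avoided, so the holomorphic one-form $q^{1/2}$ is well defined up to an overall locally constant sign, and the continuous function
\[
 f(t) \;:=\; e^{-i\vartheta}\, q^{1/2}\!\left(\gamma'(t)\right)
\]
takes values in $\mathbb{R}^\times$. Since $\mathbb{R}^\times$ is disconnected, $f$ has constant sign on every connected component of $I$.

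Next I would introduce the reparametrization $\phi\colon I\to\mathbb{R}$ defined by $\phi'(t)=|f(t)|$, which is a diffeomorphism onto its image because $|f|$ is continuous and strictly positive. Setting $\tilde\gamma(s):=\gamma(\phi^{-1}(s))$ and applying the chain rule yields $q^{1/2}(\tilde\gamma'(s))=\sgn(f(t))\, e^{i\vartheta}=\pm e^{i\vartheta}$, with the sign constant on each connected component of $I$. This is precisely the claimed standard parametrization.

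A cleaner, essentially equivalent route proceeds via natural coordinates: locally near each regular point of $q$ one chooses a coordinate $w=\int q^{1/2}$ in which $q=dw^{2}$, so that $\vartheta$-trajectories become straight lines of slope $e^{i\vartheta}$ in the $w$-plane, and the standard parametrization is just (a shift of) the real arc-length along this line. Patching these local pieces together along $\gamma$ is automatic, since the reparametrization condition is local and signs match by continuity.

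There is essentially no substantive obstacle here: the lemma is a normalization statement, and the only points worth checking are the non-vanishing of $\gamma'$ (immediate from the defining relation together with $q^{1/2}$ being nonzero on the regular locus) and the single-valuedness of $q^{1/2}$ along regular trajectories (ensured by genericity). The only mild care needed is that the reparametrization be carried out on a connected component of $I$ so that the $\pm$ sign does not flip spuriously; this is also the reason the statement allows either sign rather than insisting on $+1$.
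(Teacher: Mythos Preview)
The paper states this lemma without proof, so there is nothing to compare against. Your argument is correct and standard: the defining condition $q^{1/2}(\gamma')\in\mathbb{R}^\times e^{i\vartheta}$ immediately gives a nonvanishing real-valued speed function $f(t)=e^{-i\vartheta}q^{1/2}(\gamma'(t))$, and reparametrizing by its arclength normalizes the speed to $\pm1$. The alternative phrasing via the natural coordinate $w=\int q^{1/2}$ is equally valid and is indeed how the paper later uses the result (cf.\ the proof that $\vartheta$-trajectories are straight lines near regular points).

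One minor remark: your aside that single-valuedness of $q^{1/2}$ along the trajectory is ``ensured by genericity'' is not quite the point. Genericity (simple zeros, double poles) plays no role here; what matters is that $I$ is an interval, hence simply connected, so a continuous branch of $q^{1/2}$ exists along $\gamma$ regardless of the global structure of $q$. This does not affect the correctness of your argument.
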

	
	We now record the local structure of $ca$-trajectories near the different points we'll encounter.
	
	\begin{lemma}\label{log_spiral}
		Let $q$ be a generic differential, $\gamma$ a $\vartheta$-trajectory and $p$ one of the (second order) poles of $q$. Then $\gamma$ has one of the 
		following forms in a local neighborhood around $p$: 
		\begin{itemize}
			\item $\gamma$ is a logarithmic spiral running into (or away from) $p$.
			\item $\gamma$ is a circle with center $p$.
			\item $\gamma$ is a straight line running into $p$.
		\end{itemize}
	\end{lemma}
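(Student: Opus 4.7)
The plan is to reduce the statement to an explicit ODE by exploiting a local normal form for $q$ at the double pole. First, I would invoke Strebel's normal form: near any double pole $p$ of a meromorphic quadratic differential there exists a local holomorphic coordinate $w$ centered at $p$ in which
\[
q = c^{2}\,\frac{dw^{2}}{w^{2}}
\]
for some nonzero constant $c\in\mathbb{C}^{\times}$. This is obtained by picking any local branch of $\sqrt{q}$, whose Laurent expansion is $\pm c\,dz/z+(\text{holomorphic})\,dz$ in an initial coordinate $z$, integrating to a primitive $\pm c\log z+(\text{holomorphic})$, and exponentiating to define $w$. The constant $c$ depends only on $q$ and, up to sign, not on $\vartheta$.

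Second, I would translate the trajectory condition into these coordinates. In the standard parametrization a $\vartheta$-trajectory satisfies $\sqrt{q}(\gamma')=\pm e^{i\vartheta}$, and since $\sqrt{q}=\pm c\,dw/w$ in the normal form, the function $w(t):=w(\gamma(t))$ obeys the constant-coefficient linear ODE
\[
\frac{w'(t)}{w(t)}=\pm\,\frac{e^{i\vartheta}}{c}.
\]
Writing $\pm e^{i\vartheta}/c=\mu+i\nu$ with $\mu,\nu\in\mathbb{R}$, the general solution is $w(t)=w_{0}\exp((\mu+i\nu)t)$, whose modulus and argument are
\[
\log|w(t)|=\log|w_{0}|+\mu t,\qquad \arg w(t)=\arg w_{0}+\nu t.
\]

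The final step is a case distinction on the pair $(\mu,\nu)$, which is determined by the phase of $\vartheta$ relative to $\arg c$. If $\mu=0$, so that $e^{i\vartheta}/c$ is purely imaginary, then $|w|$ is constant while $\arg w$ evolves linearly, so $\gamma$ traces a circle centered at $p$. If $\nu=0$, so that $e^{i\vartheta}/c$ is real, then $\arg w$ is constant and $|w|$ is strictly monotone, so $\gamma$ is a radial straight line, running into $p$ in the time direction in which $\mu t\to-\infty$. In every other case both $\mu$ and $\nu$ are nonzero and the solution $w(t)=w_{0}e^{(\mu+i\nu)t}$ is a logarithmic spiral in the $w$-plane, converging to or emanating from $p$ according to the sign of $\mu$. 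Since $w$ is a biholomorphic coordinate on a punctured neighborhood of $p$, these three qualitative descriptions transfer verbatim to $\gamma$ as a curve on $\mathcal{C}$, exhausting the alternatives in the statement. There is no substantial obstacle: once Strebel's normal form is invoked the argument is a one-line ODE analysis, and the only care required is to track the effect of the sign ambiguity in $\sqrt{q}$, which only reverses the parametrization and therefore preserves the geometric classification.
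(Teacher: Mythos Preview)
Your proof is correct and follows essentially the same approach as the paper: invoke Strebel's normal form $q=(M/z^2)\,dz^2$ near the double pole, reduce the trajectory condition to the linear ODE $z'/z=\pm e^{i\vartheta}/m$, solve it as $z(t)=z_0 e^{\pm (e^{i\vartheta}/m)t}$, and read off the three cases from the real and imaginary parts of the exponent. Your version is slightly more explicit in spelling out the case distinction via $(\mu,\nu)$, but the argument is the same.
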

	
	\begin{proof}
		
		Since $q$ is a meromorphic quadratic differential it is known from \cite{Strebel.1984} that there exists a coordinate neighborhood 
		$z$ with $z(p)=0$ and $q=(M/z^2)dz^2$ for some complex number $M$. By shrinking our neighborhood we may thus assume, that $q^{1/2}$ is of the 
		form $\tfrac{m}{z}dz$ for some constant $m$. Then we obtain the following general form of the WKB curve in the local coordinate:
		\[
			z(t)=z_0e^{\pm\frac{e^{i\vartheta}}{m}t}.
		\]
		Here $z_0$ is some complex constant. Depending on the specific values of $m$ and $\vartheta$ one of the three asserted possibilities occurs.
	\end{proof}
	
	For more details on this see \cite{Gaiotto.2013} or, more generally, the original work of Strebel \cite{Strebel.1984}. Here we note that in such a 
	neighborhood for any given $\vartheta$ there are infinitely many $\vartheta$-trajectories running into the pole. This is contrasted by the 
	$\vartheta$-trajectories running into the other exceptional points.
	
	\begin{lemma}
		Let $q$ be a generic differential and $p$ one of its (simple) zeros, i. e. a turning point. Then for any $\vartheta\in\mathbb{R}/2\pi\mathbb{Z}$ there 
		are exactly three 
		$\vartheta$-trajectories that run into the turning point.
	\end{lemma}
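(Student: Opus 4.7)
The plan is to mimic the proof of the preceding lemma about poles, but now using the normal form of a generic quadratic differential near a simple zero. By Strebel's classical normal form result for quadratic differentials (used implicitly in the previous lemma and found in \cite{Strebel.1984}), near the simple zero $p$ there is a coordinate $z$ with $z(p)=0$ such that $q = c\, z\, dz^2$ for some nonzero constant $c$, and after a linear change of coordinate we may in fact arrange $q = z\, dz^2$.

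First I would compute $q^{1/2} = z^{1/2}\, dz$, fixing a branch on a small slit disc, and reparametrize by introducing $w := \tfrac{2}{3} z^{3/2}$, so that $dw = z^{1/2}\, dz$. Under this substitution, the defining condition of a $\vartheta$-trajectory, namely $q^{1/2}(\gamma') \in \mathbb{R}^\times e^{i\vartheta}$, translates into $\tfrac{d}{dt}(w\circ\gamma) \in \mathbb{R}^\times e^{i\vartheta}$. In the $w$-plane, therefore, $\vartheta$-trajectories are precisely straight line segments with inclination $\vartheta$. The zero $p$ corresponds to $w=0$, and a trajectory runs into $p$ exactly when the corresponding line passes through the origin of the $w$-plane.

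Next I would undo the substitution. The assignment $z \mapsto w = \tfrac{2}{3} z^{3/2}$ is, near $0$, a branched $3$-fold cover: the inverse map $w \mapsto z = \bigl(\tfrac{3}{2} w\bigr)^{2/3}$ has three branches differing by the cube roots of unity. Consequently, a single straight line through the origin of the $w$-plane (oriented in direction $e^{i\vartheta}$) pulls back under the $z$-coordinate to three arcs emanating from $z=0$ at the angular directions $\tfrac{2\vartheta}{3} + \tfrac{2\pi k}{3}$ for $k=0,1,2$. Since the $\vartheta$-trajectory is unoriented, the two rays of a single line (angles $\vartheta$ and $\vartheta+\pi$) on the $w$-side both contribute to the same three arcs on the $z$-side (one checks the reflection $w\mapsto -w$ is absorbed by a relabeling of the cube roots), so exactly three distinct $\vartheta$-trajectories emerge from $p$.

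The main obstacle, if any, is a bookkeeping one: keeping track of branches of $z^{3/2}$ consistently so as not to double-count or miss trajectories, and verifying that the local arcs produced in the $z$-coordinate really extend as genuine $\vartheta$-trajectories on $\mathcal{C}$ (not merely as solutions of the local ODE). The latter follows because outside $p$ the genericity hypothesis (simple zeros, second-order poles) guarantees that $q^{1/2}$ defines a nonvanishing holomorphic differential on the spectral cover, so the ODE $q^{1/2}(\gamma')=\pm e^{i\vartheta}$ has unique extensions of its local solutions until they reach another critical point; thus the three local arcs give rise to three distinct global $\vartheta$-trajectories entering $p$, and no other trajectory can do so.
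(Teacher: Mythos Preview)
Your proof is correct and follows precisely the approach the paper indicates: pass to the local normal form $q=Mz\,dz^2$ and integrate, which is exactly what your substitution $w=\tfrac{2}{3}z^{3/2}$ accomplishes. The paper's proof is only a one-line sketch, and you have filled in the details (the three branches of $z^{2/3}$, the absorption of $w\mapsto -w$ into the cube-root relabeling, and the extension to global trajectories) in a manner entirely consistent with that sketch.
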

	
	\begin{proof}
		This can be shown by switching to a coordinate where $q=Mzdz^2$ and integrating the defining differential equation.
	\end{proof}
		
	Finally we are interested in the behavior away from any critical point.
	
	\begin{lemma}
		Let $q$ be a generic differential and $\vartheta\in\mathbb{R}/2\pi\mathbb{Z}$. Then in a local neighborhood around any regular point $p$ of $q$ the
		$\vartheta$-trajectories are a foliation by straight lines.
	\end{lemma}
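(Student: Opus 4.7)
The plan is to reduce the statement to the trivial case of a constant differential on a disc in $\mathbb{C}$ by passing to a flat (natural) coordinate in which $q$ takes the form $dw^2$, then observe that the defining ODE for $\vartheta$-trajectories becomes linear with constant coefficients, so its integral curves are genuine straight lines that foliate the neighborhood.

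First, since $p$ is a regular point of $q$, the quadratic differential is holomorphic and non-vanishing on some simply connected coordinate disc $U\ni p$. On such a disc $q^{1/2}$ exists as a single-valued non-vanishing holomorphic $1$-form (after choosing one of the two branches). Define the natural coordinate
\[
    w(z) = \int_{p_0}^z q^{1/2},
\]
for a fixed base point $p_0\in U$. Because $q^{1/2}$ is non-vanishing at $p$, the map $z\mapsto w(z)$ is a local biholomorphism from a perhaps smaller neighborhood $V\subset U$ of $p$ onto an open set $W\subset\mathbb{C}$, and by construction $q = dw^2$ on $V$.

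Next, in the coordinate $w$ a curve $\gamma:I\to V$ is a $\vartheta$-trajectory precisely when
\[
    \left(\frac{dw(\gamma(t))}{dt}\right)^{\!2} \in e^{2i\vartheta}\mathbb{R}_{>0},
\]
which is equivalent to $\tfrac{d}{dt}(w\circ\gamma) \in e^{i\vartheta}\mathbb{R}^\times$. Integrating this elementary ODE gives
\[
    w(\gamma(t)) = w_0 + c\, e^{i\vartheta}\, t, \qquad c\in\mathbb{R}^\times,
\]
so, pulled back to $V$, the $\vartheta$-trajectories are exactly the preimages under $w$ of the affine lines in $W$ of slope $\vartheta$. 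Through every point of $W$ there passes exactly one such line, and these lines are pairwise disjoint, so their preimages foliate $V$ by smooth curves, which one recognizes as straight lines in the flat coordinate system $w$.

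There is essentially no real obstacle here: the only points that require care are the existence of the single-valued branch of $q^{1/2}$ (which is why the hypothesis that $p$ be a regular point, and hence $q$ locally non-vanishing, is used) and the fact that the flat coordinate $w$ is a biholomorphism near $p$ (which follows from $q^{1/2}(p)\neq 0$ together with the holomorphic inverse function theorem). Everything else is a routine integration of a constant-coefficient linear ODE in one complex variable, and matches the normalization in the preceding lemma on standard parametrization, where $|c|=1$.
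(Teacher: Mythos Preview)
Your proof is correct and takes essentially the same approach as the paper: the paper's proof consists of the single sentence ``In the coordinates $w(z):=\int_{z_0}^z q^{1/2}$ the $\vartheta$-trajectories are a foliation by straight lines,'' and you have simply filled in the details of why this natural coordinate exists and why the trajectories become genuine straight lines in it.
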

	
	\begin{proof}
		In the coordinates $w(z):=\int_{z_0}^z{q^{1/2}}$ the $\vartheta$-trajectories are a foliation by straight lines.
	\end{proof}
		
	Thus the behavior near every point of $\mathcal{C}$ is known and we obtain the following global picture.
		
	\begin{corollary}
		Let $q$ be a generic differential and $\vartheta\in\mathbb{R}/2\pi\mathbb{Z}$. Then the $\vartheta$-trajectories are the leaves of a (singular) 
		foliation of $\mathcal{C}$
	\end{corollary}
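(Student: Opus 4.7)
The plan is to assemble the global statement from the three preceding lemmas, which already give a complete local picture at every point of $\mathcal{C}$.

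First, I would observe that at each regular point $p$ of $q$, the lemma giving the straightening coordinate $w(z)=\int_{z_0}^z q^{1/2}$ shows that $\vartheta$-trajectories through a neighborhood of $p$ form a regular $1$-dimensional foliation, since in the $w$-chart they are exactly the preimages of the family of parallel lines of slope $\vartheta$ in the $w$-plane. Since $q$ is determined globally and the two branches of $q^{1/2}$ differ only by a sign, the tangent line $\ell_p\subset T_p\mathcal{C}$ defined by $q(v,v)\in e^{2i\vartheta}\mathbb{R}_{>0}$ is well defined and depends smoothly on $p$ on the complement of the critical set. Transition between two such straightening charts is an affine map of the $w$-plane that preserves the slope-$\vartheta$ direction, so the local foliations glue to a single smooth regular foliation of $\mathcal{C}\setminus\text{Crit}(q)$.

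Next, I would add in the critical points as the singular locus. For a simple zero $p$, the preceding lemma shows exactly three $\vartheta$-trajectories run into $p$, dividing a punctured neighborhood of $p$ into three sectors; in each sector the trajectories are regular leaves by the previous paragraph, so $p$ is a standard trivalent singularity of the foliation. For a double pole, the log-spiral lemma gives three possible local pictures (logarithmic spirals, concentric circles, or straight rays into $p$), all of which are foliations of the punctured neighborhood with $p$ as an isolated singularity. In each case, the local leaves of the deleted-neighborhood foliation coincide (as unparametrized curves) with the restrictions of the regular foliation coming from the paragraph above, so attaching the critical points as singular points of the foliation is consistent.

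Finally, I would package this as the statement that the partition of $\mathcal{C}$ into maximal $\vartheta$-trajectories together with the critical points is a singular foliation in the sense of Strebel \cite{Strebel.1984}: away from $\text{Crit}(q)$ it is smooth, and at each point of $\text{Crit}(q)$ it admits one of the finitely many normal forms listed in the preceding two lemmas. The only real content beyond the three lemmas is the coherence of the local pieces, which I would justify by noting that the distribution $p\mapsto\ell_p$ is globally defined by $q$ and $\vartheta$, so the notion of a $\vartheta$-trajectory is intrinsic and the leaves of the three local models are the same curves. The main (and essentially only) subtlety is ensuring that the concentric-circle and closed-orbit regimes near a double pole glue consistently to the straight-line regime away from the pole; this is handled by checking, in Strebel's coordinate $z$ with $q=(M/z^2)dz^2$, that the explicit solutions $z(t)=z_0\exp(\pm e^{i\vartheta}t/m)$ obtained in the proof of Lemma \ref{log_spiral} are exactly the trajectories one gets from the straightening chart on the punctured neighborhood, so no gluing obstruction appears.
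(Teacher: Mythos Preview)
Your proposal is correct and takes essentially the same approach as the paper: assemble the global singular foliation from the three preceding local lemmas (regular points, simple zeros, double poles). The paper in fact does not write out a separate proof for this corollary at all; it simply states ``Thus the behavior near every point of $\mathcal{C}$ is known and we obtain the following global picture'' and then records the corollary, so your argument is if anything more explicit about the gluing than what the paper provides.
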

	
	With this picture in mind we now classify the different kinds of $ca$-trajectories that may occur in the following list that exhaust all the possibilities.
	
	\begin{definition}
		Let $\gamma$ be a $\vartheta$-trajectory for generic differential $q$. It is called
		\begin{itemize}
			\item \textit{generic}, iff it converges in both directions to a pole of $q$ (which may be the same for both directions).
			\item \textit{separating}, iff it converges in one direction to a zero of $q$ and in the other direction to a pole of $q$.
			\item a \textit{saddle}, iff it converges in both directions to a zero of $q$ (which may be the same for both directions).
			\item \textit{periodic}, iff it is diffeomorphic to a circle and does not converge to any pole or zero of $q$.
			\item \textit{divergent}, in all other cases. In this case $\gamma$ does in at least one direction not converge to a pole or zero of $q$.
		\end{itemize}
	\end{definition}
	
	As the name suggests almost all $ca$-trajectories in our setting will be generic and indeed most of our calculations will be along such trajectories. 
	For this we'll later on also need the following observation which we may already infer from the local behavior near their "end points".
	
	\begin{lemma}\label{infinite_range}
		A generic $\vartheta$-trajectory in standard parametrization can be extended to all of $\mathbb{R}$.
	\end{lemma}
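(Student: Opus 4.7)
My plan is to reduce the claim to the observation that, in the standard parametrization, the curve parameter $t$ agrees with the natural length in the flat metric $|q|$ associated with the quadratic differential, and that this length diverges as one approaches a second-order pole. Since by definition a generic trajectory converges in both directions to (possibly coinciding) poles of $q$, showing that each end requires infinite parameter time will give the claim.

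First, I would pick a point $\gamma(t_0)$ on the trajectory and extend the solution of the defining ODE $q^{1/2}(\gamma')=\pm e^{i\vartheta}$ in both directions by standard existence and uniqueness; away from the critical locus this ODE has smooth nowhere-vanishing right-hand side in any local $q^{1/2}$-flat coordinate $w=\int q^{1/2}$, in which it simply reads $\dot w=\pm e^{i\vartheta}$. Hence the solution continues as long as it remains in a compact subset of the complement of the critical points of $q$. Since $\gamma$ is generic, in each direction it eventually enters an arbitrarily small punctured neighborhood of some pole $p$, so the only possible obstruction to extension to all of $\mathbb{R}$ is that the parameter runs to some finite value as $\gamma(t)\to p$.

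To rule this out, I would work in the local coordinate guaranteed by Lemma \ref{log_spiral}, in which $q=(m^2/z^2)\,dz^2$ and the standard parametrization equation becomes $\tfrac{m}{z}\dot z=\pm e^{i\vartheta}$. This linear ODE has the explicit exponential solutions $z(t)=z_0 e^{\pm(e^{i\vartheta}/m)t}$ from the proof of Lemma \ref{log_spiral}, so an approach $\gamma(t)\to p$, i.e.\ $z(t)\to 0$, requires $t\to+\infty$ or $t\to-\infty$ depending on the sign of $\mathrm{Re}(\pm e^{i\vartheta}/m)$. Equivalently, and more invariantly, the standard parametrization identifies $|t_1-t_0|$ with the $|q|$-length $\int_{t_0}^{t_1}|q^{1/2}(\gamma')|\,dt$, and for a double pole this length diverges (the punctured disk with metric $|q|=|m|^2|dz|^2/|z|^2$ is a half-infinite cylinder), so any approach to $p$ costs infinite parameter.

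The main (and essentially only) obstacle is the careful bookkeeping that the trajectory does not leave every compact set away from poles before eventually entering the local neighborhood used above; compactness of $\mathcal{C}$ together with the finite number of critical points of $q$ and the assumption that $\gamma$ converges to the pole at each end makes this routine. Combining the two halves then yields that $\gamma$ is defined on all of $\mathbb{R}$ in standard parametrization.
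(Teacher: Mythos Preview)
Your argument is correct and is precisely what the paper's one-line proof (``This follows from standard methods for ODEs'') is gesturing at: you spell out the standard ODE extension argument together with the local computation near a double pole (already supplied by Lemma~\ref{log_spiral}) showing that approaching the pole requires infinite parameter time. There is no substantive difference in approach; you have simply filled in the details the paper omits.
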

	
	\begin{proof}
		This follows from standard methods for ODEs.
	\end{proof}
	
	This infinite domain of the curve leads to the necessity of using integral equations with initial values at infinity later on as infinity corresponds to 
	the poles of the quadratic differential.
	
	Before we continue with the construction of the triangulation note the following remarks for some intuition concerning $ca$-trajectories:
	
	First remember that $q$ was only defined up to sign which is why the $ca$-trajectories are not canonically oriented. The description of the behavior 
	around poles as spiraling into the point may still be useful as a reminder what happens. Phenomenologically one might describe the singularities of 
	$q$ as some (massive) object which "captures" all curves, which come near enough. Thus finite curves have to be "rare". Indeed for one fixed $\vartheta$ 
	their number is limited by the number of turning points and, even more, a curve coming from one turning point is more "likely" to run into a singularity 
	then another turning point. But the parentheses here are necessary when we talk about $ca$-trajectories of general quadratic differentials. For example 
	one can show that on the four punctured sphere with a quadratic differential having only simple poles, for some angles only two finite $ca$-curves and one 
	divergent curve exist which fill up the whole sphere. The angles for which such a behavior occurs even form a dense subset of 
	$\mathbb{R}/2\pi\mathbb{Z}$. However, by working with generic quadratic differentials, the generic trajectories may safely be regarded as the ones 
	which "usually" occur, while finite curves don't exist for most angles.\footnote{In fact the appearance of finite curves is the defining property for the 
	definition of the so called DT invariants in the general Riemann Hilbert problem GMN discuss.} This is why the following fact is crucial which was shown 
	by GMN and which makes precise some of the vague formulations above.
		
	\begin{proposition}
		Let $q$ be a generic differential and $\vartheta\in\mathbb{R}/2\pi\mathbb{Z}$ such that there is no finite trajectory. Then there is also no 
		divergent WKB curve.
	\end{proposition}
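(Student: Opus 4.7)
The plan is to invoke the global structure theorem for the horizontal foliation of a generic meromorphic quadratic differential, which goes back to Strebel \cite{Strebel.1984} and is also used in the closely related framework of Bridgeland and Smith \cite{Bridgeland.2015}. The first step is to recall that for any fixed angle $\vartheta$ the complement in $\mathcal{C}$ of the critical graph (that is, the union of zeros, poles, saddle connections, and separating trajectories) decomposes into finitely many open connected components, each of which is of exactly one of four types: a half-plane domain asymptotic to a second-order pole, a horizontal strip bounded by separating trajectories on both sides, a ring (or annular) domain foliated by closed trajectories, or a spiral (minimal) domain whose trajectories are recurrent.

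The next step is to examine the boundaries of the latter two types. The key standard fact in this theory, which I would cite from Strebel, is that the boundary of a ring domain consists entirely of saddle connections (together with possibly some zeros), and likewise every spiral domain has at least one saddle connection in its boundary, since a spiral domain cannot be bounded solely by separating trajectories that escape to poles. Under the hypothesis of the proposition that no finite trajectory exists at the angle $\vartheta$, i.e.\ no saddle connections and no closed trajectories, neither ring nor spiral domains can be present.

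Consequently, the trajectory decomposition consists purely of half-planes and horizontal strips. In the interior of each such region every trajectory is generic (running from pole to pole), while the boundary trajectories, away from the zeros, are separating. In particular, no divergent trajectory can appear, which is what had to be shown.

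The main obstacle I expect is not the logic of the argument but the careful translation between the precise notion of ``finite trajectory'' used in the statement (which must be interpreted to exclude both saddle connections and closed trajectories, in line with the preceding example on the four-punctured sphere) and the conventions in the literature; a secondary subtlety is that $\mathcal{C}$ may have arbitrary genus, so one has to make sure the decomposition result is quoted in a form valid for compact surfaces of any genus rather than only for planar domains, for which \cite{Bridgeland.2015} provides the most convenient reference.
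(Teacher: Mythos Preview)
Your argument via the global structure theorem is correct and is in fact more complete than what the paper provides. The paper gives only a one-sentence sketch: it observes that the presence of a second-order pole prevents a trajectory from being recurrent, since any trajectory entering a neighbourhood of such a pole is captured by it (it ``falls into the singularity''), and then refers to \cite{Gaiotto.2013} for the details. Your route instead packages the same phenomenon through the Strebel/Bridgeland--Smith decomposition into half-planes, strips, ring domains, and spiral domains, and then eliminates the last two types via the fact that their boundaries must contain saddle connections. The underlying mechanism is the same --- a separating trajectory running into a pole cannot bound a spiral domain, because nearby recurrent leaves would also be attracted into the pole --- so the two arguments are close in substance; yours is simply the more systematic formulation and has the advantage of being directly citable from \cite{Bridgeland.2015} for surfaces of arbitrary genus, which addresses exactly the secondary subtlety you flagged.
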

		
	\begin{proof}
		The proof relies on the existence of a singularity, since in that case a $ca$-trajectory can't fill up the whole complex curve, as it would "fall into 
		the singularity". For more details see \cite{Gaiotto.2013}.
	\end{proof}
	
	As the case of no finite trajectory is the standard one for the following triangulations, we also denote them as such.
	
	\begin{definition}
		Let $q$ be a generic differential. If $\vartheta\in\mathbb{R}/2\pi\mathbb{Z}$ is such that there is no finite trajectory, then $\vartheta$ is called a
		\textit{generic} angle.
	\end{definition}
	
	It follows that in this generic case the foliation by $\vartheta$-trajectories consists of a finite collection of separating trajectories and an infinite 
	amount of generic ones and the following structure emerges:
	
	The separating trajectories form boundaries of "cells" on $\mathcal{C}$ which are swept out by generic trajectories. The boundaries consist of at most 
	four separating trajectories (which may be regarded as the generic case), two of which emanate from a common turning point. This is called a "diamond" in 
	\cite{Gaiotto.2013}. There may however be cells whose boundary consists only of three separating trajectories which all emanate from the same turning 
	point. This may be regarded as a degenerate case, called a "disc" in \cite{Gaiotto.2013}, which will require a bit more attention 
	later on, as the constructions need to be aware of this special case. For understanding (and visualizing) the theory however, it is enough for now to 
	think of the foliation as forming those swept out diamonds on all of $\mathcal{C}$.
	
	With this structure in mind it is now possible to define the $\vartheta$-triangulation.
	
	\begin{definition}
		Let $q$ be a generic differential and $\vartheta\in\mathbb{R}/2\pi\mathbb{Z}$ a generic angle. A 
		$\vartheta$-\textit{triangulation} is a collection of generic $\vartheta$-trajectories, such that every two $\vartheta$-trajectories belong to 
		two different cells, given by the separating trajectories, and such that in each cell a generic trajectory is chosen.
	\end{definition}
	
	One can now look at the emerging structure and notice that, corresponding to the discs above, there may be degenerate triangles with only two distinct 
	edges. Counting these still as triangles one obtains the following result (cf. \cite{Gaiotto.2013}).
		
	\begin{proposition}
		A $\vartheta$-triangulation is a triangulation of $\mathcal{C}$, in which each face contains a single turning point.
	\end{proposition}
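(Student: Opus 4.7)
The plan is to verify two things simultaneously: that the chosen generic $\vartheta$-trajectories, with the poles of $q$ as vertices, form the $1$-skeleton of a triangulation of $\mathcal{C}$, and that the $2$-cells of that triangulation are in bijection with the turning points, each containing exactly one. The key starting observation is that genericity of $\vartheta$ rules out finite trajectories, so every separating trajectory (one end of which must lie at a turning point, by the local trinity of separating directions) has its other end at a pole of $q$. Consequently the separating trajectories, together with the turning points and poles, cut $\mathcal{C}$ into the cells enumerated in the excerpt (diamonds and discs), each of which is an open disc swept out by generic trajectories all running from pole to pole.

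First I would analyze a single turning point $t$. By the earlier lemma $t$ has exactly three separating trajectories $\sigma_1,\sigma_2,\sigma_3$ leaving it, and these land at (not necessarily distinct) poles $p_1,p_2,p_3$. A small disc around $t$ is divided by these trajectories into three circular sectors, each lying in a distinct cell $D_1,D_2,D_3$ of the foliation. In each $D_i$ a generic $\vartheta$-trajectory $\gamma_i$ has been chosen; since the foliation of $D_i$ is by curves from one pole-vertex of $D_i$ to the other, we may read off that $\gamma_i$ runs between the two poles that bound the sector of $D_i$ touching $t$ — that is, between $p_{i}$ and $p_{i+1}$ (indices mod $3$). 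The three arcs $\gamma_1,\gamma_2,\gamma_3$ therefore close up to the boundary of a topological disc $T_t \subset \mathcal{C}$ whose interior is the union of the three half-cells facing $t$ (glued across the internal separating trajectories $\sigma_1,\sigma_2,\sigma_3$, which are not edges of the triangulation) and which contains $t$ in its interior.

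Next I would show that these triangular regions $T_t$, as $t$ ranges over the turning points, are pairwise interior-disjoint and cover $\mathcal{C}$. Every point of $\mathcal{C}$ lies either on a chosen $\gamma_i$, at a pole, on a separating trajectory, at a turning point, or in the interior of some half-cell of a diamond/disc cut by its chosen $\gamma_i$; in all cases it belongs to the closure of exactly one $T_t$, where $t$ is the turning point on the side of $\gamma_i$ one is on. Conversely $T_t \cap T_{t'}$ for $t\neq t'$ is a union of common boundary edges, because the cells of the foliation are disjoint and each chosen $\gamma_i$ borders exactly the two $T_t$'s corresponding to the two turning points in the closure of its cell. This gives the required triangulation and the bijection $t \leftrightarrow T_t$.

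The main delicate point is handling the degenerate disc cells, where two of the three separating trajectories at a single turning point coincide with bounding edges of the same cell, so that two of the poles $p_i,p_{i+1}$ above may collapse to a single pole and the resulting ``triangle'' becomes a bigon with two edges identified. One has to check that the combinatorial argument above is unaffected, essentially because the local count of three separating directions around each turning point persists and the gluing across separating trajectories still produces a topological disc around $t$ bounded by the three (possibly partially identified) chosen generic trajectories. A useful independent sanity check is the Euler-characteristic computation: with $n$ poles, $3z/2$ chosen trajectories and $z = 4g-4+2n$ zeros of the generic quadratic differential $q\in H^0(\mathcal{C},K_{\mathcal{C}}^2(D))$, one recovers $\chi(\mathcal{C}) = n - 3z/2 + z = 2-2g$, confirming that counting each turning point exactly once as a face is correct.
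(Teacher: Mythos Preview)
The paper does not actually prove this proposition; it simply states the result with a reference to \cite{Gaiotto.2013}. Your sketch is a correct and self-contained version of the argument that appears there: building a triangle around each turning point from the three chosen generic trajectories in the three adjacent cells, then checking that these triangles tile $\mathcal{C}$. The Euler-characteristic check is a nice confirmation.

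One small correction: the generic differential $q$ has \emph{double} poles at the marked points, so it is a section of $K_{\mathcal{C}}^2(2D)$, not $K_{\mathcal{C}}^2(D)$; your zero count $z = 4g-4+2n$ is the degree of the former and is correct, but the bundle you wrote down would give $4g-4+n$ zeros instead. Also, in the degenerate disc case it is worth being explicit that the disc cell lies on one side of all three separating trajectories of its turning point, so the ``triangle'' $T_t$ on the disc side degenerates to a bigon (the chosen generic trajectory in the disc has both ends at the same pole), while on the other side the three sectors belong to genuinely distinct cells; your final paragraph gestures at this but could be sharpened.
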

		
	Such a triangulation is of course not unique as there is family of generic WKB curves in each diamond or disc. When talking about these triangulation 
	one uses the fact that two different ones differ only by an isotopy, which won't be relevant for the following procedure. As GMN point out, one really 
	chooses an isotopy class of triangulations.
	
	\begin{remark}\label{no_deg_tri}
		Typically our constructions in the following sections will only consider the case of generic triangles. However, the notion of degenerate triangles has 
		to be included for the full picture as there is no way to make sure that these triangles do not occur in our cases. GMN have shown in 
		\cite{Gaiotto.2013} how all of the constructions that we consider here and depend on the triangulation can be adapted to the case of degenerate 
		triangles. We will only briefly consider this case later on as no real problems arise in their treatment.
	\end{remark}
	
\subsection{Decorations and Fock-Goncharov coordinates}\label{sec_dec}
		
	Thus far we have considered a compact Riemann surface $\mathcal{C}$ and constructed a triangulation for every generic differential and generic angle 
	on $\mathcal{C}$. This is one half of the data one needs to construct the Fock-Goncharov coordinate system, the other being a decoration at each vertex, for 
	which we now have to consider a rank $2$ complex vector bundle $V$ over $\mathcal{C}$. Though the moduli space we are interested in is the space of 
	solutions of Hitchin's equations the coordinates are constructed on the moduli space of flat $SL(2,\mathbb{C})$-connections $\mathcal{M}_{\text{flat}}$. 
	Later on we will identify the moduli spaces and thus pull back the coordinates but for explaining how the construction works we stay on 
	$\mathcal{M}_{\text{flat}}$. The construction of \cite{Gaiotto.2013} now proceeds as follows.
	
	\begin{definition}
		Let $V\to\mathcal{C}$ be a complex rank $2$ vector bundle and $\mathcal{A}$ a flat $SL(2,\mathbb{C})$-connection on $E$ with a regular singularity at a 
		point $p\in\mathcal{C}$ with monodromy operator $M$. A \textit{decoration} at $p$ is a choice of one of the two eigenspaces of $M$. The corresponding 
		eigenvalue is denoted by $\mu^T$.
		
		A \textit{decorated triangulation} is a triangulation $T$ together with a decoration for each vertex of $T$. We denote the corresponding decorated 
		triangulation by $T(q,\vartheta)$ where when no ambiguity may arise we may suppress the dependence of $T$ on $q$ and $\vartheta$. 
	\end{definition}
	
	Note that this differs slightly from the original construction by Fock and Goncharov as they considered flags instead of decorations. Furthermore, as GMN 
	point out, in the case at hand the conjugacy classes of the monodromy operators $M_i$ are fixed for the moduli space, so they are part of the discrete datum of the 
	triangulation, while Fock and Goncharov considered varying $M$ for the connections.
	
	We now have the data we need to build the coordinate system. For a given edge $E$ consider the two triangles which have this edge in common. They make up a quadrilateral $Q_E$ and their vertices $p_i$ shall be 
	numbered in counter-clockwise order (for $\mathcal{C}$ canonically oriented). For each $p_i$ there is a decoration which corresponds to a solution $v_i$ 
	of the flatness equation
	\begin{equation}\label{flatness_eq}
		\mathcal{A}v_i=0.
	\end{equation}
	
	The $v_i$ may be regarded as eigenvectors of the monodromy $M_i$ with eigenvalue $\mu_i^T$. They are only defined up to a scalar multiple, but this 
	ambiguity will cancel in the following step. Note that as $\mathcal{A}$ is flat the sections exist on any simply connected subset of $\mathcal{C}$.
	
	For two of the sections $v_i$ and $v_j$ the $\wedge$-product $v_i\wedge v_j$ is an element of the the 
	determinant line bundle 
	Det$E$. If $\mathcal{A}_{\text{Det}}$ denotes the induced connection on Det$E$, it follows that the $\wedge$-product solves the induced flatness equation
	\[
		\mathcal{A}_{\text{Det}}(v_i\wedge v_j)=0.
	\]
	When considering a frame $(e_1,e_2)$ on $Q_E$, so $v_i=s^i_1e_e+s^i_2e_2$ and $v_j=s^j_1e_1+s^j_2e_2$ the $\wedge$-product is locally given as
	\[
		v_i\wedge v_j(z)=(s^i_1s^j_2-s^i_2s^j_1)e_1\wedge e_2
	\]
	We may thus identify $v_i\wedge v_j$ with the Wronskian of the (local expression of the) two solutions of the flatness equation \eqref{flatness_eq} 
	evaluated at some point $z$ in $Q_E$. As all of our calculations later on are in a specific frame we will always use this interpretation, which we'll 
	also write as
	\[
		s^i\wedge s^j=\text{det}\left[s^i(z),s^j(z)\right].
	\]
	If $\mathcal{A}$ is represented by the matrix $A$ in the frame $F$, i. e. $\mathcal{A}=d+A$ for the standard differential $d$, the induced equation on Det
	$E$ is given by the trace of $A$: 
	\begin{equation}\label{induced_local}
		d\left(s^i\wedge s^j\right)=\text{tr}(A)(s^i\wedge s^j).
	\end{equation}
	Using this identification it is possible to consider the quotient of two $\wedge$-products of sections for $v_i$ with local expressions $s_i$ as  
	\[
		\frac{v_1\wedge v_2}{v_3\wedge v_4}(z):=\frac{s^1\wedge s^2}{s^3\wedge s^4}(z)
	\]
	evaluated at some point $z\in Q_E$. Using the fact that both $\wedge$-products are solutions to the same $1$-dimensional linear ODE \eqref{induced_local} 
	one obtains that $\frac{v_1\wedge v_2}{v_3\wedge v_4}$ is actually independent of the evaluation point $z$. Furthermore as of 
	$Gv_i\wedge Gv_j=\det(G)v_i\wedge v_j$  for any $G\in GL(2,\mathbb{C})$ the quotient is independent of transformations $G\in GL(2,\mathbb{C})$. We are now able to define the following functions for each edge.
	
	\begin{definition}\label{Darboux_coords}
		Let $V\to\mathcal{C}$ be a complex rank $2$ vector bundle and $\mathcal{A}$ a flat $SL(2,\mathbb{C})$-connection on $E$ with regular singularities and 
		$T(q,\vartheta)$ a decorated triangulation. Let $E$ be an edge of $T$ with corresponding quadrilateral $Q_E$ with vertices 
		$p_1,p_2,p_3,p_4\in\mathcal{C}$ and and corresponding flat sections $v_i$ with local expressions $s_i$ in some frame on $Q_E$. Define the \textit{canonical 
		coordinate} for $E$ as
		\[
			\mathcal{X}_E^{T(q,\vartheta)}:=-\frac{(v_1\wedge v_2)(v_3\wedge v_4)}{(v_2\wedge v_3)(v_4\wedge v_1)}
			:=-\frac{s^1\wedge s^2}{s^4\wedge s^1}\frac{s^3\wedge s^4}{s^2\wedge s^3}.
		\]
	\end{definition}
	
	As promised the ambiguity in the choice of the $v_i$ cancels in the final expression and just like the single quotients the full function is independent 
	of transformations $g\in GL(2,\mathbb{C})$ when taken for all sections simultaneously.
	
	We may now conclude this section with the main statement for this construction. The $\wedge$-product may become $0$ for two adjacent vertices, which 
	would lead to the corresponding coordinate function becoming $0$ or $\infty$, but, as GMN point out, this can only happen in a codimension $1$-subvariety 
	of $\mathcal{M}_\text{flat}$. One can see this in the local expression as the $\wedge$-product becomes $0$ iff the sections are linearly dependent. This 
	characterization will be our main tool in determining whether our choice of sections later on is appropriate. In this case we finally obtain the desired 
	coordinates, though we have to select a certain set out of all the defined functions.
		
	\begin{proposition}\label{FG_coord}
		The $\mathcal{X}_E^{T}$ contain a coordinate system on a Zariski open patch $\mathcal{U}_{T}\subset\mathcal{M}_{\text{flat}}$.
	\end{proposition}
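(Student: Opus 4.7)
The plan is to reduce this to the construction of Fock and Goncharov in \cite{Fock.2006}, with only mild adaptations for the fact that our decorations are eigenlines of the prescribed monodromy rather than arbitrary flags. The text has already verified that each $\mathcal{X}_E^T$ is independent of the scaling ambiguity in the flat sections $v_i$, independent of the frame choice (by the $\det G$ cancellation), and independent of the evaluation point $z \in Q_E$. So $\mathcal{X}_E^T$ gives a well-defined regular function on the complement of the divisor where some Wronskian $v_i \wedge v_j$ between adjacent vertices of the quadrilateral $Q_E$ vanishes. I would take $\mathcal{U}_T$ to be the intersection over all edges of these complements, further intersected with the locus where the monodromy operators at the punctures have distinct eigenvalues (so that the decorations remain unambiguous). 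Each of these conditions is algebraic and closed, which gives the Zariski openness.

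Next, I would do a dimension count to pick out the coordinates. For a triangulation of $\mathcal{C}$ with $n$ punctures (vertices) and genus $g$, Euler's formula together with $3F = 2E$ yields $6g - 6 + 3n$ edges. On the other hand, with monodromy conjugacy classes fixed at the punctures, $\dim_\mathbb{C} \mathcal{M}_{\text{flat}} = 6g - 6 + 2n$, matching the dimension formula recorded earlier for $\mathcal{M}_{\text{Higgs}}$. The $n$ excess edge coordinates come from the $n$ puncture constraints: around each vertex $p_i$, the product of the incident $\mathcal{X}_E^T$ (with appropriate signs) is a function of the fixed eigenvalue $\mu_i^T$ alone. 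Eliminating one edge per puncture leaves a family of $6g - 6 + 2n$ functionally independent candidates.

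The main technical step is then to show that this selected subset actually separates points and has everywhere injective differential on $\mathcal{U}_T$. Following Fock and Goncharov, I would argue this by constructing an explicit inverse: given a tuple of values for the $\mathcal{X}_E^T$ together with the fixed puncture data, one builds a flat $SL(2,\mathbb{C})$-connection by declaring, for each triangle, that the three decoration sections at its vertices form a prescribed frame, and using the cross-ratio values to determine the transition maps across each edge. The compatibility of these transitions on loops around a puncture is guaranteed precisely by the product relation described above, while on loops in the bulk one checks closure combinatorially from the cluster-algebra identities (``snake moves'') underlying the Fock--Goncharov formalism. The monodromy around each puncture automatically has the correct eigenvalue by construction, so the reconstructed connection lies in $\mathcal{M}_{\text{flat}}$ with the prescribed conjugacy data.

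The hard part, and the place where I would spend the most care, is checking that this reconstruction really lands back in the same moduli space we started with rather than some enlarged space of framed local systems. The issue is purely bookkeeping: one must track that the eigenspace decorations used to define the coordinates coincide with the eigenspace decorations implicit in the reconstruction, and that the small combinatorial degeneracies allowed by degenerate triangles (cf.\ Remark \ref{no_deg_tri}) do not introduce unexpected collapses of coordinates. Once this matching is in place, the coordinate tuple and the reconstruction are mutual inverses on $\mathcal{U}_T$, which yields the desired local isomorphism onto a Zariski open patch of $\mathcal{M}_{\text{flat}}$.
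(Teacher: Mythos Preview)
Your proposal is correct and follows essentially the same route as the paper: both defer the substance to Fock--Goncharov \cite{Fock.2006}, with the paper additionally pointing to Appendix~A of \cite{Gaiotto.2013} for the explicit reconstruction of the monodromy data from the $\mathcal{X}_E^T$. Your sketch simply unpacks what is in those references (the Zariski-open complement of vanishing Wronskians, the edge count via Euler's formula, the product relation at punctures, and the snake-move reconstruction), whereas the paper just cites them; there is no genuine difference in approach.
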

	
	\begin{proof}
		The original proof is found in \cite{Fock.2006}. In Appendix A of \cite{Gaiotto.2013} it is shown how the data of the monodromy representation of 
		$\mathcal{A}$ is encoded in the $\mathcal{X}_E^T$.
	\end{proof}
	
	\begin{remark}
		Let us briefly remark on the word "contain" here. As said above we do need to select some coordinates as the constructed ones are to many, which can be 
		seen 
		as follows. As we have a obtained one $\mathcal{X}$ function for each edge of the triangulation we can count them using Euler's formula. We then see 
		that for a Riemann surface $\mathcal{C}$ of genus $g$ with $n$ singular points we obtain $6g-6+3n$ coordinate functions. The dimension of $\mathcal{M}$ 
		however is $\dim\mathcal{M}_{\text{flat}}=6g-6+2n$. The difference reflects the fact that we consider the moduli space with fixed monodromy eigenvalues 
		at the 
		singular points. As GMN show, $n$ combinations of the $\mathcal{X}_E$ functions give the monodromy eigenvalues $(\mu_i^T)^2$. As these are fixed the 
		information is superfluous, so that we can discard $n$ of the functions (one for each singular point) and obtain the correct number of coordinates. At 
		the moment however there is no way of distinguishing, which functions to keep and which to discard. This is done in the next section.
	\end{remark}
	
	\begin{remark}
		As is also remarked upon in Appendix A of \cite{Gaiotto.2013} the coordinates of Fock and Goncharov are at first defined on the space of PSL$(2,\mathbb{C})$ connections, so they have to be pulled back to the space of SL$(2,\mathbb{C})$ connections. In the case of $\mathcal{C}=\mathbb{CP}^1$ this can be done in a unique way, but for larger genus the moduli space of SL$(2,\mathbb{C})$ connections is a discrete cover of the moduli space of PSL$(2,\mathbb{C})$ connections. This will not be of much concern to us. On the one hand there is the possibility to work directly with PSL$(2,\mathbb{C})$ connections (cf. \cite{Teschner.2011} and references therein). But also when working with SL$(2,\mathbb{C})$ connections as is done here, we can simply use the constructed coordinates as a complete set of coordinates on each sheet of the covering which is enough for our construction.
	\end{remark}
	
\subsection{Homology and lattices}\label{sec_hom}
	
	As explained in the last section, it is necessary to select a certain subset of the constructed coordinate functions to obtain the correct dimension. 
	Additionally later on the hyperkähler metric will be obtained from a symplectic form that is constructed from these coordinates we just described. So our 
	choice of coordinates also has to ensure that the emerging form is actually symplectic. Such a selection is possible via the use of certain lattices, one 
	of which carries a symplectic pairing. We will now summaries the important facts from \cite{Gaiotto.2013} about these lattices, how the correspond to the edges and finally how 
	they select the right coordinates.
	
	All of the lattices we consider are given by the spectral data, i. e. we have to consider the spectral curve. As we are dealing with differentials with 
	singularities we have to be careful as to whether we include or exclude the poles and unfortunately the notation is not canonical throughout the 
	literature. For our use of symbols so far the following notation seems appropriate:
	
	\begin{definition}
		Let $q$ be a generic differential with $n$ singular points at $p_1,\ldots,p_n$ on the compact Riemann surface $\mathcal{C}$. Denote by 
		$\mathcal{C}':=\mathcal{C}\setminus\left\{p_1,\ldots,p_n\right\}$ the punctured surface. Then the \textit{spectral curve} is defined as
		\[
			\Sigma'_q:=\left\{(z\in\mathcal{C}',\lambda\in T^\ast_z\mathcal{C}):\enspace\lambda^2=q(z)\right\}\subset T^\ast\mathcal{C}.
		\]
	\end{definition}
	
	This is a noncompact complex curve which is smooth for generic $q$. It is a double covering $\pi':\sigma'_q\to\mathcal{C}'$ which is branched over the 
	zeroes of $q$. With this notation $\Sigma'_q$ is "punctured over the singular points $p_i$" and there is corresponding compactification $\Sigma_q$ of 
	$\Sigma'_q$ which is obtained by filling in these punctures. The compactification has a corresponding projection $\pi:\Sigma_q\to\mathcal{C}$. Depending 
	on the question at hand we might also talk about $\Sigma_q$ as the spectral curve or don't distinguish $\Sigma_q$ and $\Sigma'_q$ at all if not necessary.
	However it is necessary for the next step, where we consider their homology groups.
	
	We start by noting that $\Sigma'_q$ is equipped with an involution $i:(z,\lambda)\mapsto (z,-\lambda)$ that acts accordingly on cycles in the first 
	homology group.
	
	\begin{definition}
		Let $q$ be a generic differential with $n$ singular points at $p_1,\ldots,p_n$ on the compact Riemann surface $\mathcal{C}$. The 
		\textit{charge lattice}\footnote{The terminology in this part is mostly in accordance with the physical theory, from which the indices $e$ and $m$ below are also taken, as they refer to so called electric and magnetic charges.}
		$\Gamma_q$ is defined as the subgroup of $H_1(\Sigma'_q;\mathbb{Z})$ that is odd under the involution $i$.
	\end{definition}
	
	When varying $q$ one can think of $\Gamma_q$ as the fiber of a local system $\Gamma$ over the space of $q$. Note that $\Gamma_q$ is equipped with the 
	intersection pairing $\left\langle\cdot,\cdot\right\rangle$ which may be degenerate. 
	
	\begin{definition}
		Let $\Gamma_q$ be the charge lattice of a generic differential $q$ with intersection pairing $\left\langle\cdot,\cdot\right\rangle$. The 
		\textit{flavor lattice} 
		$\Gamma_q^f$ is defined as the radical of $\left\langle\cdot,\cdot\right\rangle$. The quotient $\Gamma_q^g:=\Gamma_q/\Gamma_q^f$ is called the \textit{
		gauge lattice}.
	\end{definition}
	
	Note that $\Gamma_f^q$ does not change with varying $q$, so it can be regarded as a fixed lattice $\Gamma^f$ instead of a local system of lattices. Most 
	important for us it that the gauge lattice $\Gamma_q^g$, which can be identified with $H_1(\Sigma;\mathbb{Z})$, also carries the intersection pairing 
	which is skew-symmetric and nondegenerate as of the quotient construction. This is important as it allows us to obtain a symplectic basis.
	
	\begin{proposition}
		Let $\Gamma_q^g$ be the gauge lattice of a generic differential $q$. Then there exists a symplectic basis 
		$\gamma^e_1,\ldots,\gamma^e_r,\gamma^m_1,\ldots,\gamma^m_r$ of $\Gamma_q^g$, i. e.
		\[
			\left\langle\gamma^e_i,\gamma^e_j\right\rangle=0,\enspace\left\langle\gamma^m_i,\gamma^m_j\right\rangle=0,\enspace
			\left\langle\gamma^e_i,\gamma^m_j\right\rangle=\delta_{ij}.
		\]
	\end{proposition}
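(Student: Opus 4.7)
The plan is to reduce the statement to a purely algebraic fact about free $\mathbb{Z}$-modules carrying a nondegenerate unimodular skew-symmetric form, and to justify that the intersection pairing on $\Gamma_q^g$ fits this setting. First I would use the identification $\Gamma_q^g \cong H_1(\Sigma_q;\mathbb{Z})$ mentioned just above the statement: because $\Sigma_q$ is a smooth compact orientable surface (smoothness follows from $q$ being generic), this is a free $\mathbb{Z}$-module of finite rank, and its rank $2r$ is even since the intersection pairing on it is symplectic. So the object we need a basis of is of the right shape.

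Next I would verify that the pairing we actually inherit on $\Gamma_q^g$ is both nondegenerate and unimodular. Nondegeneracy holds by construction, since we took the quotient by the radical $\Gamma_q^f$ of the intersection pairing on $\Gamma_q$. Unimodularity of the intersection form on $H_1(\Sigma_q;\mathbb{Z})$ is the standard Poincaré duality statement for closed orientable surfaces; I would cite this rather than reprove it. It is worth noting that unimodularity is what allows the right-hand side of the pairing to be exactly $\delta_{ij}$, rather than a diagonal of elementary divisors $d_i$, which is the outcome one would get from the bare Smith normal form argument for an arbitrary nondegenerate skew-symmetric form over $\mathbb{Z}$.

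With these two ingredients in hand I would run the standard inductive construction of a symplectic basis. Pick any primitive element $\gamma^e_1 \in \Gamma_q^g$; by unimodularity there exists $\gamma^m_1 \in \Gamma_q^g$ with $\langle \gamma^e_1, \gamma^m_1 \rangle = 1$. Consider the sublattice
\[
\Lambda_1 = \{ \gamma \in \Gamma_q^g : \langle \gamma, \gamma^e_1 \rangle = \langle \gamma, \gamma^m_1 \rangle = 0 \}.
\]
Using $\langle \gamma^e_1, \gamma^m_1 \rangle = 1$ one checks that $\Gamma_q^g = \mathbb{Z}\gamma^e_1 \oplus \mathbb{Z}\gamma^m_1 \oplus \Lambda_1$ and that the restriction of the pairing to $\Lambda_1$ is again nondegenerate and unimodular. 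Induction on the rank $2r$ then produces the full symplectic basis.

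The main obstacle, and really the only non-formal point, is the unimodularity step: without it the inductive argument only gives a basis in which $\langle \gamma^e_i, \gamma^m_j \rangle = d_i \delta_{ij}$, and one needs to invoke Poincaré duality for $\Sigma_q$ (together with the fact that the odd part of $H_1(\Sigma'_q;\mathbb{Z})$ modulo the radical is correctly identified with $H_1(\Sigma_q;\mathbb{Z})$) to get $d_i = 1$. Everything else is either the cited identification or standard linear algebra over $\mathbb{Z}$.
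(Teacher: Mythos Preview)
Your proposal is correct and supplies considerably more than the paper does: the paper states this proposition without proof, treating the existence of a symplectic basis on $\Gamma_q^g \cong H_1(\Sigma_q;\mathbb{Z})$ as a standard fact (it is mentioned just before the statement that the pairing on the gauge lattice is skew-symmetric and nondegenerate by the quotient construction, and that $\Gamma_q^g$ can be identified with $H_1(\Sigma_q;\mathbb{Z})$, but no argument is given). Your reduction to Poincar\'e duality for the closed surface $\Sigma_q$ to obtain unimodularity, followed by the standard inductive splitting-off of hyperbolic planes, is exactly the expected justification and is sound; your remark that without unimodularity one would only get elementary divisors $d_i$ on the diagonal is the right point to flag.
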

	
	We want to use this structure, for which we need to identify charges $\gamma\in\Gamma_q$ with edges $E$ of the triangulation. We only sketch this process 
	here and refer to \cite{Gaiotto.2013} for the details. The idea is, that for ech edge the corresponding quadrilateral $Q_E$ contains two zeroes of $q$. 
	Thus we could try to lift a loop around these two points to a cycle in $\Sigma'_q$. The problem with this is that there are two ambiguities in the 
	orientation of the loop as well as the choice of sheet of the covering. These choices need to be canonically decided which is done by GMN by using a 
	canonical orientation for the lift of the edges and a pairing of $H_1(\Sigma'_q;\mathbb{Z})$ with the relative homology group 
	$H_1(\Sigma'_q,\left\{p_i\right\};\mathbb{Z})$. In this way we obtain a canonical cycle $\gamma_E$ for each edge $E$ of the triangulation $T$ and GMN 
	proceed by showing that these cycles suffice.
	
	\begin{proposition}
		Let $\Gamma_q$ be the charge lattice of a generic differential $q$ and $T(q,\vartheta)$ a triangulation for a generic angle $\vartheta$. Then the set 
		of cycles $(\gamma_{E_i})_i\subset\Gamma_q$ obtained from the edges in the way described above generate $\Gamma_q$.
	\end{proposition}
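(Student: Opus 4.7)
The strategy is a topological/cellular argument combining the structure of the branched double cover $\pi:\Sigma'_q\to\mathcal{C}'$ with the combinatorics of $T(q,\vartheta)$. The starting observation is that every $\gamma\in\Gamma_q$ satisfies $\pi_*\gamma=\pi_*(i_*\gamma)=-\pi_*\gamma$ in $H_1(\mathcal{C}';\Z)$, so $2\pi_*\gamma=0$; since $H_1(\mathcal{C}';\Z)$ is free abelian (first homology of an open orientable surface), this forces $\pi_*\gamma=0$. Consequently the projected $1$-cycle $\pi(\gamma)$ bounds a $2$-chain in $\mathcal{C}'$. This is the key input that reduces the problem to a combinatorial decomposition.

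Next I would use $T(q,\vartheta)$ as a CW decomposition of $\mathcal{C}$ (with $0$-cells at the punctures, $1$-cells the chosen generic trajectories, and triangular $2$-cells each containing a single turning point) to realize this filling cellularly. Any $2$-chain in $\mathcal{C}'$ with boundary $\pi(\gamma)$ is cellularly homologous to an integer combination $\sum_\alpha c_\alpha F_\alpha$ of the triangles $F_\alpha$ of $T$, whose boundary is supported on the edges. Lifting this picture to $\Sigma'_q$ and extracting the anti-invariant part under $i$, one sees that $\gamma$ is homologous in $\Sigma'_q$ to an integer combination of the elementary anti-invariant cycles associated to pairs of adjacent faces sharing an edge. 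For such an edge $E$ separating $F_1$ and $F_2$, the local model of this elementary cycle is precisely the lift to $\Sigma'_q$ of a loop in the quadrilateral $Q_E=F_1\cup F_2$ encircling the two turning points lying in $F_1$ and $F_2$; by construction this is exactly $\gamma_E$ (up to the sign fixed by the orientation conventions of \cite{Gaiotto.2013}).

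The main obstacle is the lifting step: one must keep careful track of the sheet choices used to lift a cellular $2$-chain across the turning points, where the cover is branched, and verify that the anti-invariant combination of the resulting boundary is accurately organized into the cycles $\gamma_{E_i}$. This reduces to a local model computation on a single pair of adjacent triangles sharing an edge, which then propagates across $\mathcal{C}$ by the global combinatorial structure of $T$; the individual boundary contributions of single faces (which are not anti-invariant on their own) cancel appropriately once the triangulation closes up, leaving only edge-indexed terms. Once this identification is set up, generation of $\Gamma_q$ by the $\gamma_{E_i}$ follows directly from the preceding two paragraphs.
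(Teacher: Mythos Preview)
The paper does not actually prove this proposition: it states the result and attributes it entirely to \cite{Gaiotto.2013}, recording only that GMN's argument passes through the intersection pairing of $H_1(\Sigma'_q;\Z)$ with the relative group $H_1(\Sigma'_q,\{\tilde p_i\};\Z)$, in which the anti-invariant lifted edges $E^+-E^-$ naturally sit. So there is no in-paper argument to compare against, and your proposal already supplies more than the paper does. Your route via $\pi_*\gamma=0$ and cellular filling is genuinely different from the pairing approach and is a reasonable strategy.

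That said, the step you correctly flag as the main obstacle is, as written, a real gap. Each face $F_\alpha$ of $T(q,\vartheta)$ contains exactly one branch point, so $\pi^{-1}(F_\alpha)\subset\Sigma'_q$ is a \emph{single} hexagonal $2$-cell, and the involution $i$ maps it to itself preserving orientation (being a holomorphic deck transformation with an interior fixed point). Consequently $\partial\pi^{-1}(F_\alpha)$ is $i$-invariant, and the anti-invariant part of the boundary of any lifted cellular $2$-chain $\sum c_\alpha\,\pi^{-1}(F_\alpha)$ is identically zero. So the sentence ``lifting this picture to $\Sigma'_q$ and extracting the anti-invariant part'' does not, as stated, produce the $\gamma_{E_i}$; it produces nothing. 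One way to repair this is to pass first to $\mathcal{C}'$ with the turning points removed, where $\pi$ is an honest unramified double cover and each face acquires two disjoint lifts $F_\alpha^{\pm}$; the anti-invariant chain $\sum c_\alpha(F_\alpha^+-F_\alpha^-)$ then has nonzero boundary supported on the lifted edges, and reinserting the branch points (using that a small loop around a single branch point bounds upstairs) is what organizes the surviving edge contributions into the $\gamma_{E_i}$. This is precisely the ``local model computation on a single pair of adjacent triangles'' you allude to, but it needs to be carried out rather than asserted, since the naive version fails for the reason above.
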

	
	With these identifications at hand we may now define the coordinate functions on the moduli space of flat $SL(2,\mathbb{C})$-connections labeled by 
	charges.
	
	\begin{definition}
		Let $q$ be a generic differential, $T(q,\vartheta)$ a decorated triangulation and $\Gamma_q$ the charge lattice for $q$ with the generating cycles 
		$(\gamma_{E_i})_i$ associated to the edges $(E_i)_i$ of the triangulation. For each such $E_i$ we define
		\[
			\mathcal{X}_{\gamma_{E_i}}^{(q,\vartheta)}:=\mathcal{X}_{E_i}^{T(q,\vartheta)}.
		\]
		For the sum of two charges $\gamma,\gamma'\in\Gamma'_q$ we define
		\[
			\mathcal{X}_{\gamma+\gamma'}^{(q,\vartheta)}:=\mathcal{X}_{\gamma}^{(q,\vartheta)}\mathcal{X}_{\gamma'}^{(q,\vartheta)}.
		\]
	\end{definition}
	
	Thus $\mathcal{X}_{\gamma}^{q,\vartheta}$ is well defined for all elements $\gamma\in\Gamma'_q$. In constructing the hyperkähler metric we won't actually 
	need the $\mathcal{X}_\gamma$ functions for charges that are not in the symplectic bases. We added the definition here as it amounts to a certain Poisson 
	structure which is relevant in the more abstract setting of integrable systems, for which our case may be thought of as a specific example. For an 
	overview of this setting we refer to \cite{Neitzke.2013}.
	
	Finally we turn to the matter of dimension. Remembering that we had $n$ more coordinate functions then need when labeled by the edges we accordingly have 
	the same redundancy when we label them by generators of $\Gamma'_q$. At this point the gauge lattice $\Gamma_q^g$ comes into play. Not only does this 
	lattice 
	carry the symplectic pairing we'll need later on. It is also of the correct rank: When we identify $\Gamma_q^g$ as the first homology group 
	$H_1(\Sigma_q;\mathbb{Z})$ of the unpunctured spectral curve $\Sigma$ we notice that by filling in the punctures of $\Sigma'_q$ we lose exactly those 
	generators in $H_1(\Sigma'_q;\mathbb{Z})$ that correspond to cycles around the preimage of the singular points $\pi^{-1}(p_i)$. Thus we lose exactly 
	those cycles which correspond to the fixed monodromy at the singular points and it remains the true coordinate system for $\mathcal{M}_{\text{flat}}$.
	
	At this point the general construction of the coordinate system is finished, although some ambiguity remains which will only be resolved when we apply it 
	to our case. While the quadratic differential $q$ has a natural interpretation in our setting as the determinant of the Higgs field, the angle $\vartheta$ 
	seems arbitrary. In our application this parameter will be related to another parameter $\zeta\in\mathbb{C}^\times$ that is build into the flat connection 
	$\mathcal{A}$. For the most part of the following constructions we will focus on a general relation between the two which allows for them both to vary in 
	relation to another. This is done as our results hold for this general setting and a very large part of the theory developed by GMN concerns the relation 
	of these two parameters. Still, when it comes to constructing the hyperkähler metric at the we will need to fix $\vartheta$ as it is not intrinsic to the 
	moduli space. This is done by setting $\vartheta=\text{arg}(\zeta)$ which will obey all relations we demand throughout the calculations.

At this point the general construction of the coordinate system is finished, although some ambiguity remains which will only be resolved when we apply it 
	to our case. While the quadratic differential $q$ has a natural interpretation in our setting as the determinant of the Higgs field, the angle $\vartheta$ 
	seems arbitrary. In our application this parameter will be related to another parameter $\zeta\in\mathbb{C}^\times$ that is built into the flat connection 
	$\mathcal{A}$ and which will encode the complex structure in an application of Hitchin's twistor theorem. For the most part of the following constructions we will focus on a general relation between the $\vartheta$ and $\zeta$ which allows for them both to vary in 
	relation to another. This is done as many of our results hold for this general setting and a very large part of the theory developed by GMN concerns the relation of these two parameters. Still, when it comes to constructing the hyperkähler metric we will need to fix $\vartheta$ as it is not intrinsic to the 
	moduli space. We will do this by setting $\vartheta=\text{arg}(\zeta)$ which will obey all relations we demand throughout the calculations. But this implies that we can't constrain ourselves to generic angles later on, because the twistor theorem demands to consider all elements $\zeta\in\mathbb{CP}^1$. So we now have to consider how the triangulation behaves when a finite trajectory appears.
	
	\subsection{The spectrum of a quadratic differential}\label{spec_diff}
	
	In this section we consider again the general setting of the triangulation, i. e. we consider some generic quadratic differential $q$, but now we vary the angles $\vartheta$ to study the changes of the associated $\vartheta$-triangulations. It turns out that the changes are mostly trivial, as for generic $\vartheta$, i. e. such that no finite trajectories occur, the edges of the triangulation only vary 
	by a continuous homotopy. Such variations don't change the isotopy class of the triangulation and the coordinates will therefore be 
	constant as functions of $\vartheta$ as long as no finite trajectories appear. If they do occur, however, there are in fact 
	discontinuous jumps, cf. the footnote in \cite{Gaiotto.2013}, p. 312.
	Understanding the behavior of the triangulations and coordinates as these jumps occur is one of the most important aspects of the work in 
	\cite{Gaiotto.2013}. From now on, an angle $\vartheta$ at which such a jump occurs shall be denoted by $\vartheta_c$ as for critical phase, cf. figure \ref{saddle_jump}.\footnote{The appearance of finite $ca$-trajectories is intricately linked to the notion of generalized DT invariants which are of utmost importance for the generalization of the construction presented here to general integrable systems. We will return to these invariants in section \ref{sec_R_H}.} GMN present three different ways in which the triangulation can change at a critical phase, which are denoted as flips, pops and juggles.
		
		\begin{figure}
		\centering
		\scalebox{0.5}{\includegraphics{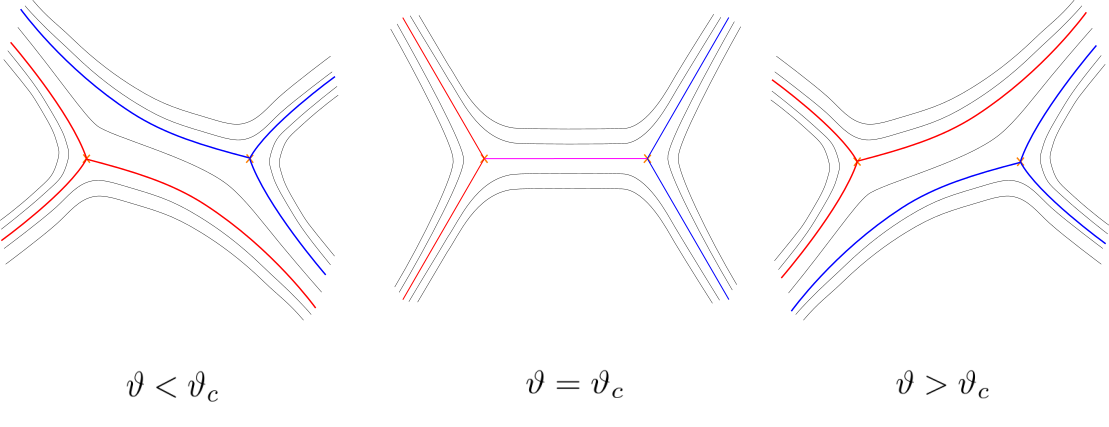}}
		\caption[A pop in the triangulation.]{A pop in the triangulation.}
		\label{saddle_jump}
		\end{figure}
		
		\newpage
		
	\begin{definition}
		Let $\vartheta_c$ be a critical phase for which a finite $\vartheta_c$-trajectory occurs.
		\begin{enumerate}
			\item If the curve connects to two different turning points, the corresponding change in the triangulation shall be called a \textit{pop}.
			\item If the curve is closed and surrounds one singular point, the corresponding change in the triangulation shall be called a \textit{flip}.
			\item If the curve is closed and does not contract to a single, point the corresponding change in the triangulation shall be called a \textit{juggle}.
		\end{enumerate}
	\end{definition}
		
	GMN show that the coordinates actually don't change when a pop occurs, so the only relevant jumps for that aspect are the flips and juggles.
	%
	The property we need now is that although the coordinates jump, a certain $2$-form $\varpi$ that is built out of the derivatives of the $\mathcal{X}$-coordinates is well defined.\footnote{We will introduce $\varpi$ properly in chapter \ref{twist_metric_all}.} This works when the jumps do not change $\varpi$, and GMN devote a great effort to proof that this is the case for the most important jumps in \cite{Gaiotto.2013}. However there may be jumps for which their argument does no work, so we have to talk about what is rigorously known now. GMN showed that the jumps which correspond to the appearance of a single saddle connection, i. e. pops, are of the right nature and one kind of jumps which corresponds to the appearance of ring domains, i. e. certain juggles, are also fine.
	
	Unfortunately, as GMN themselves note in section 7.6.2 \textit{Symplectomorphisms from a BPS vectormultiplet} of \cite{Gaiotto.2013}, a certain phenomenon of the $ca$-triangulation may occur on a Riemann surface $\mathcal{C}$ with genus $g>0$. On such a $\mathcal{C}$ a ring domain may cut off a torus without critical points, i. e. the complement of the ring domain is given by two disjoint subsurfaces, one of which has genus one and does not contain any zeros or poles of $q$. In that case their direct calculation of the jump of the coordinates does not work. Instead they refer to their work on the more general concept of spectral networks in \cite{Gaiotto.2013b} for this case. There they examine differentials which come from a degree $K$ polynomial instead of one with degree $2$ as for our generic differentials here. This leads to a lot of additional language which we can't properly introduce here, but most importantly they study the jumping behaviour of a certain \textit{formal generating function} $F$ of formal variables $X_\gamma$. The idea is that our $\mathcal{X}$-coordinates can be considered as a special case of these variables and their argument there in section 7.2 \textit{Closed loops} thus delivers the missing case. But there are some problems with this argument, one being the quite different approach which is not easily translated into the setting here. More importantly though, even if the argument for that case works there may still be a missing case, when one of the boundaries of the ring domains consists of more than one saddle connection.
	
	Now to make these problems precise it is useful to start with the work of T. Bridgeland and I. Smith. In section 5 of \cite{Bridgeland.2015} they first show what kind of ring domains can occur. It turns out that, as stated above, most of these were shown to work in \cite{Gaiotto.2013}. The only possibility that has not been rigorously proven is given by the occurrence of a ring domain that cuts off a torus without critical points. Thus we now want to concentrate on those differentials where such a phenomenon does not occur. To formulate this condition it is useful to first introduce the spectrum of a differential.
	
	\begin{definition}
		For $q$ a generic quadratic differential, the \textit{spectrum} of $q$ is the set of all ring domains and all saddle-trajectories for any angle $\vartheta\in[0,2\pi)$.
	\end{definition}
	
	The elements of the spectrum that may cause problems are ring domains that cut off a torus on which there are no critical points. This case may be pictured accordingly as a torus without any relevant structure and which has a very intricate ring structure attached to one side of it. We hope that the following term captures this picture.
	
	\begin{definition}
		Let $q$ be a generic differential. A ring domain that cuts of a torus without any singularities shall be called a \textit{signet ring}.
		If the spectrum of $q$ does not contain a signet ring, it shall be called \textit{unsealed}.
	\end{definition}
	
		Note that in particular on the sphere (with an arbitrary amount $\geq 4$ of punctures) every generic differential has an unsealed spectrum, so the theory considered here is not vacuous.
		
		For the upcoming discussion we identify the finite $ca$-trajectories with elements of the charge lattice. Following \cite{Neitzke2017} we note that the closure of the lift of a saddle connection for $q$ to the spectral cover $\Sigma'_q$ is an oriented loop, while the lift of a closed loop to the spectral cover is the union of two oriented loops. 
		
		\begin{definition}
		Let $q$ be a generic differential and $c$ a finite $ca$-trajectory for $q$. The class in $\Gamma_q$ of the corresponding loop (for a saddle connection) or the union of the two loops (for a ring domain) on $\Sigma'_q$ is called the \textit{associated charge}.
		\end{definition}
		
		Note that these associated charges correspond to the elements of $\Gamma_q$ that were identified with the edges of a $ca$-triangulation in section \ref{sec_hom}. This identification now allows us to see that saddle connections correspond to specific angles of the $ca$-triangulation. For this we consider the period map\footnote{This is sometimes also called the (central) charge function.} $Z_\gamma(q^{1/2})=\int_\gamma q^{1/2}$, for $\gamma\in\Gamma_q$ and $q^{1/2}$ denoting the tautological 1-form on $T^\ast \mathcal{C}$.
	
	\begin{lemma}\label{angle_prop}
		For a $\vartheta$-saddle connection of $q$ with associated charge $\gamma$ it holds $e^{-i\vartheta}Z_{\gamma}(q^{1/2})\in\mathbb{R}_+$.
	\end{lemma}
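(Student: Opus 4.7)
The plan is a direct computation: represent $\gamma$ explicitly as an oriented loop on $\Sigma'_q$, parametrize its two halves using the standard parametrization of $c$, and sum the contributions to $\int_\gamma q^{1/2}$.

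First, fix a standard parametrization $c:[a,b]\to\mathcal{C}$ of the saddle connection and a continuous local branch $q^{1/2}_+$ of $q^{1/2}$ along its open interior. After possibly reversing $c$ or switching the branch, the defining condition of a $\vartheta$-trajectory in standard parametrization yields
\[
q^{1/2}_+(\dot c(t))=e^{i\vartheta}\qquad\text{for all }t\in(a,b).
\]
Since the endpoints of $c$ lie at zeros of $q$ and are therefore branch points of $\pi':\Sigma'_q\to\mathcal{C}'$, the two preimages of $c|_{(a,b)}$ under $\pi'$ close up there to form a single oriented loop on $\Sigma'_q$; by construction this loop represents the associated charge $\gamma$.

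Next comes the integration. On the sheet on which the tautological form restricts to $q^{1/2}_+$, integration along the lift $\tilde c_+$ gives $\int_a^b e^{i\vartheta}\,dt=(b-a)e^{i\vartheta}$. The second half of the loop sits on the opposite sheet and is traversed in reverse; there the tautological form restricts to $-q^{1/2}_+$, and a routine change of variables shows that the reversal cancels this sign, giving again $(b-a)e^{i\vartheta}$. Summing the two halves yields $Z_\gamma(q^{1/2})=2(b-a)e^{i\vartheta}$, whence $e^{-i\vartheta}Z_\gamma(q^{1/2})=2(b-a)\in\mathbb{R}_+$.

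The genuinely delicate point I expect is orientation bookkeeping: confirming that the oriented loop built above coincides with (rather than is the reverse of) the canonical cycle fixed in Section~\ref{sec_hom}. Everything else reduces to standard facts about generic differentials near simple zeros together with the existence of a standard parametrization. Once the orientation convention is tracked through the construction, the conclusion is immediate; the degenerate case of a saddle connection from a zero to itself is handled by the same computation applied to the two lifts, which still fit together into a $1$-cycle odd under the involution.
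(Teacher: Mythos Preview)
Your proof is correct and follows the same approach as the paper, which simply states that the claim ``follows directly from integrating the differential along the $\vartheta$-trajectory.'' You spell out in detail what the paper compresses into one sentence, and you rightly flag the orientation convention as the only point requiring care---a point the paper does not explicitly address in its proof either.
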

	
	\begin{proof}
		This follows directly from integrating the differential along the $\vartheta$-trajectory.
	\end{proof}
	
	Thus it is clear, that proportional elements of $\Gamma_q$ correspond to the same ray in the $\zeta$-plane. Many of the general properties proven in \cite{Bridgeland.2015} now only hold, when there are no non-proportional elements defining the same $\zeta$-ray.\footnote{Note that a quadratic differential with this property is called \textit{generic} in \cite{Bridgeland.2015}. But we already use \textit{generic} as the notion for a differential with only simple zeroes and we will continue to do so in this section.} Here we focus notationally on the complement of this set, which are elements of the \textit{walls of marginal stability} in the work of GMN.
	
	\begin{definition}\label{chambers}
		A generic quadratic differential with unsealed spectrum shall be called a \textit{brick} if there exist distinct saddle connections of $q$ such that for their associated elements $\gamma_1$ and $\gamma_2$ in $\Gamma_q$ it holds $\mathbb{Z}\gamma_1\neq\mathbb{Z}\gamma_2$, but $Z_{\gamma_1}(q^{1/2})/Z_{\gamma_2}(q^{1/2})\in\mathbb{R}_+$.
		
		The connected components of the complement of the set of bricks in the set of generic quadratic differentials with unsealed spectrum shall be called \textit{chambers}. 
	\end{definition}
	
	The $Z_\gamma(q^{1/2})$ for the associated charges $\gamma$ for certain saddle connections of $q$ form local coordinates on the space of quadratic differentials, cf. section 4 in \cite{Bridgeland.2015}. From this it follows that the set of bricks is a countable union of real codimension $1$ submanifolds, from which the name of a wall (and thus of a brick here) arises. From this it follows directly that the chamber-elements are dense in the set of all generic quadratic differentials with unsealed spectrum. In our construction we need to vary a differential without hitting a wall, so we need the following lemma.
	
	\begin{lemma}\label{chambers_open}
		Chambers are open in the space of all quadratic differentials.
	\end{lemma}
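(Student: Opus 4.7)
The plan is to show that each of the three conditions defining a chamber---being generic, having unsealed spectrum, and not being a brick---is an open condition in the space of all quadratic differentials. Once this is established, any chamber $C$ is a connected component of an open set, and since the space of quadratic differentials is locally connected (a complex manifold), its connected components are themselves open. Openness of the generic condition is standard: the zero set of a holomorphic section varies continuously in the section, and simple zeros together with double poles form an open condition on the singular and critical data of $q$.

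For the unsealed condition, I would invoke the stability of the Strebel-type horizontal foliation under deformation. If $q_0$ has unsealed spectrum, then no ring domain of $q_0$ cuts off a genus-one subsurface without critical points. Nearby differentials $q$ have ring domains that deform continuously from those of $q_0$, and the topological type of the complementary subsurfaces---in particular, the genus and the number of critical points contained---is a discrete invariant preserved by small deformation. Hence no signet ring can appear for sufficiently small perturbations of $q_0$.

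For the non-brick condition, the key tool is the period map $q \mapsto Z_\bullet(q^{1/2})$, which by Section 4 of \cite{Bridgeland.2015} furnishes local coordinates on the space of generic quadratic differentials. In a sufficiently small neighborhood $U$ of $q_0$, the associated charges of the saddle connections of $q \in U$ vary continuously with $q$, and only finitely many classes $\gamma \in \Gamma_q$ appear as associated charges with period below a uniform bound (local finiteness of saddle connections). For any fixed pair of non-proportional classes $\gamma_1,\gamma_2$ that arise this way, the brick condition $Z_{\gamma_1}(q^{1/2})/Z_{\gamma_2}(q^{1/2}) \in \mathbb{R}_+$ is a closed real codimension-one condition in the period coordinates. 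Since $q_0$ satisfies none of these finitely many conditions, a small enough neighborhood of $q_0$ avoids all of them.

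The main obstacle I expect is the openness of the unsealed condition: one must rigorously exclude the spontaneous creation of a signet ring under arbitrarily small perturbation, which really amounts to using the continuity of the period and the discreteness of the topological type of complementary pieces in a careful way. The non-brick argument also depends on controlling the set of associated charges locally, but this is manageable once one restricts to a neighborhood small enough that the period of any newly appearing saddle connection is bounded away from zero, so that only finitely many homotopy classes of saddle connections are in play.
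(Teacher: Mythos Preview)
Your decomposition into three open conditions (generic, unsealed, non-brick) is the right structure, and in fact more systematic than the paper's own proof, which explicitly treats only the non-brick condition and tacitly assumes the other two. However, your argument for the non-brick condition has a genuine gap.

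The problem is the claim ``Since $q_0$ satisfies none of these finitely many conditions, a small enough neighborhood of $q_0$ avoids all of them.'' The period alignment $Z_{\gamma_1}(q_0)/Z_{\gamma_2}(q_0) \in \mathbb{R}_+$ may very well hold at $q_0$ even though $q_0$ is not a brick: what makes $q_0$ non-brick is only that no such aligned pair occurs \emph{among classes supporting saddle connections of $q_0$}. The set of classes supporting saddle connections changes discontinuously as $q$ varies---new saddle connections can and do appear. So a pair $(\gamma_1,\gamma_2)$ whose periods are aligned at $q_0$ but which are \emph{not} saddle connections of $q_0$ could become saddle connections for arbitrarily close $q$, making those $q$ bricks without contradicting anything about $q_0$. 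Your local-finiteness assertion (finitely many classes with period below a bound) is true but does not help, since the brick condition carries no period bound and near a ring domain infinitely many saddle-connection classes are present.

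The paper closes this gap by a different route: a sequential contradiction argument that invokes specific results of Bridgeland--Smith (Lemma~5.1, Lemma~5.2, Proposition~5.5) controlling how the \emph{set} of finite trajectories, not just their periods, behaves under perturbation. Roughly, these say that if $q_0$ has a unique finite trajectory at phase $\vartheta$, then in a neighborhood of $q_0$ no saddle connections appear at nearby phases and uniqueness at phase $\vartheta$ persists. This is precisely the missing ingredient your argument needs; without it, you cannot rule out that the saddle-connection property passes to nearby differentials in a way that creates a brick.

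Your treatment of the unsealed condition is also soft: ring domains do not simply deform continuously from those of $q_0$; new ring domains can appear under perturbation, and ruling out that a signet ring appears requires more than the discreteness of the topological type of existing complementary pieces. The paper does not justify this condition either---it is left implicit---so this is a point where neither argument is complete as written.
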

	
	\begin{proof}
		Let $q$ be an element of a chamber. So at any given angle $\vartheta$ at most one element of $\Gamma_q$ (up to integer multiples) is the associated charge of some $\vartheta$-trajectory. 
		
		
		For a contradiction suppose that in every neighborhood $U_t$ of $q$ there exists a brick $q_t$. Thus in every $U_t$ there are distinct saddle trajectories $c^1_t$ and $c^2_t$. They appear for angles $\vartheta_t\in[0,2\pi)$, so we obtain an infinite sequence of angles, which accumulates at some $\vartheta\in[0,2\pi)$ and we pass to a corresponding subsequence converging to $\vartheta$ for $t\to\infty$. As $q$ is not a brick, it follows from Lemma 5.1 in \cite{Bridgeland.2015} that either there is only one finite trajectory or there are two finite trajectories which form the boundary of a ring domain. 
		
		We first consider the case of a single trajectory and distinguish two cases. First we suppose that there is no $t_0$ such that for all 
		$t>t_0$ $\vartheta_t=\vartheta$. Thus near $q$ there are at least two $ca$-trajectories for angles arbitrarily close to $\vartheta$. Now since $q$ has only one finite $\vartheta$-trajectory it follows from Proposition 5.5 in \cite{Bridgeland.2015} that in a sufficiently small neighborhood of $q$ there can't be a $\vartheta+\epsilon$-trajectory for small enough $\epsilon\neq0$. So this case cannot occur.
		
		On the other hand, if such a $t_0$ exists, we obtain a sequence of differentials $q_t$ near $q$ which all have at least two distinct $\vartheta$-trajectories. But, as of Lemma 5.2. in \cite{Bridgeland.2015} the subset of differentials with only one $\vartheta$-trajectory is open in the space of all quadratic differentials. So this case also cannot occur.
		
		For the case of a ring domain the analogous statements follow from the same results in \cite{Bridgeland.2015}. Thus a sequence of wall-elements can only accumulate at wall-elements, so the subset of non-wall elements is open.
		
		
	\end{proof}
	
	Before moving on, let us consider the spectrum in some more detail. The use of some additional technique of \cite{Bridgeland.2015} will allow us to loosen the restriction of the differential in Proposition \ref{symp_is_cont} quite a bit. In the definition of the spectrum we did not count the infinitely many closed $\vartheta$-trajectories which sweep out a ring domain as separate elements, although they are finite $ca$-trajectories. The reason for this is the fact that the ring domain as a whole occurs at some specific angle $\vartheta_c$ and leads to a (collective) jump in the $ca$-triangulation at $\vartheta_c$, as was shown in \cite{Gaiotto.2013}. Additionally all these closed trajectories have the same lift on the spectral cover as they are homotopic. But the existence of a ring domain still does lead to an infinite spectrum, because near $\vartheta_c$ an infinite sequence of saddle connections may occur. These saddle connections lead to distinct jumps of the triangulation and accumulate at $\vartheta_c$. So it seems quite natural for the spectrum of a generic differential to be infinite, but the way in which a generic differential with unsealed spectrum is infinite is well understood. Indeed, infinitely many saddle connections occur only near ring domains (cf. section 5.3. in \cite{Tahar.2017}) and in the case of an unsealed spectrum their behaviour near such ring domains is completely described in \cite{Gaiotto.2013}. This also implies that ring domains do not accumulate, as they would have to accumulate near a ring domain, which is not possible as of the analysis in \cite{Gaiotto.2013}. Therefore we have the following observation:
	
	\begin{lemma}\label{unsealed_finite}
		An unsealed spectrum has at most finitely many ring domains.
	\end{lemma}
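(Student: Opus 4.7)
The plan is to argue by contradiction: assume the unsealed generic differential $q$ admits infinitely many ring domains, and derive a contradiction with the complete local picture of the spectrum around an unsealed ring domain worked out in \cite{Gaiotto.2013}.

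First I would reduce the problem to a statement about critical angles. For each fixed $\vartheta\in[0,2\pi)$ the $\vartheta$-horizontal foliation decomposes the flat surface $(\mathcal{C},|q|)$ into finitely many Strebel-type components (half-plane, strip, spiral and ring domains together with their separating boundaries), so $q$ admits only finitely many ring domains at any single angle. Hence infinitely many ring domains can only occur if they are spread over infinitely many distinct critical angles $\vartheta_1,\vartheta_2,\ldots\in[0,2\pi)$. By compactness of the circle I pass to a subsequence $\vartheta_n\to\vartheta_\infty$.

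The boundary of the ring domain at each angle $\vartheta_n$ consists of one or two $\vartheta_n$-saddle connections, producing an infinite sequence of saddle connections of $q$ at angles converging to $\vartheta_\infty$. By the results of section 5.3 of \cite{Tahar.2017} such an accumulation of finite trajectories is only possible in the vicinity of a ring domain whose critical angle is the accumulation angle, so $q$ must carry a ring domain $A_\infty$ at angle $\vartheta_\infty$. The unsealedness of $q$ guarantees that $A_\infty$ is not a signet ring, so both sides of its boundary contain zeros of $q$, and the local description in \cite{Gaiotto.2013} applies in full: in a sufficiently small punctured neighborhood of $\vartheta_\infty$ the only finite $ca$-trajectories of $q$ are the explicit one-sided sequence of saddle connections spiraling out of the endpoints of the boundary of $A_\infty$, and no further ring domain occurs at any angle close to but distinct from $\vartheta_\infty$. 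This contradicts the accumulation $\vartheta_n\to\vartheta_\infty$ with a ring domain at each $\vartheta_n$, so only finitely many ring domains can exist.

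The main obstacle is the clean invocation of the GMN local-near-ring-domain picture. Their classification of finite trajectories accumulating at a ring domain is carried out under the assumption that the ring domain has a \emph{spine}, i.e.\ that its boundary contains at least one turning point of $q$; this is precisely what unsealedness secures for $A_\infty$, since a signet ring could leave an entire torus component without critical points, where no GMN-type control is available and further ring domains could in principle accumulate. Once the spine is known to exist the GMN analysis rules out extra ring domains near $\vartheta_\infty$ and the argument closes. A secondary technical point is the uniform application of Tahar's accumulation statement to closed saddle connections coming from distinct $\vartheta_n$; but this is immediate, since each ring domain contributes genuine (boundary) saddle connections and these are precisely the objects to which Tahar's result applies.
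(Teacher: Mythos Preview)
Your argument is correct and follows essentially the same route as the paper: the paper's justification (given in the paragraph immediately preceding the lemma rather than in a formal proof environment) is the observation that ring domains can only accumulate near a ring domain by \cite{Tahar.2017}, and that the GMN local analysis of an unsealed ring domain rules this out. You have simply unpacked this one-sentence argument, adding the preliminary reduction via the Strebel decomposition at a fixed angle and making explicit where unsealedness enters, namely in guaranteeing that the limiting ring domain $A_\infty$ has turning points on its boundary so that the GMN picture applies.
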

	
	The analysis in \cite{Gaiotto.2013} tells us how to calculate the jumps for ring domains and their corresponding infinite amount of saddle connections. With this lemma it is now clear, that there can only be finitely many additional jumps from single saddle connections and thus all jumps can be calculated. Otherwise, if there were an infinite sequence of ring domains, there would be technical difficulties in defining the right order for the occurring jumps. But for unsealed spectra we don't have to worry about this. 
	
	We will come back to this property of only having finitely many accumulation points later on, when considering the associated Riemann-Hilbert problem. 
	Here the only condition which really seems to shorten the applicability of our results is the existence of signet rings, but there is a way to lessen their impact, which we will describe now.
	
	%
	
	\begin{remark}
	Suppose $q$ has a spectrum that is at first not unsealed. Thus Proposition \ref{symp_is_cont} does not work as there is a ring domain cutting off a torus without singularities. In those cases, as was shown in \cite{Bridgeland.2015}, the ring domain has boundaries which are made up of some number of saddle connections and are thus \textit{strongly non-degenerate} in the terminology of Bridgeland and Smith. It is then shown that such strongly non-degenerate ring domains can be removed without altering the genus of the surface or the differential outside of the ring domain. This process is called ring-shrinking and can roughly be considered as compressing the ring domain into a single closed curve.
	
	After the ring shrinking the jump of the former ring domain becomes one of the jumps that was shown to be fine. If there are only finitely many signet rings it should then be possible to obtain an unsealed spectrum after finitely many ring shrinking moves, so there is no danger of shrinking the surfaces to a point or changing the topology in any other way, which could broaden the applicability of the theory presented here a lot.
	
	However, one has to show that the ring shrinking, which eliminates a ring domain for one specific $\vartheta_c$ does not produce another ring domain at some other angle. Additionally, we are interested in properties for the moduli space for varying differentials. Therefore we would  need the ring shrinking to work uniformly in at least some open subset of the Hitchin base. But varying the differential in the wrong direction might exactly reproduce the ring domain we just got rid of, cf. section 5.9 \textit{Juggles} in \cite{Bridgeland.2015}. Finally, it seems reasonable that there are only finitely many signet rings in the spectrum of any given generic differential, but this is not rigorously proven yet.
	\end{remark}
	
	Further explorations of these phenomenons would be very useful to obtain the most general setting for our theory. Additionally, as we explained above, the jumps corresponding to the shrunken ring domains may actually also be such that the holomorphic symplectic form $\varpi$ is well defined, and it may be possible to obtain a proof for those jumps from the arguments given in \cite{Gaiotto.2013b}. Such a proof, written in terms of the $\mathcal{X}$-coordinates, would be quite useful to obtain more information about the actual limits of this construction. But for now and for the rest of this section we will be content with the demand for $q$ to have an unsealed spectrum.

\section{Local description of the solutions to Hitchin's equations}\label{local_desc}
	
	In this section we calculate local forms of all the objects we need in a way that fits the aforementioned
	construction of the canonical coordinates. We start by explaining the fundamental result of \cite{Fredrickson.2020} in subsection 
	\ref{frame_sec} and then build a standard frame over certain subsets of the standard quadrilaterals that were introduced in the previous section. This 
	will then allow us to calculate local forms of the Harmonic metric, Higgs field and Chern connection in subsection \ref{Hitchin_pair} that are suited for 
	the construction.
	
	\subsection{Construction of the standard frame}\label{frame_sec}
	
	Our construction of the metric will mostly be done via local data of the solutions of Hitchin's equation, for which we build on the theory of limiting 
	configurations developed in \cite{Fredrickson.2020}. For the rest of this section we fix a stable $SL(2,\mathbb{C})$-Higgs bundle 
	$(\mathcal{E},\Phi)$ in the regular locus $\mathcal{M}'$ of $\mathcal{M}$ s. t. $-q:=\det\Phi$ has only simple zeroes. FMSW constructed solutions $h_R$ of 
	the $R$-rescaled Hitchin equation
	\[
		F^{\bot}_{A_{h_R}}+R^2\left[\Phi,\Phi^{\ast_{h_R}}\right]=0
	\]
	for large $R\in\mathbb{R}$.\footnote{In \cite{Fredrickson.2020} this parameter is $t$, while our notation comes from the work of GMN. As we'll need $t$ 
	mostly as the usual parameter of the curve, it seems reasonable to side with GMN on this question.} The idea behind this rescaling is, that for large 
	$R$, corresponding to large Higgs fields, Hitchin's equation would decouple leading to a much simpler set of equations that can be solved explicitly. The 
	decoupled equations are the \textit{limiting equations}
	\[\label{lim_equ}
		F^{\bot}_{A_{h_\infty}}=0,\quad\text{and}\quad\left[\Phi,\Phi^{\ast_{h_\infty}}\right]=0
	\]
	and the harmonic metric $h_\infty$ is called the limiting metric, which corresponds to the limiting pair $(A_\infty,\Phi_\infty)$. Local forms near 
	zeroes and poles of these objects are given in \cite{Fredrickson.2020} which were then glued together to create an approximate solution $h_{\text{app}}$ 
	of the true 
	Hitchin equation.\footnote{From here on out we will use "true" as signifying solutions which correspond to Hitchin's equation to separate them from 
	approximate solutions or solutions of the decoupled equations.} The foundational result for our work that they obtained is that for large enough $R$ 
	there exists a gauge transformation that transforms the approximate solution into a true solution. We paraphrase the result here which 
	is found as Theorem 6.2. in \cite{Fredrickson.2020}.
	
	\begin{theorem}
		There exists $R_0>1$ such that for every $R\geq R_0$ there exists an $h_0$-Hermitian endomorphism $\gamma_R$, s. t. $g\cdot h_{\text{app}}$ for 
		$g=\exp(\gamma)$ is a true solution of the $R$-rescaled 
		Hitchin equation. Furthermore $\gamma_R$ is unique amongst endomorphisms of small norm.
	\end{theorem}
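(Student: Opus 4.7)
The plan is the standard strategy for such gluing constructions in gauge theory: reduce the full nonlinear PDE to a fixed-point equation by expanding around the approximate solution, then solve it by a contraction mapping argument. Writing $h=g\cdot h_{\text{app}}$ with $g=\exp(\gamma)$ and $\gamma$ an $h_0$-Hermitian endomorphism, the $R$-rescaled Hitchin equation becomes a nonlinear equation
\[
F_R(\gamma) := e_R + \mathcal{L}_R \gamma + Q_R(\gamma) = 0,
\]
where $e_R := F^{\bot}_{A_{h_{\text{app}}}} + R^2[\Phi,\Phi^{\ast_{h_{\text{app}}}}]$ measures the failure of $h_{\text{app}}$ to be a true solution, $\mathcal{L}_R$ is the linearization at $h_{\text{app}}$ (a Laplace-type operator plus an algebraic term built from $\text{ad}\,\Phi\cdot\text{ad}\,\Phi^{\ast_{h_{\text{app}}}}$ weighted by $R^2$), and $Q_R$ collects terms of order at least two in $\gamma$.

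Three ingredients are needed to run the fixed-point argument in a suitable weighted Banach space $X_R$ of $h_0$-Hermitian endomorphisms. First, an error estimate of the form $\|e_R\|_{X_R}=O(e^{-cR})$. This follows from the construction of $h_{\text{app}}$ as a gluing of the limiting metric $h_\infty$ to the explicit fiducial solutions on small disks around the zeros of $q$: off the gluing annuli $h_{\text{app}}$ solves either the limiting equations or the model equations exactly, and on the annuli the fiducial solution converges exponentially in $R$ to its asymptotic limit. Second, a uniform invertibility estimate $\|\mathcal{L}_R^{-1}\|\leq C R^N$ for some mild power $N$, so that $\mathcal{L}_R^{-1}e_R$ remains exponentially small. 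Third, a quadratic estimate
\[
\|Q_R(\gamma)-Q_R(\gamma')\|_{X_R} \leq C\bigl(\|\gamma\|_{X_R}+\|\gamma'\|_{X_R}\bigr)\,\|\gamma-\gamma'\|_{X_R},
\]
which follows from the smoothness of the exponential map and of Hermitian conjugation, together with control of $\Phi$ in the chosen norms.

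The main obstacle is the second ingredient. Away from the zeros of $q$ the algebraic term $R^2\,\text{ad}\,\Phi\cdot\text{ad}\,\Phi^{\ast_{h_{\text{app}}}}$ has a positive lower bound of order $R^2$, so $\mathcal{L}_R$ is effectively a large coercive operator there; but near each zero $\text{ad}\,\Phi$ degenerates and one must exploit the explicit separation-of-variables structure of the fiducial model to control the inverse. The weighted Sobolev spaces $X_R$ have to be chosen to interpolate between these two regimes so that $\mathcal{L}_R$ becomes Fredholm of index zero uniformly in $R$; stability of $(\mathcal{E},\Phi)$ together with a deformation argument in $R$ then rules out a kernel, yielding the required invertibility. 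With all three estimates in place, the equation $\gamma = -\mathcal{L}_R^{-1}\bigl(e_R+Q_R(\gamma)\bigr)$ is a contraction on the ball of radius $2\|\mathcal{L}_R^{-1}\|\cdot\|e_R\|_{X_R}$ in $X_R$ for $R$ large enough, and the Banach fixed-point theorem produces the unique small solution $\gamma_R$, which is the asserted endomorphism.
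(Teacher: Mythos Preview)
The paper does not actually prove this theorem: it is stated as a paraphrase of Theorem~6.2 of \cite{Fredrickson.2020}, and the paper simply quotes it as a foundational input. So there is no ``paper's own proof'' to compare against here.

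That said, your outline is precisely the strategy that FMSW employ, and it is the standard one for such gluing constructions (going back to Mazzeo--Swoboda--Wei\ss--Witt in the regular case). The three ingredients you identify --- exponential smallness of the error $e_R$, uniform invertibility of the linearization $\mathcal{L}_R$ in suitable weighted spaces, and quadratic control on the remainder $Q_R$ --- are exactly what is needed, and your identification of the linearization estimate as the main obstacle (with the degeneration of $\mathrm{ad}\,\Phi$ at the zeros of $q$ forcing a careful choice of function spaces) is correct. In the parabolic setting of \cite{Fredrickson.2020} there is the additional complication that one also has to work near the poles of $q$, where the analysis requires the adapted-metric framework and the Friedrichs domain $\mathcal{D}^{0,\alpha}_{\mathrm{Fr}}(\nu)$ mentioned later in the paper; this is a genuine technical point beyond the regular case, but it does not alter the overall architecture you describe.
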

	
	Not only did FMSW show the existence of these approximate solutions and the correction mechanism. They also showed that the transformation is 
	exponentially suppressed in $R$.  These results are foundational for the work at hand as they seem to imply that other objects that were constructed 
	with the help of these solutions might as well inherit the structure of an explicit approximate solution plus some correctional term that is 
	exponentially suppressed. In order to really make use of the results one would however need a way of implementing it in an appropriate construction of 
	e. g. the hyperkähler metric of $\mathcal{M}$. As it turns out, the proposed construction by GMN allows for precisely such an implementation as we want 
	to show here.
	
	For all of this to work we will have to make heavy use of the explicitly known local forms of all the moving parts which will later be the ingredients of 
	our flat connection for the GMN construction. So our goal here is to write the ingredients of the twistor connection as a diagonal leading term plus some 
	remainder or "error"-term which behaves in a certain way when going into a parabolic point, as well as when considering large $R$.
	
	In the following $z$ will always be a local coordinate on $\mathcal{C}$ with $r:=|z|$ in any coordinate centered around a parabolic point. We start by 
	considering the local frame $(e_1,e_2)$ in which we would like to work. We are interested in frames that are adjusted to the Higgs field 
	$\Phi=\varphi dz$, so that $\varphi$ has diagonal form
	\[
		\varphi=\begin{pmatrix}f^{1/2}&0\\0&-f^{1/2}\end{pmatrix}
	\]
	for some square root $f^{1/2}$ of $f=-\det\varphi$.
	
	\begin{lemma}\label{diag_struc}
		Let $(\mathcal{E},\Phi)$ be a parabolic Higgs bundle with local Higgs field $\Phi=\varphi dz$ with determinant $\det\Phi=\det\varphi dz^2=-fdz^2$. In 
		any gauge in which $\varphi$ is diagonal, the 
		holomorphic structure splits, i. e. is also represented by a diagonal matrix.
	\end{lemma}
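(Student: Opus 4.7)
The plan is to translate the holomorphicity of the Higgs field into an algebraic condition on the $(0,1)$-part of the connection associated with $\overline{\partial}_E$, and then exploit the diagonal form of $\varphi$ to force that $(0,1)$-part to be diagonal as well.

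First I would express the condition $\overline{\partial}_E \Phi = 0$ in the chosen frame. Writing $\overline{\partial}_E = \overline{\partial} + A_{\overline{z}}\, d\overline{z}$ locally and $\Phi = \varphi\, dz$, the holomorphicity of $\Phi$ as a section of $\mathrm{End}(\mathcal{E})\otimes K_{\mathcal{C}}(D)$ is equivalent to
\[
    \partial_{\overline{z}}\varphi + [A_{\overline{z}},\varphi] = 0.
\]
Next, since $\det\Phi = -f\, dz^2$ is a holomorphic section of $K_{\mathcal{C}}^{2}(2D)$, the function $f$ is holomorphic in $z$ on coordinate patches. On any simply connected subset avoiding the (simple) zeros of $f$, a holomorphic branch of $f^{1/2}$ exists, so the diagonal entries $\pm f^{1/2}$ of $\varphi$ are themselves holomorphic and $\partial_{\overline{z}}\varphi = 0$. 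The holomorphicity equation then collapses to $[A_{\overline{z}}, \varphi] = 0$.

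Finally I would compute the commutator directly. Writing $A_{\overline{z}} = \begin{pmatrix} a & b \\ c & d \end{pmatrix}$, a short calculation gives
\[
    [A_{\overline{z}}, \varphi] = 2 f^{1/2} \begin{pmatrix} 0 & -b \\ c & 0 \end{pmatrix}.
\]
Because a frame diagonalizing $\varphi$ exists only where its two eigenvalues are distinct, one has $f^{1/2} \neq 0$ on the patch under consideration, which forces $b = c = 0$. Hence $A_{\overline{z}}$ is diagonal in the chosen frame, which is precisely the statement that $\overline{\partial}_E$ splits.

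I do not expect a substantive obstacle here. The only subtlety is that the entire argument is restricted to the locus where the two eigenvalues $\pm f^{1/2}$ of $\varphi$ are distinct, i.e.\ the complement of the zero locus of $f$ in each coordinate patch. At a zero of $\det\Phi$ the Higgs field is not diagonalizable and no such frame exists, but the lemma only concerns frames in which $\varphi$ actually is diagonal, so this restriction is intrinsic to the hypothesis rather than a genuine gap in the argument.
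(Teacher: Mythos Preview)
Your proof is correct and follows essentially the same route as the paper: both reduce $\overline{\partial}_E\Phi=0$ to $\partial_{\overline{z}}\varphi+[A_{\overline{z}},\varphi]=0$ and then read off from the off-diagonal entries that $b=c=0$ since $f^{1/2}\neq 0$. The only cosmetic difference is that the paper carries out the computation by applying the operator to test sections $s=(1,0)^t$ and $s=(0,1)^t$, whereas you isolate $\partial_{\overline{z}}\varphi=0$ first and then compute the commutator directly.
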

	
	\begin{proof}
		Assume $\varphi$ to be diagonal, i. e. of the form
		\[
		\varphi=\begin{pmatrix}f^{1/2}&0\\0&-f^{1/2}\end{pmatrix}
		\]
		for some square root $f^{1/2}$ of $f$. We now write the holomorphic structure locally as
		\[
			\overline{\partial}_E=\overline{\partial}+A_{\overline{z}}d\overline{z}=\overline{\partial}
			+\begin{pmatrix}\overline{a_1}&\overline{a_2}\\ \overline{a_3}&
			\overline{a_4}\end{pmatrix}d\overline{z},
		\]
		where $\overline{\partial}$ is the trivial holomorphic structure induced by the coordinate $z$. As $(\Phi,\overline{\partial}_E)$ is a Higgs bundle 
		it holds $\overline{\partial}_E\Phi=0$, i. e.
		\[
			0=\partial_{\overline{z}}\varphi+[A_{\overline{z}},\varphi].
		\]
		In some more detail: Let $s$ be any section, then it holds:
		\[
			\begin{split}
			0&=(\overline{\partial}_E\varphi)s=\overline{\partial}_E(\varphi s)-\varphi\overline{\partial}_E(s)
			=\partial_{\overline{z}}(\varphi s)+A_{\overline{z}}\varphi s-\varphi\partial_{\overline{z}}s-\varphi A_{\overline{z}}\\
			&=\begin{pmatrix}\partial_{\overline{z}}(f^{1/2}s_1)\\-\partial_{\overline{z}}(f^{1/2}s_2)\end{pmatrix}
			+\begin{pmatrix}\overline{a_1}&\overline{a_2}\\ \overline{a_3}&\overline{a_4}\end{pmatrix}\begin{pmatrix}f^{1/2}&0\\0&-f^{1/2}\end{pmatrix}s\\
			&-\begin{pmatrix}f^{1/2}&0\\0&-f^{1/2}\end{pmatrix}\begin{pmatrix}\partial_{\overline{z}}s_1\\\partial_{\overline{z}}s_2\end{pmatrix}
			-\begin{pmatrix}f^{1/2}&0\\0&-f^{1/2}\end{pmatrix}\begin{pmatrix}\overline{a_1}&\overline{a_2}\\ \overline{a_3}&\overline{a_4}\end{pmatrix}s\\
			&=\begin{pmatrix}(\partial_{\overline{z}}f^{1/2})s_1\\-(\partial_{\overline{z}}f^{1/2})s_2\end{pmatrix}
			+\begin{pmatrix}-2\overline{a_2}f^{1/2}s_2\\2\overline{a_3}f^{1/2}s_1\end{pmatrix}.
			\end{split}
		\]
		Plugging in $s=(1,0)$ and $s=(0,1)$ we obtain $\overline{a_2}=\overline{a_3}=0$.
	\end{proof}
	
	We now introduce the standard neighborhood on which most of the following local calculations take place, for which we recall the $ca$-triangulations of 
	section \ref{ca_triangulations}. We take $-q=\det\Phi$ and consider any generic angle $\vartheta$ i. e.  the corresponding $\vartheta$-triangulation has 
	no finite $\vartheta$-trajectories. For any edge of the triangulation we consider the two triangles that share this edge. They form a quadrilateral $Q$
	bounded by four different $\vartheta$-trajectories inside some bigger open patch $U\subset C$. The trajectories connect four different weakly parabolic 
	points and inside the quadrilateral are two (distinct) zeroes of $\det\phi$ (cf. figure \ref{quadrilateral}).
	
	Now we may take a simple closed curve in $U$ that touches all four weakly parabolic points and whose interior $O\subset U$ contains all four $\vartheta$-
	trajectories that bound the quadrilateral. Inside $O$ we may also take a simple closed curve whose interior $\widetilde{O}\subset O$ contains both zeroes of 
	$\det\Phi$. Then $S:=O\setminus\widetilde{O}$ is homeomorphic to an annulus, does not contain the zeroes or poles of $\det\Phi$, but does contain the 
	$\vartheta$-trajectories that form the boundary of the quadrilateral. 
	\begin{definition}
		Let $q$ be a generic differential and $\vartheta$ a generic angle. Any quadrilateral obtained from two triangles that share an edge shall be called a 
		\textit{standard quadrilateral} and any domain $S\in\mathcal{C}$ constructed in the way described above out of $Q$ shall be called 
		a \textit{standard annulus}\footnote{Although such a domain is homeomorphic to an annulus it has to be noted that as the $\vartheta$-trajectories form 
		logarithmic spirals near the parabolic points the whole domain forms such a spiral structure there. The important fact is, that the boundary of the 
		annulus can be taken between any such two spirals.} of the triangulation. In particular such a domain does not contain any of the critical points of 
		$-q$ 
		and any simple closed curve in $S$ surrounds either no or two zeroes of $-q$ in $Q$.
	\end{definition}
	Note that if we vary $-q=\det\Phi$ or the the angle $\vartheta$ of the $ca$-triangulation the poles, zeroes and curves may vary. If we do this continuously 
	depending on some parameter $\epsilon$ all the parts depend continuously on $-q$ and $\vartheta$ (except for the critical values for $\vartheta$). Thus 
	it is always possible to choose a domain that is a standard annulus for all $\epsilon$ in some interval.
	
	\begin{figure}[!htbp] 
		\centering
		\scalebox{0.4}{\includegraphics{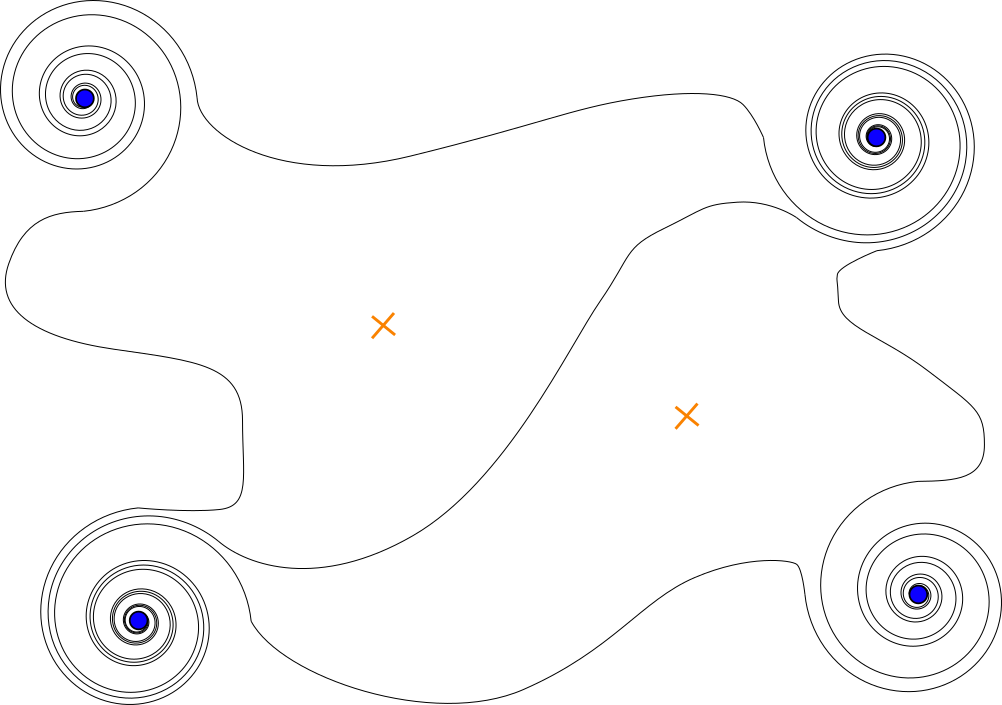}}
		\caption{A standard quadrilateral consisting of two triangles that share a common $ca$-trajectory. The blue dots represent weakly parabolic points and 
		the orange crosses represent zeroes of $-q$.}
		\label{quadrilateral}
	\end{figure}
	
	Thus defined we now establish that a standard annulus is a good working environment for our case. For the rest of this section a Higgs field $\Phi$ with 
	determinant $-q$ a generic differential and a generic angle $\vartheta$ shall be fixed.
	
	\begin{lemma}
		Let $\Phi=\varphi dz$ be a Higgs field on a standard quadrilateral $Q$ standard annulus $S$. Then there exists a (holomorphic) frame on $S$ in which 
		$\varphi$ is diagonal. 
	\end{lemma}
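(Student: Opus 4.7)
The plan is to produce a holomorphic frame diagonalising $\varphi$ directly, by splitting $\mc E|_S$ into the two eigenline bundles of $\varphi$. The crucial input is the defining topological feature of a standard annulus: the only zeros of $-q=\det\Phi$ inside the enclosing region $O$ lie in the interior set $\widetilde O$, so the function $f=-\det\varphi$ is nowhere vanishing on $S=O\setminus\widetilde O$. The first step is therefore to produce a single-valued holomorphic square root $f^{1/2}$ on $S$. Since $S$ is homeomorphic to an annulus, $\pi_1(S)\cong\Z$ is generated by a simple loop $\gamma$ encircling $\widetilde O$, which can be deformed to the boundary of $\widetilde O$. Each of the two simple zeros of $f$ inside $\widetilde O$ contributes a factor $-1$ to the monodromy of a local branch of $f^{1/2}$, so the total monodromy along $\gamma$ equals $(-1)^2=1$. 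Hence a single-valued holomorphic square root $f^{1/2}$ exists on all of $S$.

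Given $f^{1/2}$, the eigenvalues $\pm f^{1/2}$ of $\varphi$ are distinct at every point of $S$ and depend holomorphically on $z$. I would then define the eigenline bundles
\[
L_+:=\ker(\varphi-f^{1/2}\id_{\mc E}),\qquad L_-:=\ker(\varphi+f^{1/2}\id_{\mc E}).
\]
Each $L_\pm$ is the kernel of a holomorphic endomorphism of $\mc E|_S$ of constant rank one, hence a holomorphic line subbundle, and $\mc E|_S=L_+\oplus L_-$ as holomorphic vector bundles. Now $S$ is an open (non-compact) Riemann surface and is therefore Stein, so every holomorphic line bundle on $S$ is holomorphically trivial. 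Choosing nowhere-vanishing holomorphic sections $e_1\in\Gamma(S,L_+)$ and $e_2\in\Gamma(S,L_-)$ yields a holomorphic frame of $\mc E|_S$ in which $\varphi$ is represented by the diagonal matrix $\mathrm{diag}(f^{1/2},-f^{1/2})$, as required. Note that this is consistent with Lemma \ref{diag_struc}, which then guarantees that $\overline{\partial}_E$ is diagonal in the same frame.

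The only genuinely non-formal point is the monodromy computation in the first step, which depends essentially on the parity statement that any simple closed curve in $S$ encloses either zero or exactly two zeros of $-q$. If an odd number of zeros of $-q$ lay in $\widetilde O$, no single-valued square root would exist on $S$ and the splitting $\mc E|_S=L_+\oplus L_-$ could at best be constructed on a double cover; the trivialisation step, by contrast, is automatic from Stein theory once the splitting is in place. Thus all the work is hidden in the geometric design of the standard annulus, which was chosen precisely so that this parity obstruction vanishes.
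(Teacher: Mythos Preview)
Your proof is correct and follows essentially the same route as the paper: use the parity of enclosed zeros to get a single-valued $f^{1/2}$ on $S$, split $\mathcal{E}|_S$ into the two eigenline bundles, and trivialise each using that holomorphic line bundles on a non-compact Riemann surface are trivial. The only minor difference is that the paper establishes holomorphicity of the eigenline bundles by invoking Lemma~\ref{diag_struc} (diagonality of $\overline{\partial}_E$ in a diagonalising frame), whereas you argue directly that $\ker(\varphi\mp f^{1/2}\id)$ is holomorphic as the kernel of a constant-rank holomorphic endomorphism; both are valid and equivalent here.
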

	
	\begin{proof}
		Let $c:[a,b]\rightarrow C$ be a simple closed curve in $S$. Then $c$ is homotopic to a point or $c$ surrounds the two zeroes of $-q=\det\Phi$ in $Q$. 
		As $c$ can't surround only one zero a standard result of complex analysis shows that a well defined square root $q^{1/2}$ on the whole annulus can be 
		chosen. (This corresponds to choosing one sheet of the double cover by the spectral curve.) In every point $x$ in $S$ the eigenvalues $\pm q^{1/2}(x)$ 
		are distinct and as the square root is well defined they don't interchange. Thus we 
		obtain two disjoint eigenspaces $E_{x_+}$, $E_{x_-}$ in every point $x$ which form two distinct line bundles $E_+$, $E_-$ on $S$. Locally we can thus 
		choose a frame in which $\varphi$ is diagonal. As $\overline{\partial}_E(\Phi)=0$, the diagonality of $\varphi$ implies as of Lemma \ref{diag_struc} that 
		the holomorphic 
		structure $\overline{\partial}_E$ is diagonal, i. e. splits into a holomorphic structure on each line bundle. Thus $E_+$ and $E_-$ are holomorphic line 
		bundles. Since they are defined on the non-compact Riemann surface $S$ they have to be holomorphically trivial (see \cite{Forster.1981} p. 229). Thus 
		we can choose some non-vanishing sections over $S$ with respect to which $\varphi$ will be diagonal on all of $S$.
	\end{proof}
	
	In the following we compute the local forms of the Higgs field, its adjoint and the Chern connection of the holomorphic structure w. r. t. the Hermitian 
	metric $h_R$ on a standard annulus or subsets thereof. For these computations we first need to collect the information on $h_{R}$ from 
	\cite{Fredrickson.2020} which for simplicity we will just call $h:=h_{R}$ for the rest of the section keeping in mind that all the constructions are considered 
	for large enough $R$. So we start by choosing the approximate metric $h_0$, 
	which was defined in \cite{Fredrickson.2020} as $h_R^{app}$, as our background metric on $E$. In \cite{Fredrickson.2020} it is shown for 
	$\text{pdeg}(\mathcal{E})=0$ that 
	$h_0=h_R^{app}$ is adapted to the parabolic structure for weakly parabolic bundles and induces the fixed flat Hermitian structure 
	on Det$E$.\footnote{While it seems likely that their arguments can be adapted to case of pdeg$(\mathcal{E})\neq0$ we will constrain ourselves to case of pdeg$(\mathcal{E})=0$ where it is useful.} The metric is 
	defined in such a way that away from zeroes of $q$ the eigenspaces of $\Phi$ are orthogonal, thus $\Phi$ is represented by a diagonal matrix in any frame given by sections of 
	the eigenspaces such as the frame constructed above in which $\varphi$ is diagonal. In \cite{Fredrickson.2020} we also find local
	forms for $h_0$ near all the different exceptional points for $q$. Here we only note the holomorphic frame above can be rescaled into a frame
	$F^\ast:=(e^\ast_1,e^\ast_2)$ on $S$ 
	s. t. the matrix $H_0$ representing $h_0$ has the following form near the (double) poles $p_i$ of $q$:
	\[
		H_0=Q\begin{pmatrix}|z|^{2\alpha_1(p)}&0\\0&|z|^{2\alpha_2(p)}\end{pmatrix}.
	\]
	Here $\alpha_1,\alpha_2$ are the parabolic weights and $Q$ is some locally defined and 
	non-vanishing smooth function, completely determined by $h_{\text{Det}\mathcal{E}}$, the choice of holomorphic section of Det$\mathcal{E}$ and coordinate 
	$z$. Note that this metric coincides with $h_\infty$ in \cite{Fredrickson.2020}. For 
	our calculations it is convenient to change to an $h_0$-unitary frame near the $p_i$, so we define
	\[
		\tilde{e_1}=\frac{e^\ast_1}{Q^{1/2}|z|^{\alpha_1}},\quad\tilde{e}_2=\frac{e^\ast_2}{Q^{1/2}|z|^{\alpha_2}}.
	\]
	Then it holds $H_0=E_2$ in the frame $(\tilde{e}_1,\tilde{e}_2)$ near the $p_i$. 
	
	Away from poles or zeros of det$\Phi$ the metric is still diagonal in any holomorphic frame and has the general form
	\[
		H_0=\begin{pmatrix}f&0\\0&g\end{pmatrix}
	\]
	for some non-vanishing functions $f,g$. Thus via some diagonal transformation we obtain the local form $H_0=E_2$ everywhere away from zeros of det$\Phi$ in this frame $\tilde{F}$. 
	Note that scaling away $Q$ is not strictly necessary but rather simplifies a lot of the following expression. It is also convenient to make another ($h_0$-unitary) transformation to obtain a certain form for the matrix-coefficients representing the holomorphic structure $\overline{\partial}_E$.
	
	\begin{lemma}\label{s_frame}
		On the standard annulus $S$ there exists an $h_0$-unitary frame $F$ in which the Higgs field is diagonal as is the matrix representing the holomorphic structure $\overline{\partial}_E$. Additionally the Chern connection w. r. t. $h_0$ is represented by a $1$-form valued matrix $C$ with $dC=0$ for the standard differential
		$d=\partial+\overline{\partial}$.
	\end{lemma}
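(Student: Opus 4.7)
The strategy is to argue that the frame $\tilde{F}$ constructed in the discussion just before the statement already satisfies all the requirements on $S$, with the only genuine content being the flatness claim $dC=0$, which will come from the limiting equations.

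First I would record what $\tilde{F}$ already provides. By construction $\tilde{F}$ is $h_0$-unitary on $S$ (since $H_0=E_2$ there) and diagonalizes $\varphi$. The diagonality of $\overline{\partial}_E$ in $\tilde{F}$ follows from Lemma \ref{diag_struc}: in the original holomorphic frame $F^\ast$ the holomorphic structure is simply $\overline{\partial}$ with vanishing connection matrix, and $\tilde{F}$ is obtained from $F^\ast$ by a diagonal rescaling of the form $g=\mathrm{diag}(Q^{-1/2}|z|^{-\alpha_1},Q^{-1/2}|z|^{-\alpha_2})$, so the transformed matrix $g^{-1}\overline{\partial} g$ is again diagonal. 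Hence $\overline{\partial}_E$ in $\tilde{F}$ is of the form $\overline{\partial}+\mathrm{diag}(a_1,a_2)\,d\bar z$ for some local functions $a_1,a_2$.

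Next I would compute $C$. Compatibility with $h_0$ together with $H_0=E_2$ forces $C$ to be anti-Hermitian, and since $C^{0,1}=\mathrm{diag}(a_1,a_2)\,d\bar z$ has been fixed by the holomorphic structure, this determines
\[
C=\mathrm{diag}\bigl(a_1\,d\bar z-\bar a_1\,dz,\ a_2\,d\bar z-\bar a_2\,dz\bigr),
\]
which is again diagonal. In particular the commutator term in the curvature vanishes, and $F_C=dC+C\wedge C=dC$.

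The flatness $dC=0$ then follows from the limiting equations. On $S$, which contains neither zeros nor poles of $q=-\det\Phi$, the approximate metric $h_0=h_R^{\mathrm{app}}$ of \cite{Fredrickson.2020} agrees with the limiting metric $h_\infty$, because the gluing construction modifies $h_\infty$ only in small neighborhoods of the zeros of $q$. The limiting equation $F^\bot_{A_{h_\infty}}=0$, combined with the fact that in the $SL(2,\mathbb{C})$-setting the connection matrix is traceless so that $F^\bot=F$, gives $F_C=0$ on $S$, hence $dC=0$. The main technical obstacle is the precise identification $h_0=h_\infty$ on $S$: although $S$ by design avoids the zeros, one has to verify that the inner curve bounding $\widetilde{O}$ can be chosen close enough to the two zeros inside $Q$ that $S$ lies entirely outside the gluing disks of \cite{Fredrickson.2020}. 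This is always possible since those disks can be taken arbitrarily small, and once this is in place the remaining verification is the short computation above.
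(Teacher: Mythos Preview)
Your overall strategy—deduce $dC=0$ from the limiting equations rather than from an extra gauge transformation—is sound and in fact more transparent than the paper's route. You correctly note that for diagonal $C$ one has $C\wedge C=0$, so $dC$ is literally the curvature $F_C$, and that on $S$ the approximate metric $h_0$ agrees with the limiting metric $h_\infty$, so the decoupled equation $F^\perp_{A_{h_\infty}}=0$ is available.

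There is, however, a genuine gap in your passage from $F^\perp=0$ to $F=0$. The assertion that ``in the $SL(2,\mathbb{C})$-setting the connection matrix is traceless'' is false in this frame: as Remark~\ref{trace_prop} later makes explicit, the leading diagonal part of the Chern connection has nonzero trace $(\overline{a_1+a_2})\,d\bar z-(a_1+a_2)\,dz$, and this trace is actually used elsewhere (e.g.\ Lemma~\ref{transport_wedge}). What you need instead is that $\operatorname{tr}(F_C)=d(\operatorname{tr}C)$ vanishes, i.e.\ that the induced Chern connection on $\operatorname{Det}E$ is flat on $S$. The paper records precisely this (``$h_0\ldots$ induces the fixed flat Hermitian structure on $\operatorname{Det}E$'' for $\operatorname{pdeg}(\mathcal{E})=0$); once you invoke that fact in place of tracelessness, your argument closes.

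For comparison, the paper proceeds differently: it performs an additional diagonal unitary gauge transformation $g=\operatorname{diag}(e^{T_1},e^{T_2})$, with $T_i=-i\operatorname{Im}(A_i)$ built from $\bar\partial$-primitives of the $\tilde a_i$ on the non-compact surface $S$, and then asserts $dC=0$ by direct calculation in the new frame. Observe, though, that for diagonal $g$ one has $g^{-1}dg=\operatorname{diag}(dT_1,dT_2)$ and hence $d(g^{-1}dg)=0$, so $dC$ is unchanged by such a transformation; the flatness must already hold in $\tilde F$. Your limiting-equation argument (with the trace correction above) supplies exactly the missing justification.
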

	
	\begin{proof}
	Let $\tilde{F}$ be the aforementioned $h_0$-unitary frame. In this frame the Higgs field is diagonal and as of Lemma \ref{diag_struc} the holomorphic 
	structure is of the form
	\[
		\overline{\partial}_E=\overline{\partial}+\begin{pmatrix}\tilde{a}_1&0\\0&\tilde{a}_2\end{pmatrix}d\overline{z},
	\]
	for some differentiable functions $\tilde{a}_1$ and $\tilde{a}_2$. Again we use the fact that the standard annulus is a non-compact Riemann surface and thus there exist functions $A_1$ and $A_2$, s. t. $\overline{\partial}A_1=\tilde{a}_1d\overline{z}$ and $\overline{\partial}A_2=\tilde{a}_2d\overline{z}$ (see \cite{Forster.1981}, Theorem 25.6). With $T_1:=-i\text{Im}(A_1)$ and $T_2:=-i\text{Im}(A_2)$ define the (unitary) transition matrix
	\[
		g:=\begin{pmatrix}e^{T_1}&0\\0&e^{T_2}\end{pmatrix}.
	\]
	and let $F$ denote the frame obtained from this change of basis. It is still $h_0$-unitary, so $H_0=E_2$ while the holomorphic structure has the form
	\[
		\overline{\partial}_E=\overline{\partial}+A:=\overline{\partial}+\begin{pmatrix}\tilde{a}_1&0\\0&\tilde{a}_2\end{pmatrix}d\overline{z}+\begin{pmatrix}T_1&0\\0&T_2\end{pmatrix}d\overline{z}.
	\]
	As the frame is unitary, the matrix representing the Chern connection w. r. t. $h_0$ is simply $C:=A-\overline{A}^t$. A straight forward calculation now shows that $dC=0$.
	\end{proof}
	
	This specific choice of frame will be useful in section \ref{sec_small_sec} as it allows for a simple construction of the leading terms of the Fock-Goncharov-coordinates. In almost all of the following calculations we will work in this frame, so we denote it as our standard.
	
	\begin{definition}
		The $h_0$-unitary frame $F$ constructed in Lemma \ref{s_frame} on a standard annulus $S$ shall be called the \textit{standard frame} on $S$.
	\end{definition}
	
	\subsection{Local forms of Higgs pairs}\label{Hitchin_pair}
	
	We now want to describe the harmonic metric $h$ in the $h_0$-unitary frame $F$, which as of \cite{Fredrickson.2020} we obtain for large enough $R$ by the 
	action of a gauge transformation $g$ acting on $h_0$ via $h=g\cdot h_0(v,w)=h_0(gv,gw)$. In the chosen gauge we have $h_0(v,w)=\overline{v}^tH_0 w$ and 
	thus
		\[
			g\cdot h_0(v,w)=\overline{gv}^tH_0 gw=\overline{v}^t\overline{g}^tH_0 gw=\overline{v}^t\overline{g}^tgw.
		\]
		Let $H$ be the matrix representing $h$ in this gauge, i. e. $h(v,w)=\overline{v}^tHw$. Then, as $h=g\cdot h_0$ we obtain 
		$H=\overline{g}^tH_0g=\overline{g}^tg$. 
		Now from \cite{Fredrickson.2020} we have $g=\exp(\gamma)$ for $\gamma$ $h_0$-Hermitian and traceless, so $g$ is also $h_0$-Hermitian and has 
		determinant $1$. We write $g$ accordingly as
		\[
			g=:\begin{pmatrix}a&b\\ \overline{b}&d\end{pmatrix}
		\]
		with $a,d$ real-valued functions on $S$ and $ad-|b|^2=1$.  The behavior of the entries of $g$ is 
	determined by the behavior of the entries of $\gamma$ in the following way:
	
	\begin{lemma}\label{g_gamma}
		Let $\gamma$ be tracefree and $h_0$-hermitian with $D:=\det(\gamma)$. Then
		\[
			g=\exp(\gamma)=\cosh(\sqrt{-D})E_2+\tfrac{\sinh(\sqrt{-D})}{\sqrt{-D}}\gamma.
		\]
	\end{lemma}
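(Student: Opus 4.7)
The plan is to use the Cayley--Hamilton theorem to reduce the power series defining $\exp(\gamma)$ to a finite combination of $E_2$ and $\gamma$. Since $\gamma$ is a traceless $2\times 2$ matrix, its characteristic polynomial is $\lambda^2 + D$, so Cayley--Hamilton immediately yields the relation $\gamma^2 = -D \cdot E_2$. Note that the hypothesis of $h_0$-hermiticity is not actually needed for this algebraic identity; it is only the tracelessness and the $2\times 2$ shape that matter (hermiticity enters elsewhere in the paper to ensure $\gamma$ is diagonalizable and to make sense of the square root in the real-valued situation).

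From $\gamma^2 = -D \cdot E_2$ an easy induction gives the closed forms $\gamma^{2k} = (-D)^k E_2$ and $\gamma^{2k+1} = (-D)^k \gamma$ for all $k \geq 0$. Splitting the exponential series into even and odd parts then produces
\[
\exp(\gamma) = \sum_{k=0}^{\infty} \frac{\gamma^{2k}}{(2k)!} + \sum_{k=0}^{\infty} \frac{\gamma^{2k+1}}{(2k+1)!} = E_2 \sum_{k=0}^{\infty} \frac{(-D)^k}{(2k)!} + \gamma \sum_{k=0}^{\infty} \frac{(-D)^k}{(2k+1)!}.
\]
Recognizing these two series as the Taylor expansions of $\cosh(\sqrt{-D})$ and $\sinh(\sqrt{-D})/\sqrt{-D}$ respectively finishes the identification with the formula claimed in the lemma.

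There is essentially no hard step here, but two small points deserve care in the write-up. First, the expressions $\cosh(\sqrt{-D})$ and $\sinh(\sqrt{-D})/\sqrt{-D}$ are invariant under the choice of square root of $-D$ since $\cosh$ is even and $\sinh(x)/x$ is an even entire function of $x$; this means the formula is unambiguous even though $\sqrt{-D}$ is only defined up to a sign. Second, the case $D=0$ is not a genuine exception: the series $\sum_{k\geq 0}(-D)^k/(2k+1)!$ extends continuously (in fact entirely) through $D=0$ with value $1$, matching $\exp(\gamma) = E_2 + \gamma$ when $\gamma$ is nilpotent. So the formula holds uniformly in $D$ and no separate case analysis is required.
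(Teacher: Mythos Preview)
Your proof is correct and follows essentially the same approach as the paper: both derive $\gamma^2 = -D\,E_2$ and then split the exponential series into even and odd parts to recognise $\cosh$ and $\sinh$. The only cosmetic difference is that the paper obtains $\gamma^2 = -D\,E_2$ by writing out the hermitian matrix explicitly, whereas you invoke Cayley--Hamilton; your additional remarks on the sign ambiguity of $\sqrt{-D}$ and the degenerate case $D=0$ are a nice touch not present in the paper.
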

	
	\begin{proof}
		This is a standard calculation: As $\gamma$ is tracefree and $h_0$-hermitian we may write it as
		\[
			\gamma=\begin{pmatrix}l&o\\ \overline{o}&-l\end{pmatrix}
		\]
		for some real valued function $l$ and complex valued function $o$. Thus $D:=\det(\gamma)=-l^2-|o|^2\leq 0$ and $\gamma^2=-DE_2$. From this we obtain
		\[
		\begin{split}
			\exp(\gamma)&=\sum_{k=0}^\infty\frac{\gamma^k}{k!}=\sum_{k=0}^\infty\frac{\gamma^{2k}}{(2k)!}+\sum_{k=0}^\infty\frac{\gamma^{2k+1}}{(2k+1)!}\\
			&=\sum_{k=0}^\infty\frac{(-D)^kE_2}{(2k)!}+\sum_{k=0}^\infty\frac{(-D)^k\gamma}{(2k+1)!}
			=\sum_{k=0}^\infty\frac{\sqrt{-D}^{2k}}{(2k)!}E_2+\sum_{k=0}^\infty\frac{\sqrt{-D}^{2k}}{(2k+1)!}\gamma\\
			&=\sum_{k=0}^\infty\frac{\sqrt{-D}^{2k}}{(2k)!}E_2+\frac{1}{\sqrt{-D}}\sum_{k=0}^\infty\frac{\sqrt{-D}^{2k+1}}{(2k+1)!}\gamma
			=\cosh(\sqrt{-D})E_2+\frac{\sinh(\sqrt{-D})}{\sqrt{-D}}\gamma.
		\end{split}
		\]
	\end{proof}
		
		This Lemma relates the properties of $\gamma$ to the properties of $g$, from which we obtain the necessary information about $H$ as we have:
		\[
			H=\overline{g}^tg=\begin{pmatrix}a^2+|b|^2&(a+d)b\\(a+d)\overline{b}&d^2+|b|^2\end{pmatrix}
			=:\begin{pmatrix}\alpha&\beta\\ \overline{\beta}&\delta\end{pmatrix}.
		\]
	
	We will use the following observation in some calculations later on:
	
	\begin{lemma}\label{H_det_1}
		In the standard frame $F$ on the standard annulus the matrix $H$ that represents the Hermitian harmonic metric $h$ has determinant $1$.
	\end{lemma}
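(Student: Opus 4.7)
The plan is to observe that in the standard frame $F$, the background metric satisfies $H_0 = E_2$ (by the very construction of $F$ in Lemma \ref{s_frame}), so that $H = \overline{g}^t H_0 g = \overline{g}^t g$. Since $\det(H) = \det(\overline{g}^t)\det(g) = |\det(g)|^2$, it suffices to show that $\det(g) = 1$.

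The cleanest route is to use the fact recalled just before the statement, namely that $g = \exp(\gamma)$ with $\gamma$ traceless (and $h_0$-Hermitian). The standard identity $\det(\exp(\gamma)) = \exp(\mathrm{tr}\,\gamma)$ then immediately gives $\det(g) = e^0 = 1$, hence $|\det(g)|^2 = 1$.

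Alternatively, and more concretely consistent with the notation introduced in the excerpt, one can simply verify $\det(H) = 1$ from the explicit entries. Writing
\[
    H = \begin{pmatrix}\alpha & \beta \\ \overline{\beta} & \delta\end{pmatrix} = \begin{pmatrix}a^2+|b|^2 & (a+d)b \\ (a+d)\overline{b} & d^2+|b|^2\end{pmatrix},
\]
a short expansion gives
\[
    \alpha\delta - |\beta|^2 = (a^2+|b|^2)(d^2+|b|^2) - (a+d)^2|b|^2 = (ad-|b|^2)^2,
\]
and the relation $ad - |b|^2 = 1$ (already recorded in the excerpt as the condition $\det g = 1$ following from $\mathrm{tr}\,\gamma = 0$ via Lemma \ref{g_gamma}) closes the argument.

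There is no real obstacle here; the only thing one must be careful about is making sure the background metric is truly the identity in the chosen frame, which is precisely the content of the preceding construction of $F$, and that $\gamma$ from \cite{Fredrickson.2020} is indeed traceless, which is used in setting up the parameterization $g = \exp(\gamma)$.
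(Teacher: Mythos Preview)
Your proof is correct and matches the paper's approach exactly: the paper's proof is a one-line reference to the preceding discussion, which already recorded that $H=\overline{g}^t g$ in the standard frame and that $g=\exp(\gamma)$ with $\gamma$ traceless implies $\det g=1$. Your explicit verification via the entries of $H$ is a harmless elaboration of the same idea.
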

	
	\begin{proof}
		This follows readily from the discussion of the local form above as $g$ has determinant $1$.
	\end{proof}
	
	We are interested in the decaying behavior of the entries of $H$ near weakly parabolic points and we will also need the behavior of the entries of $H^{-1}\partial H$. Using \ref{H_det_1} we obtain
	\[
		H^{-1}\partial H=\begin{pmatrix}\delta\partial\alpha-\beta\partial\overline{\beta}&\partial(\beta\delta)\\-\partial(\alpha\overline{\beta})&-(\delta\partial\alpha-\beta\partial\overline{\beta})\end{pmatrix}=:\begin{pmatrix}h_1&h_2\\h_3&-h_1\end{pmatrix}.
	\]
	We now obtain all the information from the properties of $\gamma$ which were established in \cite{Fredrickson.2020}.
	
	\begin{lemma}\label{g_asymptotics}
		There exists a local coordinate $z$ centered around any weakly parabolic point, such that the following asymptotics hold for the functions defined 
		above (with $r:=|z|$) for some $\mu>0$ and all $1\leq i\leq3$:
		\begin{align*}
			a,d,\alpha,\delta=O(1), \quad b,\beta=O(r^\mu), \quad h_i=O(r^{\mu-1}).
		\end{align*}
		Furthermore, if $\epsilon$ denotes a $C^2$-coordinate of $\mathcal{M}'$, then the derivatives w. r. t. $\epsilon$ exist and have the same asymptotics:
		\begin{align*}
			\partial_\epsilon a,\enspace\partial_\epsilon d,\enspace\partial_\epsilon\alpha,\enspace\partial_\epsilon\delta&=O(1),
			 & \quad\partial_\epsilon b,\enspace\partial_\epsilon\beta\enspace&=O(r^\mu),\\
			\partial^2_\epsilon a,\enspace\partial^2_\epsilon d,\enspace\partial^2_\epsilon\alpha,\enspace\partial^2_\epsilon\delta&=O(1),
			 & \quad\partial^2_\epsilon b,\enspace\partial^2_\epsilon\beta&=O(r^\mu),\\
			 \partial_\epsilon h_i,\partial^2_\epsilon h_i&=O(r^{\mu-1}).
		\end{align*}
		Additionally there exist constants $C,\nu>0$ s. t.
		\[
			|r^{-\mu}b|,|r^{-\mu}\beta|,|r^{1-\mu}h_i|\leq Ce^{-\nu R}\quad\text{and}\quad |r^{1-\mu}\partial_\epsilon h_i|\leq Ce^{-\nu R}
		\]
		for large enough $R$.
	\end{lemma}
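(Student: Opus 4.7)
The plan is to reduce everything to the fundamental bounds on $\gamma$ established by FMSW in \cite{Fredrickson.2020} and then pass through the explicit formulas of Lemma \ref{g_gamma} and the definitions of $\alpha,\beta,\delta,h_i$ by straightforward calculus. The starting point is that $\gamma$ is tracefree and $h_0$-Hermitian, hence of the form $\gamma=\begin{pmatrix}l&o\\\bar o&-l\end{pmatrix}$ with $D=\det\gamma=-l^2-|o|^2\leq 0$. FMSW provide, near every weakly parabolic point and in a suitable coordinate $z$, the asymptotic bounds $l=O(1)$, $o=O(r^\mu)$ (for some $\mu>0$ determined by the parabolic weights), together with the corresponding bounds for the first and second moduli derivatives, and the exponential decay $|l|,|r^{-\mu}o|\leq Ce^{-\nu R}$ (with analogous bounds for the derivatives). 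These are exactly the results on which the argument will rest.

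Given these inputs, I would first apply Lemma \ref{g_gamma} to obtain the entries of $g$ explicitly: $a=\cosh(\sqrt{-D})+\tfrac{\sinh(\sqrt{-D})}{\sqrt{-D}}l$, $d=\cosh(\sqrt{-D})-\tfrac{\sinh(\sqrt{-D})}{\sqrt{-D}}l$, and $b=\tfrac{\sinh(\sqrt{-D})}{\sqrt{-D}}o$. Since $\sqrt{-D}=O(e^{-\nu R})$ pointwise, the functions $\cosh(\sqrt{-D})$ and $\sinh(\sqrt{-D})/\sqrt{-D}$ are $1+O(e^{-2\nu R})$ and in particular bounded, so $a,d=O(1)$ and $b=O(r^\mu)$ with $|r^{-\mu}b|\leq Ce^{-\nu R}$ follows. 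From $\alpha=a^2+|b|^2$, $\delta=d^2+|b|^2$ and $\beta=(a+d)b$ the asymptotics and the exponential decay propagate immediately to $\alpha,\delta,\beta$.

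Next, for the coefficients $h_i$ of $H^{-1}\partial H$, I would use Lemma \ref{H_det_1} ($\det H=1$) to rewrite $H^{-1}\partial H$ as in the display preceding the statement, so that each entry is a polynomial in $\alpha,\beta,\delta$ and their $\partial$-derivatives. A single application of Leibniz then shows that the only way to lose a power of $r$ is by differentiating an $r^\mu$ factor, which produces exactly the $r^{\mu-1}$ claimed. The exponential decay is preserved because differentiation in $z$ commutes with the $R$-independent coordinate and the FMSW bounds control both $\partial_z l$ and $\partial_z o$ uniformly in $R$ via their Hölder/$C^1$-estimates on $\gamma$. Finally, the moduli-derivative statements follow by differentiating the closed-form expressions above and observing that $\partial_\epsilon D$, $\partial_\epsilon l$, $\partial_\epsilon o$ and their second $\epsilon$-derivatives obey the very same spatial asymptotics and $R$-exponential bounds as $l,o$ themselves, again by FMSW.

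The main obstacle is the last step: although FMSW state the required differentiable dependence on moduli for $\gamma$ (and its derivatives) with uniform exponential bounds in $R$, one has to check that their estimates are compatible with the simultaneous $\partial_z$-differentiation that enters the $h_i$. In other words, what is genuinely needed is a mixed bound of the form $|r^{1-\mu}\partial_\epsilon\partial_z\gamma|\leq Ce^{-\nu R}$, and one must verify that this is indeed contained in the statements of \cite{Fredrickson.2020} (or extract it from the elliptic regularity argument used there). Once this compatibility is in place, all the asserted bounds follow by the elementary algebra outlined above.
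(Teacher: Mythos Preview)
Your proposal is correct and follows essentially the same route as the paper: extract the asymptotics and exponential suppression of the entries of $\gamma$ from \cite{Fredrickson.2020}, push them through Lemma~\ref{g_gamma} to $g$, then algebraically to $H$ and $H^{-1}\partial H$. The paper phrases the input slightly differently, invoking that $\gamma$ lies in the Friedrichs domain $\mathcal{D}_{\text{Fr}}^{0,\alpha}(\nu)$ (which encodes the partial expansion with diagonal leading term), but the content is the same. The obstacle you flag at the end---the need for a mixed bound $|r^{1-\mu}\partial_\epsilon\partial_z\gamma|\leq Ce^{-\nu R}$---is exactly what the paper resolves by citing that the first two polar derivatives of $\gamma$ also lie in the Friedrichs domain, again from \cite{Fredrickson.2020}.
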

	
	\begin{proof}
		One important result in \cite{Fredrickson.2020} is that the gauge transformation $\gamma$ that transforms the 
		approximate solution of the 
		Hitchin equations lies in the Friedrichs domain $\mathcal{D}_{\text{Fr}}^{0,\alpha}(\nu)$. In particular $\gamma$ has a partial expansion in 
		(non-negative) powers of $r$, where the first term is a constant diagonal matrix. This gives the first
		asymptotics for the entries of $\gamma$ by definition of the domain .
		
		The $\epsilon$-differentiability for the entries of $\gamma$ follows from an appropriate enhancement of the application of the inverse function 
		theorem that shows the existence of $\gamma$. The asymptotics for the $\epsilon$-derivatives follow readily as the $r$-dependence in 
		$\mathcal{D}_{\text{Fr}}^{0,\alpha}(\nu)$ is independent of the $\epsilon$-dependence.
		
		Finally the exponential suppression is also an explicit part of the result of \cite{Fredrickson.2020}.
		
		This shows all the conditions for the entries of $\gamma$. As $\det(\gamma)$ is well defined and non-zero on the standard annulus it follows from 
		Lemma \ref{g_gamma} that the properties also hold for the entries of $g$ and correspondingly for $H$. The properties for the $h_i$ follow in the same 
		way, using the fact, that the first two polar derivatives also lie in the corresponding domain as shown in \cite{Fredrickson.2020}.
	\end{proof}
	
	With this information we can now calculate the rest of the protagonists of our construction.
	
	\begin{corollary}
		In the standard frame $F$ on the standard annulus $S$ the adjoint Higgs field (w. r. t. the harmonic metric $h$) can be written as
		\[
			\Phi^{\ast_h}=\left(\overline{f^{1/2}}\sigma^3
			+\overline{f^{1/2}}\varphi^{er}\right)d\overline{z}
		\]
		for $\varphi^{er}$ some traceless matrix whose entries are $O(r^{\mu})$ for some $\mu>0$ near any weakly parabolic point.
	\end{corollary}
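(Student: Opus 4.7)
The plan is a direct matrix computation exploiting the standard frame. In the standard frame $F$ the Higgs field is represented by the diagonal matrix $\varphi = f^{1/2}\sigma^3$, and the harmonic metric $h$ corresponds to the matrix $H=\overline{g}^t g$ with $\det H = 1$ by Lemma \ref{H_det_1}. Since the adjoint $\Phi^{\ast_h}$ relative to a local frame is computed from the defining identity $h(\Phi v, w) = h(v, \Phi^{\ast_h}w)$, its matrix representation is $\varphi^{\ast_h} = H^{-1}\overline{\varphi}^t H$, so I would begin by writing this down explicitly and noting that $\overline{\varphi}^t = \overline{f^{1/2}}\sigma^3$.

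Next I would use the parametrisation $H=\begin{pmatrix}\alpha & \beta \\ \overline{\beta} & \delta\end{pmatrix}$ with $\alpha\delta-|\beta|^2=1$ to invert $H$ and multiply out. A short calculation gives
\[
\varphi^{\ast_h} = \overline{f^{1/2}}\begin{pmatrix} \alpha\delta+|\beta|^2 & 2\beta\delta \\ -2\alpha\overline{\beta} & -(\alpha\delta+|\beta|^2)\end{pmatrix},
\]
and substituting $\alpha\delta+|\beta|^2 = 1+2|\beta|^2$ separates the expression into the diagonal leading term $\overline{f^{1/2}}\sigma^3$ and the traceless remainder
\[
\overline{f^{1/2}}\varphi^{er} := \overline{f^{1/2}}\begin{pmatrix} 2|\beta|^2 & 2\beta\delta \\ -2\alpha\overline{\beta} & -2|\beta|^2 \end{pmatrix}.
\]

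To conclude, I would appeal to the asymptotics of Lemma \ref{g_asymptotics}: near a weakly parabolic point one has $\alpha,\delta = O(1)$ and $\beta = O(r^\mu)$, hence the off-diagonal entries of $\varphi^{er}$ are of order $r^\mu$, while the diagonal entries are of order $r^{2\mu}$, which is in particular $O(r^\mu)$ for some $\mu > 0$. Tracelessness of $\varphi^{er}$ is manifest from the formula. There is no real obstacle here: the only thing to be careful about is choosing the correct convention for the matrix of the adjoint (so that the right conjugation pattern with $H$ appears) and keeping track of the fact that $\det H = 1$, which is what prevents an extra factor from being absorbed into the leading term.
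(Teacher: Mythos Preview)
Your proof is correct and essentially identical to the paper's: the same formula $\varphi^{\ast_h}=H^{-1}\overline{\varphi}^tH$ is computed explicitly using $\det H=1$, then split into $\overline{f^{1/2}}\sigma^3$ plus the traceless remainder $2\overline{f^{1/2}}\begin{pmatrix}|\beta|^2&\delta\beta\\-\alpha\overline{\beta}&-|\beta|^2\end{pmatrix}$, with the $O(r^\mu)$ bound read off from Lemma~\ref{g_asymptotics}.
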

	
	\begin{proof}
		Using \ref{H_det_1} the adjoint Higgs field $\Phi^{\ast_h}=\varphi^{\ast_h}d\overline{z}$ is calculated as
		\[
			\begin{split}
			\varphi^{\ast_h}&=H^{-1}\overline{\varphi}^tH
			=\begin{pmatrix}\delta&-\beta\\-\overline{\beta}&\alpha\end{pmatrix}\begin{pmatrix}\overline{f^{1/2}}&0\\0&-\overline{f^{1/2}}\end{pmatrix}
			\begin{pmatrix}\alpha&\beta\\ \overline{\beta}&\delta\end{pmatrix}\\
			&=\overline{f^{1/2}}\begin{pmatrix}\alpha\delta+|\beta|^2&2\delta\beta\\-2\alpha\overline{\beta}&-\alpha\delta-|\beta|^2\end{pmatrix}
			=\begin{pmatrix}\overline{f^{1/2}}&0\\0&-\overline{f^{1/2}}\end{pmatrix}
			+2\overline{f^{1/2}}\begin{pmatrix}|\beta|^2&\delta\beta\\-\alpha\overline{\beta}&-|\beta|^2\end{pmatrix}.
			\end{split}
		\]
		Thus the adjoint Higgs field consists of a diagonal leading term plus a remainder whose entries are all at least linear in $|\beta|$. As of Lemma 
		\ref{g_asymptotics} the remainder behaves in specified way.
	\end{proof}
	
	The final ingredient is the Chern connection for $\overline{\partial}_E$ for the harmonic metric $h$, which we also want to describe with a "simple" leading term and a "small" remainder.
	
	\begin{corollary}\label{trace_away}
		In the frame standard frame $F$ on the standard annulus $S$ the Chern connection $A$ 
		(w. r. t. the harmonic metric $h$) of $\overline{\partial}$ can be expressed as
		\[
			A=d+\begin{pmatrix}\overline{a_1}d\overline{z}-a_1dz&0\\0&\overline{a_2}d\overline{z}-a_2dz\end{pmatrix}+(a_2-a_1)A^{er}dz+H^{-1}\partial H,
		\]
		where $d=\partial+\overline{\partial}$ is the standard differential and $A^{er}$ is some traceless matrix whose entries are $O(r^{\mu})$ 
		for some $\mu>0$ near any parabolic point.
	\end{corollary}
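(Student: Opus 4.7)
The plan is to compute the Chern connection directly from the defining compatibility condition and read off the claimed decomposition. Recall that for a holomorphic structure written locally as $\overline{\partial}_E=\overline{\partial}+A$ with $A$ a matrix of $(0,1)$-forms, and a Hermitian metric represented by the matrix $H$, the Chern connection is the unique connection $D=d+A+B$ with $B$ a matrix of $(1,0)$-forms satisfying $dH=\overline{A}^t H+\overline{B}^t H+HA+HB$. Splitting into $(1,0)$ and $(0,1)$ parts gives $\partial H=\overline{A}^t H+HB$, hence the standard formula
\[
	B=H^{-1}\partial H-H^{-1}\overline{A}^t H.
\]
So I would begin by recording this identity and then substitute the ingredients provided by Lemma~\ref{s_frame} and Lemma~\ref{H_det_1}.

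In the standard frame $F$ on $S$, Lemma~\ref{s_frame} guarantees that $A$ is diagonal, so in the notation of the corollary $A=\mathrm{diag}(\overline{a_1},\overline{a_2})\,d\overline{z}$ and $\overline{A}^t=\mathrm{diag}(a_1,a_2)\,dz$. Next I would insert the explicit form $H=\begin{pmatrix}\alpha&\beta\\\overline{\beta}&\delta\end{pmatrix}$ with $\det H=1$ by Lemma~\ref{H_det_1}, so $H^{-1}=\begin{pmatrix}\delta&-\beta\\-\overline{\beta}&\alpha\end{pmatrix}$, and carry out the conjugation $H^{-1}\overline{A}^t H$ as an elementary $2\times 2$ matrix product. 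Using the relation $\alpha\delta-|\beta|^2=1$ to simplify the diagonal entries, this conjugation cleanly splits as
\[
	H^{-1}\mathrm{diag}(a_1,a_2)H=\mathrm{diag}(a_1,a_2)+(a_1-a_2)\begin{pmatrix}|\beta|^2&\beta\delta\\-\alpha\overline{\beta}&-|\beta|^2\end{pmatrix}.
\]
The diagonal piece $\mathrm{diag}(a_1,a_2)\,dz$ combines with $A=\mathrm{diag}(\overline{a_1},\overline{a_2})\,d\overline{z}$ into the first matrix of the claimed formula, and the remaining off-diagonal contribution to $-H^{-1}\overline{A}^t H$ produces the factor $(a_2-a_1)A^{er}\,dz$ with
\[
	A^{er}=\begin{pmatrix}|\beta|^2&\beta\delta\\-\alpha\overline{\beta}&-|\beta|^2\end{pmatrix}.
\]
Tracelessness is manifest, and what remains is the term $H^{-1}\partial H$ already appearing on the right.

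To finish I would invoke Lemma~\ref{g_asymptotics} to establish the decay of $A^{er}$: there $\beta=O(r^\mu)$ while $\alpha,\delta=O(1)$, which immediately gives that each entry of $A^{er}$ is $O(r^\mu)$ near every weakly parabolic point, as required. I do not expect a real obstacle here: the only point that requires any care is keeping track of which factors are $(1,0)$- versus $(0,1)$-forms when taking $\overline{A}^t$, and correctly using $\det H=1$ to carry out the conjugation so that the diagonal of $H^{-1}\overline{A}^t H$ reduces to $\mathrm{diag}(a_1,a_2)$ plus a manifestly small off-diagonal perturbation. Once this algebraic identity is in place, the claimed splitting and the asymptotics follow at once.
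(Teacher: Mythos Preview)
Your proposal is correct and follows essentially the same route as the paper. Both arguments write the Chern connection as $d+A_{\overline{z}}d\overline{z}-H^{-1}\overline{A_{\overline{z}}}^{t}H\,dz+H^{-1}\partial H$, perform the identical $2\times 2$ conjugation using $\det H=1$ to split off $\mathrm{diag}(a_1,a_2)$ plus the traceless perturbation $A^{er}$, and then appeal to the asymptotics $\beta=O(r^{\mu})$, $\alpha,\delta=O(1)$ from Lemma~\ref{g_asymptotics}; the only cosmetic difference is that you spell out the compatibility derivation of the $(1,0)$-part, whereas the paper summarizes this as ``a direct calculation using that $H$ is $h_0$-hermitian.''
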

	
	\begin{proof}
		By construction the holomorphic structure in the standard frame $F$ has the form
		\[
			\overline{\partial}_E=\overline{\partial}+A_{\overline{z}}
			=\overline{\partial}+\begin{pmatrix}\overline{a_1}&0\\0&\overline{a_2}\end{pmatrix}d\overline{z},
		\]
		for some function $a_1,a_2$. A direct calculation using that $H$ is $h_0$-hermitian shows that the Chern connection $d_A=d+A$ with respect to $h$ is given as 
		\[
			d+A=d+A_{\overline{z}}d\overline{z}-A_{\overline{z}}^{\ast_h}dz+H^{-1}\partial H.
		\]
		So we need to calculate $A_{\overline{z}}^{\ast_h}$, which becomes
		\[
		\begin{split}
			A_{\overline{z}}^{\ast_h}&=H^{-1}\overline{A_{\overline{z}}}H
			=\begin{pmatrix}\delta&-\beta\\-\overline{\beta}&\alpha\end{pmatrix}\begin{pmatrix}a_1&0\\0&a_2\end{pmatrix}
			\begin{pmatrix}\alpha&\beta\\ \overline{\beta}&\delta\end{pmatrix}dz\\
			&=\begin{pmatrix}a_1\alpha\delta-a_2|\beta|^2&(a_1-a_2)\delta\beta
			\\-(a_1-a_2)\alpha\overline{\beta}&a_2\alpha\delta-a_1|\beta|^2\end{pmatrix}dz
			=\begin{pmatrix}a_1&0\\0&a_2\end{pmatrix}+(a_1-a_2)\begin{pmatrix}|\beta|^2&\delta\beta
			\\-\alpha\overline{\beta}&-|\beta|^2\end{pmatrix}dz,
		\end{split}
		\]
		where we used \ref{H_det_1}. Thus the whole connection becomes
		\[
		d_A=d+\begin{pmatrix}\overline{a_1}d\overline{z}-a_1dz&0\\0&\overline{a_2}d\overline{z}-a_2dz\end{pmatrix}
		+(a_2-a_1)\begin{pmatrix}|\beta|^2&\delta\beta\\-\alpha\overline{\beta}&-|\beta|^2\end{pmatrix}dz+H^{-1}\partial H.
		\]
		Again the structure is that of a diagonal leading term and an remainder term whose entries depend at least linearly on $|\beta|$ and thus have the specified 
		decaying behavior.
	\end{proof}
	
	Note that the factor $H^{-1}\partial H$ should also be considered as part of the remainder term as it is part of the deviation from the diagonal leading 
	term. It consists only of data coming from the harmonic metric and thus only depends indirectly (via the solution of Hitchin's equation) on the Higgs 
	field and holomorphic structure. Most importantly it also has the pole-structure we need: In \ref{g_asymptotics} it was shown that the $h_i$ are 
	$O(r^{\mu-1})$ while $b$ and $\beta$ are $O(r^{\mu})$. This matches in the emerging formula in section \ref{const_on_mod}, as the other remainder terms for the Higgs 
	field and the Chern connection obtain a pole of order $1$ through $\overline{f^{1/2}}$ and $a_2-a_1$.
	
	\begin{remark}\label{trace_prop}
		We have obtained the structure of a leading term plus a "small" remainder for both the data from the Higgs field $\Phi$ and from the connection $A$ which 
		will together form the connection matrix for a connection $\nabla$ in the next section. The remainder
		$\overline{f^{1/2}}\varphi^{er}d\overline{z}+(a_2-a_1)A^{er}dz+H^{-1}\partial H$ of this matrix is traceless which is seen directly for the parts 
		$\varphi^{er}$ and $A^{er}$ and follows for $H^{-1}\partial H$ by a simple calculation from the fact that $H$ has constant determinant.
		
		As $\varphi$ is traceless by definition of our moduli space the only part that isn't comes from the leading part of the Chern connection $A$. One could 
		arrange for this part to also be traceless by shifting the parabolic weights, which simplifies some of the following calculations and which is actually 
		the form \cite{Gaiotto.2013} use. We don't do this here as we follow the set-up of \cite{Fredrickson.2020}. Additionally this shows that no specific 
		choice of weights is necessary. However, thinking of $A$ as traceless leads to more convenient formulas and our treatment actually shows that one can 
		think of these objects in this way without loosing any important structure.
	\end{remark}
	
	We have now obtained all of the data that we need for the following constructions and would like to compute the Darboux coordinate system as described in 
	section \ref{sec_dec}. That is, we can construct $\nabla(\zeta)=\tfrac{R}{\zeta}\Phi+A+R\zeta\Phi^{\ast_h}$ for $\zeta\in\mathbb{C}^\times$ and try to solve the flatness equation 
	$\nabla(\zeta)v=0$, where 
	we observe the aforementioned structure of a diagonal leading term and a "small" remainder. As it turns out it is possible to solve this equation along 
	$ca$-curves but to do so in a proper way, one needs some non-standard theory for singular ODEs which we will develop in the next section.

\section{Initial value problems at infinity}\label{init_prob}

	The crucial ingredient in our construction are the so called "small flat sections" obeying the flatness equation of a connection build out of a Higgs 
	pair. This connection, when trivialized 
	along a $ca$-curve, becomes a $2\times2$ non-autonomous linear ODE. The small flat sections are supposed to be solutions of this ODE that decay in a 
	certain way as $t\rightarrow\infty$. Thus we want to solve an ODE uniquely (up to a complex multiple) be demanding a certain behaviour at $\infty$, which 
	differs substantially from the usual boundary value problems where the boundary value is given at a finite time. This becomes a problem as we want to 
	differentiate the solutions with respect to a parameter and the standard theory requires a boundary value or initial value at some finite time. It turns 
	out, that to solve this problem the theory of Volterra integral equations is helpful, as they encode the boundary value in an explicit term of the 
	equation. This allows for a rigorous definition of the small flat section and furthermore exploration of their differentiable behavior. Therefore we will 
	use this section to study these types of equations. We will completely ignore the origin of the problem in the theory of Higgs bundles in this section 
	and instead develop the theory in its own purely analytical setting, which may also be of interest in its own right and for completely different 
	applications.
	
	Before we start let us briefly summarize, what is known for this sort of equations. Generally a linear Volterra equation of the second kind is an 
	equation of the form
	\begin{equation}\label{finite}
		x(t)=a(t)-\int_b^tK(t,s)x(s)ds.
	\end{equation}
	which has to be solved for some function $x$ which may take values in any vector space for which the operations make sense. The "second kind" refers to 
	the existence of the $a(t)$ term, which takes the aforementioned role of the initial value as one can see by evaluating $x$ at $t=b$. The term $K$ is the 
	so called integral kernel, which is the main object of study to determine whether solutions exists, are smooth and so on.
	
	The equations we consider however are different in the important detail, that the integral is evaluated up to infinity:
	\begin{equation}\label{infinite}
		x(t)=a(t)-\int_t^\infty K(t,s)x(s)ds.
	\end{equation}
	Now in this equation $a$ becomes the "boundary value at infinity". We know form the work of Levinson \cite{Levinson.1948} that in some cases a specific 
	solution can be constructed, but as one might guess, the general theory becomes a bit more intricate mostly because we can't work on compact intervals. 
	One article studying this problem is \cite{Franco.2008} by D. Franco and D. O'Regan. They show the existence of solutions for specific kernels  
	but mostly consider kernels with some periodic behavior that we don't assume here. Additionally they do not consider the differentiable dependence on 
	parameters.
	
	A better guiding light is instead the article \cite{Windsor.2010} by A. Windsor. He studies equations of the finite form \eqref{finite} and proves 
	existence, continuity and smoothness of the solutions with respect to parameters. Most of our arguments are adaptations of his treatment to our case, as is the general structure of this section. But 
	quite some work has to be done to make the theory work for the unbounded intervals on which the integral lives. This becomes most apparent when it 
	comes to the differentiable dependence, where the assumptions on the integral kernel are quite strong and in fact stronger, then one might deem 
	necessary, i. e. we have to demand that $K$ and $a$ are two times differentiable to obtain one time differentiability for our solution. We will also need 
	a product structure on the kernel that exists in our special case as the ODE from which the integral equation arises has a leading part and a bounded 
	remainder term. The product structure then arises as we transform the ODE into the integral equation. This is quite a special structure but it may appear in 
	different settings and the theory we develop should be worth looking at whenever an ODE splits into a well behaved leading part and some small 
	perturbation.

	\subsection{Existence and uniqueness}
	We want to start with proving the unique existence of a solution of equations of the type \eqref{infinite}. Our main tool for this is the  
	Banach-Caccioppoli
	contraction mapping principle, which we will also need for the smooth dependence.
	\begin{theorem}[Contraction mapping principle]\label{contraction}
		Let $(X,d)$ be a complete metric space and $P:X\rightarrow X$ be a contraction mapping, i. e. there exists $0\leq\lambda<1$ s. t. 
		\[
			d(P(x),P(y))\leq\lambda d(x,y)
		\]
		for all $x,y\in X$.\\
		Then $P$ has a unique fixed point, which will be denoted by $\varpi(P)$. Moreover, for any $x\in X$ we have
		\[
			d(x,\varpi(P))<\frac{d(x,P(x)}{1-\lambda}.
		\]
	\end{theorem}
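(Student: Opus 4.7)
The plan is to prove this classical result via Picard iteration. I would start by fixing an arbitrary $x_0\in X$ and defining the sequence $x_{n+1}:=P(x_n)$ recursively. The contraction estimate applied inductively yields $d(x_{n+1},x_n)\leq\lambda^n d(x_1,x_0)$, which is the key control estimate.

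Next I would show that $(x_n)$ is a Cauchy sequence. Using the triangle inequality and the previous bound, for $m>n$,
\[
d(x_m,x_n)\leq\sum_{k=n}^{m-1}d(x_{k+1},x_k)\leq\sum_{k=n}^{m-1}\lambda^k d(x_1,x_0)\leq\frac{\lambda^n}{1-\lambda}d(x_1,x_0),
\]
which tends to $0$ as $n\to\infty$ since $\lambda<1$. By completeness of $X$, the sequence converges to some $\varpi\in X$. Since the contraction property makes $P$ Lipschitz (hence continuous), I can pass to the limit in the relation $x_{n+1}=P(x_n)$ to conclude $\varpi=P(\varpi)$, so $\varpi$ is a fixed point.

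For uniqueness, if $y\in X$ also satisfies $P(y)=y$, then $d(\varpi,y)=d(P(\varpi),P(y))\leq\lambda d(\varpi,y)$, which forces $d(\varpi,y)=0$ because $\lambda<1$. Finally, for the quantitative estimate, I would start the iteration from the given $x\in X$, so that $d(x_1,x_0)=d(P(x),x)$. Applying the Cauchy estimate and letting $m\to\infty$ with $n=0$ gives
\[
d(x,\varpi)\leq\frac{d(x,P(x))}{1-\lambda},
\]
which is precisely the asserted bound. There is no substantive obstacle here; this is the standard Banach fixed point argument, and the only care required is the bookkeeping of the geometric series and verifying that continuity of $P$ suffices to commute $P$ with the limit.
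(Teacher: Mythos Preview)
Your proof is correct and is the standard Picard iteration argument for the Banach fixed point theorem. The paper does not actually prove this statement; it is quoted as the classical Banach--Caccioppoli contraction mapping principle and used as a tool without proof, so there is nothing to compare against. One minor remark: your argument yields the non-strict inequality $d(x,\varpi(P))\leq d(x,P(x))/(1-\lambda)$, which is in fact the correct version (the strict inequality as printed in the statement fails when $x$ is itself the fixed point).
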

	To use this we have to start by building our function spaces. We fix a positive number $n\in\mathbb{N}$ for the dimension of our target space (in our 
	application this would be $n=2$). For any $T\in\mathbb{R}$ consider functions on the interval $[T,\infty)$, so our space $X$ will be
	\[
	B_T:=\left\{f\in C([T,\infty),\mathbb{C}^n):
	\exists M\in\mathbb{R},\forall x\in[T,\infty):\left|f(x)\right|\leq M\right\},
	\] 
	the space of continuous bounded functions on the half line beginning with $T$ with values in $\mathbb{C}^2$ (where $\left|f(x)\right|$ denotes the usual 
	euclidean norm on $\mathbb{C}^n$). It is well known that $B_T$ is complete with respect to the metric $d_\infty$ induced by the sup-norm
	\[
		\left\|f\right\|_\infty:=\sup_{x\in[T,\infty)}\left|f(x)\right|,
	\]
	so from now on $B_T$ shall be equipped with this norm. In order to obtain the contraction property we will need our kernel to decay fast enough. They 
	will live on triangles starting with $T$, i. e. we write $\Delta(T):=\left\{(t,s)\in\mathbb{R}^2:T\leq t\leq s\right\}$. Then the 'good' kernels will be 
	the ones in
	\[
		\mathcal{K}_T:=\left\{K\in C(\Delta(T),M_n(\mathbb{C})):\sup_{t\in[T,\infty)}\int_t^{\infty}\left\|K(t,s)\right\|ds<1 \right\}.
	\]
	The bound for the  norm of the integral will be the contraction factor, so we denote it from now on by $\lambda:=\sup_{t\in[T,\infty)}\int_T^{\infty}\left\|K(t,s)\right\|ds<1$. 
	The choice of matrix norm is not important at this point, so we just take the operator norm. 
	We consider $\mathcal{K}_T$ equipped with the metric 
	$d_\mathcal{K}(K_1,K_2):=\underset{t\in [T,\infty)}{\sup}\int_t^{\infty}\left\|K_1(t,s)-K_2(t,s)\right\|ds$. 
	Now we are ready to formulate our existence and uniqueness result similiar to the one in \cite{Windsor.2010}:
	\begin{theorem}[Existence and uniqueness]\label{ex}
		Let $T\in\mathbb{R}$, $a\in B_T$ and $K\in\mathcal{K}_T$. Then there exists a unique solution of the integral equation
		\begin{equation}\label{int_eq}
			x(t)=a(t)-\int_t^\infty K(t,s)x(s)ds,
		\end{equation}
		on the intervall $[T,\infty)$ and this solution is an element of $B_T$.
	\end{theorem}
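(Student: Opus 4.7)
The plan is to set up equation \eqref{int_eq} as a fixed-point problem and invoke the contraction mapping principle (Theorem \ref{contraction}). Define the operator $P\colon B_T \to B_T$ by
\[
(Px)(t) := a(t) - \int_t^\infty K(t,s)\,x(s)\,ds,
\]
so that solving \eqref{int_eq} is the same as finding a fixed point of $P$, and verify in turn that $P$ maps $B_T$ into itself and is a contraction.

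First I would check that $P$ is a well-defined self-map of $B_T$. For $x \in B_T$ the integrand is dominated in norm by $\|K(t,s)\|\cdot\|x\|_\infty$, so the improper integral converges absolutely and yields the bound $\|Px\|_\infty \leq \|a\|_\infty + \lambda\|x\|_\infty < \infty$. Continuity of $Px$ at a point $t_0$ reduces to continuity of $t \mapsto \int_t^\infty K(t,s)\,x(s)\,ds$: I would decompose the difference, for $t$ close to $t_0$, into a boundary piece over $[t,t_0]$ (controlled by continuity of $K$ on a compact box around $(t_0,t_0)$) and the remainder $\int_{t_0}^\infty [K(t,s) - K(t_0,s)]\,x(s)\,ds$, which I split again at a large cutoff $S$: on $[t_0,S]$ use uniform continuity of $K$, and on $[S,\infty)$ use the hypothesis $K \in \mathcal{K}_T$ together with the convergence of $\int_{t_0}^\infty \|K(t_0,s)\|\,ds$ to make the tail small uniformly for $t$ near $t_0$. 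The contraction property then follows from the pointwise estimate
\[
|(Px)(t) - (Py)(t)| \leq \int_t^\infty \|K(t,s)\|\cdot|x(s) - y(s)|\,ds \leq \lambda\,\|x - y\|_\infty,
\]
which after taking the supremum in $t$ gives $\|Px - Py\|_\infty \leq \lambda\,\|x - y\|_\infty$ with $\lambda < 1$. Theorem \ref{contraction} then yields a unique fixed point $x \in B_T$ of $P$, which is precisely the unique bounded continuous solution of \eqref{int_eq} on $[T,\infty)$.

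I expect the main obstacle to be the continuity verification in the well-definedness step. In Windsor's finite-interval setting continuity of $Px$ is essentially automatic, but here the improper upper limit forces a uniform tail estimate that is not supplied directly by the hypothesis $K \in \mathcal{K}_T$, which only guarantees the global bound $\lambda$. The remedy is to exploit the convergence of $\int_{t_0}^\infty \|K(t_0,s)\|\,ds$ at the fixed base point $t_0$ to pick the cutoff $S$, and then absorb the corresponding tail for nearby $t$ into a small correction using continuity of $K$ on a neighborhood of $(t_0,t_0)$. Once this technical point is settled, the contraction estimate and the invocation of Theorem \ref{contraction} are entirely standard.
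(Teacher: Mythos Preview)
Your approach is the same as the paper's: define $P$ on $B_T$ and apply the contraction mapping principle. The contraction estimate is identical, and your boundedness check is the same. You give considerably more detail on the continuity of $Px$ than the paper, which simply asserts that $P$ maps $B_T$ into itself and later that ``the integral over a bounded function is continuous'' without further argument; your identification of the tail-uniformity issue is apt, though neither you nor the paper fully resolves it under the bare hypothesis $K\in\mathcal{K}_T$ (your proposed fix via ``continuity of $K$ on a neighborhood of $(t_0,t_0)$'' does not control $K(t,s)$ for large $s$).

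There is one step the paper includes that you omit. After obtaining the unique fixed point in $B_T$, the paper argues separately that no \emph{other} bounded solution can exist: if $x$ is any bounded (possibly discontinuous) solution, then the right-hand side $a(t)-\int_t^\infty K(t,s)x(s)\,ds$ is continuous (continuous $a$, $K$ continuous in $t$, bounded integrand), forcing $x$ itself to be continuous and hence to lie in $B_T$. The theorem's phrasing --- ``unique solution \ldots\ on the interval $[T,\infty)$, and this solution is an element of $B_T$'' --- asserts uniqueness in a class larger than $B_T$, so this bootstrap is part of the intended statement and your proof should include it.
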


	\begin{proof}
		For any $\phi\in B_T$ we know from the integral property of $K$ that $\int_t^{\infty}K(t,s)\phi(s)ds$ exists for $t\geq T$. As $a$ is also bounded it 
		follows that 
		\[
			P(\phi)(t):=a(t)-\int_t^{\infty}K(t,s)\phi(s)ds.
		\]
		defines an operator $P:B_T\rightarrow B_T$. For two elements $\phi_1,\phi_2\in B_T$ we obtain
		\[
			\begin{split}	
				d_\infty(P(\phi_1),P(\phi_2))&\leq \sup_{t\in[T,\infty)}\left|\int_t^{\infty}K(t,s)(\phi_1(s)-\phi_2(s))ds\right|\\
				&\leq \sup_{t\in[T,\infty)}\int_t^{\infty}\left\|K(t,s)\right\|\left|\phi_1(s)-\phi_2(s)\right|ds\\
				&\leq \sup_{t\in[T,\infty)}\int_t^{\infty}\left\|K(t,s)\right\|ds\sup_{t\in[T,\infty)}\left|\phi_1(s)-\phi_2(s)\right|\\
				&=\lambda d_\infty(\phi_1,\phi_2),
			\end{split}		
		\]
		where $\lambda<1$ since $K\in\mathcal{K}_T$. Thus $P$ is a contraction mapping and we obtain a unique fixed point $x\in B_T$. 
		Although this solution is unique in $B_T$ there could still exist another (non-continuous) solution of \ref{int_eq}. This possibility may be excluded 
		as follows:
		
		A non-continuous solution would still lead to a continuous function
		\[
				a(t)-\int_t^\infty K(t,s)x(s)ds
		\]
		as $a$ is continuous, $K$ is continuous in $t$ and the integral over a bounded function is continuous. But as this expression equals $x$ the solution 
		has to be continuous in the first place.
	\end{proof}
		
		\begin{remark}
			Windsor gives another argument to prove that there can't be a non-continuous bounded solution, using that the difference of two solutions also 
			satisfies an integral equation and 
			obtaining a contradiction for the case of the difference being non-zero. His argument would also work for our case but it requires more estimates. It may be more interesting to note, that he can't use our argument as he doesn't demand his kernel to be continuous 
			in 
			$t$. Thus, although the integration over $s$ leads to a continuity from the integral, there may still be a non-continuous part in $K$.
		\end{remark}
		
		Later on, we will need the aforementioned estimate for the fixed point.
		
	\begin{corollary}\label{est}
		The following estimate holds for the solution $x$ of \ref{int_eq}:
		\begin{equation}
			\left\|x\right\|_\infty=d_\infty(0,x)\leq\frac{d_\infty(0,P(0))}{1-\lambda}=\frac{\left\|a\right\|_\infty}{1-\lambda}.
		\end{equation}
	\end{corollary}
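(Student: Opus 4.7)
The plan is to deduce this estimate directly from the contraction mapping principle (Theorem \ref{contraction}), applied to the zero function in the Banach space $B_T$. The contraction mapping principle gives us, for any element $\phi \in B_T$, the bound
\[
d_\infty(\phi, \varpi(P)) \leq \frac{d_\infty(\phi, P(\phi))}{1-\lambda},
\]
where $\varpi(P)$ is the unique fixed point. Since the solution $x$ obtained in Theorem \ref{ex} is precisely this fixed point, I would specialize this inequality to the particularly convenient choice $\phi = 0 \in B_T$ (the constant zero function).

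The key observation is then simply to evaluate $P(0)$: by definition,
\[
P(0)(t) = a(t) - \int_t^\infty K(t,s) \cdot 0 \, ds = a(t),
\]
so $P(0) = a$ as elements of $B_T$. Hence $d_\infty(0, P(0)) = \|a - 0\|_\infty = \|a\|_\infty$, while $d_\infty(0, x) = \|x\|_\infty$ by definition of the sup-metric. Substituting these two identifications into the contraction-mapping bound yields exactly the claimed inequality.

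There is no real obstacle here; the corollary is essentially a bookkeeping consequence of Theorem \ref{contraction} once one notices that evaluating the operator $P$ at the zero function isolates the inhomogeneous term $a$. The only minor care needed is to verify that $0 \in B_T$ (trivial) and that the strict inequality in Theorem \ref{contraction} weakens to the non-strict inequality stated in the corollary, which is harmless.
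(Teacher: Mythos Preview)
Your proof is correct and follows exactly the approach the paper intends: the very statement of the corollary displays the chain $\|x\|_\infty = d_\infty(0,x) \leq \frac{d_\infty(0,P(0))}{1-\lambda} = \frac{\|a\|_\infty}{1-\lambda}$, which is precisely the contraction-mapping estimate applied at $\phi = 0$ together with the observation $P(0)=a$. There is nothing to add.
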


	\subsection{Continuous dependence}

	Now that we have the unique existence, we consider the case, where $a$ and $K$ may vary in order to obtain first continuous and then differentiable 
	dependence on the parameters of the solution. We start with the continuous dependence by restating Windsors theorem and proof concerning arbitrary 
	contraction maps on metric spaces. For this denote by $Ctr(X)$ the space of contraction maps on a complete metric space $(X,d)$. Every contraction map is 
	Lipschitz continuous and therefore we can regard $Ctr(X)$ as a subspace of the space $C(X)$ of continuous functions on $X$. We consider on $Ctr(X)$ the 
	supremum $\infty$-metric $d_0$, i. e. $d_0(P,Q):=\sup_{x\in X}d(P(x),Q(x))$. Note that the value of $d_0$ may indeed be $\infty$ in general settings but this has no consequences for the questions regarding continuity here. Additionally in the main Theorem \ref{cont} of this subsection the space of interest is shown to be of finite diameter, where $d_0$ becomes an honest metric. As every contraction map $P\in Ctr(X)$ has a 
	unique fixed point 
	$\varpi(P)$, we can define the map
	\[
		\varpi:Ctr(X)\rightarrow X,\quad P\mapsto\varpi(P).
	\]
	With this \cite{Windsor.2010} first obtains the following result:

	\begin{lemma}
		The function $\varpi$ is continuous.
	\end{lemma}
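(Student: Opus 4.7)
The plan is to use the quantitative estimate already built into the contraction mapping principle (Theorem \ref{contraction}), which in fact gives something stronger than mere continuity of $\varpi$: it will yield a local Lipschitz-type estimate at each point of $Ctr(X)$.

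Fix $P \in Ctr(X)$ with contraction constant $\lambda = \lambda(P) < 1$, and let $Q \in Ctr(X)$ be arbitrary. The idea is to evaluate the estimate of Theorem \ref{contraction} at the point $x := \varpi(Q)$, using the contraction $P$ (so the constant $\lambda$ appearing in the estimate is $\lambda(P)$). This gives
\[
d\bigl(\varpi(Q),\varpi(P)\bigr) \leq \frac{d\bigl(\varpi(Q),P(\varpi(Q))\bigr)}{1-\lambda}.
\]
Since $\varpi(Q)$ is by definition a fixed point of $Q$, we have $\varpi(Q) = Q(\varpi(Q))$, hence
\[
d\bigl(\varpi(Q),P(\varpi(Q))\bigr) = d\bigl(Q(\varpi(Q)),P(\varpi(Q))\bigr) \leq \sup_{y \in X}d\bigl(Q(y),P(y)\bigr) = d_0(P,Q).
\]
Combining the two displays yields
\[
d\bigl(\varpi(Q),\varpi(P)\bigr) \leq \frac{d_0(P,Q)}{1-\lambda(P)},
\]
which directly implies continuity of $\varpi$ at $P$: given $\varepsilon > 0$, any $Q$ with $d_0(P,Q) < (1-\lambda(P))\varepsilon$ satisfies $d(\varpi(Q),\varpi(P)) < \varepsilon$. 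Since $P$ was arbitrary, $\varpi$ is continuous on $Ctr(X)$.

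There is no real obstacle here — the entire argument is a one-line application of the built-in estimate of the Banach fixed point theorem. The only subtle point worth noting is that the Lipschitz-type constant $1/(1-\lambda(P))$ depends on the base point $P$ and blows up as $\lambda(P) \to 1$, so one cannot expect $\varpi$ to be globally Lipschitz on $Ctr(X)$; but this is irrelevant for continuity. When this lemma is applied to the Volterra setting in the following subsection, $P$ will be the operator $\phi \mapsto a - \int_t^\infty K(t,\cdot)\phi$, and the hypothesis $d_0(P,Q) \to 0$ will be verified from continuous dependence of $a$ and $K$ on the external parameter in the norms of $B_T$ and $\mathcal{K}_T$, respectively.
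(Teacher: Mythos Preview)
Your proof is correct and follows essentially the same approach as the paper: both apply the estimate from Theorem \ref{contraction} to the point $\varpi(Q)$ with respect to the contraction $P$, then use $\varpi(Q)=Q(\varpi(Q))$ to bound $d(\varpi(Q),P(\varpi(Q)))$ by $d_0(P,Q)$. The only cosmetic difference is that the paper presents this as an explicit $\varepsilon$--$\delta$ argument, whereas you first isolate the local Lipschitz-type bound $d(\varpi(Q),\varpi(P))\leq d_0(P,Q)/(1-\lambda(P))$ and then read off continuity.
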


	\begin{proof}
		Let $P\in Ctr(X)$. Then there is some $0\leq\lambda<1$, s. t.
		\[
			d(P(x),P(y))\leq\lambda d(x,y)
		\]
		holds for all $x,y\in X$. Let $\epsilon>0$ and define $\delta:=(1-\lambda)\epsilon$. Now let $Q\in Ctr(X)$ be a contraction mapping with $d_0(P,Q)<\delta$. We apply the estimate from the contraction mapping theorem for $P$ to $\varpi(Q)$ to obtain
		\[
		\begin{split}
			d(\varpi(Q),\varpi(P))&<\frac{d(\varpi(Q),P(\varpi(Q))}{1-\lambda}=\frac{d(Q(\varpi(Q)),P(\varpi(Q)))}{1-\lambda}\\
			&<\frac{\delta}{1-\lambda}=\epsilon.
		\end{split}	
		\]
		Thus $\varpi$ is a continuous function.
	\end{proof}
	Now we come to the continuous dependence for our solutions. For this denote the unique solution of \ref{int_eq} for any $a\in B_T$ and $K\in\mathcal{K}$ 
	by $x_{a,K}\in B_T$. (Note that $x_{a,K}$ thus does depend on $T$, but as $T$ is just a parameter that is fixed from the beginning, we don't write this 
	dependence into the notation.) We now claim:

	\begin{theorem}[Continuous dependence]\label{cont}
		Fix $T\in\mathbb{R}$. Then the map
		\[
			X:B_T\times\mathcal{K}_T\rightarrow B_T
		\]
		given by $X(a,K):=x_{a,K}$ is continuous (where $B_T\times\mathcal{K}_T$ is equipped with the usual product metric.)
	\end{theorem}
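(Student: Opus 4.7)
The plan is to derive continuity of $X$ by directly comparing the integral equations satisfied by $x_{a_1,K_1}$ and $x_{a_2,K_2}$, rather than by passing to the abstract map $\varpi$ of the preceding lemma. The reason for the detour is that $d_0(P_{a_1,K_1},P_{a_2,K_2})$ generically fails to be finite: when $\phi$ ranges over all of $B_T$ the factor $\|\phi\|_\infty\,d_{\mathcal{K}}(K_1,K_2)$ arising from the kernel term is unbounded, so $\varpi$ cannot be applied globally. One could salvage the abstract approach by restricting to a bounded ball in $B_T$ and checking it is invariant, but a direct estimate is cleaner.

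First I would fix a target point $(a_0,K_0)\in B_T\times\mathcal{K}_T$ and set $\lambda_0:=\sup_{t\geq T}\int_t^\infty\|K_0(t,s)\|\,ds<1$. Choose $\eta>0$ with $\lambda_0+\eta<1$. For any $K\in\mathcal{K}_T$ with $d_{\mathcal{K}}(K,K_0)<\eta$ the induced contraction factor satisfies $\lambda_K\leq\lambda_0+\eta=:\lambda^\ast<1$, so Theorem \ref{ex} produces a unique solution $x_{a,K}$ uniformly on this neighborhood and, by Corollary \ref{est}, $\|x_{a,K}\|_\infty\leq\|a\|_\infty/(1-\lambda^\ast)$. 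In particular $\|x_{a,K}\|_\infty$ is uniformly bounded for $(a,K)$ in a sufficiently small neighborhood of $(a_0,K_0)$.

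Next, for two such pairs $(a_1,K_1)$ and $(a_2,K_2)$ I would subtract the two defining integral equations and add and subtract $\int_t^\infty K_1(t,s)x_{a_2,K_2}(s)\,ds$, obtaining
\[
x_{a_1,K_1}(t)-x_{a_2,K_2}(t)=\bigl(a_1(t)-a_2(t)\bigr)-\int_t^\infty K_1(t,s)\bigl[x_{a_1,K_1}(s)-x_{a_2,K_2}(s)\bigr]\,ds-\int_t^\infty\bigl[K_1(t,s)-K_2(t,s)\bigr]x_{a_2,K_2}(s)\,ds.
\]
Taking sup-norms, estimating the middle term by $\lambda^\ast\|x_{a_1,K_1}-x_{a_2,K_2}\|_\infty$, and absorbing it into the left-hand side yields
\[
(1-\lambda^\ast)\,\|x_{a_1,K_1}-x_{a_2,K_2}\|_\infty\leq\|a_1-a_2\|_\infty+\|x_{a_2,K_2}\|_\infty\,d_{\mathcal{K}}(K_1,K_2).
\]
Since $\|x_{a_2,K_2}\|_\infty$ is bounded by a constant depending only on the chosen neighborhood, the right-hand side tends to zero as $(a_1,K_1)\to(a_2,K_2)$ in the product metric. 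This proves continuity of $X$ at the arbitrary point $(a_0,K_0)$.

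The only real obstacle is ensuring that the contraction constant $\lambda_K$ stays uniformly below $1$ on a neighborhood of $K_0$, which is exactly what prevents a naive global Lipschitz estimate; the reduction to a neighborhood with $\lambda_K\leq\lambda^\ast$ is what makes the argument work. The rest is formal manipulation of the integral equation, adapting Windsor's finite-interval reasoning in \cite{Windsor.2010} to the unbounded interval and the kernel class $\mathcal{K}_T$ introduced above.
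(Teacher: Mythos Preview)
Your argument is correct and in fact yields a local Lipschitz estimate, which is slightly stronger than bare continuity. The paper, however, takes a different route: it fixes $(a,K)$, restricts attention to the ball $\mathcal{B}=\{\phi\in B_T:d_\infty(\phi,x_{a,K})<1\}$, and shows that for $(a',K')$ in an explicit open neighborhood $\mathcal{P}$ the operator $P_{a',K'}$ still maps $\mathcal{B}$ into itself. On the bounded set $\mathcal{B}$ the supremum metric $d_0$ is finite, so the map $(a',K')\mapsto P_{a',K'}\in Ctr(\mathcal{B})$ is continuous, and then the abstract lemma on the continuity of the fixed-point assignment $\varpi$ finishes the job. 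Thus both arguments confront the same obstruction you identify---that $d_0(P_{a_1,K_1},P_{a_2,K_2})$ is infinite on all of $B_T$---but resolve it differently: the paper localizes the \emph{target} space of the contractions to a bounded ball so that the preceding $\varpi$-lemma applies, whereas you localize the \emph{kernel} to keep the contraction constants uniformly below~$1$ and then subtract the integral equations directly. Your approach is more self-contained and bypasses the $\varpi$-lemma entirely; the paper's approach stays closer to Windsor's template and reuses the abstract machinery already set up. A minor point: in your final line you let $(a_1,K_1)\to(a_2,K_2)$, but for continuity at the fixed base point $(a_0,K_0)$ you should specialize $(a_2,K_2)=(a_0,K_0)$, which your estimate of course permits.
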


	\begin{proof}
		Again the proof works similar to the one found in \cite{Windsor.2010}. Fix $a\in B_T$ and $K\in\mathcal{K}_T$. We consider $\mathcal{B}:=\left\{\phi\in B_T:d_\infty(\phi,x_{a,K})<1\right\}$. By the arguments from the 
		beginning the operator
		\[
			P_{a,k}(\phi)(t):=a(t)-\int_{t}^{\infty}K(t,s)\phi(s)ds
		\]
		is a contraction mapping on $B_T$ and for $\phi\in\mathcal{B}$ it holds
		\[
			d_{\infty}(P_{a,k}(\phi),x_{a,K})=d_{\infty}(P_{a,k}(\phi),P_{a,K}(x_{a,K}))\leq\lambda d_{\infty}(\phi,x_{a,K})
		\]
		for $\lambda=\sup_{t\in[T,\infty)}\int_t^{\infty}\left\|K(t,s)\right\|ds<1$. Thus $P_{a,k}(\phi)$ descends to a contraction mapping on (i. e. takes 
		values in) $\mathcal{B}$ with contraction factor 
		$\lambda$.\\ 
		Now take $a'\in B_T$ and $K'\in\mathcal{K}_T$. Then it holds
		\begin{equation}\label{inequ}
			\begin{split}
			\left\|P_{a',K'}(\phi)-P_{a,K}(\phi)\right\|_{\infty}&\leq \left\|a'-a\right\|+\left\|\int_t^{\infty}(K'(t,s)-K(t,s))\phi(s)ds\right\|_{\infty}\\
			&\leq \left\|a'-a\right\|_{\infty}+\gamma\left\|\phi\right\|_{\infty},
			\end{split}
		\end{equation}
		for
		\[
			\gamma:=\underset{t\in [T,\infty)}{\sup}\int_t^{\infty}\left\|K'(t,s)-K(t,s)\right\|ds
			=d_\mathcal{K}(K,K').
		\]
		Note that $\gamma$ is finite as $K'$ and $K$ are elements of $\mathcal{K}_T$. Again as before $P_{a',K'}$ is a contraction mapping on $B_T$ and we now 
		want to obtain 
		a condition under which $P_{a',K'}$ also descends to a contraction mapping on $\mathcal{B}$. For this assume that $a'$ and $K'$ are such that
		\[
			\left\|a'-a\right\|_{\infty}+\gamma(\left\|x_{a,K}\right\|_{\infty}+1)<1-\lambda
		\]
		holds. Then via triangle inequality and the fact that $x_{a,K}$ is the fixed point of $P_{a,K}$ we obtain
		\[
			\begin{split}
				d_{\infty}(P_{a',K'}(\phi),x_{a,K})&\leq d_{\infty}(P_{a',K'}(\phi),P_{a,K}(\phi))+d_{\infty}(P_{a,K}(\phi),x_{a,K})\\
				&\leq \left\|P_{a',K'}(\phi)-P_{a,K}(\phi)\right\|_{\infty}+d_{\infty}(P_{a,K}(\phi),P_{a,K}(x_{a,K}))\\
				&\leq \left\|a'-a\right\|_{\infty}+\gamma\left\|\phi\right\|_{\infty}+\lambda d_{\infty}(\phi,x_{a,K})\\
				&\leq \left\|a'-a\right\|_{\infty}+\gamma(\left\|x_{a,K}\right\|_{\infty}+1)+\lambda\\
				&< 1-\lambda+\lambda=1, 
			\end{split}
		\]
		where we used the fact $\left\|\phi\right\|_{\infty}\leq\left\|x_{a,K}\right\|_{\infty}+1$ which follows from the inverse triangle inequality and the 
		definition of $\phi\in\mathcal{B}$. Therefore in this case $P_{a',K'}$ defines a contraction mapping on $\mathcal{B}$ and we define the corresponding set
		\[
			\mathcal{P}:=\left\{(a',K')\in B_T\times\mathcal{K}_T:\left\|a'-a\right\|_{\infty}+\gamma(\left\|x_{a,K}\right\|_{\infty}+1)<1-\lambda\right\}.
		\]
		This set contains $(a,K)$ and is open in $B_T\times\mathcal{K}_T$ as the defining inequality is non-strict.
		Now define the map $P:\mathcal{P}\rightarrow Ctr(B_T)$ by $P(a',K'):=P_{a',K'}$. As of the inequality \ref{inequ} we see that 
		$P\left(\mathcal{P}\right)$ is of finite diameter in $d_0$ and we obtain
		\[
		\begin{split}
		d_0(P_{a',K'},P_{a,K})&=\sup_{\phi\in \mathcal{B}}d_\infty(P_{a',K'}(\phi),P_{a,K}(\phi))\\
		&=\sup_{\phi\in \mathcal{B}}\left\|P_{a',K'}(\phi)(t)-P_{a,K}(\phi)(t)\right\|_{\infty}\\
		&\leq \sup_{\phi\in \mathcal{B}}\left\|a'-a\right\|_{\infty}+\gamma\left\|\phi\right\|_{\infty}\\
		&\leq d_{\infty}(a,a')+d_\mathcal{K}(K,K')(\left\|x_{a,K}\right\|_{\infty}+1).
		\end{split}
		\]
		Thus for $(a',K')$ close enough to $(a,K)$ the distance $d_0(P_{a',K'},P_{a,K})$ becomes arbitrarily small and we see that $P$ is continuous at 
		$(a,K)$. From the previous lemma, the map $\varpi:\left.Ctr(B_T)\right|_{P(\mathcal{P})}\rightarrow\mathcal{B}$ is continuous and thus 
		$\varpi\circ P:\mathcal{P}\rightarrow\mathcal{B}$ is continuous at $(a,K)$. But on the open subset $\mathcal{P}$ of $B_T\times\mathcal{K}_T$ we have 
		$\varpi\circ P=\left.X\right|_{\mathcal{P}}$ and thus $X$ is continuous at $(a,K)$.	
	\end{proof}
	Next up, we would like to reformulate the preceding theorem in terms of an additional parameter on which the equation depends in a continuous way, i. e. 
	the "boundary value" $a$ and kernel $K$ shall now depend continuously on some $\epsilon$. We expand our notation accordingly for some open subset 
	$U\subset\mathbb{R}$:
	\[
		\begin{split}
	B_{T,U}&:=\left\{f\in C([T,\infty)\times U,\mathbb{C}^n):\exists M\in\mathbb{R}:
	\forall\epsilon\in U\enspace\forall x\in[T,\infty):\left|f(x)\right|\leq M\right\},\\
		\mathcal{K}_{T,U}&:=\left\{K\in C(\Delta(T)\times U,M_n(\mathbb{C})):\forall\epsilon\in U:\sup_{t\in[T,\infty)}\int_t^{\infty}\left\|K(t,s,\epsilon)\right\|ds<1\right\}.
		\end{split}
	\] 
	For $a\in B_{T,U}$ we write the corresponding element in $B_T$ as $a_{\epsilon}$ (i. e. if we regard $\epsilon$ as fixed) and in the same way for 
	$K\in\mathcal{K}_{T,U}$ the element in $\mathcal{K}_{T}$ will be $K_{\epsilon}$. Now let any $a\in B_{T,U}$ and $K\in\mathcal{K}_{T,U}$ be given. Then 
	for each $\epsilon\in U$ there is a unique continuous solution $x_{\epsilon}\in B_{T}$ of
		\[
			x_{\epsilon}(t)=a_{\epsilon}-\int_t^{\infty}K_{\epsilon}(t,s)x_{\epsilon}(s)ds.
		\]
	as of theorem \ref{ex}. We can thus define the $\epsilon$-dependent solution as
	\[
		x:[T,\infty)\times U\rightarrow\mathbb{C}^2,\quad x(t,\epsilon):=x_{\epsilon}(t).
	\]
	With this notation we can now reformulate the continuity of $x$ by a standard argument:
	\begin{korollar}\label{cont_b}
		Let $a\in B_{T,U}$ and $K\in \mathcal{K}_{T,U}$ be such that the maps $\epsilon\mapsto a_{\epsilon}$ and $\epsilon\mapsto K_{\epsilon}$ are continuous. 
		Let $x$ be the solution of the parameter-dependent integral equation. Then $x$ is continuous in $\epsilon\in U$ and $t\in[T,\infty)$.
	\end{korollar}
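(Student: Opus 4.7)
The plan is to deduce this corollary almost directly from Theorem \ref{cont} by packaging the parameter $\epsilon$ into the earlier framework and then performing a standard triangle-inequality argument to upgrade separate continuity in $t$ and in $\epsilon$ to joint continuity.

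First I would observe that the hypotheses of the corollary are exactly what is needed to view the $\epsilon$-dependent data as continuous curves in the spaces $B_T$ and $\mathcal{K}_T$. More precisely, the maps
\[
    U\to B_T,\quad \epsilon\mapsto a_\epsilon, \qquad U\to\mathcal{K}_T,\quad \epsilon\mapsto K_\epsilon,
\]
are continuous by assumption, so their product furnishes a continuous map $U\to B_T\times\mathcal{K}_T$. Composing with the continuous solution map $X$ from Theorem \ref{cont} yields continuity of the assignment $\epsilon\mapsto x_\epsilon$ as a map $U\to (B_T,\|\cdot\|_\infty)$. In other words, whenever $\epsilon_n\to\epsilon_0$ in $U$, we have $\|x_{\epsilon_n}-x_{\epsilon_0}\|_\infty\to 0$, which is convergence uniform in $t\in[T,\infty)$.

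Next I would promote this to joint continuity of $x(t,\epsilon)$ by a routine $\tfrac{\epsilon}{2}$-argument. Fix $(t_0,\epsilon_0)\in[T,\infty)\times U$ and estimate
\[
    |x(t,\epsilon)-x(t_0,\epsilon_0)|\leq |x_\epsilon(t)-x_{\epsilon_0}(t)|+|x_{\epsilon_0}(t)-x_{\epsilon_0}(t_0)|
    \leq \|x_\epsilon-x_{\epsilon_0}\|_\infty+|x_{\epsilon_0}(t)-x_{\epsilon_0}(t_0)|.
\]
Given $\delta>0$, the first summand can be made smaller than $\delta/2$ by choosing $\epsilon$ close enough to $\epsilon_0$ (using the continuity just established), and the second summand can be made smaller than $\delta/2$ by choosing $t$ close enough to $t_0$ (using that $x_{\epsilon_0}\in B_T\subset C([T,\infty),\mathbb{C}^n)$, which is the continuity statement already built into Theorem \ref{ex}).

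There is no real obstacle here: the genuine analytic work has been done in Theorem \ref{cont}, and the corollary is simply the observation that continuity of the fixed-point map in the sup-norm plus pointwise continuity of each solution in $t$ yields joint continuity. The only thing one has to be careful about is that $\|\cdot\|_\infty$-continuity in $\epsilon$ is stronger than $\epsilon$-wise pointwise continuity, which is exactly why it combines with continuity in $t$ to give joint continuity in the product topology.
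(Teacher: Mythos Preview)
Your proposal is correct and follows essentially the same approach as the paper: both compose the continuous maps $\epsilon\mapsto(a_\epsilon,K_\epsilon)$ with the solution map $X$ from Theorem~\ref{cont} to obtain sup-norm continuity of $\epsilon\mapsto x_\epsilon$, and then upgrade to joint continuity in $(t,\epsilon)$. The paper phrases the last step as continuity of the evaluation map $[T,\infty)\times B_T\to\mathbb{C}^n$, while you write out the underlying triangle-inequality estimate explicitly; these are the same argument.
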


	\begin{proof}
	From our assumption the map $\epsilon\mapsto a_{\epsilon}$ is continuous as is the map $\epsilon\mapsto K_{\epsilon}$. Theorem \ref{cont} then tells us 
	that the concatenation with the map $X:B_T\times\mathcal{K}_T\rightarrow B_T$ is continuous, i. e. $\epsilon\mapsto x_{\epsilon}$. Thus the map 
	\[
		f:[T,\infty)\times U\rightarrow [T,\infty)\times B_T,\quad (t,\epsilon)\mapsto (t,x_{\epsilon})
	\]
	is continuous in $\epsilon$ and $t$. Finally the evaluation map
	\[
		e:[T,\infty)\times B_T\rightarrow \mathbb{C}^n,\quad (t,x_\epsilon)\mapsto x_\epsilon(t)
	\]
	is continuous. This can be derived from the fact that we are considering bounded continuous functions with the sup-norm by a standard $\epsilon-\delta$ 
	argument.
	\end{proof}

	Note that the continuity of the maps $\epsilon\mapsto a_{\epsilon}$ and $\epsilon\mapsto K_{\epsilon}$ does not follow simply from the assumption 
	$a\in B_{T,U}$ and $K\in \mathcal{K}_{T,U}$ as the functions in $B_{T,U}$ and $\mathcal{K}_{T,U}$ are defined on the open half line $[T,\infty)$. This is 
	a complication compared to Windsor, since he works with functions on a compact interval.

	\subsection{Differentiable dependence}\label{sec_diff_dep}

	We now want our initial conditions to depend differentiably on the parameter $\epsilon$. We would like to just assume that one derivative exists and is 
	continuous and obtain a differentiable solution in the same way as it was done in \cite{Windsor.2010}. But the non-compactness now demands some stronger conditions. So the greatest difficulty here is to formulate the right conditions which allow us to obtain the differentiable dependence, while still being loose enough so that they are fulfilled by the functions arising in our applications in section \ref{const_on_mod}. This amounts to demanding the right uniform behavior of our kernels $K$, for which we find the following notions to be adequate:
	
	A map $B:[T,\infty)\times U\rightarrow M_n(\mathbb{C})$ shall be called \textit{integrally bounded} iff for every $\epsilon\in U$ there exists an open 
	subset $V_\epsilon\subset U$ containing $\epsilon$ and a bounded function $g_\epsilon:[T,\infty)\rightarrow\mathbb{R}_{\geq0}$, s. t.
	\[
	\begin{split}
		\left\|B(s,\mu)\right\|\leq g_\epsilon(s)\enspace\forall\mu\in V_\epsilon,\enspace\forall(t,s)\in\Delta(T)
		\quad\text{and}\quad\int_T^{\infty}g_\epsilon(s)ds<\infty,
		\end{split}
	\]
	Note that the integral bound and the boundedness of $g$ generally don't imply one another. With this a kernel $K$ shall be called \textit{of product type} iff it can be written as 
	\[
		K(t,s,\epsilon)=A(t,s,\epsilon)B(s,\epsilon),
	\]
	for two functions $A:\Delta(T)\times U\rightarrow M_n(\mathbb{C})$ and $B:[T,\infty)\times U\rightarrow M_n(\mathbb{C})$. Furthermore $K$ shall be called 
	\textit{of product type of zeroth order}, iff it is of product type and $A$ is bounded by some constant $\alpha\in\mathbb{R}$, while $B$ is integrally 
	bounded. It shall be called of product type of order $(1,0)$, iff it is of product type of zeroth order and $\partial_\epsilon A$ exists and is bounded 
	be some constant $\beta\in\mathbb{R}$ and of order $(0,1)$, iff $\partial_\epsilon B$ exists and is integrally bounded. The general notion of product 
	type of order $(m,n)$ follows accordingly.\\
	With this notion we can define the space of kernels of any order:
	\[
		\mathcal{K}^{(m,n)}_{T,U}:=\left\{K\in\mathcal{K}_{T,U},\enspace K\enspace\text{is of product type of order $(m,n)$}\right\}.
	\]
	Note that this assumption on $K$ is not the most general one with which the following proofs work but it suffices for our set up. The important structure 
	here is that only $A$ depends on $t$ and this factor is (uniformly) bounded. This will allow us to extract an upper bound of the $t$-dependence out of 
	some of the following integrals. It is this uniform behaviour that we actually need for our results and the sufficient uniform bound will be provided in 
	our applications on the Higgs bundle moduli space later on.
	
	For our main theorem we will first need two results concerning the continuity of our ingredients, which we obtain from the following standard generalization 
	of the mean value theorem.

	\begin{proposition}
		Let $f:(a,b)\rightarrow E$ be a differentiable function from an open interval in $\mathbb{R}$ to a Banach space $E$. Then for all $x,y\in(a,b)$ it holds
		\[
			\left\|f(y)-f(x)\right\|\leq\left|y-x\right|\sup_{0\leq \tau\leq 1}\left\|f'(x+\tau(y-x))\right\|.
		\]
	\end{proposition}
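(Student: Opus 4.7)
The plan is to reduce the Banach-valued statement to the classical real-valued mean value theorem via duality. The main tool is the Hahn-Banach theorem, which lets us pick out the norm of $f(y)-f(x)$ by a single bounded linear functional on $E$.

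First I would handle the trivial case $f(y)=f(x)$ (the inequality is immediate) and assume from now on that $f(y)\neq f(x)$. By a corollary of the Hahn-Banach theorem, there exists $\ell\in E^\ast$ with $\|\ell\|=1$ and $\ell(f(y)-f(x))=\|f(y)-f(x)\|$. Next I would consider the real-valued auxiliary function
\[
    g:[0,1]\to\mathbb{R},\quad g(\tau):=\ell\bigl(f(x+\tau(y-x))\bigr).
\]
Since $f$ is differentiable on $(a,b)$ and $\ell$ is continuous and linear, the chain rule (for Banach-valued maps composed with a bounded linear functional) gives that $g$ is differentiable on $(0,1)$ with $g'(\tau)=\ell\bigl(f'(x+\tau(y-x))\bigr)(y-x)$, and continuous on $[0,1]$ by continuity of $\ell\circ f$.

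Now I would apply the classical one-variable mean value theorem to $g$ to obtain some $\tau_0\in(0,1)$ with $g(1)-g(0)=g'(\tau_0)$. Unwinding the definitions, $g(1)-g(0)=\ell(f(y))-\ell(f(x))=\|f(y)-f(x)\|$, while on the other side
\[
    |g'(\tau_0)|\leq\|\ell\|\,\bigl\|f'(x+\tau_0(y-x))\bigr\|\,|y-x|\leq|y-x|\sup_{0\leq\tau\leq1}\bigl\|f'(x+\tau(y-x))\bigr\|,
\]
where I used $\|\ell\|=1$. Combining these two observations yields the claimed inequality.

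I do not expect any serious obstacle: the only non-trivial input is the existence of the norming functional $\ell$, which is standard Hahn-Banach. An alternative route (via the Bochner integral $f(y)-f(x)=\int_0^1 f'(x+\tau(y-x))(y-x)\,d\tau$ followed by the norm inequality for vector-valued integrals) would work equally well but requires more machinery than the scalar mean value theorem. Since the statement is only used as an auxiliary tool for the differentiable dependence results that follow, the elementary duality argument above is the most economical choice.
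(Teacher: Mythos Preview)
Your argument via a norming functional and the scalar mean value theorem is correct and is one of the standard proofs of this inequality. The paper itself does not supply a proof: it simply quotes the proposition as ``the following standard generalization of the mean value theorem'' and uses it as a black box, so there is nothing to compare against beyond noting that your approach is a perfectly legitimate way to fill in this omitted detail.
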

 
	As a reminder for our set up the Banach spaces in question will be $\mathbb{C}^n$ and $M_n(\mathbb{C})$, the first with the standard norm and the second 
	with the operator norm. With this at hand we may now proceed. First up is the "boundary value":

	\begin{lemma}
		Let $a\in B_{T,U}$ be differentiable with respect to $\epsilon\in U$ and such that the derivative is still bounded, i. e.
		\[
			\frac{\partial a}{\partial \epsilon}\in B_{T,U}
		\]
		Then the map
		\[
		f:U\rightarrow B_T,\quad \epsilon\mapsto a_{\epsilon}
		\] 
		is continuous.
	\end{lemma}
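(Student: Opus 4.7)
The plan is a direct application of the mean value estimate that has just been cited. Fix $\epsilon_0 \in U$. Since $U$ is open in $\mathbb{R}$, we may choose an open interval $V$ with $\epsilon_0\in V\subset\overline{V}\subset U$, so that for every $\epsilon\in V$ the entire line segment from $\epsilon_0$ to $\epsilon$ lies in $U$. This is exactly the setting in which we can apply the preceding proposition.

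For each fixed $t\in[T,\infty)$, consider the map $\epsilon\mapsto a(t,\epsilon)$ as a function from the open interval $V$ into the Banach space $\mathbb{C}^n$. By hypothesis it is differentiable with derivative $\partial_\epsilon a(t,\cdot)$. The mean value proposition therefore gives, for $\epsilon\in V$,
\[
    \left|a(t,\epsilon)-a(t,\epsilon_0)\right|
    \leq |\epsilon-\epsilon_0|\sup_{\tau\in[0,1]}\left|\partial_\epsilon a\bigl(t,\epsilon_0+\tau(\epsilon-\epsilon_0)\bigr)\right|.
\]
Since $\partial_\epsilon a\in B_{T,U}$, there is a constant $M$ (independent of both $t$ and $\mu$) with $|\partial_\epsilon a(t,\mu)|\leq M$ for all $(t,\mu)\in[T,\infty)\times U$. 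Substituting this into the estimate yields $|a(t,\epsilon)-a(t,\epsilon_0)|\leq M|\epsilon-\epsilon_0|$, and crucially this bound does not depend on $t$. Taking the supremum over $t\in[T,\infty)$ gives
\[
    \|a_\epsilon-a_{\epsilon_0}\|_\infty=\sup_{t\in[T,\infty)}|a(t,\epsilon)-a(t,\epsilon_0)|\leq M|\epsilon-\epsilon_0|,
\]
so $f$ is in fact Lipschitz on $V$ and, in particular, continuous at $\epsilon_0$.

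The only real obstacle, and the reason for the specific hypothesis $\partial_\epsilon a\in B_{T,U}$, is the non-compactness of $[T,\infty)$. Pointwise differentiability of $a(t,\cdot)$ alone would yield continuity of $\epsilon\mapsto a(t,\epsilon)$ for each fixed $t$, but not uniform convergence in $t$, which is what is required for convergence in the sup-norm of $B_T$. Demanding that the derivative lie in $B_{T,U}$ is precisely the condition that upgrades the pointwise mean value estimate into a uniform one, and it is an instance of the same structural phenomenon that will force the product-type assumption on the kernels in the following differentiability theorem.
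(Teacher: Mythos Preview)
Your proof is correct and follows essentially the same route as the paper: apply the mean value estimate pointwise in $t$, use the uniform bound $M$ on $\partial_\epsilon a$ coming from $\partial_\epsilon a\in B_{T,U}$ to make the estimate independent of $t$, and then take the supremum over $[T,\infty)$ to conclude Lipschitz continuity in the sup-norm. Your explicit restriction to a subinterval $V\subset U$ to ensure the segment between $\epsilon_0$ and $\epsilon$ lies in $U$ is a small technical improvement over the paper's presentation, but otherwise the arguments are the same.
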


	\begin{proof}
		We start with $a\in B_{T,U}$ and consider continuity of $f$ at some $\epsilon_0\in U$. Since $\tfrac{\partial a}{\partial \epsilon}\in B_{T,U}$ there 
		exists an $M\in\mathbb{R}$
		s. t. $\left|\tfrac{\partial a}{\partial \epsilon}(\mu,t)\right|<M$ for all $t\in[T,\infty)$ and 
		$\mu\in U$.\\
		For any $\epsilon_1\in U$ it then follows from the generalized mean value theorem:
		\[
			\left|a(\epsilon_0,t)-a(\epsilon_1,t)\right|
			\leq\sup_{0\leq\tau\leq1}\left|\tfrac{\partial a}{\partial \epsilon}((\epsilon_0+\tau(\epsilon_1-\epsilon_0)),t)\right|\left|\epsilon_0-\epsilon_1\right|
			\leq M\left|\epsilon_0-\epsilon_1\right|.
		\]
		Now for any $\nu>0$ define $\delta:=\min\left\{\tfrac{\nu}{M},r\right\}$. Then, if we have $\left|\epsilon_0-\epsilon_1\right|<\delta$ it follows
		\[
			\begin{split}
			\left\|f(\epsilon_0)-f(\epsilon_1)\right\|_{\infty}&=\sup_{t\in[T,\infty)}\left|f(\epsilon_0)(t)-f(\epsilon_1)(t)\right|
			=\sup_{t\in[T,\infty)}\left|a(\epsilon_0,t)-a(\epsilon_1,t)\right|\\
			&\leq\sup_{t\in[T,\infty)}M\left|\epsilon_0-\epsilon_1\right|\\
			&< M\delta\leq\nu.
			\end{split}
		\]
		So $f$ is continuous.
		\end{proof}
		
		Next up is the kernel:
		
		\begin{lemma}
		Let $K\in \mathcal{K}_{T,U}^{(1,1)}$. Then the map
		\[
		f:U\rightarrow \mathcal{K}_T,\quad\epsilon\mapsto K_{\epsilon}
		\] 
		is continuous.
	\end{lemma}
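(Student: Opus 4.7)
The plan is to use the product structure of kernels in $\mathcal{K}_{T,U}^{(1,1)}$ to show that $\epsilon\mapsto K_\epsilon$ is in fact locally Lipschitz into $(\mathcal{K}_T,d_{\mathcal{K}})$. Since $K(t,s,\epsilon) = A(t,s,\epsilon)B(s,\epsilon)$, fix $\epsilon_0 \in U$, take a neighborhood $V_{\epsilon_0}\subset U$ on which the integral bounds for $B$ and $\partial_\epsilon B$ hold (via functions $g_{\epsilon_0}$ and $\tilde g_{\epsilon_0}$), and on which the uniform bounds $\|A\|\leq\alpha$ and $\|\partial_\epsilon A\|\leq\beta$ are valid by the order-$(1,1)$ assumption. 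The whole argument then reduces to estimating
\[
    d_{\mathcal{K}}(K_\epsilon,K_{\epsilon_0}) = \sup_{t\in[T,\infty)}\int_t^\infty \bigl\|A(t,s,\epsilon)B(s,\epsilon)-A(t,s,\epsilon_0)B(s,\epsilon_0)\bigr\|\,ds.
\]

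The first step is the standard add-and-subtract trick: write the integrand as
\[
    A(t,s,\epsilon)\bigl(B(s,\epsilon)-B(s,\epsilon_0)\bigr) + \bigl(A(t,s,\epsilon)-A(t,s,\epsilon_0)\bigr)B(s,\epsilon_0),
\]
and apply the generalized mean value theorem in the Banach space $M_n(\mathbb{C})$ to each difference, exactly as in the previous lemma. This yields pointwise bounds
\[
    \|B(s,\epsilon)-B(s,\epsilon_0)\|\leq |\epsilon-\epsilon_0|\sup_{0\leq\tau\leq 1}\|\partial_\epsilon B(s,\epsilon_0+\tau(\epsilon-\epsilon_0))\|\leq |\epsilon-\epsilon_0|\,\tilde g_{\epsilon_0}(s)
\]
and $\|A(t,s,\epsilon)-A(t,s,\epsilon_0)\|\leq \beta|\epsilon-\epsilon_0|$, both of which are uniform in $t$.

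The second step is to carry out the two integrations. For the first summand I use the uniform bound $\|A\|\leq\alpha$ to pull the $t$-dependence out, leaving
\[
    \int_t^\infty \|A(t,s,\epsilon)\|\,\|B(s,\epsilon)-B(s,\epsilon_0)\|\,ds \leq \alpha|\epsilon-\epsilon_0|\int_T^\infty \tilde g_{\epsilon_0}(s)\,ds,
\]
which is finite by the integral bound on $\partial_\epsilon B$. For the second summand, the mean value estimate on $A$ is already uniform in $(t,s)$, so
\[
    \int_t^\infty \|A(t,s,\epsilon)-A(t,s,\epsilon_0)\|\,\|B(s,\epsilon_0)\|\,ds \leq \beta|\epsilon-\epsilon_0|\int_T^\infty g_{\epsilon_0}(s)\,ds,
\]
which is finite by the integral bound on $B$ itself. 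Taking the supremum over $t\in[T,\infty)$ on the left and adding both contributions gives $d_{\mathcal{K}}(K_\epsilon,K_{\epsilon_0})\leq C_{\epsilon_0}|\epsilon-\epsilon_0|$ for $\epsilon\in V_{\epsilon_0}$, which establishes (local Lipschitz, hence) continuity at $\epsilon_0$.

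The main obstacle, and the reason the hypothesis was formulated as it was, is precisely the supremum over the unbounded interval $[T,\infty)$: without the factorization $K=AB$ with $A$ uniformly bounded and $B$ integrally bounded together with their $\epsilon$-derivatives, one cannot decouple the $t$-dependence from the tail behavior in $s$, and the naive estimate $\|K(\cdot,\epsilon)-K(\cdot,\epsilon_0)\|\leq|\epsilon-\epsilon_0|\sup\|\partial_\epsilon K\|$ would be useless since the $s$-integral of that supremum need not be finite. The product-type-of-order-$(1,1)$ hypothesis is tailored so that the two remaining integrals appearing above are independent of $t$ and finite, which is exactly what makes the sup pass through the estimate.
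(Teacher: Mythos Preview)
Your proof is correct and follows essentially the same approach as the paper: both arguments use the mean value inequality together with the product structure $K=AB$ and the bounds $\|A\|\leq\alpha$, $\|\partial_\epsilon A\|\leq\beta$, and the integral bounds on $B$, $\partial_\epsilon B$ to obtain a local Lipschitz estimate $d_{\mathcal{K}}(K_\epsilon,K_{\epsilon_0})\leq C|\epsilon-\epsilon_0|$. The only cosmetic difference is that the paper applies the mean value inequality to $K$ itself and then expands $\partial_\epsilon K=(\partial_\epsilon A)B+A(\partial_\epsilon B)$ via the product rule, whereas you first perform the add-and-subtract decomposition and then apply the mean value inequality to $A$ and $B$ separately; the resulting bounds are identical.
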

		
		\begin{proof}
			For the continuity of $f$ we have to work with the metric $d_\mathcal{K}$ mentioned above. We want to show the continuity at some $\epsilon_0\in U$. 
			For any other $\epsilon_1\in U$ we first obtain again by using the mean value estimate
		\[
			\begin{split}
				d_\mathcal{K}(K(t,s,\epsilon_0),K(t,s,\epsilon_1))&
				=\underset{t\in [T,\infty)}{\sup}\int_t^{\infty}\left\|K(t,s,\epsilon_0)-K(t,s,\epsilon_1)\right\|ds\\
				&\leq\underset{t\in [T,\infty)}{\sup}\int_t^{\infty}
				\sup_{0\leq\tau\leq1}\left\|\tfrac{\partial K}{\partial \epsilon}(t,s,\epsilon_0+\tau(\epsilon_1-\epsilon_0))\right\|
				\left|\epsilon_0-\epsilon_1\right|ds
			\end{split}
		\]
		Now $K$ is of product type of first order, i. e. 
		\[
			K(t,s,\epsilon)=A(t,s,\epsilon)B(s,\epsilon)\enspace\Rightarrow\enspace \frac{\partial K}{\partial\epsilon}(t,s,\epsilon)
			=\frac{\partial A}{\partial\epsilon}(t,s,\epsilon)B(s,\epsilon)+A(t,s,\epsilon)\frac{\partial B}{\partial\epsilon}(s,\epsilon),
		\]
		where $A$ and $\partial_\epsilon A$ are bounded by some constants $\alpha,\beta\in\mathbb{R}$ and $B(s,\mu)$ and $\partial_\epsilon B(s,\mu)$ are 
		integrally bounded by some functions $g_0$ and $g_1$ for all $\mu\in U$ close enough to $\epsilon$. Therefor in the integral above, we obtain for 
		$\epsilon_1$ close enough to $\epsilon_0$ 
		\[
			\sup_{0\leq\tau\leq1}\left\|\frac{\partial K}{\partial \epsilon}(t,s,\epsilon_0+\tau(\epsilon_1-\epsilon_0)\right\|\leq \alpha g_1(s)+\beta g_2(s),
		\]
		and
		\[
		\begin{split}
			d_\mathcal{K}(K(t,s,\epsilon_0),K(t,s,\epsilon_1))&\leq\underset{t\in [T,\infty)}{\sup}\int_t^{\infty}\alpha g_1(s)+\beta g_2(s)ds\left|\epsilon_0-\epsilon_1\right|\\
			&\leq\int_T^{\infty}\alpha g_1(s)+\beta g_2(s)ds\left|\epsilon_0-\epsilon_1\right|
			\leq C\left|\epsilon_0-\epsilon_1\right|.
		\end{split}
		\]
		for some constant $C\in\mathbb{R}$. Therefore $d_\mathcal{K}(K(t,s,\epsilon_0),K(t,s,\epsilon_1))$ becomes arbitrarily small and $f$ is continuous.
			
	\end{proof}

	Now for the main theorem.
	\begin{theorem}\label{different}
		Let $a\in B_{T,U}$ be bounded and such that the first and second derivative with respect to $\epsilon$ exist and are elements of $B_{T,U}$. 
		Let $K\in \mathcal{K}^{(2,1)}_{T,U}$ be of product type of order $(2,1)$.\\
		Then the solution $x(t,\epsilon)$ of the integral equation
		\[
			x(t,\epsilon)=a(t,\epsilon)-\int_t^{\infty}K(t,s,\epsilon)x(s,\epsilon)ds
		\]
		is differentiable in $\epsilon$ with $\tfrac{\partial x}{\partial\epsilon}\in B_{T,U}$ and the partial derivative satisfies the Volterra integral 
		equation
		\[
			\frac{\partial x}{\partial\epsilon}(t,\epsilon)=
			\left(\frac{\partial a}{\partial\epsilon}(\epsilon,t)-\int_t^{\infty}\frac{\partial K}{\partial\epsilon}(t,s,\epsilon)x(\epsilon,s)ds\right)
			-\int_t^{\infty}K(t,s,\epsilon)\frac{\partial x}{\partial\epsilon}(s,\epsilon) ds
		\]
		for every $\epsilon\in U$.
	\end{theorem}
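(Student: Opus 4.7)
The strategy is the standard one: differentiate the equation formally to obtain a candidate linear Volterra equation for $\partial_\epsilon x$, use Theorem \ref{ex} to solve it, and then identify the resulting function with $\partial_\epsilon x$ via a difference quotient argument relying on the continuous dependence Theorem \ref{cont}. First I would set
\[
\tilde a(t,\epsilon) := \frac{\partial a}{\partial\epsilon}(t,\epsilon) - \int_t^\infty \frac{\partial K}{\partial\epsilon}(t,s,\epsilon)\,x(s,\epsilon)\,ds.
\]
Writing $K = AB$ and using $\partial_\epsilon K = (\partial_\epsilon A) B + A (\partial_\epsilon B)$ together with the product-type bounds ($A$ and $\partial_\epsilon A$ uniformly bounded, $B$ and $\partial_\epsilon B$ integrally bounded) plus the fact that $x$ is bounded by Corollary \ref{est}, a direct estimate shows $\tilde a \in B_{T,U}$. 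Theorem \ref{ex} then produces, for each $\epsilon \in U$, a unique bounded solution $y(\,\cdot\,,\epsilon)$ of
\[
y(t,\epsilon) = \tilde a(t,\epsilon) - \int_t^\infty K(t,s,\epsilon)\,y(s,\epsilon)\,ds,
\]
and Corollary \ref{cont_b} upgrades $y$ to an element of $B_{T,U}$.

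To identify $\partial_\epsilon x$ with $y$, set $z_h(t) := h^{-1}[x(t,\epsilon+h) - x(t,\epsilon)]$. Subtracting the integral equation for $x$ at $\epsilon$ from the one at $\epsilon+h$ and rearranging yields
\[
z_h(t) = a_h(t) - \int_t^\infty K(t,s,\epsilon+h)\,z_h(s)\,ds,
\]
where
\[
a_h(t) := \frac{a(t,\epsilon+h) - a(t,\epsilon)}{h} - \int_t^\infty \frac{K(t,s,\epsilon+h) - K(t,s,\epsilon)}{h}\,x(s,\epsilon)\,ds.
\]
Since $K \in \mathcal{K}^{(1,1)}_{T,U}$, the map $\epsilon \mapsto K_\epsilon$ is continuous into $\mathcal{K}_T$, so as soon as I can establish $a_h \to \tilde a$ in $B_T$, Theorem \ref{cont} immediately gives $z_h \to y$ in $B_T$ as $h \to 0$, which proves both that $x$ is differentiable in $\epsilon$ and that $\partial_\epsilon x = y \in B_{T,U}$.

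The main obstacle is this sup-norm convergence $a_h \to \tilde a$. The scalar contribution $h^{-1}[a(\epsilon+h) - a(\epsilon)] - \partial_\epsilon a(\epsilon)$ is $O(h)$ uniformly in $t$ by Taylor's formula together with the hypothesis $\partial^2_\epsilon a \in B_{T,U}$. For the kernel contribution, I would exploit the product structure and decompose
\[
\frac{K(\epsilon+h) - K(\epsilon)}{h} - \partial_\epsilon K(\epsilon) = \left[\frac{A(\epsilon+h) - A(\epsilon)}{h} - \partial_\epsilon A(\epsilon)\right] B(\epsilon+h) + \partial_\epsilon A(\epsilon)\,[B(\epsilon+h) - B(\epsilon)] + A(\epsilon)\left[\frac{B(\epsilon+h) - B(\epsilon)}{h} - \partial_\epsilon B(\epsilon)\right].
\]
After integration in $s$ the first summand contributes $O(h)$ in sup norm via the bound on $\partial^2_\epsilon A$ paired with the integral bound on $B$, and the second contributes $O(h)$ via the bound on $\partial_\epsilon A$ paired with $|B(\epsilon+h) - B(\epsilon)| \leq h\,g_1(s)$ coming from the integral bound on $\partial_\epsilon B$. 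The delicate third summand is where the order $(2,1)$ assumption is just barely enough, since nothing is assumed on $\partial^2_\epsilon B$: writing
\[
\frac{B(\epsilon+h) - B(\epsilon)}{h} - \partial_\epsilon B(\epsilon) = \int_0^1 \bigl[\partial_\epsilon B(\epsilon + \tau h) - \partial_\epsilon B(\epsilon)\bigr]\,d\tau,
\]
the integrand tends to $0$ pointwise in $s$ and is dominated by $2\,g_1(s)$; since the tail integral $\int_t^\infty(\,\cdot\,)\,ds$ is bounded by the full integral $\int_T^\infty(\,\cdot\,)\,ds$, dominated convergence yields convergence to $0$ uniformly in $t$. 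The boundedness of $A$ and of $x(s,\epsilon)$ then shows that this third contribution also tends to $0$ in sup norm, completing $a_h \to \tilde a$ in $B_T$ and hence the proof. That the derivative satisfies the stated Volterra equation is built into the construction of $y$.
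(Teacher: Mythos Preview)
Your proof is correct and follows essentially the same approach as the paper: construct the candidate derivative as the unique solution of the formally differentiated equation, then identify it with the difference quotient via the contraction estimates, handling the $A$-part by Taylor expansion with the second derivative bound and the $B$-part by dominated convergence. The only organizational difference is that you arrange the difference-quotient equation with kernel $K(\cdot,\epsilon+h)$ and then invoke Theorem~\ref{cont} directly, whereas the paper keeps the kernel $K(\cdot,\epsilon)$ fixed (absorbing the discrepancy into the inhomogeneous term, which then involves $x(\cdot,\epsilon+\Delta\epsilon)$ rather than $x(\cdot,\epsilon)$) and applies Corollary~\ref{est} by hand; the resulting three-term decomposition and the estimates are the same.
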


	\begin{proof}
		With the notions introduced above it is now possible for the proof to follow the general idea of \cite{Windsor.2010}. Still the non-compact integration range demands a lot of additional estimates now.The parameter dependent solution $x(t,\epsilon)$ of the integral equation exists and depends continuously on $\epsilon$ as of corollary \ref{cont_b} 
		and the preceding two lemmas. In order to show the differentiability we first construct a candidate for the derivative and then show, that this 
		candidate is indeed the derivative.\\
		Formally differentiating the integral equation with respect to $\epsilon$ yields
		\begin{equation}\label{diff_int}
		\begin{split}
			\frac{\partial x}{\partial\epsilon}(t,\epsilon)&
			=\frac{\partial a}{\partial\epsilon}(\epsilon,t)-\int_t^{\infty}\left(\frac{\partial K}{\partial\epsilon}(t,s,\epsilon)x(\epsilon,s)
			-K(t,s,\epsilon)\frac{\partial x}{\partial\epsilon}(s,\epsilon)\right)ds\\		
			&=\left(\frac{\partial a}{\partial\epsilon}(\epsilon,t)-\int_t^{\infty}\frac{\partial K}{\partial\epsilon}(t,s,\epsilon)x(\epsilon,s)ds\right)
			-\int_t^{\infty}K(t,s,\epsilon)\frac{\partial x}{\partial\epsilon}(s,\epsilon)ds.
		\end{split}	
		\end{equation}
		As of our assumptions the term
		\[
			\frac{\partial a}{\partial\epsilon}(\epsilon,t)-\int_t^{\infty}\frac{\partial K}{\partial\epsilon}(t,s,\epsilon)x(\epsilon,s)ds
		\]
		is continuous in $t$ (for every $\epsilon\in U$) and bounded. Therefore theorem $\ref{ex}$ tells us, that there is a unique solution of equation 
		\ref{diff_int}, which shall be denoted by $\tfrac{\widehat{\partial x}}{\partial \epsilon}$.\\
		Note that $\tfrac{\widehat{\partial x}}{\partial \epsilon}$ only formally satisfies the equation for the derivative. In fact, in formulating equation 
		\ref{diff_int} we pulled the differential under the integral which entails the assumption that the solution is still continuous in $\epsilon$, which we 
		can't know yet. So instead we have to consider only the final equation as of itself and now show, that the solution is in fact the derivative. For this 
		consider the difference quotient
		\[
			q(\epsilon,\Delta\epsilon,t):=\frac{x(\epsilon+\Delta\epsilon)-x(\epsilon,t)}{\Delta\epsilon}.
		\]
		It satisfies the following integral equation
		\[
			q(\epsilon,\Delta\epsilon,t)=\frac{a(t,\epsilon+\Delta\epsilon)-a(t,\epsilon)}{\Delta\epsilon}
			-\int_t^{\infty}\frac{K(t,s,\epsilon+\Delta\epsilon)x(s,\epsilon+\Delta\epsilon)-K(t,s,\epsilon)x(s,\epsilon)}{\Delta\epsilon}ds.
		\]
		Now we reformulate the integrand to obtain an expression involving the difference quotient:
		\[
			\begin{split}
				&\frac{K(t,s,\epsilon+\Delta\epsilon)x(s,\epsilon+\Delta\epsilon)-K(t,s,\epsilon)x(s,\epsilon)}{\Delta\epsilon}\\
				&=\frac{\left(K(t,s,\epsilon+\Delta\epsilon)-K(t,s,\epsilon)\right)x(s,\epsilon+\Delta\epsilon)+K(t,s,\epsilon)(x(s,\epsilon+\Delta\epsilon)-x(s,\epsilon))}{\Delta\epsilon}\\
				&=\frac{K(t,s,\epsilon+\Delta\epsilon)-K(t,s,\epsilon)}{\Delta\epsilon}x(s,\epsilon+\Delta\epsilon)
				+K(t,s,\epsilon)\frac{x(s,\epsilon+\Delta\epsilon)-x(s,\epsilon)}{\Delta\epsilon}\\
				&=\frac{K(t,s,\epsilon+\Delta\epsilon)-K(t,s,\epsilon)}{\Delta\epsilon}x(s,\epsilon+\Delta\epsilon)
				+K(t,s,\epsilon)q(\epsilon,\Delta\epsilon,s).
			\end{split}
		\]
		With this, we can now look at the difference $\tfrac{\widehat{\partial x}}{\partial \epsilon}-q(\epsilon,\Delta\epsilon,t)$ and obtain another integral 
		equation:
		\[
			\begin{split}
				\frac{\widehat{\partial x}}{\partial \epsilon}(t,\epsilon)-q(\epsilon,\Delta\epsilon,t)
				=&\frac{\partial a}{\partial\epsilon}(\epsilon,t)-\frac{a(t,\epsilon+\Delta\epsilon)-a(t,\epsilon)}{\Delta\epsilon}\\
				&-\int_t^{\infty}\frac{\partial K}{\partial\epsilon}(t,s,\epsilon)x(s,\epsilon)
				-\frac{K(t,s,\epsilon+\Delta\epsilon)-K(t,s,\epsilon)}{\Delta\epsilon}x(s,\epsilon+\Delta\epsilon)ds\\
			&-\int_t^{\infty}K(t,s,\epsilon)\left(\frac{\widehat{\partial x}}{\partial\epsilon}(s,\epsilon)-q(\epsilon,\Delta\epsilon,s)\right)ds\\
			&=:\widehat{a}(t,\epsilon,\Delta\epsilon)-\int_t^{\infty}K(t,s,\epsilon)\left(\frac{\widehat{\partial x}}{\partial\epsilon}(s,\epsilon)-q(\epsilon,\Delta\epsilon,s)\right)ds
			\end{split}
		\]
		Again the first term defines a bounded continuous function on $[T,\infty)$, so we know that 
		$\tfrac{\widehat{\partial x}}{\partial \epsilon}-q(\epsilon,\Delta\epsilon,t)$ is the unique solution to this integral equation. This is helpful, as we 
		can now use the estimate from the contraction mapping principle to obtain an estimate for the difference $\tfrac{\widehat{\partial x}}{\partial \epsilon}-q(\epsilon,\Delta\epsilon,t)$. In fact in our situation corollary \ref{est} tells us
		\[
			\left\|\frac{\widehat{\partial x}}{\partial \epsilon}-q(\epsilon,\Delta\epsilon,t)\right\|_{\infty}
			\leq\frac{\left\|\widehat{a}\right\|_{\infty}}{1-\lambda}.
		\]
		Note that $\lambda=\sup_{t\in[T,\infty)}\int_t^{\infty}\left\|K(t,s,\epsilon)\right\|ds$ only depends on $\epsilon$ and not on $\Delta\epsilon$, so it 
		remains the same as we now want to consider the limit $\Delta\epsilon\rightarrow0$.\\
		The Taylor expansion of $a$ at $\epsilon$ up to first order is
		\[
			a(t,\epsilon+\Delta\epsilon)=a(t,\epsilon)+\frac{\partial a}{\partial\epsilon}(t,\epsilon)\Delta\epsilon+R(t,\Delta\epsilon),
		\]
		for the remainder $R$ that we may estimate in Lagrange form via
		\[
			\left\|R(t,\Delta\epsilon)\right\|\leq\frac{1}{2}\left\|\frac{\partial^2a}{\partial\epsilon^2}(t,\mu)(\Delta\epsilon)^2\right\|,
		\]
		for some $\mu\in[\epsilon,\epsilon+\Delta\epsilon]$. Thus the norm of the first part of the "boundary value" of the last integral equation can be 
		estimated as
		\[
		\begin{split}
			&\left\|\frac{\partial a}{\partial\epsilon}(t,\epsilon)-\frac{a(t,\epsilon+\Delta\epsilon)-a(t,\epsilon)}{\Delta\epsilon}\right\|_{\infty}
			\leq\left\|\frac{R(t,\Delta\epsilon)}{\Delta\epsilon}\right\|\\
			&\leq\frac{1}{2}\left\|\frac{\partial^2a}{\partial\epsilon^2}(t,\mu)\right\||\Delta\epsilon|.
		\end{split}
		\]
		As we assumed our $a$ to have a bounded second derivative we thus see, that the norm vanishes in the $\Delta\epsilon\rightarrow0$ limit.
		
		We would like to do the same for the integrand part, which basically works, but we have to put more work into this part, as the $\Delta\epsilon$ 
		limit , the sup-norm and the integral generally don't commute. It is mainly for this reason that we introduced the product structure for $K$ as this 
		gives us the desired uniform behavior. So now we write our kernel $K$ as $K(t,s,\epsilon)=A(t,s,\epsilon)\cdot B(s,\epsilon)$. This leads to
		\[
			\begin{split}
			&\frac{\partial K}{\partial\epsilon}(t,s,\epsilon)x(\epsilon,s)
				-\frac{K(t,s,\epsilon+\Delta\epsilon)-K(t,s,\epsilon)}{\Delta\epsilon}x(s,\epsilon+\Delta\epsilon)\\
				&=\left(\frac{\partial A}{\partial\epsilon}(t,s,\epsilon)B(s,\epsilon)+A(t,s,\epsilon)\frac{\partial B}{\partial\epsilon}(s,\epsilon)\right)x(s,\epsilon)\\
				&-\left(\frac{A(t,s,\epsilon+\Delta\epsilon)B(s,\epsilon+\Delta\epsilon)-A(t,s,\epsilon)B(s,\epsilon)}{\Delta\epsilon}\right)x(s,\epsilon+\Delta\epsilon)\\
				&=\left(\frac{\partial A}{\partial\epsilon}(t,s,\epsilon)B(s,\epsilon)+A(t,s,\epsilon)\frac{\partial B}{\partial\epsilon}(s,\epsilon)\right)x(s,\epsilon)\\
				&-\left(\frac{A(t,s,\epsilon+\Delta\epsilon)-A(t,s,\epsilon)}{\Delta\epsilon}B(s,\epsilon+\Delta\epsilon)
				+A(t,s,\epsilon)\frac{B(s,\epsilon+\Delta\epsilon)-B(s,\epsilon)}{\Delta\epsilon}\right)x(s,\epsilon+\Delta\epsilon)\\
				&=\frac{\partial A}{\partial\epsilon}(t,s,\epsilon)B(s,\epsilon)x(s,\epsilon)
				-\frac{A(t,s,\epsilon+\Delta\epsilon)-A(t,s,\epsilon)}{\Delta\epsilon}B(s,\epsilon+\Delta\epsilon)x(s,\epsilon+\Delta\epsilon)\\
				&+A(t,s,\epsilon)\frac{\partial B}{\partial\epsilon}(s,\epsilon)x(s,\epsilon)
				-A(t,s,\epsilon)\frac{B(s,\epsilon+\Delta\epsilon)-B(s,\epsilon)}{\Delta\epsilon}x(s,\epsilon+\Delta\epsilon).
			\end{split}
		\]
		Let us consider the first half of the final expression above. We write it as
		\[
		\begin{split}
			&\frac{\partial A}{\partial\epsilon}(t,s,\epsilon)B(s,\epsilon)x(s,\epsilon)
				-\frac{\partial A}{\partial\epsilon}(t,s,\epsilon)B(s,\epsilon+\Delta\epsilon)x(s,\epsilon+\Delta\epsilon)\\
				&+\frac{\partial A}{\partial\epsilon}(t,s,\epsilon)B(s,\epsilon+\Delta\epsilon)x(s,\epsilon+\Delta\epsilon)
				-\frac{A(t,\epsilon+\Delta\epsilon)-A(t,s,\epsilon)}{\Delta\epsilon}B(s,\epsilon+\Delta\epsilon)x(s,\epsilon+\Delta\epsilon)\\
				&=\frac{\partial A}{\partial\epsilon}(t,s,\epsilon)\left(B(s,\epsilon)x(s,\epsilon)-B(s,\epsilon+\Delta\epsilon)x(s,\epsilon+\Delta\epsilon)\right)\\
				&+\left(\frac{\partial A}{\partial\epsilon}(t,s,\epsilon)
				-\frac{A(t,s,\epsilon+\Delta\epsilon)-A(t,s,\epsilon)}{\Delta\epsilon}\right)B(s,\epsilon+\Delta\epsilon)x(s,\epsilon+\Delta\epsilon)
		\end{split}
		\]
		Now for the first part of this expression, we know that $B(s,\epsilon+\Delta\epsilon)x(s,\epsilon+\Delta\epsilon)$ converges point-wise to 
		$B(s,\epsilon)x(s,\epsilon)$ as $\Delta\epsilon\rightarrow0$. As of our assumption on $B$ and the boundedness of $x$ there exists some function 
		$b:[T,\infty)\rightarrow\mathbb{R}_{\geq0}$ with $\int_T^{\infty}b(s)ds<\infty$ s. t. 
		$\left\|B(s,\epsilon+\Delta\epsilon)x(s,\epsilon+\Delta\epsilon)\right\|\leq b(s)$ for all $\Delta\epsilon$ close enough to $0$. Therefore, by the 
		Dominated convergence theorem (for Bochner integrable functions) it holds
		\[
			\lim_{\Delta\epsilon\rightarrow0}\int_T^{\infty}\left\|B(s,\epsilon)x(s,\epsilon)-B(s,\epsilon+\Delta\epsilon)x(s,\epsilon+\Delta\epsilon)\right|ds=0.
		\]
		Thus, by taking into account that there is an upper bound $\beta\in\mathbb{R}$ for $\tfrac{\partial A}{\partial\epsilon}(t,s,\epsilon)$ we get for the 
		sup-norm of the whole first term:
		\[
		\begin{split}
			&\lim_{\Delta\epsilon\rightarrow0}\left\|\int_t^{\infty}\frac{\partial A}{\partial\epsilon}(t,s,\epsilon)\left(B(s,\epsilon)x(s,\epsilon)-B(s,\epsilon+\Delta\epsilon)x(s,\epsilon+\Delta\epsilon)\right)ds\right\|_{\infty}\\
			&\leq\beta\lim_{\Delta\epsilon\rightarrow0}\int_T^{\infty}\left\|B(s,\epsilon)x(s,\epsilon)-B(s,\epsilon+\Delta\epsilon)x(s,\epsilon+\Delta\epsilon)\right\|ds=0
		\end{split}
		\]
		It is for this inequality (and for following ones of the same kind) that we assumed the product structure of our kernel $K$, as it gives us the desired 
		uniform behavior.
		
		Next up is the term
		\[
			\int_t^{\infty}\left(\frac{\partial A}{\partial\epsilon}(t,s,\epsilon)-\frac{A(t,s,\epsilon+\Delta\epsilon)-A(t,s,\epsilon)}{\Delta\epsilon}\right)
			B(s,\epsilon+\Delta\epsilon)x(s,\epsilon+\Delta\epsilon).
		\]
		The $x$ part is now bounded by some constant $C$ for all $\Delta\epsilon$ small enough. For the difference quotient we consider again the Taylor 
		approximation of $A(t,s,\epsilon)$ with respect to $\epsilon$ (i. e. $t$ and $s$ are fixed):
		\[
			A(t,s,\epsilon+\Delta\epsilon)=A(t,s,\epsilon)+\frac{\partial A}{\partial\epsilon}(t,s,\epsilon)\Delta\epsilon+R(t,s,\Delta\epsilon),
		\]
		with the remainder term $R$. Thus the $A$ part in the integrand can also be regarded as
		\[
			-\frac{R(t,s,\Delta\epsilon)}{\Delta\epsilon}.
		\]
		For the remainder we again use the Lagrange estimate, i. e.:
		\[
			\left\|R(t,s,\Delta\epsilon)\right\|\leq\frac{1}{2}\left\|\frac{\partial^2A}{\partial\epsilon^2}(t,s,\mu)\left(\Delta\epsilon\right)^2\right\|
		\]
		for some $\mu\in[\epsilon,\epsilon+\Delta\epsilon]$. Now we us the assumption that in a neighbourhood of $\epsilon$ the second derivative of $A$ is 
		still bounded by some $\gamma$, so we obtain for $\Delta\epsilon$ small enough:
		\[
			\begin{split}
			&\left\|\int_t^{\infty}\left(\frac{\partial A}{\partial\epsilon}(t,s,\epsilon)-\frac{A(t,s,\epsilon+\Delta\epsilon)-A(t,s,\epsilon)}{\Delta\epsilon}\right)
			B(s,\epsilon+\Delta\epsilon)x(s,\epsilon+\Delta\epsilon)\right\|_{\infty}ds\\
			&\leq\sup_{t\in[T,\infty)}\int_t^{\infty}\left\|\left(\frac{\partial A}{\partial\epsilon}(t,s,\epsilon)-\frac{A(t,s,\epsilon+\Delta\epsilon)-A(t,s,\epsilon)}{\Delta\epsilon}\right)B(s,\epsilon+\Delta\epsilon)x(s,\epsilon+\Delta\epsilon)\right\|ds\\
			&\leq \frac{C}{2}\sup_{t\in[T,\infty)}\int_t^{\infty}\left\|\frac{\partial^2A}{\partial\epsilon^2}(t,s,\mu)\right\|\left\|B(s,\epsilon+\Delta\epsilon)\right\|ds\left(\Delta\epsilon\right)
			\leq \frac{C}{2}\gamma\int_T^{\infty}\left\|B(s,\epsilon+\Delta\epsilon)\right\|ds\left(\Delta\epsilon\right)\\
			&\leq \frac{C\gamma}{2}\int_T^{\infty}\left\|B(s,\epsilon+\Delta\epsilon)\right\|ds\left(\Delta\epsilon\right)
			\leq\frac{C\gamma}{2}\int_T^{\infty}g(s)ds\left(\Delta\epsilon\right).
			\end{split}
		\]
		As $g$ is integrable this term also vanishes in the $\Delta\epsilon\rightarrow0$ limit.\\
		Finally there is the last part 
		\[
		\int_t^{\infty}A(t,s,\epsilon)\left(\frac{\partial B}{\partial\epsilon}(s,\epsilon)x(s,\epsilon)
				-\frac{B(s,\epsilon+\Delta\epsilon)-B(s,\epsilon)}{\Delta\epsilon}x(s,\epsilon+\Delta\epsilon)\right)ds.
		\]
		Here we use a combination of the arguments above. Firstly $A$ is bounded by some $\alpha\in\mathbb{R}$ for all $(t,s)\in\Delta(T)$. From the mean value 
		estimate we obtain
		\[
			\left\|B(s,\epsilon+\Delta\epsilon)-B(s,\epsilon)\right\|\leq|\Delta\epsilon|\sup_{0\leq\tau\leq1}\left\|\frac{\partial B}{\partial\epsilon}(s,\epsilon+\tau\Delta\epsilon)\right\|.
		\]
		We assumed $\partial_\epsilon B$ to be integrally bounded, so there is some integrable function $h$ which gives a pointwise upper bound for all 
		$\Delta\epsilon$ small enough. As $x$ is also bounded the whole right side in the integrand is bounded by some general integrable function. As the 
		right side also converges pointwise to the left side, we obtain by the dominant convergence theorem that the integral vanishes in the 
		$\Delta\epsilon\rightarrow0$ limit, and thus
		\[
			\begin{split}
		&\lim_{\Delta\epsilon\rightarrow0}\left\|\int_t^{\infty}A(t,s,\epsilon)\left(\frac{\partial B}{\partial\epsilon}(s,\epsilon)x(s,\epsilon)
				-\frac{B(s,\epsilon+\Delta\epsilon)-B(s,\epsilon)}{\Delta\epsilon}x(s,\epsilon+\Delta\epsilon)\right)ds\right\|_\infty\\
				&\leq\lim_{\Delta\epsilon\rightarrow0}\alpha\sup_{t\in[T,\infty)}
				\int_t^{\infty}\left\|\left(\frac{\partial B}{\partial\epsilon}(s,\epsilon)x(s,\epsilon)
				-\frac{B(s,\epsilon+\Delta\epsilon)-B(s,\epsilon)}{\Delta\epsilon}x(s,\epsilon+\Delta\epsilon)\right)\right\|ds\\
				&\leq\alpha\lim_{\Delta\epsilon\rightarrow0}\int_T^{\infty}\left\|\left(\frac{\partial B}{\partial\epsilon}(s,\epsilon)x(s,\epsilon)
				-\frac{B(s,\epsilon+\Delta\epsilon)-B(s,\epsilon)}{\Delta\epsilon}x(s,\epsilon+\Delta\epsilon)\right)\right\|ds=0.
			\end{split}
		\]
		We have thus shown that all terms of the kernel vanish and we obtain
		\[
		\lim_{\Delta\epsilon\rightarrow0}\left\|\int_t^{\infty}\frac{\partial K}{\partial\epsilon}(t,s,\epsilon)x(\epsilon,s)
				-\frac{K(t,s,\epsilon+\Delta\epsilon)-K(t,s,\epsilon)}{\Delta\epsilon}x(s,\epsilon+\Delta\epsilon)ds\right\|_\infty=0.
		\]
		And with this \ref{est} tells us
		\[
			\lim_{\Delta\epsilon\rightarrow\infty}\left\|\frac{\widehat{\partial x}}{\partial \epsilon}-q(\epsilon,\Delta\epsilon,t)\right\|_{\infty}
			\leq\frac{\lim_{\Delta\epsilon\rightarrow\infty}\left\|\widehat{a}\right\|_{\infty}}{1-\lambda}=0.
		\]
		So the difference quotient converges towards our candidate for the derivative $\tfrac{\widehat{\partial x}}{\partial \epsilon}$ in the sup-norm, and 
		thus also point-wise. So it is in fact the true derivative. As it is a solution of the aforementioned integral equation \eqref{diff_int} we also know 
		that it is bounded and continuous in $t$. As $x$ is bounded and $\partial_\epsilon K$ is of product type and integrally bounded it is also continuous 
		in $\epsilon$.
	\end{proof}
	This finishes the proof for the continuous differentiability in $\epsilon$ and one may expect that corresponding propositions for higher derivatives 
	hold, given the corresponding differentiability of the ingredients $a$ and $K$. In fact Windsor gives a proof by induction for his setting, and the 
	arguments should translate to our set up, but as they are not needed for our applications in the second part and don't promise more important insights, 
	they are not pursued at this point.
	
	We will, however, need the extension of the arguments for holomorphic dependence. For this we first note, that the notion of kernels of product type 
	readily translates to the notion of holomorphic product types $K\in\mathcal{K}^h_{T,U}$. We don't write them here explicitly but only note that we do not 
	need to distinguish 
	product types of order greater than $1$. Instead by demanding the parts of the kernel to be complex differentiable we already have that they are 
	infinitely often complex differentiable and thanks to Cauchy's differentiation formula the upper bound and integral bound for the kernel are also 
	corresponding bounds for the derivatives. Thus the notion of order is not necessary in the holomorphic setting.
	
	\begin{theorem}\label{hol_dep}
		Let $a\in B_{T,U}$ be bounded and holomorphic in $\epsilon\in U$ where $U$ is some open subset of $\mathbb{C}$. Let $K\in \mathcal{K}^h_{T,U}$ be of 
		holomorphic product type.
		Then the solution $x(t,\epsilon)$ of the integral equation
		\[
			x(t,\epsilon)=a(t,\epsilon)-\int_t^{\infty}K(t,s,\epsilon)x(s,\epsilon)ds
		\]
		is holomorphic in $\epsilon$ with $\tfrac{\partial x}{\partial\epsilon}\in B_{T,U}$ and the partial derivative satisfies the Volterra integral equation
		\[
			\frac{\partial x}{\partial\epsilon}(t,\epsilon)=
			\left(\frac{\partial a}{\partial\epsilon}(\epsilon,t)-\int_t^{\infty}\frac{\partial K}{\partial\epsilon}(t,s,\epsilon)x(\epsilon,s)ds\right)
			-\int_t^{\infty}K(t,s,\epsilon)\frac{\partial x}{\partial\epsilon}(s,\epsilon) ds
		\]
		for every $\epsilon\in U$.
	\end{theorem}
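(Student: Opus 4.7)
The plan is to mimic the proof of Theorem \ref{different} step by step, with $\epsilon$ and $\Delta\epsilon$ allowed to be complex, and to show that wherever the real proof invoked bounds on second $\epsilon$-derivatives, those bounds are now furnished automatically by Cauchy's integral formula. Since $a$ is holomorphic and bounded on $[T,\infty)\times U$, and since for a kernel of holomorphic product type $K=A(t,s,\epsilon)\,B(s,\epsilon)$ the factor $A$ is uniformly bounded and $\partial_\epsilon B$ is integrally bounded, I can shrink $U$ slightly and apply Cauchy's differentiation formula on small discs around each $\epsilon_0\in U$: this yields uniform bounds for $\partial_\epsilon a$, $\partial^2_\epsilon a$, $\partial_\epsilon A$, $\partial^2_\epsilon A$, and an integrable majorant for $\partial^2_\epsilon B$. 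In particular every hypothesis of Theorem \ref{different} on the relevant $\epsilon$-derivatives is satisfied locally in $U$, so it already follows that $x$ is real-differentiable in $\epsilon$ with derivative bounded and continuous.

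Next I would construct the candidate complex derivative $\widehat{\partial x}/\partial\epsilon$ exactly as in the real case: the inhomogeneous term
\[
	\frac{\partial a}{\partial\epsilon}(t,\epsilon)-\int_t^{\infty}\frac{\partial K}{\partial\epsilon}(t,s,\epsilon)\,x(s,\epsilon)\,ds
\]
lies in $B_T$ by the bounds above and the integral bound on $\partial_\epsilon K$ (again using the product structure of $K$), so Theorem \ref{ex} supplies a unique bounded continuous solution $\widehat{\partial x}/\partial\epsilon$ to the formally differentiated Volterra equation. The task is then to prove that the complex difference quotient $q(\epsilon,\Delta\epsilon,t)=(x(t,\epsilon+\Delta\epsilon)-x(t,\epsilon))/\Delta\epsilon$ converges to $\widehat{\partial x}/\partial\epsilon$ in $B_T$ as $\Delta\epsilon\to 0$ in $\mathbb{C}$.

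Following the real argument, the difference $\widehat{\partial x}/\partial\epsilon - q(\epsilon,\Delta\epsilon,\cdot)$ satisfies a Volterra equation of the same form with the same kernel and a new inhomogeneous term $\widehat a(t,\epsilon,\Delta\epsilon)$; by Corollary \ref{est} it is enough to show $\|\widehat a(\cdot,\epsilon,\Delta\epsilon)\|_\infty\to 0$ as $\Delta\epsilon\to 0$. The $a$-part is handled by the complex Taylor expansion $a(t,\epsilon+\Delta\epsilon)=a(t,\epsilon)+\partial_\epsilon a(t,\epsilon)\Delta\epsilon+R(t,\Delta\epsilon)$ with the remainder controlled by the Cauchy bound on $\partial^2_\epsilon a$. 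The integral part is split, exactly as in the real proof, along the product decomposition into a piece controlled by $\partial_\epsilon A$ versus $(A(t,s,\epsilon+\Delta\epsilon)-A(t,s,\epsilon))/\Delta\epsilon$ and a piece controlled by $\partial_\epsilon B$ versus $(B(s,\epsilon+\Delta\epsilon)-B(s,\epsilon))/\Delta\epsilon$; the first is handled by a complex Taylor remainder estimate on $A$ using the Cauchy bound on $\partial^2_\epsilon A$ together with the integrability of $\|B(s,\epsilon+\Delta\epsilon)\|$, and the second by the complex mean value estimate for $B$ combined with dominated convergence, using the integrable majorant for $\partial_\epsilon B$.

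The main obstacle, and the reason I would cite the product structure of $K$ repeatedly, is exactly the one encountered in the real case: the integration range is unbounded, so pointwise convergence of integrands together with crude bounds would not give $L^\infty_t$-convergence. The product type of $K$ lets me pull the $t$-dependence out as a uniform constant $\alpha$ or $\beta$ and reduce everything to a dominated convergence argument on $[T,\infty)$ with integrable majorants independent of $t$. Once this is accomplished, $\widehat{\partial x}/\partial\epsilon$ is the complex derivative of $x$ at every $\epsilon\in U$, so $x$ is holomorphic in $\epsilon$ with the stated integral equation for its derivative, and boundedness plus joint continuity in $(t,\epsilon)$ follow from the solution being in $B_T$ and the product-type structure of $\partial_\epsilon K$.
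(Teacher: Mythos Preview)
Your proposal is correct and follows essentially the same approach as the paper: the paper's proof simply says to retrace the steps of Theorem \ref{different}, replacing the mean value inequality by its standard holomorphic analog and the Lagrange remainder estimate by bounds coming from Cauchy's differentiation formula. Your write-up is in fact considerably more detailed than the paper's own sketch, but the strategy and the key replacements are identical.
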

	
	\begin{proof}
		For the proof we simply have to retrace all the steps in section \ref{sec_diff_dep}. We don't do this in detail and just note that the two main 
		estimates that were used in the proof of \ref{different} also hold in the holomorphic setting. That is firstly the mean value inequality, for which a 
		corresponding estimate in the holomorphic setting is standard. The other estimate concerns the use of the Lagrangian remainder in the Taylor expansion. 
		Similar to the explanation above this is not needed now. Instead the remainder term for all relevant parts is bounded by the some multiple of the 
		function itself thanks to Cauchy's differentiation formula. Thus all relevant estimates are given and the proof can be recast in an obvious manner.
	\end{proof}
	
	\newpage
	
\section{Construction of the coordinate system on \texorpdfstring{$\mathcal{M}$}{M}}\label{const_on_mod}

	We now come to the main part of this endeavor. While the previous section was mostly concerned with a theory that can stand on its own we now want 
	to use it for the construction of hyperkähler metrics on the moduli space of weakly parabolic Higgs bundles. Here most of the hard work is done as we use 
	the theory developed in section \ref{init_prob} to obtain the correct decorations and deduce their asymptotic behaviour. This is first done for the small 
	flat sections in subsection \ref{sec_small_sec} and then for the canonical coordinates in subsection \ref{sec_coord}. Finally in subsection 
	\ref{der_coord} the logarithmic derivatives of the coordinates are studied as they will provide us with the holomorphic symplectic form from which the 
	hyperkähler metric is obtained in the last section.

\subsection{Obtaining the small flat sections}\label{sec_small_sec}

	As was already hinted at at the beginning of section \ref{sec_dec} we actually construct coordinates on the moduli space of flat 
	connections $\mathcal{M}_{\text{flat}}$ which we identify with $\mathcal{M}$ in the following way. We start with a stable parabolic Higgs bundle $(E,\overline{\partial}_E,\Phi)$ of parabolic degree pdeg$(\mathcal{E})=0$ and a solution $h$ of the $R$-rescaled Hitchin's equation to obtain a pair
	$(\Phi,A)$ consisting of the Higgs field $\Phi$ and the Chern-connection $A=A(\overline{\partial}_E)$ on the Hermitian vector bundle $(E,h)$. Additionally we now take any 
	$\zeta\in\mathbb{C}^\times$ and construct the connection
	\[
		\nabla(\zeta):=\frac{R}{\zeta}\Phi+d_A+R\zeta\Phi^{\ast_h}.
	\]
	The $\zeta$-dependence will later on lead to the objects of interest being twisted by the line bundle $\mathcal{O}(2)$ in accordance with the twistor 
	theorem. Therefore this connection will be referred to as \textit{twistor}-connection here. The fact that $(\Phi,A)$ is a solution to Hitchin's equations 
	and pdeg$(\mathcal{E})=0$ implies that this connection is flat. Note that the flatness of $\nabla(\zeta)$ is of quite some importance in the following arguments, as we are interested in a unique way of parallel transporting sections in $E$. For different parabolic degree one would obtain projectively flat connections and it seems reasonable that the theory can be adapted accordingly. But as we also used this restriction on the degree above in section \ref{frame_sec} when we established the background metric on the Higgs bundles, we will only consider strictly flat case here.
	
	In the rest of this section want to study the decorations that belong to this family of connections. So in light of the construction in section 
	\ref{Darboux_coords} we have to study the $\nabla$-flat sections of $E$ and choose one for each vertex $p$ of any $ca$-triangulation. So we look at the 
	solutions $s$ of the flatness equation
		\[
			\nabla(\zeta)s=0.
		\]
	We first briefly describe how GMN defined the decorations and which problems arise in their setting, so that we can then state the goal our treatment 
	of the subject. For this consider a local patch around any of the poles such that the pole has coordinate 
	$z=0$. GMN consider Higgs pairs $(\Phi,A)$ of the form 
	\begin{equation}\label{GMN_way}
		\begin{split}
			\Phi&=m\sigma^3\frac{dz}{z}+\text{regular},\\
			A&=m^{(3)}\sigma^3\left(\frac{dz}{z}-\frac{d\overline{z}}{\overline{z}}\right)+\text{regular},
		\end{split}
	\end{equation}
	for $m\in\mathbb{C}$ and $m^{(3)}\in\mathbb{R}$. Note that $m^{(3)}$ corresponds to a choice of the parabolic weights as $\alpha_1=2m^{(3)}=-\alpha_2$, 
	while $m$ equals the residue of the Higgs field at the pole of at $z=0$.
	
	Then they argue that a Frobenius analysis near a regular singular point shows that there are two flat sections of $\nabla(\zeta)$ of the form
	\[
		\begin{split}
			s^{(1)}=z^{-R\zeta^{-1}m+m^{(3)}}\overline{z}^{-R\zeta\overline{m}+m^{(3)}}\begin{pmatrix}1+O(\left|z\right|\\O(\left|z\right|)\end{pmatrix},\\
			s^{(2)}=z^{R\zeta^{-1}m-m^{(3)}}\overline{z}^{R\zeta\overline{m}+m^{(3)}}\begin{pmatrix}O(\left|z\right|)\\1+O(\left|z\right|)\end{pmatrix}.
		\end{split}
	\]
	These have clockwise monodromy eigenvalues $\mu^{(1)}=e^{2\pi i\nu}$, $\mu^{(2)}=e^{-2\pi i\nu}$ with $\nu=R\zeta^{-1}m-2m^{(3)}-R\zeta\overline{m}$. To 
	single out one of the solutions thy consider these solutions along $ca$-trajectories and ask whether they decay or grow when running into the pole. Thus 
	we now fix some generic angle $\vartheta$. If not otherwise noted all angles here and in the following will always be generic. It turns out that for the 
	following constructions to work the choice of $\vartheta$ and $\zeta$ have to be tied together to gain control over how a $ca$-trajectory behaves. This is 
	done by introducing the half-plane centered around $e^{i\vartheta}\mathbb{R}_+$:
	\[
	\mathbb{H}_{\vartheta}:=\left\{\zeta\in\mathbb{C}^\times:\enspace \vartheta-\frac{\pi}{2}<\text{arg}\zeta<\vartheta+\frac{\pi}{2}\right\}.
	\]
	We will now always take $\zeta\in\mathbb{H}_{\vartheta}$. Then along any $\vartheta$-trajectory with $\zeta\in\mathbb{H}_{\vartheta}$ one of these 
	sections is exponentially decaying while the other one is exponentially growing. The space of exponentially 
	decaying sections is thus a $1$-dimensional subspace of solutions and seems to be well equipped to be the decoration for the parabolic point. 
	
	Following GMN one could now define the \textit{small flat section} at a weakly parabolic point $p$ as the (up to complex rescaling) unique 
	$\nabla(\zeta)$-flat section $s$ of $E$ that decays exponentially when following a $\vartheta$-trajectory into $p$. We will not do this here, as there 
	are some problems with this definition. A minor detail is that we do not work with a traceless connection matrix, so the argument would have to be 
	adjusted to this. Though this might be possible another problem is that from the set-up of \cite{Gaiotto.2013} it is a priori not clear, that the 
	remainder terms in the expansion of the Higgs field and Chern connection are regular enough for the asymptotic analysis to hold rigorously for all Higgs 
	pairs. One could try 
	to argue for this using the results from the previous section, but the most important difficulty would remain even if such an analysis were possible, as 
	it is a statement concerning only small neighborhoods of the parabolic points.
	Even if one were able to define these sections in this way it is not clear at all at this point that this amounts to a decoration for which the 
	construction of the 
	Fock-Goncharov coordinate system via Theorem \ref{FG_coord} works, as we do not know how these sections propagate throughout the rest of $\mathcal{C}$. 
	For example it is important that such a section that is exponentially decaying when running into some weakly parabolic point $p_1$ along a 
	$\vartheta$-trajectory is not also decaying when running in the opposite direction into another weakly parabolic point $p_2$. Only if this can be ruled out it is 
	clear that the $\mathcal{X}$-coordinates are well defined and no linear dependence appears in the $\wedge$-products. Even then, to construct the 
	$\wedge$-
	product of two such decorations we have to evaluate them at some common points on $\mathcal{C}$ for which we also need to know how the sections develop 
	away from weakly parabolic points where they are singled out. Finally we need to know the asymptotics of the sections for 
	$\zeta\to0$, $\zeta\to\infty$ and we would like to have the asymptotics for $R\to\infty$. GMN argued that all of these aspects are as desired mainly by 
	using an argument for the asymptotics build on the WKB approximation. The main goal here is to give a rigorous proof of these assertions via the limiting 
	configurations of \cite{Fredrickson.2020} and the theory of initial value problems at infinity. 
	
	With these goals in mind we now proceed to recast the problem in the formalism of section \ref{init_prob}. For the rest of this section a generic angle 
	$\vartheta$ for $-q=\det\Phi$ shall be fixed. We now also fix a standard annulus $S$ for a standard 
	quadrilateral $Q$ and corresponding frame $F$ as working environment for all our calculations of this section. Locally the flatness equation 
	for $s$ is given as
	\begin{equation}\label{flatness_equation}
		0=\nabla s=\left(d+\frac{R}{\zeta}\varphi dz+A_zdz+A_{\overline{z}}d\overline{z}+R\zeta\varphi^{\ast_h}d\overline{z}\right)s
	\end{equation}
	The form of $\nabla$ implies that the $dz$ and $d\overline{z}$ part of $ds$ have to become zero independently, so we obtain the two PDEs:
	\[
		\partial_zs=\left(-\frac{R}{\zeta}\varphi-A_z\right)s\quad\wedge\quad\partial_{\overline{z}}s=\left(-R\zeta\varphi^{\ast_h}-A_{\overline{z}}\right)s.
	\]
	Now let $\gamma(t)$ be a $\vartheta$-trajectory for $q^{1/2}$ and some generic angle $\vartheta\in\mathbb{R}/2\pi\mathbb{Z}$ that runs between two poles 
	$p_1$ and $p_2$ of $q$ and thus forms one of the boundary curves of the quadrilateral $Q$. We also fix the parametrization for the 
	following calculations such that $q^{1/2}(\gamma')=f^{1/2}(z(\gamma(t)))z'(\gamma(t))=-e^{i\vartheta}$. Note that we have thus fixed an orientation for 
	the trajectory 
	which determines whether the trajectory runs from $p_1$ to $p_2$ for increasing $t$ or in the other direction.
	
	We take this curve to be extend to all of $\mathbb{R}$ as of proposition \ref{infinite_range}. Thus trivialized the section
	$s$ can be regarded as a function $s:\mathbb{R}\rightarrow\mathbb{C}^2$. For notational simplicity let $z(t)$ denote $z(\gamma(t))$. Locally in $S$ along 
	the $\vartheta$-trajectory $\gamma(t)$ the flat section $s(t)=s(z(t))$ then satisfies the following ODE-form of the flatness equation:
	\[
		\frac{ds}{dt}(t)=\frac{\partial s}{\partial z}(z(t))\frac{dz}{dt}(t)+\frac{\partial s}{\partial\overline{z}}(\overline{z}(t))\frac{d\overline{z}}{dt}(t)
		=\left(\left(-\frac{R}{\zeta}\varphi-A_z\right)z'+\left(-R\zeta\varphi^{\ast_h}-A_{\overline{z}}\right)\overline{z}'\right)s=:B(t)s(t).
	\]
	Note that this trivialized $s$ depends on the chosen gauge. This dependency vanishes in the final expression $\mathcal{X}$, as the gauge 
	transformations cancel each other in the defining formula.
	
	To solve the ODE we start by rewriting $B$ in the acquired form of leading term plus remainder from section \ref{local_desc} in any contractible open 
	subset $U$ of $S$. For the leading part we define 
	\begin{equation}\label{leading}
		\begin{split}
		\lambda_1&:=\tfrac{R}{\zeta}e^{i\vartheta}-\overline{a_1}\overline{z'}+a_1z'+R\zeta e^{-i\vartheta}
		=\tfrac{R}{\zeta}e^{i\vartheta}-2i\text{Im}(a_1z')+R\zeta e^{-i\vartheta},\\
		\lambda_2&:=\tfrac{R}{\zeta}e^{i\vartheta}+\overline{a_2}\overline{z'}-a_2z'+R\zeta e^{-i\vartheta}
		=\tfrac{R}{\zeta}e^{i\vartheta}+2i\text{Im}(a_2z')+R\zeta e^{-i\vartheta}.
		\end{split}
	\end{equation}
	Note that $a_1$ and $a_2$ have simple poles at the weakly parabolic points, so by multiplying with $z'$ the whole expression for the leading part becomes regular. More details on this follow shortly in the next proof.
	For the error part we define
	\[
		\begin{split}
		e_1:&=(a_1-a_2)|\beta|^2z'+R\zeta2e^{-i\vartheta}|\beta|^2+h_1z'=\left((a_2-a_1)z'+R\zeta2e^{-i\vartheta}\right)|\beta|^2+h_1z',\\
		e_2:&=(a_1-a_2)\delta\beta z'+R\zeta2e^{-i\vartheta}\delta\beta+h_2z'=\left((a_2-a_1)z'+R\zeta2e^{-i\vartheta}\right)\delta\beta+h_2z',\\
		e_3:&=-(a_1-a_2)\alpha\overline{\beta}z'-R\zeta2e^{-i\vartheta}\alpha\overline{\beta}+h_3z'
		=\left(-(a_1-a_2)z'-R\zeta2e^{-i\vartheta}\right)\alpha\overline{\beta}+h_3z'.
		\end{split}
	\]
	Note that the error part only has a linear dependence on $\zeta$, while $\zeta^{-1}$ only exists in the leading term. This is a consequence of the choice of holomorphic frame in section \ref{frame_sec} and important for us, as we will consider the terms for small $\zeta$ which is in this form possible without an unbounded increase of the error part.
	
	As a corollary of Lemma \ref{g_asymptotics} we obtain the asymptotics for the entries in the error-matrix:
	
	\begin{lemma}\label{e_asymptotics}
		There exists a local coordinate $z$ centered around any weakly parabolic point, such that along a generic $\vartheta$-trajectory the following 
		asymptotics hold for the entries of the error matrix defined 
		above (with $r:=|z|$) for some $\mu>0$:
		\[
			e_1,e_2,e_3=O(r^\mu).
		\]
		Furthermore, if $\epsilon$ denotes a $C^2$-coordinate of $\mathcal{M}$, then the derivatives w. r. t. $\epsilon$ exist and have the same asymptotics:
		\begin{align*}
			\partial_\epsilon e_1\partial_\epsilon e_2,\partial_\epsilon e_3&=O(r^\mu),\\
			\partial^2_\epsilon e_1\partial^2_\epsilon e_2,\partial^2_\epsilon e_3&=O(r^\mu).
		\end{align*}
		
		Additionally there exists $C,\delta>0$ s. t.
		\[
			|r^{-\mu}e_1|,|r^{-\mu}e_2|,|r^{-\mu}e_3|\leq Ce^{-\delta R}
		\]
		for large enough $R$.
	\end{lemma}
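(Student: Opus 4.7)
The plan is to read off the asymptotics directly from Lemma \ref{g_asymptotics} together with two simple observations about the parametrization and the local structure of $a_1,a_2$. First I would check that along a generic $\vartheta$-trajectory running into a weakly parabolic point, the chain-rule factor $z'=-e^{i\vartheta}/f^{1/2}$ inherited from the standard parametrization $q^{1/2}(\gamma')=-e^{i\vartheta}$ satisfies $z'=O(r)$, because $q=-\det\Phi$ has a double pole at the parabolic point so that $f^{1/2}\sim c/z$. Second I would note that in the standard frame $F$ on the standard annulus, the diagonal entries $a_1,a_2$ of the holomorphic structure have at worst a simple pole at each weakly parabolic point, so that the combination $(a_2-a_1)z'$ is bounded near the parabolic points. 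Consequently the prefactor $(a_2-a_1)z'+2R\zeta e^{-i\vartheta}$ appearing in the definitions of $e_1,e_2,e_3$ is $O(1)$ near the pole.

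With these two observations, the asymptotic $e_1,e_2,e_3=O(r^\mu)$ follows by multiplying: the $|\beta|^2$, $\delta\beta$, $\alpha\overline{\beta}$ contributions become $O(r^{2\mu})$ or $O(r^\mu)$ by Lemma \ref{g_asymptotics}, and the terms $h_iz'$ become $O(r^{\mu-1})\cdot O(r)=O(r^\mu)$. Since Lemma \ref{g_asymptotics} gives the same shape of estimates for the first and second $\epsilon$-derivatives of $\alpha,\delta,\beta$ and $h_i$, and since $a_1,a_2,z',\zeta$ enter through explicit smooth dependence on $\epsilon$ via the underlying coordinates on $\mathcal{M}'$, a product-rule expansion of $\partial_\epsilon e_j$ and $\partial^2_\epsilon e_j$ produces a finite sum of terms each of which has at least one factor from $\{\beta,\overline\beta,|\beta|^2,h_i\}$ or their derivatives, giving the same order $O(r^\mu)$.

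Before applying this I would need to justify that $\partial_\epsilon(a_2-a_1)$ and $\partial_\epsilon z'$ do not spoil the $O(r)$ or $O(1)$ behaviour of the prefactor $(a_2-a_1)z'$; this is where a small technical step enters, since the standard frame and the $\vartheta$-parametrization both deform with $\epsilon$. Here I would argue that the pole structure of $a_1,a_2$ is fixed by the (fixed) residue data $\sigma_p$ and parabolic weights of the moduli space, so only the regular part of $a_1,a_2$ varies smoothly with $\epsilon$, and $z'$ depends smoothly on $\epsilon$ through $f^{1/2}$ which has a fixed simple pole; the variation therefore keeps the prefactor $O(1)$ with bounded $\epsilon$-derivatives. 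This bookkeeping is the main thing to be careful about, but it is essentially combinatorial rather than analytic.

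Finally, the exponential suppression in $R$ transfers one-to-one from Lemma \ref{g_asymptotics}: the uniform bounds $|r^{-\mu}\beta|,|r^{1-\mu}h_i|\le Ce^{-\nu R}$ and the analogous bounds for $\partial_\epsilon h_i$ combine with the $O(1)$ prefactor to give $|r^{-\mu}e_j|\le Ce^{-\delta R}$ with $\delta=\nu$ (possibly after enlarging $C$). I expect the only mildly nontrivial point in the whole argument is tracking the dependence of the parametrization of $\gamma$ on the coordinate $\epsilon$ when taking two $\epsilon$-derivatives; everything else is a straightforward application of Lemma \ref{g_asymptotics}.
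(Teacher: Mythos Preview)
Your proposal is correct and follows essentially the same approach as the paper: use Lemma \ref{g_asymptotics} for the $\alpha,\delta,\beta,h_i$ asymptotics, show that the prefactor $(a_2-a_1)z'$ is bounded because $a_1-a_2$ has a simple pole while $z'=O(r)$, and then conclude $h_iz'=O(r^{\mu})$ and the exponential suppression directly. The only cosmetic difference is that the paper obtains $z'=O(r)$ from the explicit logarithmic-spiral parametrization of Lemma \ref{log_spiral} (giving $z'(t)=-\tfrac{e^{i\vartheta}}{m}z(t)$), whereas you obtain it from $z'=-e^{i\vartheta}/f^{1/2}$ with $f^{1/2}\sim m/z$; these are of course the same thing, and your more detailed bookkeeping for the $\epsilon$-derivatives is an elaboration of what the paper dismisses with ``follows in the same way.''
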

	
	\begin{proof}
		We start with a sufficiently small neighborhood centered around a weakly parabolic point $p$, s. t. Lemma \ref{g_asymptotics} holds, which tells us that the 
		factors involving $\alpha,\delta$ and $\beta$ give exactly the asymptotics as stated in the Lemma. So for the statement at hand to hold the terms in 
		front have to be bounded. This is clear for the part $R\zeta2e^{-i\vartheta}$, so we only have to check the part $|(a_1-a_2)z'|$.  
		
		From \cite{Fredrickson.2020} $a_1-a_2$ has a simple pole at $p$, so $a_1-a_2$ is of the form $\tfrac{\kappa}{z}+O(r^{\mu-1})$ for some constants $\kappa\in\mathbb{C}$ and $\mu>0$. From Lemma 
		\ref{log_spiral} we know that $z(t)=z_0e^{-\frac{e^{i\vartheta}}{m}t}$, so
		\[
			z'(t)=-\frac{e^{i\vartheta}}{m}z_0e^{-\frac{e^{i\vartheta}}{m}t}=-\frac{e^{i\vartheta}}{m}z(t).
		\]
		It is thus clear that $|(a_2-a_1)z'|$ is bounded along the $\vartheta$-trajectory. 
		
		Finally we have to check the $h_iz'$ parts. From Lemma \ref{g_asymptotics} we see that the $h_i$ are $O(r^{\mu-1})$, so multiplying with 
		$z'=-\frac{e^{i\vartheta}}{m}z$ results in $h_iz'=O(r^{\mu})$.
		
		This proves the first part, and the argument for the derivatives follows in the same way. Finally the exponential 
		suppression also follows directly from Lemma \ref{g_asymptotics}.
	\end{proof}
	
	Note that the last statement of the Lemma depends crucially on the fact that all entries of the error matrix are at least linear in the off diagonal term 
	$\beta$ of 
	the matrix $H$ that represents the metric $h$. This in turn leads to the at least linear dependence on the off diagonal term $b$ of the gauge transformation $g$, 
	whose decaying behaviour is thus the source of these estimates. We now obtain the following form for our flatness ODE:
	\[
		\frac{ds}{dt}=\begin{pmatrix}\lambda_1&0\\0&-\lambda_2\end{pmatrix}s+\begin{pmatrix}e_1&e_2\\e_3&-e_1\end{pmatrix}s.
	\]
	The important feature is that the second matrix is integrable (which we'll show below), while choosing $\zeta$ in $\mathbb{H}_{\vartheta}$ leads to 
	the upper left entry of the leading term having positive real part while the lower right entry has negative real part. The small flat section (for the 
	pole at $+\infty$) should now be given as the unique solution of the following integral equation:
	\[
		s(t)=\begin{pmatrix}0\\e^{-\Lambda_2(t)}\end{pmatrix}-\int_{t}^{\infty}
		\begin{pmatrix}e^{\Lambda_1(t)-\Lambda_1(\tau)}&0\\0&e^{-\Lambda_2(t)+\Lambda_2(\tau)}\end{pmatrix}
		\begin{pmatrix}e_1(\tau)&e_2(\tau)\\e_3(\tau)&-e_1(\tau)\end{pmatrix}s(\tau)d\tau.
	\]
	
	Here the $\Lambda_i$ are primitives of the $\lambda_i$ along the $ca$-curve. Note that from the structure of the $\lambda_i$ and the choice of $\zeta$ it follows that 
	$e^{-\Lambda_i(t)}\stackrel{t\rightarrow\infty}{\longrightarrow}0$ for both $i\in\left\{1,2\right\}$.
	
	This equation may already be regarded as an initial value problem at infinity. The limit for $t\to\infty$ of 
	the initial value function corresponds to the limit for $t\to\infty$ of the solution $s$, i. e. the limit of the section evaluated along a 
	$\vartheta$-trajectory going into a parabolic point. In fact solutions of this equation can already be constructed via the method of successive 
	approximation which was observed by N. Levinson in \cite{Levinson.1948}. This is also what I. Tulli did for the case of wild Higgs bundles on the sphere 
	in \cite{Tulli.2019}. Although that construction works and is useful to obtain good approximations of the solution it does not seem to be adequate 
	for questions regarding the structure of the solution space as well as uniqueness of solutions and differentiable dependence of a family of solutions. This is why we had to build the general theory for these kinds of equations in 
	part \ref{init_prob} of our work. 
	
	For our theory to work in this case however we have to modify the kernel of this equation. This is done by detaching the 
	decaying leading term, which corresponds to finding the leading term in the WKB approximation. So we write $s(t)=x(t)e^{-\Lambda_2(t)}$ and the integral 
	equation for $x$ takes the form
	\[
		\begin{split}
		x(t)&=\begin{pmatrix}0\\1\end{pmatrix}-\int_{t}^{\infty}
		\begin{pmatrix}e^{\Lambda_1(t)-\Lambda_1(\tau)+\Lambda_2(t)-\Lambda_2(\tau)}&0\\0&1\end{pmatrix}
		\begin{pmatrix}e_1(\tau)&e_2(\tau)\\e_3(\tau)&-e_1(\tau)\end{pmatrix}x(\tau)d\tau\\
		&=\begin{pmatrix}0\\1\end{pmatrix}-\int_{t}^{\infty}
		\begin{pmatrix}e^{\Lambda(t)-\Lambda(\tau)}&0\\0&1\end{pmatrix}
		\begin{pmatrix}e_1(\tau)&e_2(\tau)\\e_3(\tau)&-e_1(\tau)\end{pmatrix}x(\tau)d\tau.
		\end{split}
	\]
	In the last equation we introduced $\Lambda(t):=\Lambda_1(t)+\Lambda_2(t)$ for better readabillity. Note that the leading term corresponds to the part of the solutions of 
	Hitchin's equation that comes from the limiting configurations. Thus it is this decoupling were the results of \cite{Fredrickson.2020} really come into 
	play.
	
	In order to use our theorem we have to make sure, that the kernel is now at least for large enough $R$ and small enough $\zeta$ a good one. Note that we 
	will use the notion of "small enough $\zeta$" as meaning the existence of some $\zeta_0\in\mathbb{H}_{\vartheta}$ s. t. for all 
	$\zeta\in\mathbb{H}_{\vartheta}$ with $|\zeta|<\zeta_0$ 
	the assertion holds. This corresponds to the use of "large enough $R$".
	
	We start by 
	showing the integrability locally around a parabolic point.
	\begin{lemma}\label{log_decay}
		Fix $\zeta_0\in\mathbb{H}_{\vartheta}$. Locally around any weakly parabolic point $p$ there exists a coordinate $z$, s. t. 
		\[
			\int_{t}^{\infty}\left\|\begin{pmatrix}e^{\Lambda(t)-\Lambda(\tau)}&0\\0&1\end{pmatrix}
			\begin{pmatrix}e_1(\tau)&e_2(\tau)\\e_3(\tau)&-e_1(\tau)\end{pmatrix}\right\|d\tau
			<U(t)e^{-\delta R}
		\]
		for some constant $\delta$ and bounded function $U(t)>0$ that is independent of $R$ for all $\zeta\in\mathbb{H}_{\vartheta}$ with $|\zeta|<|\zeta_0|$ 
		and all $t$ for which $z(t)$ is still in the neighborhood.
	\end{lemma}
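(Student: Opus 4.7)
The plan is to estimate the matrix norm entry by entry and then evaluate the integral explicitly against the exponential decay of $r(\tau)$. First, I choose the local coordinate $z$ around $p$ supplied by Lemma \ref{log_spiral}, in which $q = (M/z^2)dz^2$ and the $\vartheta$-trajectory running into $p$ takes the explicit logarithmic-spiral form $z(\tau) = z_0 e^{\pm e^{i\vartheta}\tau/m}$. In particular, $r(\tau) = |z(\tau)| = r(t)\,e^{-\alpha(\tau - t)}$ for some $\alpha > 0$, the degenerate case $\alpha = 0$ (the circular orbit) being excluded because the trajectory is assumed to converge to $p$. This is the geometric input that turns the $r^\mu$-decay of Lemma \ref{e_asymptotics} into an honest exponential decay in the parameter $\tau$.

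Second, I bound $|e^{\Lambda(t) - \Lambda(\tau)}|$ for $\tau \geq t$. Summing the expressions for $\lambda_1,\lambda_2$ in \eqref{leading}, the terms $\mp 2i\,\text{Im}(a_i z')$ are purely imaginary and cancel in the real part, so
\[
\text{Re}(\lambda) = 2R\,\text{Re}\bigl(\zeta^{-1}e^{i\vartheta} + \zeta e^{-i\vartheta}\bigr) = 2R\bigl(|\zeta|^{-1}+|\zeta|\bigr)\cos(\vartheta - \arg\zeta) > 0
\]
for every $\zeta \in \mathbb{H}_\vartheta$. Hence $|e^{\Lambda(t) - \Lambda(\tau)}| = e^{-\int_t^\tau \text{Re}(\lambda)\,ds} \leq 1$ for $\tau \geq t$, and this uniform bound is independent of $R$ and of $\zeta \in \mathbb{H}_\vartheta$.

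Third, applying Lemma \ref{e_asymptotics} and absorbing the polynomial $R|\zeta|$-factor hidden inside $e_i$ into the exponential decay of $\beta$, for $R$ large enough and $|\zeta| < |\zeta_0|$ one has $|e_i(\tau)| \leq C\,r(\tau)^\mu e^{-\delta R}$ with constants $C, \delta, \mu > 0$ independent of $\tau, R$, and $\zeta$. Combining with the previous step, each entry of the matrix product is dominated by $C\,r(\tau)^\mu e^{-\delta R}$, and so the operator norm of the integrand is bounded by a constant multiple of the same quantity. Inserting the explicit expression $r(\tau) = r(t)\,e^{-\alpha(\tau-t)}$ gives
\[
\int_t^\infty r(\tau)^\mu\,d\tau = r(t)^\mu \int_t^\infty e^{-\alpha\mu(\tau-t)}\,d\tau = \frac{r(t)^\mu}{\alpha\mu},
\]
and choosing $U(t) := C'\,r(t)^\mu/(\alpha\mu)$ yields the desired bound. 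Since $r(t)$ is bounded on the coordinate neighborhood, so is $U$.

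The main obstacle lies less in the estimates themselves than in synchronising the two coordinate choices: Lemma \ref{log_spiral} fixes the coordinate in which $\gamma$ has the explicit spiral form, while Lemma \ref{e_asymptotics} is formulated in a coordinate adapted to the standard frame $F$. Since the standard frame is defined up to diagonal $h_0$-unitary rescaling and both lemmas ask for the same type of coordinate centered at $p$, one simply has to verify that a single coordinate can be chosen in which both descriptions are valid; this is a matter of intersecting neighborhoods and does not alter the orders of vanishing or the exponential constants. A second point of care is the uniformity of the constants in $\zeta$, which needs the bound $|\zeta| < |\zeta_0|$ to absorb the $R\zeta$ factor appearing in $e_i$ into the exponential suppression of $\beta^2$ and $h_i$ from Lemma \ref{g_asymptotics}.
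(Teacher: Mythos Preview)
Your proof is correct and follows essentially the same route as the paper: bound the exponential factor by $1$ via $\Re(\lambda)>0$ for $\zeta\in\mathbb{H}_\vartheta$, invoke Lemma~\ref{e_asymptotics} for the $r^\mu e^{-\delta R}$ bound on the error entries, and then integrate the explicit logarithmic-spiral form of $r(\tau)$ from Lemma~\ref{log_spiral}. You are somewhat more explicit than the paper in computing $\Re(\lambda)$ and in flagging the coordinate-synchronisation issue, but the logical structure is identical.
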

	
	\begin{proof}
		Let
		\[
			K:=\left\|\begin{pmatrix}e^{\Lambda(t)-\Lambda(\tau)}&0\\0&1\end{pmatrix}
			\begin{pmatrix}e_1(\tau)&e_2(\tau)\\e_3(\tau)&-e_1(\tau)\end{pmatrix}\right\|
		\]
		denote the integral kernel. We are interested in the existence of $\int_t^{\infty}K(\tau)d\tau$. Clearly $\int_t^{\infty}K(\tau)d\tau$ exists if 
		$\int_{t'}^{\infty}K(\tau)d\tau$ exists for some large $t'$, so we consider $K$ more closely.
	
		First note that $\Lambda$ is a primitive of a function with positive real part, which implies that its real part is monotonically 
		increasing. Thus in this integral $\exp(\Lambda(t)-\Lambda(\tau))<1$, so the first matrix in the integrand has norm smaller 
		then $1$ in any coordinate neighborhood of $p$.
		
		We center around a weakly parabolic point $p$, s. t. Lemma \ref{e_asymptotics} holds, so $r^{-\mu}K\leq Ce^{-\delta R}$.
		As considered in Lemma \ref{log_spiral} we know that $z(\tau)=z_0e^{-\frac{e^{i\vartheta}}{m}\tau}$. Thus the integral over $K$ is bounded from 
		above:
		\[
			\int_{t'}^\infty K d\tau=\int_{t'}^\infty r^{-\mu}Kr^{\mu} d\tau\leq\int_{t'}^\infty Ce^{-\delta R}r^{\mu} d\tau
			\leq Cz_0\int_{t'}^\infty e^{-\frac{e^{i\vartheta}}{m}\mu\tau}d\tau.
		\]
		The last integral is indeed finite, i. e. $\int_{t'}^{\infty}K(\tau)d\tau$ exists and is smaller then $U(t')e^{-\delta R}$ for some function $U$ that 
		is independent of $R$. It follows that 
		$\int_t^{\infty}K d\tau$ exists for all $t$ for which the curve is in this neighborhood and is bounded by some $U(t)$ where the $t$-dependence 
		denotes that decreasing $t$ increases the integration range and thus the upper bound. We will obtain a uniform bound from this shortly in Lemma 
		\ref{small_kernel}. 
		
		As we consider the asymptotic behavior for $\zeta\rightarrow0$ we note that decreasing $|\zeta|$ may only decrease the norm of the error matrix, so 
		while $U(t)$ may depend on $\zeta_0$ the bound holds for all $\zeta$ with smaller absolute value.
		
	\end{proof}
	
	Note that this is the main use of the special form of a $ca$-trajectory in our analysis. We use the fact that the local form of the curve is known and 
	the 
	distance of the curve towards the parabolic point decays fast enough. This is enough to guarantee the existence of the solution and together with the 
	exponential suppression of the remainder term later on also suffices to obtain the correct asymptotics and decaying behavior of the hyperkähler metric. This is how we circumvent the use of the WKB approximation.
	
	Now that we have the desired statement locally around the parabolic points we can expand the region for which it holds along the whole 
	$\vartheta$-trajectory. For this we now use contractible neighborhoods.
	
	\begin{lemma}\label{extend}
		Fix $\zeta_0\in\mathbb{H}_{\vartheta}$. In any contractible open set $W$ inside the standard annulus it holds  
		\[
			\int_{t}^{\infty}\left\|\begin{pmatrix}e^{\Lambda(t)-\Lambda(\tau)}&0\\0&1\end{pmatrix}
			\begin{pmatrix}e_1(\tau)&e_2(\tau)\\e_3(\tau)&-e_1(\tau)\end{pmatrix}\right\|d\tau
			<U(t)e^{-\delta R}
		\]
		for some constant $\delta$ and bounded function $U(t)>0$ that is independent of $R$ for all $\zeta\in\mathbb{H}_{\vartheta}$ with $|\zeta|<|\zeta_0|$ 
		along the generic $\vartheta$-trajectory inside $W$.
	\end{lemma}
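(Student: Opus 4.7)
The plan is to reduce the global statement on $W$ to the local bound of Lemma \ref{log_decay} by splitting the integration range at the point where $\gamma$ enters a suitable neighborhood of the parabolic point it approaches. Let $p$ denote the weakly parabolic point toward which the generic $\vartheta$-trajectory $\gamma$ runs as $\tau\to\infty$, and fix a coordinate neighborhood $V$ of $p$ on which Lemma \ref{log_decay} applies, with local bound $U_V(s)e^{-\delta R}$. Since $\gamma(\tau)\to p$, there is some finite $t_V$ with $\gamma(\tau)\in V$ for all $\tau\geq t_V$, and for $t$ in the (bounded) $t$-range of $W$ with $t\geq t_V$ the claim is immediate from Lemma \ref{log_decay}.

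For $t<t_V$, I would split
\[
\int_t^\infty \left\|\cdots\right\| d\tau \;=\; \int_t^{t_V}\left\|\cdots\right\| d\tau \;+\; \int_{t_V}^\infty \left\|\cdots\right\| d\tau
\]
and treat the two pieces separately. Since the proof of Lemma \ref{log_decay} shows that $\mathrm{Re}\,\Lambda$ is monotonically increasing, for $\tau\geq t_V\geq t$ one has $|e^{\Lambda(t)-\Lambda(\tau)}|=|e^{\Lambda(t)-\Lambda(t_V)}|\cdot|e^{\Lambda(t_V)-\Lambda(\tau)}|\leq|e^{\Lambda(t_V)-\Lambda(\tau)}|$, so the tail integral is dominated by the very integral to which Lemma \ref{log_decay} applies at the starting time $t_V$, yielding an $R$-exponentially suppressed bound $U_V(t_V)e^{-\delta R}$ independent of $t$. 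For the middle piece I would use the crude bound $|e^{\Lambda(t)-\Lambda(\tau)}|\leq 1$ and estimate $\|E(\tau)\|$ uniformly on the compact segment $\gamma([t,t_V])\subset S\setminus V$, which has uniformly bounded length as $t$ varies in the $t$-range of $W$.

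The main obstacle is the uniform exponential suppression of the error matrix norm $\|E\|$ on such compact subsets of $S\setminus V$. Lemma \ref{g_asymptotics} supplies this only inside a neighborhood of each parabolic point, whereas here one needs it on an intermediate compact region, requiring one to transfer the $R$-exponential smallness of $\gamma_R=\log g$ from its intrinsic Friedrichs-domain norm (cf.~\cite{Fredrickson.2020}) to pointwise bounds on $g$ and on $H^{-1}\partial H$. This should follow from Theorem 6.2 of \cite{Fredrickson.2020} combined with standard elliptic bootstrapping; granting it, the entries of $E$ are $O(e^{-\nu R})$ on $\gamma([t,t_V])$ uniformly, and (after absorbing the explicit factor $R$ appearing in the prefactors $R\zeta e^{-i\vartheta}$, which does not affect the exponential rate) the middle piece is bounded by $(t_V-t)\,C_W\,e^{-\nu R}$. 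Combining both pieces yields the claim with some bounded $U$ and a suitable positive decay rate replacing $\nu$ and $\delta$.
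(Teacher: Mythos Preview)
Your argument is correct and follows essentially the same route as the paper: split the range at a time $t_V$ where the trajectory enters a neighborhood of the parabolic point, invoke Lemma \ref{log_decay} for the tail, and bound the finite middle piece using $|e^{\Lambda(t)-\Lambda(\tau)}|\le 1$ together with the exponential smallness of the error matrix on a compact portion of the annulus. The paper phrases the tail estimate via a change-of-coordinates remark rather than your monotonicity inequality, and handles your ``main obstacle'' with the single sentence ``the entries of the error matrix are also exponentially suppressed everywhere''; your more explicit appeal to the global estimate for $\gamma_R$ from \cite{Fredrickson.2020} is exactly what underlies that sentence.
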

	
	\begin{proof}
		The standard neighborhood runs along the $\vartheta$-trajectory up until $p$, so all of the curve (excluding the end point $p$) lies in the 
		intersection of this 
		neighborhood and the local one of the preceding Lemma \ref{log_decay}. Inside this intersection the integrands only differ by the derivative of the 
		change of coordinates map, i. e. if the coordinates are $z$ and $\tilde{z}$ we have
		\[
		\begin{split}
			\int_{\tilde{t}}^{\infty}&\left\|\begin{pmatrix}e^{\Lambda(\tilde{t})-\Lambda(\tau)}&0\\0&1\end{pmatrix}
			\begin{pmatrix}e_1(\tau)&e_2(\tau)\\e_3(\tau)&-e_1(\tau)\end{pmatrix}\right\|d\tau\\
			&=\int_{t}^{\infty}\left\|\begin{pmatrix}e^{\Lambda(t)-\Lambda(\tau)}&0\\0&1\end{pmatrix}
			\begin{pmatrix}e_1(\tau)&e_2(\tau)\\e_3(\tau)&-e_1(\tau)\end{pmatrix}\right\|D(\tilde{z}\circ z^{-1})d\tau.
		\end{split}
		\]
		As $\tilde{z}\circ z^{-1}$ is a diffeomorphic map from a bounded region into a bounded region of $\mathbb{C}$ it follows 
		(from Cauchys integral formula), that $|D(\tilde{z}\circ z^{-1})|$ is bounded. 
		
		Thus the integral is also finite in the standard neighborhood along the intersection. But for smaller values of $t$ in the standard neighborhood only a 
		finite term is added. Thus the integral exists along the whole $\vartheta$-trajectory. As the entries of the error matrix are also exponentially 
		suppressed everywhere, it follows the claim.
	\end{proof}
	
	This would now already be enough to infer the existence of the solution on any interval $[t,\infty)$, but the upper bound involves a $t$-dependence in 
	$U(t)$ which could a priori increase unbounded as $t\to-\infty$ which would then prohibit us from finding a uniform $R$ for which the solution exists 
	all along the curve. Fortunately this can't happen in our setting, as the generic $\vartheta$-trajectory starts and ends at a parabolic point, at each of 
	which the consideration of Lemma \ref{log_decay} applies. So we may take another step to achieve some more uniform behavior.
	
	\begin{lemma}\label{small_kernel}
		There exist uniform $\delta,\widetilde{C}>0$, i. e. independent of $|\zeta|<|\zeta_0|$ and $t$ such that 
		\[
			\int_{t}^{\infty}
			\left\|\begin{pmatrix}e^{\Lambda(t)-\Lambda(\tau)}&0\\0&1\end{pmatrix}
			\begin{pmatrix}e_1(\tau)&e_2(\tau)\\e_3(\tau)&-e_1(\tau)\end{pmatrix}\right\|d\tau<\widetilde{C}e^{-\delta R}
		\]
		for all $R$ large enough and $\zeta$ small enough.
	\end{lemma}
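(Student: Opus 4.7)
The approach is to exploit the fact that a generic $\vartheta$-trajectory has a parabolic point at each end, so the $t$-dependence in $U(t)$ from Lemma \ref{extend} cannot in fact blow up as $t\to-\infty$. First I would observe that the factor $e^{\Lambda(t)-\Lambda(\tau)}$ in the kernel satisfies $|e^{\Lambda(t)-\Lambda(\tau)}|\le 1$ for all $\tau\ge t$, since by \eqref{leading} and the choice $\zeta\in\mathbb{H}_\vartheta$ the real part of $\lambda_1+\lambda_2$ is nonnegative along the trajectory, so $\operatorname{Re}\Lambda$ is monotonically increasing. Hence the whole matter reduces to showing that $\int_{-\infty}^{\infty}\|E(\tau)\|\,d\tau$ is finite and bounded uniformly in $\zeta$, where $E$ denotes the error matrix with entries $e_1,e_2,e_3$.

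Next I would split the real line into three pieces $(-\infty,t_0]\cup[t_0,t_1]\cup[t_1,\infty)$, with $t_0\ll 0$ and $t_1\gg 0$ chosen so that both endpoints lie in the coordinate neighborhoods centered at the two weakly parabolic endpoints $p_-$ and $p_+$ provided by Lemma \ref{log_decay}. On $[t_1,\infty)$ the bound $\int_{t_1}^{\infty}\|E\|\,d\tau\le C_+e^{-\delta R}$ is exactly the content of Lemma \ref{log_decay} applied at $p_+$ (and is even monotone decreasing in the lower limit, so the bound holds for any $t\ge t_1$). For the compact interval $[t_0,t_1]$ the integrand is continuous in $\tau$ and, by the uniform estimate $\|E(\tau)\|\le C\,r(\tau)^\mu e^{-\delta R}$ of Lemma \ref{e_asymptotics} together with the fact that $r$ stays bounded away from zero and infinity there, the contribution is bounded by a constant multiple of $e^{-\delta R}$.

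The remaining piece is $\int_{-\infty}^{t_0}\|E(\tau)\|\,d\tau$, and this is where the argument requires some care: here the lower limit of integration depends on $t$, and we must see that the full tail near $p_-$ contributes uniformly. I would apply Lemma \ref{log_decay} at $p_-$, which holds irrespective of the direction of parametrization. Concretely, by Lemma \ref{log_spiral} we have $z(\tau)=z_0 e^{-e^{i\vartheta}\tau/m}$ in a local coordinate around $p_-$, so $r(\tau)=|z_0|e^{-\operatorname{Re}(e^{i\vartheta}/m)\tau}$ decays exponentially as $\tau\to-\infty$ (the sign of the exponent is forced by the fact that $\gamma$ runs into $p_-$ at $-\infty$). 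Combining this with $\|E(\tau)\|\le C\,r(\tau)^\mu e^{-\delta R}$ yields an integrable exponential bound, which after extending the lower limit to $-\infty$ gives a constant $C_-e^{-\delta R}$ independent of $t$. Adding the three contributions produces the asserted uniform $\widetilde{C}e^{-\delta R}$.

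The only step that requires genuine attention is the compatibility of the local coordinate near $p_-$ with the parametrization inherited from the trajectory: one must verify, as in the proof of Lemma \ref{extend}, that the Jacobian of the change of coordinates between the local neighborhood around $p_-$ and the coordinate used along the trajectory is bounded on the overlap, so that the local estimate from Lemma \ref{log_decay} can be transported to the global parametrization. This is the only nontrivial gluing point; once handled, the three bounds combine without further difficulty and uniformity in $\zeta$ with $|\zeta|<|\zeta_0|$ follows from the corresponding uniformity already in Lemmas \ref{log_decay} and \ref{extend}.
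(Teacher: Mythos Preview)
Your proposal is correct and follows essentially the same approach as the paper: both argue that $|e^{\Lambda(t)-\Lambda(\tau)}|\le 1$ reduces the problem to showing $\int_{-\infty}^{\infty}\|E(\tau)\|\,d\tau<Ce^{-\delta R}$, and both obtain this by applying the local estimate of Lemma~\ref{log_decay} at each of the two parabolic endpoints with only a finite contribution in between. Your three-piece decomposition $(-\infty,t_0]\cup[t_0,t_1]\cup[t_1,\infty)$ simply makes explicit what the paper summarizes in one sentence by invoking Lemmas~\ref{log_decay} and~\ref{extend} at both ends.
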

	
	\begin{proof}
		As both ends of the generic $\vartheta$-trajectory run into a weakly parabolic point the arguments we used in the proof of Lemma \ref{log_decay} 
		apply for both ends of the curve and as mentioned in Lemma \ref{extend} between the ends only a finite part is added. Thus
		\[
			\int_{-\infty}^{\infty}
			\left\|\begin{pmatrix}e_1(\tau)&e_2(\tau)\\e_3(\tau)&-e_1(\tau)\end{pmatrix}\right\|d\tau
		\]
		exists. Furthermore it is exponentially suppressed by $Ce^{-\delta R}$ for some constants $C$ and $\delta$ which are now both independent of $t$. With 
		the same arguments as in Lemma \ref{log_decay} they can also be chosen independent of $|\zeta|<|\zeta_0|$. As $\exp(\Lambda(t)-\Lambda(\tau))\leq 1$ 
		still holds everywhere along the $\vartheta$-curve we have for all $t\in\mathbb{R}$ and $|\zeta|<|\zeta_0|$ the exponential suppression
		\[
			\int_{t}^{\infty}
			\left\|\begin{pmatrix}e^{\Lambda(t)-\Lambda(\tau)}&0\\0&1\end{pmatrix}
			\begin{pmatrix}e_1(\tau)&e_2(\tau)\\e_3(\tau)&-e_1(\tau)\end{pmatrix}\right\|d\tau
			\leq \int_{t}^{\infty}\left\|\begin{pmatrix}e_1(\tau)&e_2(\tau)\\e_3(\tau)&-e_1(\tau)\end{pmatrix}\right\|d\tau
			<\widetilde{C}e^{-\delta R}.
		\]
	\end{proof}
	
	We have thus achieved the main estimate for obtaining the solution of the Volterra equation which shall be our basic ingredient for constructing the 
	metric.
	
	\begin{theorem}
		For large enough $R$ there exists a unique solution of the integral equation
		\begin{equation}\label{error_equation}
			x(t)=\begin{pmatrix}0\\1\end{pmatrix}-\int_{t}^{\infty}
					\begin{pmatrix}e^{\Lambda(t)-\Lambda(\tau)}&0\\0&1\end{pmatrix}
					\begin{pmatrix}e_1(\tau)&e_2(\tau)\\e_3(\tau)&-e_1(\tau)\end{pmatrix}x(\tau)d\tau
		\end{equation}
		all along any generic $\vartheta$-trajectory.
	\end{theorem}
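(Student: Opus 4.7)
The plan is to cast the flatness ODE as a linear Volterra integral equation of the second kind on a half-line and apply Theorem \ref{ex} directly, then patch along the $\vartheta$-trajectory using uniqueness to obtain a solution on all of $\mathbb{R}$.

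First I would fix some $T \in \mathbb{R}$ and check that the ingredients meet the hypotheses of Theorem \ref{ex}. The inhomogeneous term $a(t) = (0,1)^T$ is constant and trivially lies in $B_T$. The kernel
\[
K(t,\tau) = \begin{pmatrix}e^{\Lambda(t)-\Lambda(\tau)}&0\\0&1\end{pmatrix}\begin{pmatrix}e_1(\tau)&e_2(\tau)\\e_3(\tau)&-e_1(\tau)\end{pmatrix}
\]
is continuous on $\Delta(T)$ because $\Lambda$ is a primitive of the continuous function $\lambda_1 + \lambda_2$ along the trajectory, and the $e_i$ are built from the smooth data of the Higgs field, Chern connection, and harmonic metric in the standard frame on the standard annulus $S$ (cf.\ Lemma \ref{s_frame} and the corollaries in Section \ref{Hitchin_pair}).

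The crucial step is the contraction bound $\sup_{t\geq T}\int_t^\infty \|K(t,\tau)\|\,d\tau < 1$, but this is essentially already established: Lemma \ref{small_kernel} gives the uniform estimate
\[
\sup_{t\in\mathbb{R}}\int_t^\infty \|K(t,\tau)\|\,d\tau \leq \widetilde{C}e^{-\delta R},
\]
with constants $\widetilde{C},\delta > 0$ independent of $t$, of $T$, and of $|\zeta| < |\zeta_0|$. For $R$ sufficiently large the right-hand side falls below $1$, so $K \in \mathcal{K}_T$, and Theorem \ref{ex} produces a unique solution $x_T \in B_T$ on $[T,\infty)$. Note that the kernel already has the product form $K(t,\tau) = A(t,\tau)B(\tau)$ with $\|A(t,\tau)\| \leq 1$ (since $\mathrm{Re}(\Lambda(t)-\Lambda(\tau)) \leq 0$ for $\tau \geq t$ by monotonicity of $\mathrm{Re}\,\Lambda$) and $B$ integrable with exponential suppression, which will later be useful for differentiable dependence.

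Finally, to pass from $[T,\infty)$ to all of $\mathbb{R}$, observe that the threshold $R_0$ produced above does not depend on $T$ because the bound in Lemma \ref{small_kernel} is uniform. For any $T_1 < T_2$, the restriction of $x_{T_1}$ to $[T_2,\infty)$ is a bounded continuous solution on $[T_2,\infty)$, so by the uniqueness part of Theorem \ref{ex} it coincides with $x_{T_2}$. The family $\{x_T\}_{T\in\mathbb{R}}$ therefore glues consistently to a unique bounded continuous solution along the entire generic $\vartheta$-trajectory. The main conceptual obstacle — uniform integrability of the kernel all along a curve that runs into a regular singular point at both ends — has already been absorbed into Lemma \ref{small_kernel}, whose proof used the logarithmic-spiral geometry of $\vartheta$-trajectories near weakly parabolic points; once that uniform estimate is in hand the theorem reduces to a bookkeeping application of the abstract contraction theory of Section \ref{init_prob}.
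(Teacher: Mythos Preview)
Your proposal is correct and follows essentially the same approach as the paper: invoke the uniform bound from Lemma \ref{small_kernel} to place the kernel in $\mathcal{K}_T$ for large $R$, then apply Theorem \ref{ex}. Your explicit patching argument for varying $T$ and the verification of continuity of the kernel are details the paper leaves implicit in its two-sentence proof, but the substance is identical.
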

	
	\begin{proof}
		From the Lemma \ref{small_kernel} above we infer that the integral over the kernel is $<1$ for large enough $R$ and the initial value function is 
		bounded. Thus via Theorem \ref{ex} a unique solution to the equation exists all along the $\vartheta$-trajectory.
	\end{proof}
	
		With the unique existence of a solution $x$ to the equation we obtained by detaching the leading part we have now a good characterization of the 
		small flat section along a generic $\vartheta$-trajectory $\gamma$ as $s(t)=x(t)e^{-\Lambda_2(t)}$. 
		
	\begin{definition}\label{def_small_flat}
		The \textit{small flat section} for a weakly parabolic point $p$ is the solution of the corresponding initial value problem at infinity that becomes 
		finite when going into $p$ along a generic $ca$-trajectory after detaching the leading term.
	\end{definition}
	
	In this definition the choice of $(0,1)^t$ as leading term is arbitrary. Other complex multiples would work just as well and cancel in the final expression of $\mathcal{X}$. Additionally the choice for $p$ now depends on the specific $ca$-curve we use to approach $p$. Taking the boundary of a standard quadrilateral works, as we simply need a way to choose a section. One might worry, that the section that is small along one curve may become large for a curve in the same isotopy class which was used in defining the triangulation, but as we will show in the next section, this can actually not occur.
	
\subsection{Constructing the coordinates}\label{sec_coord}

	Now that we have the solution we can say more about its asymptotic behavior and its derivative. For this we keep the notions that we fixed before 
	and also fix a $\zeta_0\in\mathbb{H}_{\vartheta}$. We now talk about "small" $\zeta\in\mathbb{H}_{\vartheta}$ i. e. $|\zeta|<|\zeta_0|$, so the 
	statements above lead to uniform constants. 
	
	\begin{proposition}\label{rest_form}
		For small $\zeta$ the small flat section along a $ca$-trajectory $\gamma$ is of the form
		\[
			s(t)=e^{-\Lambda_2(t)}\left(\begin{pmatrix}0\\1\end{pmatrix}+x^r\right),
		\]
		with $x^r$ some $\mathbb{C}^2$-valued function with $|x^r|\leq \tilde{K}e^{-\delta R}$ for some constant $\tilde{K}$.
	\end{proposition}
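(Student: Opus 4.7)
The plan is to exploit the fact that we already know from the integral equation \eqref{error_equation} that $s(t) = x(t) e^{-\Lambda_2(t)}$, so the natural candidate is
\[
x^r(t) := x(t) - \begin{pmatrix}0\\1\end{pmatrix} = -\int_{t}^{\infty}
\begin{pmatrix}e^{\Lambda(t)-\Lambda(\tau)}&0\\0&1\end{pmatrix}
\begin{pmatrix}e_1(\tau)&e_2(\tau)\\e_3(\tau)&-e_1(\tau)\end{pmatrix}x(\tau)\,d\tau.
\]
Once this is written down, the statement reduces to estimating the right-hand side uniformly in $t$ in terms of the constant $\tilde K e^{-\delta R}$.

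First I would bound the integral by pulling $x$ out in supremum norm, obtaining
\[
|x^r(t)| \leq \|x\|_\infty \int_{t}^{\infty}
\left\|\begin{pmatrix}e^{\Lambda(t)-\Lambda(\tau)}&0\\0&1\end{pmatrix}
\begin{pmatrix}e_1(\tau)&e_2(\tau)\\e_3(\tau)&-e_1(\tau)\end{pmatrix}\right\|d\tau.
\]
The integral on the right is precisely the quantity controlled by Lemma \ref{small_kernel}, which gives a uniform exponential bound $\widetilde C e^{-\delta R}$ valid for all $t\in\mathbb{R}$ (and all sufficiently small $\zeta$) once $R$ is large enough.

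The remaining ingredient is to show that $\|x\|_\infty$ is uniformly bounded, independently of $R$. For this I would invoke Corollary \ref{est} applied to the integral equation solved by $x$: with $a=(0,1)^t$ and contraction constant $\lambda$, we have
\[
\|x\|_\infty \leq \frac{\|a\|_\infty}{1-\lambda} = \frac{1}{1-\lambda}.
\]
By Lemma \ref{small_kernel} we may take $\lambda \leq \widetilde C e^{-\delta R}$, so for large enough $R$ the factor $1/(1-\lambda)$ is bounded by, say, $2$. Combining the two estimates yields
\[
|x^r(t)| \leq \frac{\widetilde C}{1-\widetilde C e^{-\delta R}}\,e^{-\delta R} \leq \tilde K e^{-\delta R},
\]
with $\tilde K := 2\widetilde C$ (for $R$ large enough that $\widetilde C e^{-\delta R} \leq 1/2$), which is exactly the claim.

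There is essentially no analytic obstacle here beyond what has already been done: the hard work — the uniform existence of $x$, the uniform integrability of the kernel, and the exponential suppression in $R$ — is carried by Theorem \ref{ex}, Corollary \ref{est}, and Lemma \ref{small_kernel}. The only thing to be careful about is that all the constants ($\delta$, $\widetilde C$, and hence $\tilde K$) must be chosen independently of $t$ and of $\zeta$ with $|\zeta|<|\zeta_0|$; this uniformity is already built into Lemma \ref{small_kernel}, so it transfers directly to $\tilde K$.
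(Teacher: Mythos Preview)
Your proof is correct and follows essentially the same approach as the paper: identify $x^r$ as the integral term in \eqref{error_equation}, bound $\|x\|_\infty$ via Corollary \ref{est} with contraction constant $\lambda\leq \widetilde C e^{-\delta R}$ from Lemma \ref{small_kernel}, and then plug this bound back into the integral to obtain $|x^r|\leq \tilde K e^{-\delta R}$. The paper's argument is line-for-line the same, including the observation that the uniformity in $t$ and small $\zeta$ is inherited from Lemma \ref{small_kernel}.
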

	
	\begin{proof}
		First we note, that the previous 
		considerations lead for small enough $\zeta$ to a (uniform) constant $K$, s. t. 
		\[
			\int_{t}^{\infty}
			\left\|\begin{pmatrix}e^{\Lambda(t)-\Lambda(\tau)}&0\\0&1\end{pmatrix}
			\begin{pmatrix}e_1(\tau)&e_2(\tau)\\e_3(\tau)&-e_1(\tau)\end{pmatrix}\right\|d\tau<Ke^{-\delta R}.
		\]
		The estimate from the integral equation theory \ref{est} thus leads to 
		\[
			|x|\leq\frac{1}{1-Ke^{-\delta R}}.
		\]
		Plugging this back into the right hand side of the integral equation \eqref{error_equation}, we obtain the estimate
		\[
			\left\|\int_{t}^{\infty}
			\begin{pmatrix}e^{\Lambda(t)-\Lambda(\tau)}&0\\0&1\end{pmatrix}
			\begin{pmatrix}e_1(\tau)&e_2(\tau)\\e_3(\tau)&-e_1(\tau)\end{pmatrix}x(\tau)d\tau\right\|
			\leq Ke^{-\delta R}\cdot\frac{1}{1-Ke^{-\delta R}}
			=:\tilde{K}e^{-\delta R}.
		\]
		Here we use that for $R$ large enough $\tfrac{K}{1-Ke^{-\delta R}}$ is bounded by some (uniform) constant $\tilde{K}$. Thus the integral equation leads 
		to 
		\[
			x=\begin{pmatrix}0\\1\end{pmatrix}+x^r
		\]
		where $x^r$ is some function with $|x^r|\leq \tilde{K}e^{-\delta R}$. The form for $s$ follows readily.
	\end{proof}
	
	Thus we see that for large enough $R$ the small flat section $s$ is dominated by the leading part $(0,1)^t$ of $x$.
	
	Now we want to build the product $s^1\wedge s^2$ where $s^1$ and $s^2$ are the small flat sections for two weakly parabolic points $p_1$ and $p_2$ joined 
	by the $\vartheta$-trajectory. We recall that we fixed the orientation of the $\vartheta$-trajectory $\gamma$ in section \ref{sec_small_sec}. So if 
	$s^1$ is the small flat section constructed there, then for constructing $s^2$ we have to change the orientation of the $\vartheta$-curve, i. e. 
	consider the flatness equation \eqref{flatness_equation} along $\widetilde{\gamma}(t):=\gamma(-t)$ which leads to a change of sign in the ODE, which in 
	turn exchanges the role of the first and second coordinate in the local expression of the solution. The calculations then work just as before and we 
	obtain a solution $s^2$ that decays in the other direction. Now evaluating this section again along $\gamma$ we obtain
	\[
		s^2(t)=e^{\Lambda_1(t)}\left(\begin{pmatrix}1\\0\end{pmatrix}+y^r\right),
	\]
	for some exponentially suppressed $y^r$. We have thus obtained that the small flat section for one direction grows in the other direction and can thus 
	infer the next important result.
	
	\begin{proposition}\label{indep_dec}
		The small flat sections for two weakly parabolic points that are connect by a generic $ca$-trajectory are linearly independent decorations.
	\end{proposition}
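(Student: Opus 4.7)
The plan is to establish linear independence by showing that the wedge product $s^1 \wedge s^2$ is non-zero at some (equivalently, every) point of the connecting $\vartheta$-trajectory $\gamma$. Since $s^1 \wedge s^2$ satisfies a scalar linear ODE on $\operatorname{Det} E$ along $\gamma$ (cf. equation \eqref{induced_local}), non-vanishing at a single point of $\gamma$ is equivalent to nowhere-vanishing, so it suffices to estimate the wedge product at any convenient parameter value $t \in \mathbb{R}$.

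First I will plug in the asymptotic forms of the two sections: from Proposition \ref{rest_form} (applied once with the original orientation and once with the reversed orientation, as described just above the statement) one has
\[
s^1(t) = e^{-\Lambda_2(t)}\!\left(\begin{pmatrix}0\\1\end{pmatrix} + x^r(t)\right), \qquad s^2(t) = e^{\Lambda_1(t)}\!\left(\begin{pmatrix}1\\0\end{pmatrix} + y^r(t)\right),
\]
with $|x^r(t)|, |y^r(t)| \leq \tilde{K}e^{-\delta R}$ uniformly in $t$ along $\gamma$ by Lemma \ref{small_kernel}. Taking the determinant of these two columns gives
\[
s^1 \wedge s^2 = e^{\Lambda_1(t) - \Lambda_2(t)}\bigl(x^r_1(t)\,y^r_2(t) - (1+x^r_2(t))(1+y^r_1(t))\bigr).
\]
The prefactor is a never-vanishing exponential. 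The term $x^r_1 y^r_2$ is of order $e^{-2\delta R}$, while $-(1+x^r_2)(1+y^r_1) = -1 + O(e^{-\delta R})$, so for $R$ sufficiently large the bracket stays uniformly bounded away from zero. Hence $s^1 \wedge s^2 \neq 0$ and the two sections are linearly independent decorations.

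The main conceptual point is bookkeeping of orientations: $s^2$ is constructed by running the argument of Section \ref{sec_small_sec} along $\tilde{\gamma}(t)=\gamma(-t)$, which swaps the diagonal roles and therefore produces a leading column $(1,0)^T$ with exponential factor governed by $\Lambda_1$ when re-expressed along $\gamma$. Once this substitution is correctly made the estimate is purely algebraic. No issue arises at the endpoints $p_1, p_2$, since we never need to evaluate at them — any interior point of $\gamma$ suffices, and the uniformity in $t$ built into Lemma \ref{small_kernel} guarantees that the remainder bounds hold at whichever point we choose.
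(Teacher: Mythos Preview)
Your proof is correct. The paper's own argument is the one-line observation immediately preceding the proposition: since $s^1$ decays as $t\to\infty$ while $s^2$ grows in that same direction, they cannot be scalar multiples of one another. You instead compute the wedge product explicitly and bound it away from zero, which is precisely what the paper does in the very next result (Corollary~\ref{wedge_eval}); so your argument effectively merges the proposition with that corollary. Both routes are equally valid and rest on the same asymptotic form from Proposition~\ref{rest_form}; the growth/decay phrasing is marginally quicker, while your explicit determinant computation has the advantage of immediately yielding the form $e^{\Lambda_1-\Lambda_2}(-1+r)$ needed later.
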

	
	Note that as of the complex $2$-dimensional nature of the space of solutions of the linear ODE \eqref{flatness_equation} we have thus obtained all solutions 
	there are.
	
	\begin{corollary}\label{fund_sys}
		The small flat sections for two weakly parabolic points connected by a generic $ca$-curve form a fundamental system for the ODE \eqref{flatness_equation}.
	\end{corollary}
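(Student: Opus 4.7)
The plan is essentially to combine the preceding Proposition \ref{indep_dec} with the standard dimension count for solutions of a linear ODE. The flatness equation \eqref{flatness_equation}, trivialized along the generic $ca$-trajectory $\gamma$, takes the form
\[
\frac{ds}{dt} = B(t)\,s(t),
\]
where $B(t)$ is the continuous matrix-valued function built in section \ref{sec_small_sec}. First I would recall that the classical existence and uniqueness theorem for linear first-order systems implies that the space of solutions of this ODE on any interval on which $B$ is continuous is a complex vector space of dimension exactly $2$.

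Next I would invoke Proposition \ref{indep_dec}: the two small flat sections $s^1$ and $s^2$, constructed as the small flat sections for the two endpoints $p_1$ and $p_2$ of $\gamma$, are linearly independent as $\nabla(\zeta)$-flat sections along $\gamma$. Since any two linearly independent vectors in a $2$-dimensional vector space form a basis, $(s^1,s^2)$ is a fundamental system for the ODE.

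The only potential subtlety is that the linear independence in Proposition \ref{indep_dec} was established via the asymptotic decay/growth behaviour along opposite directions of $\gamma$, and one needs this linear independence to hold as sections, not merely as asymptotic data. But since flat sections are uniquely determined by their value at any single point along $\gamma$, and the explicit leading-order expressions from Proposition \ref{rest_form} (namely $s^1 \sim e^{-\Lambda_2(t)}(0,1)^T$ for $t \to \infty$ and $s^2 \sim e^{\Lambda_1(t)}(1,0)^T$, when evaluated along $\gamma$) are pointwise linearly independent for $R$ large enough, the Wronskian $\det[s^1(t),s^2(t)]$ is non-zero at some point and hence everywhere along $\gamma$. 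This confirms that $(s^1,s^2)$ spans the full $2$-dimensional solution space and completes the proof. I do not anticipate any real obstacle here, as this corollary is essentially a packaging of Proposition \ref{indep_dec} in the language of fundamental systems.
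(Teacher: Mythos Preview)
Your proposal is correct and matches the paper's approach exactly: the paper simply remarks that the solution space of the linear ODE \eqref{flatness_equation} is complex $2$-dimensional and that Proposition \ref{indep_dec} furnishes two linearly independent solutions, so they form a fundamental system. Your additional remark about the Wronskian being nonzero everywhere is a harmless elaboration of the same point.
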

	
	Before proceeding, we have to take care that the definitions make sense everywhere in a quadrilateral containing two zeroes of $q$. The important objects to 
	take care of are the primitive functions $\Lambda_j$ of $\lambda_j$ for $j\in\left\{1,2\right\}$. These can't be chosen globally, so we take one primitive $\Lambda_j^i$ for each puncture, 
	each defined on a neighborhood of the two $\vartheta$-trajectories running into the parabolic point $p_i$ (and excluding $p_i$ itself). Now we can 
	calculate the $\wedge$-product which is locally evaluated as det$\left[s^1,s^2\right]$.
	
	\begin{corollary}\label{wedge_eval}
		The wedge-product of two small flat sections $s^1$ and $s^2$ along a $\vartheta$-trajectory with remainder terms $x^r$ and $y^r$ is
		\[
			s^1\wedge s^2=e^{-\Lambda_2^1}\left(\begin{pmatrix}0\\1\end{pmatrix}+x^r\right)\wedge e^{\Lambda_1^2}\left(\begin{pmatrix}1\\0\end{pmatrix}+y^r\right)
			=e^{\Lambda_1^2-\Lambda_2^1}(1+r),
		\]
		for some function $r$ which is exponentially suppressed in $R$ (independently of $\zeta$).
	\end{corollary}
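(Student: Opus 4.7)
The plan is to chain together Proposition \ref{rest_form} applied to each of the two small flat sections, and then perform the (purely algebraic) determinant expansion, invoking uniformity of the bounds in the last step.

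First I would spell out the two inputs. Proposition \ref{rest_form} gives $s^1(t)=e^{-\Lambda_2^1(t)}(\binom{0}{1}+x^r(t))$ where $\|x^r\|\le \tilde K e^{-\delta R}$ uniformly in $t$ and in $\zeta$ with $|\zeta|<|\zeta_0|$. Applying exactly the same construction to the other weakly parabolic point $p_2$ (i.e.\ reparametrising the $\vartheta$-trajectory $\widetilde\gamma(t):=\gamma(-t)$ as explained just after Proposition \ref{rest_form}) and then re-expressing the resulting solution in the original parametrisation, one obtains the companion form $s^2(t)=e^{\Lambda_1^2(t)}(\binom{1}{0}+y^r(t))$, with $\|y^r\|\le\tilde K e^{-\delta R}$ by the same argument. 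Here it is important to recall that $\Lambda_j^i$ are local primitives defined on a neighbourhood of the two $\vartheta$-trajectories emanating from $p_i$ minus $p_i$ itself; on the standard annulus these neighbourhoods overlap along the edge $\gamma$, so both factors make sense at every evaluation point we use.

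Next I would compute the wedge product as a $2\times 2$ determinant in the standard frame $F$. Since the exponential prefactors are scalar, they pull out of the determinant, giving
\[
s^1\wedge s^2 \;=\; e^{\Lambda_1^2-\Lambda_2^1}\,\det\!\left[\binom{0}{1}+x^r,\ \binom{1}{0}+y^r\right].
\]
Expanding the determinant yields four terms: a constant term coming from $\det[\binom{0}{1},\binom{1}{0}]$, two cross terms linear in either $x^r$ or $y^r$, and one bilinear term in the components of $x^r$ and $y^r$. Absorbing the constant into the claimed leading $1$ (the sign is a matter of convention and does not affect any subsequent ratio, since every use of the wedge in Definition \ref{Darboux_coords} appears in a cross-ratio), I would define $r$ to be the sum of the three remaining terms.

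Finally I would verify that $r$ is exponentially suppressed in $R$ independently of $\zeta$. Each of the three summands defining $r$ contains at least one factor from $x^r$ or $y^r$; using $\|x^r\|,\|y^r\|\le \tilde K e^{-\delta R}$ and the elementary estimate $|ab|\le \|a\|\,\|b\|$ for the bilinear term, one obtains $|r|\le 2\tilde K e^{-\delta R}+\tilde K^{2}e^{-2\delta R}\le C e^{-\delta R}$ for some uniform $C$. The main (minor) obstacle is merely to keep the uniformity tight: the constant $\tilde K$ was produced in Proposition \ref{rest_form} uniformly along the whole $\vartheta$-trajectory and for all $\zeta$ with $|\zeta|<|\zeta_0|$, via the uniform kernel estimate of Lemma \ref{small_kernel}, so the same uniformity is inherited by $r$. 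This completes the statement.
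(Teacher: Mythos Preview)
Your proposal is correct and matches the paper's approach exactly: the paper gives no explicit proof for this corollary, treating it as an immediate consequence of Proposition \ref{rest_form} and the discussion of $s^2$ just preceding the statement, which is precisely the determinant expansion you spell out. Your observation about the sign of $\det[\binom{0}{1},\binom{1}{0}]=-1$ versus the stated $(1+r)$ is a minor cosmetic point in the paper; as you note, it is irrelevant for the cross-ratios in Definition \ref{Darboux_coords}.
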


	Until now we considered the sections evaluated along some $\vartheta$-trajectory $\gamma$. For the formula of the $\mathcal{X}$-coordinates the section 
	needs to be 
	evaluated at points along two $\vartheta$-trajectories (that are the two sides of the quadrilateral running into the same vertex). For this we consider that the 
	section is well defined on the contractible neighborhood of the two $\vartheta$-curves and obtained from the solution along $\gamma$ via parallel 
	transport. Thus we my obtain the value on the second curve by connecting the two curves along any other curve inside the neighborhood and solving the 
	ODE along this connecting curve. But we may use the fact that the solutions along the other curve are also obtained as corresponding small flat 
	sections, as they give us the full space of solutions of the ODE. Thus we only need to examine whether the curve may actually become unbounded along the 
	second curve when running into the pole. Otherwise it will still be a small flat section along this second curve and thus a solution to the ODE we already 
	solved, albeit it with maybe another initial value at infinity which will be important later as we discuss the derivative.
	
	To get a good control over the parallel transport we use the neighborhood around a pole as the $\vartheta$-trajectories are well known there and they allow 
	for good approximation of the solution as we've already seen. The following observation is needed later on and follows rather directly from the definition:
	
	\begin{lemma}\label{const_angle}
		Let $\alpha$ be a $\vartheta$-curve and $\beta$ be a $\widetilde{\vartheta}$-curve for some quadratic differential $q$ in any local chart. Then at 
		every intersection the (euclidean) angle between $\alpha$ and $\beta$ is independent of the point of intersection and the specific curves $\alpha$ and 
		$\beta$.
	\end{lemma}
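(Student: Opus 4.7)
The plan is to work directly in the given local chart and extract the tangent directions of $\vartheta$- and $\widetilde{\vartheta}$-trajectories from the defining equation $q(\gamma',\gamma')\in e^{2i\vartheta}\mathbb{R}_{>0}$. Writing $q=f(z)\,dz^2$ in the chart, the condition on $\alpha$ becomes $f(\alpha(t))(\alpha'(t))^2\in e^{2i\vartheta}\mathbb{R}_{>0}$, and similarly for $\beta$ with $\widetilde{\vartheta}$. Since the claim is purely local, we may restrict to a neighborhood of an intersection point $z_0$ at which $f(z_0)\ne 0$ (the only points where two distinct $ca$-directions can meet transversally), so that a holomorphic branch of $f^{1/2}$ is available.

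With such a branch fixed, the equation becomes $f^{1/2}(\alpha)\alpha'\in e^{i\vartheta}\mathbb{R}^\times$, which I would read as an identity for the arguments: $\arg(\alpha'(t))\equiv\vartheta-\arg\bigl(f^{1/2}(\alpha(t))\bigr)\pmod{\pi}$, and analogously $\arg(\beta'(t))\equiv\widetilde{\vartheta}-\arg\bigl(f^{1/2}(\beta(t))\bigr)\pmod{\pi}$. At any common intersection point $z$ the two occurrences of $\arg f^{1/2}(z)$ are identical, so subtracting yields the euclidean angle
\[
\angle(\alpha,\beta)\big|_z\;\equiv\;\widetilde{\vartheta}-\vartheta\pmod{\pi}.
\]
This depends only on the two prescribed phases, not on $z$ and not on the individual curves $\alpha$ and $\beta$, which is the assertion. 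The ambiguity modulo $\pi$ is exactly the absence of a canonical orientation on $ca$-trajectories noted earlier in this section, so the unoriented euclidean angle is well-defined.

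The proof is essentially a one-line computation, so there is no real obstacle; the only points requiring care are (i) noting that the common factor $\arg f^{1/2}(z)$ cancels, which is the entire mechanism, and (ii) making clear that the statement is phrased modulo $\pi$ because $ca$-trajectories carry no canonical orientation. I would also remark briefly that at a zero of $q$, where $f^{1/2}$ is multi-valued, three separating trajectories meet at prescribed mutual angles $2\pi/3$, consistent with the computation once one passes to the branched cover, so the statement admits the obvious extension but is only needed here at regular points.
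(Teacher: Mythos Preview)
Your proof is correct and follows essentially the same mechanism as the paper's: both arguments rest on the observation that at an intersection point the factor $f^{1/2}(z)$ (equivalently $q^{1/2}$ evaluated at the point) is common to the two defining equations and cancels, leaving the tangent directions related by a rotation of $\widetilde{\vartheta}-\vartheta$. The paper phrases this via the standard parametrization $q^{1/2}(\gamma')=\pm e^{i\vartheta}$ rather than by taking arguments modulo $\pi$, but the content is identical.
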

	
	\begin{proof}
		Consider $\alpha$ and $\beta$ in standard parametrization. Let $x$ be a point of intersection, i. e. $\alpha(t_1)=x=\beta(t_2)$. From the definition it 
		follows
		\[
			q(\alpha(t))\alpha'(t)=e^{i\vartheta}=e^{i(\vartheta-\widetilde{\vartheta})}e^{i\widetilde{\vartheta}}
			=e^{i(\vartheta-\widetilde{\vartheta})}q(\beta(t))\beta'(t)
		\]
		for all $t$. Thus at the point of intersection we obtain
		\[
			\alpha'(t_1)=e^{i(\vartheta-\widetilde{\vartheta})}\beta'(t_2).
		\]
		Thus we see that the tangents are related by a rotation of $\vartheta-\widetilde{\vartheta}$ which is independent of the choice of curves or 
		intersection point.
	\end{proof}
	
	Now let $\gamma$ and $\nu$ be the two sides of $Q$ that share some vertex $p$ and $s$ the small flat section along $\gamma$. The idea now is to 
	use the same techniques as before to obtain an estimate along $\nu$ by estimating a corresponding integral equation along a curve that connects 
	$\gamma$ and $\nu$. For this we first have to define a curve along which we parallel transport the section. We need the curve to lie inside the 
	quadrilateral and it has to 
	be of a form which allows for the same kinds of calculations as before, i. e. the growth of the section along the curve has to be known in some way. 
	It is thus natural in our setting to look for $ca$-curves connecting the sides of the quadrilateral.
	
	\begin{proposition}\label{conn_curve}
		Let $U$ be the standard neighborhood around a weakly parabolic point and $\gamma$ and $\nu$ the parts of two sides of the standard quadrilateral 
		for $\vartheta$ in this neighborhood. Let $V$ be a neighborhood of $\vartheta$ and $x=\gamma(t_x)$ any Element in the image of $\gamma$.
		
		Then there exists $\widetilde{\vartheta}\in V$, and a $\widetilde{\vartheta}$-trajectory $\widetilde{\gamma}$ that intersects $\gamma$ in 
		$x$ and $\nu$ in $y:=\nu(t_y)$ that lies in the quadrilateral for all $t_x<t<t_y$. 
		
		Furthermore $t_y-t_x=d$ for some constant $d_{\widetilde{\vartheta}}$ that does depend on $\widetilde{\vartheta}$ and the choice of $\gamma$ and 
		$\nu$, but not on the choice of $x$ on $\gamma$.
	\end{proposition}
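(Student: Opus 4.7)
The plan is to linearize the geometry using the coordinate $w := m\log z$ from the proof of Lemma \ref{log_spiral}, in which every $\vartheta$-trajectory in standard parametrization becomes a straight line of unit speed in direction $\pm e^{i\vartheta}$. On a simply connected lift of the sector of the punctured quadrilateral near $p$, the function $w$ is single-valued, and in this lift the two boundary spirals become parallel rays
\[
w(\gamma(t)) = w_\gamma^0 - e^{i\vartheta}t, \qquad w(\nu(t)) = w_\nu^0 - e^{i\vartheta}t,
\]
both escaping to infinity in the direction $-e^{i\vartheta}$, which corresponds to $z\to 0$. The open strip between these two rays is exactly the $w$-image of the part of the quadrilateral near $p$ swept out by generic $\vartheta$-trajectories.

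Given $V$, I would choose any $\widetilde{\vartheta}\in V$ with $\widetilde{\vartheta}\not\equiv\vartheta\pmod{\pi}$; such a choice exists since $V$ is an open neighborhood of $\vartheta$. In $w$-coordinates the $\widetilde{\vartheta}$-trajectory through $w(\gamma(t_x))$ is the straight line $\widetilde{w}(s) = w(\gamma(t_x)) + \sigma e^{i\widetilde{\vartheta}}s$, where $\sigma\in\{+1,-1\}$ is chosen so that the line enters the strip. Since $e^{i\vartheta}$ and $e^{i\widetilde{\vartheta}}$ are $\mathbb{R}$-linearly independent, the intersection condition $w(\gamma(t_x))+\sigma e^{i\widetilde{\vartheta}}s = w(\nu(t_y))$ is a $2\times 2$ real linear system with a unique solution $(s^*, t_y)$. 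Rearranging it as
\[
\sigma e^{i\widetilde{\vartheta}}\, s \;-\; e^{i\vartheta}(t_y - t_x) \;=\; w_\nu^0 - w_\gamma^0,
\]
one sees that $s^*$ and $t_y - t_x$ are determined solely by $w_\gamma^0 - w_\nu^0$ and $\widetilde{\vartheta}-\vartheta$, with no remaining dependence on $t_x$. Setting $d := t_y - t_x$ then yields the claimed constancy.

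For the containment in the quadrilateral I would invoke convexity: the straight-line segment from $w(\gamma(t_x))$ to $w(\nu(t_y))$ lies entirely in the open strip between the two parallel rays, which maps back bijectively to a subset of the quadrilateral. Because the strip is translation-invariant in the $e^{i\vartheta}$-direction, the same picture holds for every $t_x$, so the segment stays inside the quadrilateral uniformly as $x$ varies along $\gamma$. The hard part of the argument is the topological bookkeeping behind this linearization: one must verify that the sector between $\gamma$ and $\nu$ near $p$ genuinely admits a single-valued branch of $\log z$, so that no winding around $p$ corrupts the clean strip picture. This uses the structure of a standard quadrilateral (whose interior is foliated by generic $\vartheta$-trajectories converging to $p$) together with the properties of the standard neighborhood $U$, and it may be necessary to shrink $V$ to a sufficiently small neighborhood of $\vartheta$ to guarantee that the connecting segment does not escape the portion of the strip corresponding to the quadrilateral.
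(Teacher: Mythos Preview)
Your proposal is correct and follows essentially the same approach as the paper: both arguments pass to the logarithmic coordinate $w=\int q^{1/2}\sim m\log z$ on a simply connected sector (the paper makes the branch cut explicit along a $\vartheta$-trajectory in the complement of the quadrilateral), where the two sides $\gamma,\nu$ become parallel straight lines bounding a strip and the $\widetilde{\vartheta}$-trajectory becomes a transversal line, so that the intersection parameters are determined by a $2\times 2$ linear system whose solution for $t_y-t_x$ depends only on the offset $w_\gamma^0-w_\nu^0$ and the angle $\widetilde{\vartheta}-\vartheta$. Your acknowledgment that $V$ may need to be shrunk matches the paper's requirement that $\widetilde{\vartheta}$ be taken close enough to $\vartheta$ so that the transversal enters the strip on the correct side.
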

	
	\begin{proof}
	
		\begin{figure}[!htbp] 
		\centering
		\scalebox{0.25}{\includegraphics{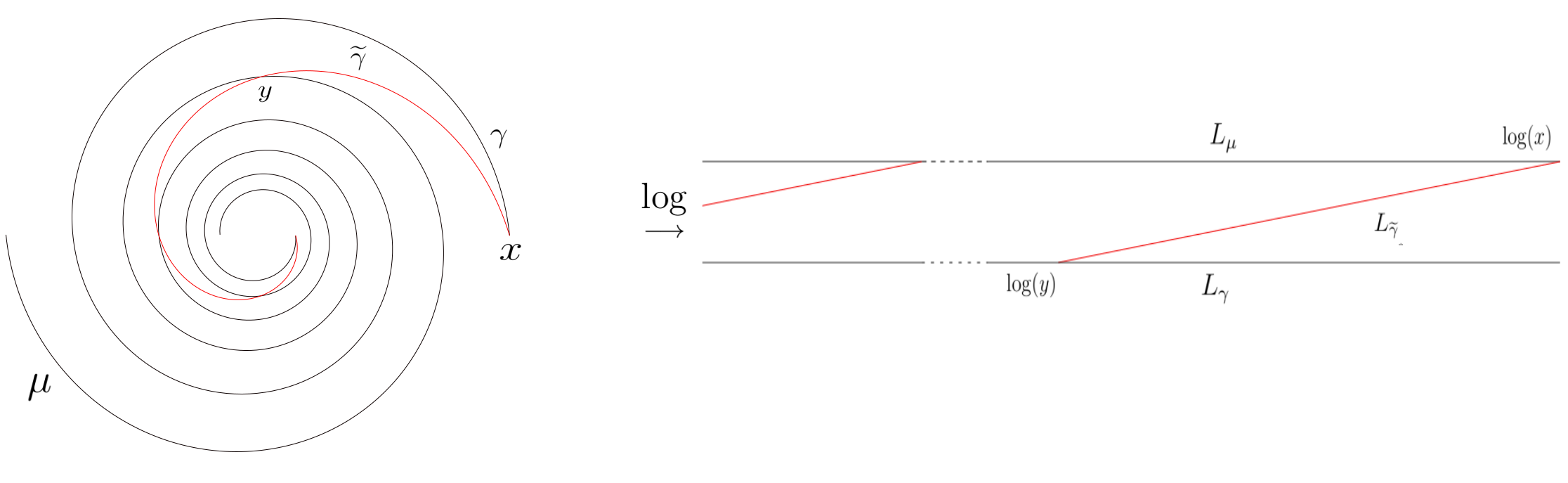}}
		\caption{The $\vartheta$-trajectories are mapped by the logarithm to a horizontal strip, whereas the $\widetilde{\vartheta}$-trajectory is mapped to a 
		straight line intersecting the strip at constant angles. }
		\label{log_wirkung}
		\end{figure}
		
		We consider the standard neighborhood $U$ around a weakly parabolic point, where $-qdz=\det(\Phi)=\frac{m}{z}dz$. In this neighborhood the 
		$\vartheta$-trajectories are known to be logarithmic spirals (or straight lines) except for two exceptional values of $\vartheta_c$ where they become 
		concentric 
		circles around the parabolic point. We may  parametrize all these spirals as $\mu(t)=\mu_{0}e^{i\varpi t}$ for differing values of $\mu_0$ and 
		with $\varpi$ only depending on $\vartheta$ with $\Re(\varpi)<0$ as seen before. Let $\gamma$ and $\nu$ be the parts of two sides of the 
		standard quadrilateral in this neighborhood. Their images together with $0$ 
		separates the neighborhood into two regions, one of which corresponds to the inside of the quadrilateral, which shall be called $I$. As the $\vartheta$-
		trajectories don't intersect, there are $\vartheta$-trajectories that lie in the complement $I^c$. We may take any one of them, say $\alpha$, to be a 
		branch cut on 
		$U$ and define a biholomorphic logarithm on $U\setminus\left(\Im(\alpha)\cup\left\{0\right\}\right)$. The domain of $\log$ then includes $I$. The 
		logarithm now maps all the $\vartheta$-trajectories $\mu$ (excluding $\alpha$) to parallel lines $L_\mu$ which we rotate to be horizontal, so 
		$L_{\gamma}$ and $L_{\nu}$ define a horizontal strip $S$. In the following $\log$ shall denote the composition of the logarithm and the rotation.
		In $\Im(\log)$ the left endpoints of the $L_{\mu}$ correspond to the values $\mu_0$ of $\mu$ at $t=0$ in $U$ while $0\in U$ is mapped to 
		$-\infty$ to the left.
		
		Now let $\widetilde{\gamma}$ be a $ca$-trajectory in for some $\widetilde{\vartheta}\neq\pm\vartheta$ that intersects $\gamma$ at some point $x$. The 
		part 
		of the curve starting at $x$ is then also mapped to a straight line at least until some point, where it intersects $\alpha$, which happens somewhere, as 
		$\widetilde{\vartheta}\neq\pm\vartheta$ (cf. figure \ref{log_wirkung}).
		
		As the mapping is biholomorphic $L_{\widetilde{\gamma}}$ intersects $L_{\gamma}$ in the point $\log(x)$ at some non-zero angle $\beta$ 
		which, as of Lemma \ref{const_angle} is constant for all intersection points. We recall 
		that the $ca$-trajectories are oriented as running into $0$. For $\widetilde{\vartheta}$ close enough to $\vartheta$ we thus have that depending on 
		wether 
		$\widetilde{\vartheta}$ is bigger or smaller then $\vartheta$ the straight line $L_{\widetilde{\gamma}}$ continues in the strip $S$ or outside 
		of $S$. We may thus take $\widetilde{\vartheta}$ arbitrarily close to $\vartheta$ and such that $L_{\widetilde{\gamma}}$ continues in the 
		strip $S$. Being a straight line, $L_{\widetilde{\gamma}}$ will intersect $L_{\mu}$ at some point $\log(y)$ and the thus defined segment 
		is the image of a part of the $ca$-trajectory $\widetilde{\gamma}$ inside $I$ under the logarithm. Thus $\widetilde{\gamma}$ intersects $\gamma$ and 
		$\mu$ in $x=\gamma(t_x)$ and $y=\mu(t_y)$ stays inside $I$ for all $t_x<t<t_y$.
		
		To calculate $t_x-t_y$ note that $L_{\gamma}=a_1+t\Re(\varpi)$ and $L_{\mu}=a_2+t\Re(\varpi)$ for $a_1:=\log(\gamma(0))$ and 
		$a_2:=\log(\mu(0))$. Now 
		$L_{\widetilde{\gamma}}$ has the form $L_{\widetilde{\gamma}}(t)=L_0+te^{i\beta}$ and intersects $L_{\gamma}$ in $t_x$ and $L_{\mu}$ in  $t_y$. We 
		obtain, that
		\[
			\begin{split}
			a_2-a_1+\Re(\varpi)(t_y-t_x)&=a_2+t_y\Re(\varpi)-(a_1+t_x\Re(\varpi))=L_{\mu}(t_y)-L_{\gamma}(t_x)\\
			&=L_{\widetilde{\gamma}}(t_y)-L_{\widetilde{\gamma}}(t_x)
			=t_ye^{i\beta}-t_xe^{i\beta}=(t_y-t_x)e^{i\beta}\\
			\Rightarrow |t_y-t_x|&\leq \left|\frac{a_2-a_1}{e^{i\beta}-\Re(\varpi)}\right|=:d.
			\end{split}
		\]
		Note that $a_2$ and $a_1$ are constants that only depend on the sides of the quadrilateral, as is $\varpi$. Also $\beta$ is constant for all 
		intersections of $\widetilde{\gamma}$ and any part of any $\vartheta$-trajectory everywhere in $I$ as of Lemma \ref{const_angle} and the 
		biholomorphicity of $\log$. Thus we conclude that $|t_y-t_x|=d$ is constant for all $\widetilde{\gamma}$-trajectories connecting $\gamma$ and $\mu$ 
		in $U$.
	\end{proof}
		
	We have thus obtained the prerequisites for transporting the small flat section that was obtained along some side $\gamma$ of the quadrilateral $Q$ to 
	another side $\mu$. For the proof we can now use ideas similar to the ones that helped befored. 
		
	\begin{proposition}\label{connecting_coord}
		Let $\gamma$ and $\mu$ be two sides of the quadrilateral $Q$ sharing a vertex $p$ and $s$ the small flat section along $\gamma$. If $\zeta$ is small enough 
		then $s$ 
		evaluated along $\mu$ is of the form
		\[
			s(t)=e^{-\Lambda_2(t)}\left(\begin{pmatrix}0\\1\end{pmatrix}+x^r\right),
		\]
		with $x^r$ some $\mathbb{C}^2$-valued function with $|x^r|\leq \tilde{K}e^{-\delta R}$ for some constant $\tilde{K}$.
	\end{proposition}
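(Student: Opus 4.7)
Since $s$ was constructed as a genuine $\nabla(\zeta)$-flat section on a simply connected neighborhood $N$ of the shared vertex $p$ that contains both $\gamma$ and $\mu$, its restriction $s|_\mu$ is again a solution of the trivialized flatness equation along $\mu$. The plan is to identify $s|_\mu$ with the small flat section $\sigma_\mu$ built directly along $\mu$, up to a scalar that will turn out to be exponentially close to $1$, which then delivers the stated form. For this I will view $\Lambda_2$ as a primitive on $N$ of the leading $(2,2)$-diagonal 1-form of the connection; this is well-defined on $N$ by path-independence and restricts to the primitive of $\lambda_2$ along either $\gamma$ or $\mu$.

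The first step is to apply Proposition \ref{conn_curve} to produce, for every $y = \mu(t_y)$ sufficiently close to $p$, a connecting $\widetilde{\vartheta}$-trajectory $\widetilde{\gamma}_y$ inside $N$ that meets $\gamma$ at some $x = \gamma(t_x)$ with parameter-length $|t_y - t_x| \le d$ bounded independently of $y$. Along $\widetilde{\gamma}_y$ the flatness ODE has exactly the structure of Subsection \ref{sec_small_sec} with $\vartheta$ replaced by $\widetilde{\vartheta}$, and the arguments of Lemma \ref{e_asymptotics} and Lemma \ref{small_kernel} depended only on the logarithmic-spiral form of $ca$-trajectories near $p$ (Lemma \ref{log_spiral}) and on the simple-pole structure of $a_1 - a_2$, both of which remain in force; consequently the error entries are again $O(r^\mu)$ and their $L^1$-norm along $\widetilde{\gamma}_y$ is bounded by a constant multiple of $e^{-\delta R}$ uniformly in $y$. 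Solving the resulting Volterra-type equation on the bounded interval of length at most $d$ with initial value $s(x)$ provided by Proposition \ref{rest_form}, the leading-order propagator acts diagonally and sends the leading piece $(0, e^{-\Lambda_2(x)})^t$ to $(0, e^{-\Lambda_2(y)})^t$ by path-independence of $\Lambda_2$ on $N$, while the error matrix contributes a correction bounded by $e^{Cd}\cdot e^{-\delta R}$. This yields
\[
s(y) = e^{-\Lambda_2(y)}\left(\begin{pmatrix}0\\1\end{pmatrix} + \widehat{x}^r(y)\right), \quad |\widehat{x}^r(y)| \le \widetilde{K}' e^{-\delta R},
\]
uniformly as $y$ approaches $p$ along $\mu$.

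Next let $\sigma_\mu$ denote the small flat section along $\mu$ obtained by running the construction of Subsection \ref{sec_small_sec} with $\mu$ in place of $\gamma$; by Proposition \ref{rest_form} it has the form $\sigma_\mu(t) = e^{-\Lambda_2(\mu(t))}((0,1)^t + x^r_\mu(t))$ with $|x^r_\mu| \le \widetilde{K}\, e^{-\delta R}$. Both $s|_\mu$ and $\sigma_\mu$ are flat sections along $\mu$ that decay into $p$, and since by Corollary \ref{fund_sys} the small flat sections at the two endpoints of $\mu$ form a fundamental system (of which exactly one summand decays into $p$), the subspace of flat sections along $\mu$ decaying into $p$ is one-dimensional; hence $s|_\mu = c\,\sigma_\mu$ for some $c \in \mathbb{C}^\times$. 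Comparing with the estimate of the previous paragraph at any $y$ close to $p$ gives $c = 1 + O(e^{-\delta R})$, and substitution produces
\[
s|_\mu(t) = e^{-\Lambda_2(\mu(t))}\left(\begin{pmatrix}0\\1\end{pmatrix} + x^r(t)\right), \quad |x^r(t)| \le |c - 1| + |c|\,|x^r_\mu(t)| \le \widetilde{K}_0\, e^{-\delta R},
\]
which is the asserted form.

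The main technical obstacle is making the Gr\"onwall-type bound for the propagator along the family $\widetilde{\gamma}_y$ uniform both as $y \to p$ (equivalently $t_y \to \infty$) and as $\widetilde{\vartheta}$ varies in a small neighborhood of $\vartheta$. Uniformity in $y$ is delivered by the explicit form $z(\widetilde{t}) = z_0 e^{-e^{i\widetilde{\vartheta}}\widetilde{t}/m}$ from Lemma \ref{log_spiral}, which keeps $a_i z$ bounded and hence the leading entries $\widetilde{\lambda}_1, \widetilde{\lambda}_2$ bounded in standard parametrization even though the curves accumulate at $p$; uniformity in $\widetilde{\vartheta}$ follows from the continuous dependence of all ingredients on the angle, after possibly shrinking the $\widetilde{\vartheta}$-window allowed by Proposition \ref{conn_curve}.
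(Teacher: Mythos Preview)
Your proof is correct and follows essentially the same route as the paper: both use Proposition~\ref{conn_curve} to produce connecting $\widetilde{\vartheta}$-trajectories of uniformly bounded length, transport along them via the finite-interval Volterra equation with the same kernel estimates, and conclude that the transported section remains small along $\mu$. The only cosmetic difference is in the final identification: the paper reads off the limit $(0,1+\epsilon)^t$ with $|\epsilon|\le Ce^{-\delta R}$ directly and then re-solves the integral equation along $\mu$ with this perturbed initial value, whereas you invoke the one-dimensionality of the decaying subspace (via Corollary~\ref{fund_sys}) to write $s|_\mu = c\,\sigma_\mu$ and extract $c = 1 + O(e^{-\delta R})$ by comparison near $p$. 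These packagings are equivalent.

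One point worth making explicit (the paper does so) is that $\zeta \in \mathbb{H}_{\widetilde{\vartheta}}$ once $\widetilde{\vartheta}$ is chosen close enough to $\vartheta$, which is what guarantees $\mathrm{Re}(\Lambda(t)-\Lambda(\tau)) \le 0$ along the connecting segment and hence keeps the first diagonal entry of the Volterra kernel bounded by $1$; your Gr\"onwall-style $e^{Cd}$ bound is adequate, but this is the sharper mechanism and is what actually controls the first component of the remainder under the leading propagator.
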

		
	\begin{proof}
		To parallel transport the small flat section from one side of the quadrilateral to another we use the formulation via an integral equation as we have 
		done before, only now we do not need to consider the integral up to infinity. We recall that in our set-up $\vartheta$ is fixed and we consider only 
		those $\zeta$ which lie in the open half-plane $\mathbb{H}_\vartheta$. Thus $\zeta$ also lies in the half-plane $\mathbb{H}_{\widetilde{\vartheta}}$ for 
		$\widetilde{\vartheta}$ close enough to $\vartheta$. Now we chose any point $p_s$ on $\mu$ close enough to the parabolic point, s. t. it lies in 
		the neighborhood where the $ca$-trajectories may be expressed as logarithmic spirals. As of the previous Proposition \ref{conn_curve} there exists a 
		$\widetilde{\vartheta}$-trajectory $\alpha:[a,b]\to C$ with starting point $\alpha(a)=p_s$ and first intersection with $\gamma$ at $\alpha(b)=p_e$, 
		s. t. the image of the curve is contained in the quadrilateral and $b-a\leq d$ for some constant $d$ that does not depend on the starting point $p_s$.
		
		We now consider the solution of our ODE via variation of constants along $\alpha$, i. e. the parallel transport along $\alpha$ is the unique solution 
		$x:[a,b]\to\mathbb{R}^2$ to the integral equation
		\begin{equation}\label{connecting_equ}
			x(t)=\begin{pmatrix}e^{\Lambda(t)-\Lambda(b)}&0\\0&1\end{pmatrix}x(b)
			-\int_{t}^b\begin{pmatrix}e^{\Lambda(t)-\Lambda(\tau)}&0\\0&1\end{pmatrix}\begin{pmatrix}e_1&e_2\\e_3&-e_1\end{pmatrix}x(\tau)d\tau.
		\end{equation}
		Note that $x(b)$ is now the "initial value" of $x$ at the end point $p_e$ instead of the value at infinity.
		Here $\Lambda$ is again a primitive of $\lambda$ as above but now evaluated along $\alpha$. The theory developed before also holds for these usual Volterra equations where the integral does not go to infinity \cite{Windsor.2010}.\footnote{More accurately the theory 
		developed here tried to broaden the already existing theory for these equations to the case of an initial value at infinity.} 
		
		Thus we have a unique solution from which we would like to gather information about $x(a)=x(p_s)$ on $\mu$ by estimating it against $x(b)=x(p_e)$ 
		on $\gamma$, using the techniques we already developed for the infinite case. As $\zeta$ lies in the half-plane 
		$\mathbb{H}_{\widetilde{\vartheta}}$ we still have that $\Lambda(t)-\Lambda(\tau)$ has negative real part, which holds in particular for $\tau=b$. Thus the norm 
		of the first matrix in the integrand 
		is smaller then $1$ while the norm of the error matrix stays bounded by some $Ce^{-\delta R}$ as before for $\zeta$ small enough. As the domain of the 
		integral is bounded by $d$ we obtain for large enough $R$ that the integral over the kernel is smaller then $1$, so a unique solution exists. As the 
		matrix in the initial value function is also bounded by $1$, we obtain:
		\[
			|x(t)|\leq\frac{\left|x(b)\right|}{1-dCe^{\delta R}}.
		\]
		Plugging this estimate back into the the right hand side of the equation \eqref{connecting_equ} we obtain for the integral
		\begin{equation}\label{con_est}
			\begin{split}
			\left|\int_{t}^b\begin{pmatrix}e^{\Lambda(t)-\Lambda(\tau)}&0\\0&1\end{pmatrix}\begin{pmatrix}e_1&e_2\\e_3&-e_1\end{pmatrix}x(\tau)d\tau\right|
			&\leq \int_{t}^bCe^{-\delta R}\frac{\left|x(b)\right|}{1-dCe^{-\delta R}}d\tau\\
			&\leq|x(b)|\widetilde{C}e^{-\delta R}
			\end{split}
		\end{equation}
		for some uniform constant $\widetilde{C}$. The full equation \eqref{connecting_equ} thus shows us, that we have:
		\begin{equation}\label{conn_sol}
			x(t)=\begin{pmatrix}e^{\Lambda(t)-\Lambda(b)}&0\\0&1\end{pmatrix}x(b)+v(t)
		\end{equation}
		where $|v(t)|\leq|x(b)|\widetilde{C}e^{-\delta R}$ along the whole segment. As $\Lambda(t)-\Lambda(\tau)\leq0$ along the segment, we obtain the estimate
		\begin{equation}\label{conn_est}
			|x(t)|\leq |x(b)|(1+Ce^{-\delta R}).
		\end{equation}
		In particular we have the (in)equalities \ref{conn_sol} and \ref{conn_est} for $t=a$, i. e. for large enough $R$ the endpoints $x(a)$ and $x(b)$ on the 
		two 
		sides of the quadrilateral differ by some (bounded) multiplicative factor in the first component (that vanishes for $\zeta\to0$ or $R\to\infty$) and 
		an added vector which is bounded by the norm of the function on the other side multiplied with some exponentially suppressed constant. Note that this 
		argument is valid for all $\widetilde{\vartheta}$ segments arbitrarily close to the weakly parabolic point $p$ and the upper bounds can be chosen to be 
		valid for all arbitrarily close segments. Now $x(b)$ is the value of the section on the side where the small flat 
		section was defined which is bounded near $p$, so the value on the other side is also bounded. This already shows that the transported section along 
		the second side must also be a small flat section as the big sections would have to surpass every upper bound somewhere close to the parabolic 
		point. Thus we know that the section has a limit along the second side going into the parabolic point and the first component of this limit must 
		vanish. As $x(b)$ has limit $(0,1)^t$ going into $p$ along $\gamma$ the limit along the second side $\mu$ is $(0,1+\epsilon)$ for some constant 
		$\epsilon$ with norm 
		$|\epsilon|\leq Ce^{-\delta R}$ for some uniform constant $C$ for $\zeta$ small enough.
		
		Therefore the section along $\mu$ is the solution of 
		\[
			\begin{split}
			x(t)=\begin{pmatrix}0\\1+\epsilon\end{pmatrix}-\int_{t}^{\infty}
			\begin{pmatrix}e^{\Lambda(t)-\Lambda(\tau)}&0\\0&1\end{pmatrix}
			\begin{pmatrix}e_1(\tau)&e_2(\tau)\\e_3(\tau)&-e_1(\tau)\end{pmatrix}x(\tau)d\tau,
			\end{split}
		\]
		where $\Lambda$ and the integral are now taken along $\mu$. As $\epsilon$ is exponentially bounded it follows with the same arguments as in 
		Proposition \ref{rest_form} that $x$ is of the form 
		\[
			x(t)=\begin{pmatrix}0\\1\end{pmatrix}+x^r,
		\]
		with $x^r$ some $\mathbb{C}^2$-valued function with $|x^r|\leq \tilde{K}e^{-\delta R}$ for some constant $\tilde{K}$. Again the form for $s$ follows 
		readily.
	\end{proof}
	
	We can now proceed to build the $\mathcal{X}_E$ coordinate (where we mostly suppress the explicit dependence on the triangulation, $q$ and $\vartheta$ in 
	the notation). 
	For this consider again the standard quadrilateral $Q$ associated to an edge $E$ of the triangulation with vertices given by four weakly parabolic 
	points 
	$p_i$.\footnote{In the following all of the expressions involving the indices of the vertices, like $p_{i+2}$, are to be considered as mod $4$ as we talk 
	about a quadrilateral with four vertices.} We want to 
	build the $\wedge$-product according to the definition \ref{Darboux_coords} of $\mathcal{X}_E$. But to do so we have to consider that our calculations up 
	until now was always validated on contractible subsets of the standard annulus $S$ in $Q$ and we already introduced different primitives for $\lambda$ in 
	the different patches belonging to two different $\vartheta$-curves. This now has to be done for all of the four sections of the quadrilateral. 
	Additionally for each vertex $p_i$ there are two sides joining $p_i$ and thus two small flat sections of $\nabla(\zeta)$ which are given by the solution 
	of the integral equation along these two curves, from which one has to be chosen. Our way for doing this is the following: 
	
	First we remember that we fixed the orientation for the $\vartheta$-curves depending on $q^{1/2}$ (cf. the discussion of the flatness equation 
	\eqref{flatness_equation}). This implies that if one side of the quadrilateral $Q$ is given by a curve oriented as running into a vertex $p_i$ 
	(for $t\to\infty$) the other 
	side of $Q$ sharing the vertex $p_i$ is also oriented as running into $p_i$. Thus two vertices of $Q$ are points into which the $\vartheta$-curves run, 
	and the other two vertices are points from which the curves emerge. We start with labeling one of the points into which the curves run as $p_1$ and label 
	the following vertices clockwise. Now we label the side joining $p_4$ and $p_1$ as $\gamma_1$ with is thus oriented in such a way that $\gamma_1$ 
	converges to $p_1$ for $t\to\infty$. The other side joining $p_1$ connects $p_1$ and $p_2$ and shall be labeled as $\gamma_2$ and is also oriented in 
	such a way that $\gamma_2$ converges to $p_1$ for $t\to\infty$. Now $p_2$ will be a vertex from which $\vartheta$-curves emerge, while $p_3$ will be of 
	the same type as $p_1$. So we proceed for $\gamma_3$ and $\gamma_4$ in the same way as for $\gamma_1$ and $\gamma_2$, just now for the vertex $p_3$. 
	In this way we obtain four oriented curves as sides of $Q$. For each of the vertices 
	$p_i$ let $\Lambda^i_1$ be a primitive of $\lambda_1:=-\frac{R}{\zeta}q^{1/2}-R\zeta q^{1/2}-\overline{a_1}d\overline{z}+a_1dz$ and $\Lambda^i_2$ a 
	primitive of 
	$\lambda_2:=-\frac{R}{\zeta}q^{1/2}-R\zeta q^{1/2}+\overline{a_2}d\overline{z}-a_2dz$ defined on a simply connected neighborhood $U_i$ of $\gamma_i$ and 
	$\gamma_{i+1}$ in $S$\footnote{This neighborhood can be taken to be $S$ minus some branch cut connecting $p_{i+2}$ and the inner 
	boundary of the annulus $S$.}. Note that these primitives exist as $p$ is holomorphic and the frame was chosen in such a way that 
	$d(\overline{a_j}-a_j)=0$. Evaluated along a $\vartheta$-curve these primitives become the primitives that were used in the calculations above 
	(cf. equation \eqref{flatness_equation}). Finally 
	for each vertex we choose $s^i$ to be the small flat section along 
	$\gamma_i$ defined using the primitives $\Lambda_j^i$ and extended to sections of $\nabla(\zeta)$ on $U_i$ (cf. figure \ref{coords_quad}). This 
	extension is possible as the $U_i$ are simply connected and $\nabla(\zeta)$ is flat.
	We note that this way of choosing the sections is not necessary. Other 
	choices lead to the same $\mathcal{X}$ coordinates as the small flat sections only differ by a (constant) complex scalar, which cancels in the final 
	$\wedge$-product. This also justifies the arbitrary choice of the "initial value at infinity" as $(0,1)^t$. Thus we may now define the $\mathcal{X}$-
	coordinate.
	
	\begin{figure}[!htbp]
		\centering
		\scalebox{0.3}{\includegraphics{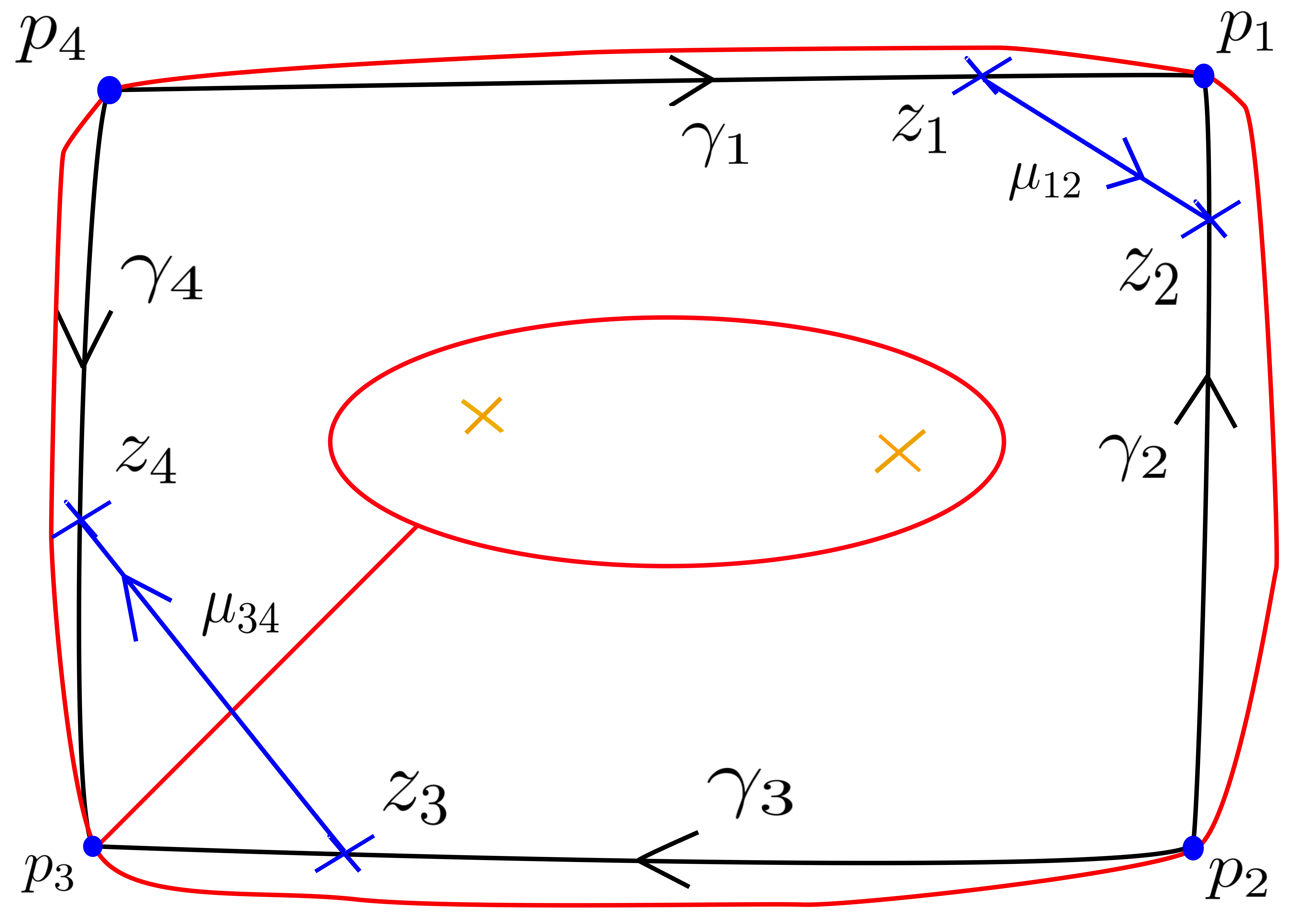}}
		\caption{A standard quadrilateral given by four oriented $\vartheta$-curves $\gamma_i$. The orange crosses denote the zeroes of $-q$ and the blue circles the weakly parabolic points. The blue crosses denote the evaluation points for the small flat sections and the blue lines are curves along which two of the $\wedge$-products are transported. The red outer and inner "circles" are the boundary of a standard annulus. Together with the red straight arc they form the boundary of the simply connected neighborhood $U_1$ for $\gamma_1$.}
		\label{coords_quad}
		\end{figure}
	
	\begin{proposition}\label{x_well}
		Let $Q$ be a standard quadrilateral with vertices $p_i$ and small flat sections chosen as described above. Then for $z_1\in U_1\cap U_4$ and 
		$z_3\in U_2\cap U_3$ the product
		\[
			-\frac{\left(s^{1}\wedge s^{2}\right)(z_1)}{\left(s^{4}\wedge s^{1}\right)(z_1)}
			\frac{\left(s^{3}\wedge s^{4}\right)(z_3)}{\left(s^{2}\wedge s^{3}\right)(z_3)}
			=:-\frac{\left(s^{1}\wedge s^{2}\right)\left(s^{3}\wedge s^{4}\right)}{\left(s^{4}\wedge s^{1}\right)\left(s^{2}\wedge s^{3}\right)}
			=:\mathcal{X}_E
		\]
		is well defined, non-zero and independent of the evaluation points $z_1$, $z_3$.
	\end{proposition}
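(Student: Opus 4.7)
The strategy is to reduce the statement to the observation that the connection induced by $\nabla(\zeta)$ on $\mathrm{Det}\,E$ is a fixed flat connection independent of $\zeta$. I first check this reduction: since $\Phi$ and $\Phi^{\ast_h}$ are traceless, the trace of $\nabla(\zeta)=\tfrac{R}{\zeta}\Phi+d_A+R\zeta\Phi^{\ast_h}$ equals the trace of $d_A$, which is the Chern connection of the fixed holomorphic Hermitian structure on $\mathrm{Det}\,E$. Under the standing assumption $\mathrm{pdeg}(\mathcal{E})=0$ this trace connection is flat. In particular, for any two $\nabla(\zeta)$-flat local sections $s,s'$ of $E$ the wedge-product $s\wedge s'$ is a flat section of $\mathrm{Det}\,E$ with respect to this fixed flat connection.

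Second, each intersection $U_i\cap U_j$ for adjacent indices is simply connected, being built by removing two branch cuts at opposite vertices from the standard annulus $S$. On such a simply connected set the space of flat sections of a flat line bundle is one-dimensional, so any two non-vanishing flat sections differ by a non-zero constant. Granting non-vanishing of all four wedge-products for the moment, the ratio of two of them on their common domain of definition is therefore a non-zero constant. This already proves independence of the value $\mathcal{X}_E$ from the choice of the evaluation points $z_1\in U_1\cap U_4$ and $z_3\in U_2\cap U_3$ (with implicit additional membership in the third $U_k$ needed to define the numerators), and shows that $\mathcal{X}_E$ is a product of two non-zero complex numbers.

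Third, I have to establish the non-vanishing. For each consecutive pair of indices $(i,i+1)$ the two sections $s^i, s^{i+1}$ are small flat sections at the two endpoints of a common side $\gamma_k$ of the quadrilateral, which by construction is a generic $\vartheta$-trajectory between those two weakly parabolic points. Corollary \ref{fund_sys} (resting on Proposition \ref{indep_dec}) asserts that such a pair of small flat sections forms a fundamental system of the flatness equation along $\gamma_k$, so $(s^i\wedge s^{i+1})(z)\neq 0$ at some point $z\in\gamma_k$. Since $s^i\wedge s^{i+1}$ is a flat section of the flat line bundle $\mathrm{Det}\,E$ on the simply connected overlap $U_i\cap U_{i+1}$, non-vanishing at one point propagates to the entire overlap. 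Together with step two this gives the claim.

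The argument is almost purely formal once the preceding propositions are in hand; the only mild obstacle is the geometric bookkeeping needed to confirm that the overlaps $U_i\cap U_j$ are simply connected and that the evaluation points lie in the intersections where all three relevant $s^k$ are defined as flat sections. This is a matter of arranging the branch cuts compatibly inside $S$ and does not require any new analytic input beyond what has been assembled in the previous subsections.
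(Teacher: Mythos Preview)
Your argument is correct and follows the same route as the paper: non-vanishing comes from linear independence of the small flat sections at adjacent vertices, and independence of the evaluation points comes from the fact that each quotient of wedge-products is a ratio of two flat sections of the (flat) determinant line bundle on a simply connected set, hence constant. The paper's own proof is just a two-line citation of the relevant earlier results together with the discussion preceding Definition~\ref{Darboux_coords}.

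One point to tighten: when you say that for each consecutive pair $(i,i+1)$ the sections $s^i,s^{i+1}$ are small flat sections at the two endpoints of a common side $\gamma_k$, this is not automatic from the construction. By definition $s^i$ is the small section along $\gamma_i$, and for exactly one of $s^i,s^{i+1}$ the side $\gamma_k$ is different from its defining trajectory; for that section you need Proposition~\ref{connecting_coord} to know that after parallel transport to $\gamma_k$ it is still small at its vertex. The paper's proof cites both Proposition~\ref{indep_dec} and Proposition~\ref{connecting_coord} for exactly this reason. Once that is invoked, Corollary~\ref{fund_sys} applies along $\gamma_k$ and your non-vanishing argument goes through.
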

	
	\begin{proof}
		By construction the $\wedge$-products are defined on $z_1$ and $z_3$. As of Propositions \ref{indep_dec} and \ref{connecting_coord} it follows that all 
		of the $\wedge$-products are non-zero, so $\mathcal{X}$ is well defined and non-zero. The independence of the evaluation points is also true by 
		construction, cf. the discussion preceding the definition \ref{Darboux_coords}.
	\end{proof}
	
	So $\mathcal{X}_E$ only depends on the parameters of the Higgs pair, $\zeta$, $\vartheta$ and the 
	choice of quadrilateral. Thus if we fix $\zeta$ and $\vartheta$ we obtain functions on $\mathcal{M}$. As described in section \ref{sec_dec} it follows 
	from the work of Fock and Goncharov \cite{Fock.2006} and the treatment in \cite{Gaiotto.2013} that the collection of these functions for all edges are 
	actually coordinates of $\mathcal{M}$, called the \textit{canonical coordinates.}
	
	Now consider the cycle $\gamma_E\in H_1(\Sigma'_q;\mathbb{Z})$ encircling the preimage of the two zeroes inside of $Q$ in the (punctured) spectral 
	curve $\Sigma'_q$. One of the key claims in \cite{Gaiotto.2013} is that in the $\zeta\rightarrow0$ asymptotic the period integral $\oint_\gamma 
	\left(q^{1/2}\right)^\ast$ emerges as the leading term for the $\mathcal{X}$-coordinates, where $\left(q^{1/2}\right)^\ast$ denotes the Liouville 
	(tautological) $1$-form on $T^\ast C$
	That is by pushing $\gamma_E$ down to a cycle on $C$ (which we'll also denote by $\gamma_E$) one has 
	$\mathcal{X}_{\gamma_E}\sim c_{\gamma_E} \exp(\frac{R}{\zeta}\oint_{\gamma_E} q^{1/2}dz)$ 
	for some constant $c_{\gamma_E}$. Their conjecture was based on the WKB method but had not been rigorously proven until now. With the calculations done 
	above we can 
	now achieve this, but one technical detail has to be accounted for. As noted above \cite{Gaiotto.2013} used the convention that the matrix representing 
	$\nabla(\zeta)$ is traceless, which leads to a $z$-invariance of the $\wedge$-products even before taking the quotient in the $\mathcal{X}$-definition. 
	Thus they can evaluate the sections on each curve so that the primitives combine in just the right way to create the period integral. In our set-up the 
	the connection matrix is not traceless, so this independence does not hold. Additionally from Propositions \ref{indep_dec} and \ref{connecting_coord} we 
	only have the detailed description of $s^i\wedge s^k$ along the curve connecting the vertices $p_i$ and $p_k$. But the definition of $\mathcal{X}$ 
	implies that at least two of the $\wedge$-products have to be evaluated in a point on some other side. We have to account for this by parallel transporting 
	two of the $\wedge$-products to other sides. As the $s^i\wedge s^k$ are sections of a line bundle this is done rather easily.
	
	\begin{lemma}\label{transport_wedge}
		Let $\gamma_i$ and $\gamma_{i+1}$ be the sides of a standard quadrilateral $Q$ sharing the vertex $p_i$, with $z_i\in\text{Im}(\gamma_i)$ 
		and $z_{i+1}\in\text{Im}(\gamma_{i+1})$ two points on these sides of $Q$. Let $s^i$ and $s^{i+1}$ be the corresponding small flat sections defined on 
		the neighborhoods $U_i$ and $U_{i+1}$.
		
		Then for every curve $\mu\in U_i\cap U_{i+1}$ connecting $z_i$ and $z_{i+1}$ it holds:
		\[
			s^i\wedge s^{i+1}(w_2)=e^{\int_\mu (a_1+a_2)dz-\overline{a_1+a_2}d\overline{z}}s^i\wedge s^{i+1}(w_1),
		\]
		
	\end{lemma}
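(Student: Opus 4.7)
The plan is to recognize $s^i\wedge s^{i+1}$ as a section of $\det E$ that is flat for the connection induced by $\nabla(\zeta)$ on the determinant line bundle, and then simply integrate the associated scalar first-order ODE along $\mu$. Since both $s^i$ and $s^{i+1}$ are extended by flatness to all of $U_i$ and $U_{i+1}$, respectively, the wedge product is a well-defined smooth section on $U_i\cap U_{i+1}$, and the curve $\mu$ lies entirely in this intersection, so there are no issues of domain.

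First I would record, as in the derivation of \eqref{induced_local}, that a flat pair $\nabla(\zeta)s^i=0=\nabla(\zeta)s^{i+1}$ forces
\[
  d(s^i\wedge s^{i+1})=-\operatorname{tr}(\mathcal{A})\,(s^i\wedge s^{i+1}),
\]
where $\mathcal{A}$ is the matrix-valued $1$-form representing $\nabla(\zeta)$ in the standard frame $F$. So the task reduces to identifying $\operatorname{tr}(\mathcal{A})$. Here the structural results of Section \ref{Hitchin_pair} do almost all the work. The Higgs field $\Phi=\varphi dz$ is traceless by definition, and the adjoint $\Phi^{\ast_h}$ is the conjugate $H^{-1}\overline{\varphi}^tH$ of a traceless matrix, hence also traceless. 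By Remark \ref{trace_prop}, all remainder contributions to the Chern connection, namely the $A^{er}$-term and $H^{-1}\partial H$, are traceless as well. Therefore only the diagonal leading part of $d_A$ from Corollary \ref{trace_away} survives the trace, and we obtain
\[
  \operatorname{tr}(\mathcal{A}) = (\overline{a_1}+\overline{a_2})d\overline{z}-(a_1+a_2)dz.
\]

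Plugging this into the induced equation gives the scalar first-order linear ODE
\[
  d(s^i\wedge s^{i+1}) = \bigl[(a_1+a_2)dz-\overline{(a_1+a_2)}d\overline{z}\bigr]\,(s^i\wedge s^{i+1}).
\]
Since $s^i\wedge s^{i+1}$ is nowhere zero on $U_i\cap U_{i+1}$ by Proposition \ref{indep_dec} together with the parallel transport argument of Proposition \ref{connecting_coord}, I can take the logarithmic derivative and integrate along $\mu$ from $z_i$ to $z_{i+1}$, producing the asserted exponential formula.

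The only subtle point is that the statement asks the identity to hold for \emph{every} curve $\mu\subset U_i\cap U_{i+1}$ joining the endpoints, whereas the left-hand side depends only on the endpoints. This forces the integrand to be a closed $1$-form, and indeed it is: by construction of the standard frame in Lemma \ref{s_frame} the diagonal leading matrix $C$ of the Chern connection satisfies $dC=0$, and the $1$-form appearing in the exponent is exactly $-\operatorname{tr}(C)$, hence closed. Consequently the integral depends only on the homotopy class of $\mu$ in $U_i\cap U_{i+1}$, and path-independence of the formula holds. I expect no serious obstacle here; the only place where one must be careful is in verifying that the remainder pieces of both $d_A$ and $\Phi^{\ast_h}$ really are traceless, so that the computation of $\operatorname{tr}(\mathcal{A})$ reduces cleanly to the simple diagonal leading term and the resulting $1$-form is genuinely closed.
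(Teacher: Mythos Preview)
Your proof is correct and follows essentially the same approach as the paper: both reduce the statement to computing the trace of the connection matrix of $\nabla(\zeta)$ in the standard frame and integrating the resulting scalar first-order equation for the Wronskian. The paper phrases this as an application of Liouville's formula after noting that $s^i,s^{i+1}$ form a fundamental system (Corollary \ref{fund_sys}), while you phrase it via the induced connection on $\det E$; these are the same computation. Your additional remark on path-independence via $dC=0$ is a nice clarification that the paper does not spell out at this point.
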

	
	\begin{proof}
		From Proposition \ref{connecting_coord} and Corollary \ref{fund_sys} $s^i$ and $s^j$ define a fundamental system for the linear ODE $s'(t)=B(t)s(t)$ 
		given by the flatness equation \eqref{flatness_equation} along $\mu$. The equation is now simply an application of Liouville's formula where we note that 
		the only part of the connection matrix of $\nabla(\zeta)$ that is not traceless comes from the leading part of the Chern connection 
		(cf. Remark \ref{trace_prop}), which is just $(a_1+a_2)dz-\overline{a_1+a_2}d\overline{z}$ (cf. equation \eqref{leading}).
	\end{proof}
	
	Finally we can see the emergence of the period integrals. 
	
	\begin{theorem}\label{x_int_form}
		Let $Q$ be the standard quadrilateral for an edge $E$ of a $\vartheta$-triangulation of $C$. The canonical coordinate for $Q$ can be expressed as
		\[
			\mathcal{X}_E=-(1+r_q)
			\exp\left(\frac{R}{\zeta}\oint_{\gamma_E} q^{1/2}+R\zeta\oint_{\gamma_E}\overline{q^{1/2}}+\oint_{\gamma_E}\overline{a_1}d\overline{z}-a_1dz\right),
		\]
		where $r_q$ is exponentially suppressed in $R$ for small enough $\zeta$, and $\gamma_E$ is the projection of a cycle in $H_1(\Sigma'_q;\mathbb{Z})$ 
		corresponding to the edge $E$, that is a simple closed curve in $Q$ encircling the two zeros of $q$ once.
	\end{theorem}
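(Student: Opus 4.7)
The strategy is to substitute the explicit local representation from Corollary \ref{wedge_eval} into the defining cross-ratio for $\mathcal{X}_E$ and to observe that the four exponents telescope into a single period of the $1$-form $\omega_1 := -\tfrac{R}{\zeta} q^{1/2} + (a_1 dz - \overline{a_1} d\overline z) - R\zeta \overline{q^{1/2}}$ along $\gamma_E$. Fix a branch of $q^{1/2}$ on $S$; this orients every $\vartheta$-trajectory so that two of the vertices of $Q$ (say $p_1, p_3$) are sinks and the other two ($p_2, p_4$) are sources. For every edge $\gamma_{ij}$ connecting a sink $p_i$ to a source $p_j$, the construction of section \ref{sec_small_sec} together with Corollary \ref{wedge_eval} yields a representation
\[
(s^i \wedge s^j)(w) \;=\; C_{ij}\, e^{\Lambda_1^j(w) - \Lambda_2^i(w)}\bigl(1+r_{ij}(w)\bigr),
\]
valid on a neighborhood of $\gamma_{ij}$ inside $U_i \cap U_j$, where $\Lambda_1^j, \Lambda_2^i$ are single-valued primitives of $\omega_1, \omega_2$ on the simply connected patches $U_j, U_i$, the sign $C_{ij} = \pm 1$ is fixed by the normalization of the small flat sections, and $r_{ij}$ is exponentially suppressed in $R$ uniformly for small $\zeta$ by Lemma \ref{small_kernel} and the estimates in Propositions \ref{rest_form} and \ref{connecting_coord}. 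Lemma \ref{transport_wedge} extends this representation throughout the simply connected patches (its transport factor is precisely the exponential of the trace form $\omega_1 - \omega_2$, which is already encoded in $\Lambda_1^j - \Lambda_2^i$), and since each section $s^i$ appears in the numerator and denominator of the cross-ratio the same number of times, any remaining transport factors cancel in pairs.

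Substituting into $\mathcal{X}_E$, the four $\Lambda_2$-contributions cancel exactly and the surviving exponent is
\[
\bigl[\Lambda_1^2(z_1) - \Lambda_1^4(z_1)\bigr] - \bigl[\Lambda_1^2(z_3) - \Lambda_1^4(z_3)\bigr].
\]
Since $\Lambda_1^2$ and $\Lambda_1^4$ are branches of the same multivalued primitive of $\omega_1$ on the non-simply-connected annulus $S$, their difference is locally constant. Analytic continuation of $\Lambda_1^2$ to $z_1$ passes through $U_1$, while continuation of $\Lambda_1^4$ to $z_1$ stays in $U_4$; likewise the continuations to $z_3$ pass through $U_2$ and $U_3$ respectively. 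Concatenating these paths produces a loop isotopic to the generator of $\pi_1(S)$, which coincides with the projection of $\gamma_E$ to $\mathcal{C}$. Consequently the displayed expression equals $-\oint_{\gamma_E}\omega_1$ (with orientation inherited from the chosen cyclic ordering), and substituting the explicit form of $\omega_1$ gives exactly the exponent claimed in the theorem.

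The prefactor $-(1+r_q)$ is collected at the end: the explicit $-$ in the definition of $\mathcal{X}_E$ together with the product of the four signs $C_{ij}$ produces $-1$, while the product $(1+r_{12})(1+r_{34})/[(1+r_{41})(1+r_{23})]$ has the form $1+r_q$ with $r_q$ exponentially suppressed in $R$ by the bounds already established. The main technical difficulty will lie in the topological bookkeeping of the second step, specifically verifying that the prescribed analytic continuations of the two branches $\Lambda_1^2, \Lambda_1^4$ yield precisely the cycle $\gamma_E$ (up to orientation) rather than the trivial loop; this is ultimately a consistency check once the cyclic labelling of the vertices and the branch of $q^{1/2}$ have been fixed, but it is the point where the sink/source structure of the vertices and the edge identifications of the $U_i$ must be carefully coordinated.
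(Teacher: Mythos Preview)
Your approach is correct and is essentially a streamlined reorganization of the paper's own proof. Both arguments rest on the same two ingredients (Corollary \ref{wedge_eval} for the local form of the wedge products and Lemma \ref{transport_wedge} for moving them around the annulus), but they are bookkept differently. The paper evaluates each $s^i\wedge s^j$ at a point on its own edge, transports two of them explicitly via Liouville's formula picking up factors $e^{-\int_\mu\text{tr}(\nabla)}$, writes each difference $\Lambda_j^i(z_k)-\Lambda_j^i(z_l)$ as a line integral of $\lambda_j$, and then discovers by direct calculation that the $a_2$-contributions cancel against the transport factors, leaving only the period of $\lambda_1$. You instead absorb the transport factor $e^{\int(\lambda_1-\lambda_2)}$ into the representation $e^{\Lambda_1^j-\Lambda_2^i}$ by analytic continuation, and then observe that each sink's $\Lambda_2^i$ appears at the same point (with continuation path inside the simply connected $U_i$) once in the numerator and once in the denominator, so the $\Lambda_2$-parts cancel \emph{a priori}. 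This makes the appearance of $\lambda_1$ alone (hence of $a_1$ but not $a_2$) transparent from the outset, which is a genuine conceptual gain; the price is that the topological step---identifying the difference of analytic continuations with the period along $\gamma_E$---carries slightly more weight, exactly as you flag at the end.

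One small point worth making explicit in your write-up: the claim that ``the four $\Lambda_2$-contributions cancel exactly'' relies on the fact that for each sink $p_i$, both occurrences of $\Lambda_2^i$ in the cross-ratio are evaluated at the same point $z_i\in U_i$, and the continuation path required for one of them (coming from transporting the wedge product off its natural edge) stays entirely inside the simply connected $U_i$, so no monodromy is picked up. This is true by the choice of evaluation points in Proposition \ref{x_well}, but it is the hinge on which your cancellation turns and deserves a sentence.
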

	
	\begin{proof}
	With the same notations as before we choose a point $z_i\in\text{Im}(\gamma_i)$ on each of 
	the sides of the standard quadrilateral $Q$ and curves $\mu_{kl}$ connecting the points $z_{k}$ and $z_l$ in $U_k\cap U_l$ 
	(cf. figure \ref{coords_quad}). For abbreviation we write 
	$\text{tr}\left(\nabla(\zeta)\right):=\overline{a_1+a_2}d\overline{z}-(a_1+a_2)dz$. Then 
	using Lemma \ref{transport_wedge} to parallel transport the 
	$\wedge$-products to the sides where they can be evaluated as in corollary \ref{wedge_eval} we obtain the following expression:
	\footnote{The right choice of lower indices of $\Lambda^i_{j}$ can be a bit confusing here. Proposition \ref{rest_form} and the following discussion 
	imply that a small flat section 
	$s^i$ for the vertex $p_i$ along the curve $\gamma_i$ running into $p_i$ has the exponent $-\Lambda^i_2$ while a small flat section $s^{i+1}$ for the 
	vertex $p_{i+1}$ along the curve $\gamma_i$ running into $p_i$ has the exponent $\Lambda^{i+1}_1$.}
	\[
		\begin{split}
			\mathcal{X}_E&=-\frac{\left(s^{1}\wedge s^{2}\right)(z_2)\left(s^{3}\wedge s^{4}\right)(z_4)}
			{\left(s^{4}\wedge s^{1}\right)(z_2)\left(s^{2}\wedge s^{3}\right)(z_4)}
			=-\frac{\left(s^{1}\wedge s^{2}\right)(z_2)\left(s^{3}\wedge s^{4}\right)(z_4)}
			{\left(s^{4}\wedge s^{1}\right)(z_1)\left(s^{2}\wedge s^{3}\right)(z_3)}
			e^{-\int_{\mu_2} \text{tr}\left(\nabla(\zeta)\right)-\int_{\mu_4} \text{tr}\left(\nabla(\zeta)\right)}\\
			&=-\frac{e^{-\Lambda^1_2(z_{2})+\Lambda^2_1(z_{2})}(1+r_{12})e^{-\Lambda^3_2(z_{4})+\Lambda^4_1(z_{4})}(1+r_{34})}
			{e^{\Lambda^4_1(z_{1})-\Lambda^1_2(z_{1})}(1+r_{41})e^{\Lambda^2_1(z_{3})-\Lambda^3_2(z_{3})}(1+r_{23})}
			e^{-\int_{\mu_2} \text{tr}\left(\nabla(\zeta)\right)-\int_{\mu_4} \text{tr}\left(\nabla(\zeta)\right)}\\
			&=-(1+r_q)
			\cdot e^{\Lambda^1_2(z_{1})-\Lambda^1_2(z_{2})+\Lambda^2_1(z_{2})-\Lambda^2_1(z_{3})
			+\Lambda^3_2(z_{3})-\Lambda^3_2(z_{4})+\Lambda^4_1(z_{4})-\Lambda^4_1(z_{1})}
			e^{-\int_{\mu_2} \text{tr}\left(\nabla(\zeta)\right)-\int_{\mu_4} \text{tr}\left(\nabla(\zeta)\right)},
		\end{split}
	\]
	where the $r_{kl}$ denote the corresponding remainder terms and $r_q$ is the term emerging from all of the $r_{kl}$, which is thus, as of 
	Corollary \ref{wedge_eval}, exponentially suppressed in $R$. 
	
	The key step is now noticing that $\Lambda^i_j(z_k)-\Lambda^i_j(z_l)=\int_{\mu_{lk}}\lambda_j$ leads to the desired period integrals, which is a bit 
	concealed here as we work with the different $\lambda_j$, emerging from tr$\nabla(\zeta)\neq0$. So at first we obtain the following:
	\[
		\mathcal{X}_E=-(1+r_q)\exp\left(\int_{\mu_{21}}\lambda_2+\int_{\mu_{32}}\lambda_1+\int_{\mu_{43}}\lambda_2+\int_{\mu_{14}}\lambda_1\right)
		\exp\left(-\int_{\mu_{12}} \text{tr}\left(\nabla(\zeta)\right)-\int_{\mu_{34}} \text{tr}\left(\nabla(\zeta)\right)\right).
	\]
	To put these integrals together we note that $\lambda_1$ and $\lambda_2$ differ only by containing expressions of $a_1$ or $a_2$. So for the parts 
	involving $q^{1/2}$ we already obtain a period integral along a simple closed curve $\mu$ encircling the two zeroes of $q$, while we observe the 
	following cancellation for the rest
	\begin{equation}\label{exp_exp}
	\begin{split}
		\mathcal{X}_E&=-(1+r_q)\exp\left(-\frac{R}{\zeta}\oint_\mu q^{1/2}-R\zeta\oint_\mu \overline{q^{1/2}}\right)\\
		&\exp\left(\int_{\mu_{21}}\overline{a_2}d\overline{z}-a_2dz-\int_{\mu_{32}}\overline{a_1}d\overline{z}-a_1dz+\int_{\mu_{43}}\overline{a_2}d\overline{z}-a_2dz-\int_{\mu_{14}}\overline{a_1}d\overline{z}-a_1dz\right)\\
		&\exp\left(\int_{\mu_{12}} \overline{a_1}d\overline{z}+\overline{a_2}d\overline{z}-\int_{\mu_{12}}a_1dz+a_2dz
		+\int_{\mu_{34}} \overline{a_1}d\overline{z}-\overline{a_2}d\overline{z}-\int_{\mu_{34}}a_1dz+a_2dz\right)\\
		&=-(1+r_q)\exp\left(-\frac{R}{\zeta}\oint_\mu q^{1/2}-R\zeta\oint_\mu \overline{q^{1/2}}+\oint_{\mu}a_1dz-\overline{a_1}d\overline{z}\right).
	\end{split}
	\end{equation}
	
	As $q^{1/2}$ is holomorphic on $\mathcal{C}$, the integral does not depend on the specific curve but only on its homotopy class. For the term $a:=\overline{a_1}d\overline{z}-a_1dz$ we note that this corresponds to the entries of the Chern connection $A_0$ of $\overline{\partial}_E$ w. r. t. the limiting metric $h_0$, and this solves the decoupled Hitchin equations \ref{lim_equ}. Thus $A_0$ is a flat connection and this implies $da=0$, so this integral is also defined on the corresponding homotopy class and we obtain the period integrals on $\mathcal{C}$. The construction as outlined in section \ref{sec_hom} and described in more detail in \cite{Gaiotto.2013} guarantees that the signs match in the right way when comparing $\mu$ with $\gamma_E$ coming from a cycle in $H_1(\Sigma'_q;\mathbb{Z})$, leading to the desired result.
	\end{proof}

	As noted before the $\oint_{\gamma_E} q^{1/2}$ should be thought of as living on the spectral cover, where the tautological $1$-form is integrated along 
	$\gamma_E\in H_1(\Sigma'_q;\mathbb{Z})$. As \cite{Gaiotto.2013} already noted the period integrals would vanish if $p^{1/2}$ could be well defined on the 
	whole quadrilateral. But as we had to cut out a disc around the zeroes of $p$, i. e. the branch points of the spectral curve, we obtain some nonzero 
	value. Following the notation of GMN the periods are denoted $Z_\gamma:=\tfrac{1}{\pi}\oint_{\gamma}q^{1/2}$. They give "special coordinates" on the Hitchin Base 
	$\mathcal{B}'$ corresponding to its special Kähler metric. Similarly note that $\oint_{\gamma}a_1dz-\overline{a_1}d\overline{z}$ is imaginary-valued and 
	one can show 
	that the functions $\theta$ with $i\theta:=\oint_{\gamma}\overline{a_1}d\overline{z}-a_1dz$ for 
	$\gamma_E\in H_1(\Sigma'_q;\mathbb{Z})$ give coordinates of the torus fibers of the Hitchin fibration, thus completing the set of coordinates for the 
	semiflat metric in $\mathcal{M}'$.
	
	One might wonder at this point that the coordinates don't depend on $a_2$ anymore which was one of the two entries of the holomorphic structure from which the Chern connection of the Higgs pair was constructed. The vanishing of this term does not lead to a loss of information as the connections for two Higgs pairs only differ by a traceless matrix so all necessary information is encoded in one of the diagonal entries.

	Let us now proceed to show a property of $\mathcal{X}_E$, which will be important for the twistorial construction later on. 
	First we want to expand the definition of $\mathcal{X}_E$ to large $\zeta$, for which we first have to check whether the small flat sections can be 
	treated 
	in the same way as before for large $\zeta$. We do this by considering $\nabla\left(-1/\overline{\zeta}\right)$ for small $\zeta$. It is useful in this 
	context to switch the frame and consider the flatness equation $0=\nabla(\zeta) s$ in an $h$-unitary frame. So in the following the indices $h$ and $h_0$ 
	denote the local expression in the $h$-unitary and $h_0$-unitary frame (where the second one is the frame $F$ we used up until now).
	
		In this new frame $H_h=E_2$ locally and so $\varphi_h^{\ast_h}=\overline{\varphi_h}^t$ and 
		$A_{h_{\overline{z}}}^{\ast_h}=\overline{A_{h_{\overline{z}}}}^t$. 
		By construction the change of basis matrix from the $h$-unitary frame to the $h_0$-unitary frame is $g^{-1}$. So if $s_{h_0}$ is 
		the local expression of a solution of the flatness equation in the $h_0$-unitary frame, $s_h:=gs_{h_0}$ is the local expression in the $h$-unitary 
		frame. Now from
		\[
			g\circ\left(\nabla_{h_0} \right)\circ g^{-1}s_h=g\circ\left(\nabla_{h_0} s_{h_0}\right)=0
		\]
		it follows, that $s_h$ is a solution of the flatness equation in the $h$-unitary frame.
		
		We take the $\zeta$-dependence of $B_h$ into account and calculate $\overline{B_h}^{t}$ using the fact that $A$ is $h$-unitary:
		\[
			\begin{split}
			\overline{B_h}^{t}(\zeta,t)
			&=\left(-\frac{R}{\overline{\zeta}}\overline{\varphi}^t-\overline{A_{h_z}}^t\right)\overline{z}'
			+\left((-R\overline{\zeta})\overline{\varphi^{\ast_h}}^t-\overline{A_{h_{\overline{z}}}}^t\right)z'\\
			&=\left(-\frac{R}{\overline{\zeta}}\varphi^{\ast_h}+A_{h_{\overline{z}}}\right)\overline{z}'
			+\left((-R\overline{\zeta})\varphi+A_{h_{z}}\right)z'\\
			&=-B_u(-\tfrac{1}{\overline{\zeta}},t).
			\end{split}
		\]
		Let $s$ and $b$ denote the small flat sections for two parabolic points $p_1$ and $p_2$ joined by $\gamma$. From the Proposition \ref{indep_dec} 
		we know that they are linearly independent and thus form a fundamental system $X$ for the ODE given by 
		$B_h(\zeta)$:
		\[
			X=\begin{pmatrix}s^1_h&b^1_h\\s^2_h&b^2_h\end{pmatrix}.
		\]
		Combining the result above with a standard argument we obtain
		\[
		\begin{split}
			0&=\frac{d(XX^{-1})}{dt}=X'X^{-1}-X\left(X^{-1}\right)'=B_h(\zeta)XX^{-1}-X\left(X^{-1}\right)'\\
			&\Rightarrow \left(X^{-1}\right)'=-X^{-1}B_h(\zeta)\\
			&\Rightarrow \left(\overline{X^{-1}}^t\right)'=-\overline{B_h(\zeta)}^t\overline{X^{-1}}^t\\
			&\Rightarrow \left(\overline{X^{-1}}^t\right)'=B_h(-\frac{1}{\overline{\zeta}})\overline{X^{-1}}^t.
		\end{split}
		\]
		So we see that a fundamental system for $-\frac{1}{\overline{\zeta}}$ is given by
		\[
			\overline{X^{-1}}^t=\frac{1}{s^1_hb^2_h-s^2_hb^1_h}\begin{pmatrix}\overline{b^2_h}&-\overline{s^2_h}\\ -\overline{b^1_h}&\overline{s^1_h}\end{pmatrix}.
		\]
		Thus we see that the sections given by these rows of this matrix are flat sections for $-\tfrac{1}{\overline{\zeta}}$. We denote them by 
		$\tilde{s}:=\frac{1}{s^1_hb^2_h-s^2_hb^1_h}(-\overline{s^2_h},\overline{s^1_h})$ and 
		$\tilde{b}:=\frac{1}{s^1_hb^2_h-s^2_hb^1_h}(\overline{b^2_h},-\overline{b^1_h})$. We would like these sections to be the corresponding small flat 
		sections, so we have to check how they behave as $t\to\infty$. But to be able to compare them to the sections considered before we first 
		have to transform back to the $h_0$-unitary frame. Using the fact that the transform is given by $g^{-1}$ and using Lemma \ref{g_asymptotics} a short 
		calculation shows that $\tilde{s}$ decays exponentially for $\to\infty$, while $\tilde{b}$ grows. Thus we have the same structure as before and can 
		expand our choice of small flat sections to $-\tfrac{1}{\overline{\zeta}}$ for small $\zeta$. As before there is a complex $1$-dimensional subspace of 
		small flat sections and we may chose a unique one by demanding the a certain limit for $t\to\infty$ after extracting the leading term. Again this 
		choice is of no real importance as complex factors cancel in the final expression for $\mathcal{X}_E$.
		
		Now we have to remember that $s$ and $b$ are the small flat section for the 
		vertices $p_1$ and $p_2$ for some small $\zeta$ and a $\vartheta$-curve running from the $p_2$ into $p_1$. 
		We want $\tilde{s}$ to be the small flat section for some vertex. As explained above the behaviour is just right but we have to take care of the 
		correct domain in the $\zeta$-plane. By construction the 
		decoration at any parabolic point is only defined if $\zeta\in\mathbb{H}_\vartheta$. Now $-\frac{1}{\overline{\zeta}}$ is not in $\mathbb{H}_\vartheta$ 
		as it has exactly the opposite phase. But it is thus in the half space $\mathbb{H}_{\vartheta+\pi}$. Now it remains to be checked
		whether $\tilde{s}$ is small along a $ca$-trajectory with angle $\vartheta+\pi$. At first $ca$-trajectories are only defined up to 
		orientation, i. e. 
		$\gamma$ is also a $ca$-trajectory for $\vartheta+\pi$ as $e^{i(\vartheta+\pi)}=-e^{i\vartheta}$. But now the orientation is the opposite one if we 
		demand $q^{1/2}(\gamma')=-e^{i\vartheta} $ as before. Thus $\tilde{s}$ is now the small flat section for $p_2$ and $\tilde{b}$ for $p_1$, which corresponds to a change 
		of the decoration at both vertices, which is called \textit{pop} in \cite{Gaiotto.2013}. In the full picture for a 
		standard quadrilateral this amounts to switching all of the decorations, which is called an $\textit{omnipop}$. For our calculations we note that the 
		omnipop leads to switching all sections in the wedge products of $\mathcal{X}_E$, leading to four changes of the sign, which thus overall 
		cancel. We note that GMN use the equality under the omnipop as a quite powerful tool to determine the BPS spectrum, but here it will only be used now 
		to obtain the so called reality condition.
		
	\begin{theorem}\label{reality}
		For small enough $\zeta$ the $\mathcal{X}$ coordinates obey the reality condition
		\[
			\mathcal{X}_\gamma^{\vartheta}(\zeta)=\overline{\mathcal{X}_\gamma^{\vartheta+\pi}(-1/\overline{\zeta})}.
		\]
	\end{theorem}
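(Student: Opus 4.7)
The plan is to compute $\mathcal{X}_\gamma^{\vartheta+\pi}(-1/\overline{\zeta})$ directly using the explicit correspondence between small flat sections at the two parameter values that was set up in the paragraphs preceding the theorem. Throughout I will work in the $h$-unitary frame, where the harmonic metric is represented by $E_2$ and wedge products of flat sections are unchanged by the frame change from the standard frame $F$ (since $\det g = 1$ by Lemma \ref{g_gamma}, so the change of basis lies in $SL(2,\mathbb{C})$ and preserves wedges).

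The key input is the claim that at each vertex $p_i$ of the standard quadrilateral, the small flat section $\tilde{s}^i$ of $\nabla(-1/\overline{\zeta})$ for angle $\vartheta+\pi$ is proportional to $J(s^i)$, where $J:(a,b)\mapsto(-\overline{b},\overline{a})$ is the antilinear map and $s^i$ is the small flat section of $\nabla(\zeta)$ at $p_i$ for $\vartheta$. That $J(s^i)$ solves the flatness equation of $\nabla(-1/\overline{\zeta})$ is precisely the fundamental matrix computation $(X^{-1})^{\ast}$ already displayed. That $J(s^i)$ is the \emph{small} section at $p_i$, and not the large one, follows from the identity $J\circ M = M^{-\ast}\circ J$, valid for any $M\in SL(2,\mathbb{C})$ by a direct $2\times 2$ check: applied to the monodromy matrix at $p_i$, this turns the eigenvector of $M$ with the $\zeta$-small eigenvalue $\mu^{+}$ into an eigenvector of the new monodromy $M^{-\ast}$ with eigenvalue $\overline{\mu^{+}}$, which is the small eigenvalue for the new angle (the roles of the two eigenspaces swap because $\nu(-1/\overline{\zeta}) = \overline{\nu(\zeta)}$ forces $\mathrm{Im}\,\nu$ to change sign). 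This identification is intrinsic to each pole, hence automatically consistent across all four vertices of the quadrilateral regardless of which edge is used to construct the fundamental matrix.

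The theorem then follows from the elementary determinant identity $J(v)\wedge J(w) = \overline{v\wedge w}$ for $v,w\in\mathbb{C}^{2}$. Writing $\tilde{s}^i = c_i J(s^i)$ for nonzero scalars $c_i$ and plugging into the cross-ratio defining $\mathcal{X}_E^{(\vartheta+\pi,-1/\overline{\zeta})}$, each scalar $c_i$ appears in exactly two wedge products in the numerator and two in the denominator, so all scalars cancel; what remains is the complex conjugate of the cross-ratio defining $\mathcal{X}_E^{(\vartheta,\zeta)}$. The extension from an edge cycle to arbitrary $\gamma \in \Gamma_q$ is immediate from $\mathcal{X}_{\gamma+\gamma'} = \mathcal{X}_\gamma \mathcal{X}_{\gamma'}$ and multiplicativity of complex conjugation. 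I expect the main obstacle to be the careful bookkeeping of which $\tilde{s}^i$ pairs with which $J(s^j)$: this is where the monodromy identity $J\circ M = M^{-\ast}\circ J$ plays the decisive role, by ensuring that the correspondence is vertex-by-vertex rather than through some nontrivial permutation induced by the reversal of orientation of the $ca$-trajectories.
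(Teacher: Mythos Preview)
Your proposal is correct and computationally identical to the paper's proof: both reduce to evaluating the cross-ratio with $J(s^i)$ in place of $s^i$ and invoking $J(v)\wedge J(w)=\overline{v\wedge w}$ together with scalar cancellation. The paper also carries along the explicit normalising determinants $\overline{s^i\wedge s^j}$ and then transports two of the wedge products via Liouville's formula so that the evaluation points match; you bypass this by appealing to evaluation-point independence of the cross-ratio, which is legitimate.

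The one substantive difference is the justification for why $J(s^i)$ decorates the \emph{same} vertex $p_i$. You argue via the monodromy identity $J\circ M=M^{-\ast}\circ J$ and the relation $\nu(-1/\overline{\zeta})=\overline{\nu(\zeta)}$. The paper instead uses the decay asymptotics worked out in the paragraphs immediately preceding the theorem, and in fact asserts that there \emph{is} a swap of vertices (a pop at each edge, hence an omnipop on the quadrilateral), which it then argues is harmless because the four resulting sign changes in the cross-ratio cancel. Your vertex-by-vertex identification avoids this detour and is arguably cleaner; note however that it relies on the monodromy-eigenvalue characterisation of smallness, whereas the paper's Definition~\ref{def_small_flat} is phrased purely in terms of decay along a $ca$-trajectory, so strictly speaking you are invoking a correspondence that the paper set up only heuristically (around the Frobenius discussion it explicitly declines to use). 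The paper's own decay computation for $\tilde{s}$ in the preceding paragraphs is what connects directly to that definition.
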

	
	\begin{proof}
		
		With the considerations from above and the notation as for Theorem \ref{x_int_form} we obtain the following equality:
		\[
		\begin{split}
			&\mathcal{X}_E^{T(\vartheta+\pi)}(-1/\overline{\zeta})\\
			&=\frac{\begin{pmatrix}-\overline{s^1_2}\\ \overline{s^1_1}\end{pmatrix}\wedge \begin{pmatrix}-\overline{s^2_2}\\ \overline{s^2_1}\end{pmatrix}}{\overline{s^1\wedge s^2}^2}(z_2)
			\frac{\begin{pmatrix}-\overline{s^3_2}\\ \overline{s^3_1}\end{pmatrix}\wedge \begin{pmatrix}-\overline{s^4_2}\\ \overline{s^4_1}\end{pmatrix}}{\overline{s^3\wedge s^4}^2}(z_4)
			\left(\frac{\begin{pmatrix}-\overline{s^4_2}\\ \overline{s^4_1}\end{pmatrix}\wedge \begin{pmatrix}-\overline{s^1_2}\\ \overline{s^1_1}\end{pmatrix}}{\overline{s^1\wedge s^2}\overline{s^3\wedge s^4}}(z_2)
			\frac{\begin{pmatrix}-\overline{s^2_2}\\ \overline{s^2_1}\end{pmatrix}\wedge \begin{pmatrix}-\overline{s^3_2}\\ \overline{s^3_1}\end{pmatrix}}{\overline{s^1\wedge s^2}\overline{s^3\wedge s^4}}(z_4)\right)^{-1}\\
			&=\frac{\overline{s^1\wedge s^2}(z_4)\overline{s^3\wedge s^4}(z_2)}{\overline{s^4\wedge s^1}(z_2)\overline{s^2\wedge s^3}(z_4)}.
		\end{split}
		\]
		Now we can again use Liouville's formula to transport $s^1\wedge s^2$ from $z_4$ to $z_2$ and $s^3\wedge s^4$ from $z_2$ to $z_4$. The emerging 
		exponential factors then cancel an we obtain $\mathcal{X}_E^{T(\vartheta+\pi)}(-1/\overline{\zeta})=\overline{\mathcal{X}^{T(\vartheta)}(\zeta)}$.
		By construction
		\[
			\gamma_E^{\vartheta}=-\gamma_E^{\vartheta+\pi},
		\]
		and thus combined we obtain
		\[
			\mathcal{X}_\gamma^{(\vartheta)}(\zeta)=\overline{\mathcal{X}_\gamma^{\vartheta+\pi}(-1/\overline{\zeta})}.
		\]
	\end{proof}
	
	\begin{remark}\label{deg_case}
	Before concluding this section we may shortly consider the case of degenerate edges. As stated in remark \ref{no_deg_tri} we do not work this out in 
	detail here as we can mostly adapt the way this is done in \cite{Gaiotto.2013}. The basic idea is that in the case of a degenerate edge $E$ corresponding 
	to a degenerate triangle the corresponding quadrilateral also degenerates and one has to consider a covering surface on which the necessary cycles can be 
	defined. After appropriately matching the definitions one obtains cycles consisting of two loops running in opposite directions around the two lifts of a 
	vertex on the spectral cover. The 
	period integrals are then given by the residues at the vertex which are fixed for the moduli space.
	
	But GMN also show that in this case $\mathcal{X}_E=\mu^2$ where $\mu$ is the clockwise monodromy around the degenerate vertex, which they have calculated 
	from their Frobenius analysis (cf. the discussion following equation \eqref{GMN_way}). This is the only part in which we differ, as we have not obtained 
	the small flat section in this way. But our way works just as good, as we may also evaluate our small flat sections along a circle around the degenerate 
	vertex. Indeed, there is always some $\vartheta$ such that the $\vartheta$-curves are circles around the poles of $q$. We can thus simply use our 
	analysis just like before and obtain the monodromy data, which matches the one from GMN. One then sees directly by comparing with the residue data that 
	$\mathcal{X}$ again becomes dominated by the period integral without any further calculations.
	\end{remark}
	
	\subsection{Concerning the derivatives of the coordinates}\label{der_coord}
	
	In the last section of this part we want to check the logarithmic derivative of the $\wedge$-product as the hyperkähler metric will be build out of this. 
	After considering the possibility of differentiating at all the important result here will be, that the logarithmic derivative has only simple poles for 
	$\zeta\to0$ or $\zeta\to\infty$.
	To show this we first take a look at the derivative 
	$\partial_\epsilon$ of $x\wedge y$, where we differentiate with respect to some coordinate of the moduli space which is a parameter $\epsilon$ of the 
	ODE. Generally, 
	if the sections are differentiable we have the usual product rule
	\[
		\partial_\epsilon (x\wedge y)=(\partial_\epsilon x)\wedge y+x\wedge(\partial_\epsilon y).
	\]
	Note that we do not need to consider the variation of the frame as this can be taken to be locally constant in $\epsilon$ and taking if necessary a 
	global change of basis which cancels in the definition of $\mathcal{X}$. Thus we need to differentiate the solutions of the integral equation, but before we can do carry out the explicit calculations there is one rather technical point we have to address.
	
	\begin{remark}\label{diff_quotient}
		In this chapter we will assume that the coefficients of the local expressions of our objects, like the Higgs field, depend differentiably on differentiable coordinates of the moduli space. As long as we only considered the functions $\mathcal{X}$ at some point on the moduli space this was not important, but as we begin to describe the derivatives we are moving in the moduli space and one has to lift families of elements of the moduli spaces (i. e. the equivalence classes) to corresponding families of representatives in such a way, that the differentiable structure is preserved. The possibility of such a differentiable choice of representatives is basically built directly into the construction of the moduli space as a Hyperkähler quotient, which is why we won't go into the technical details here. Instead we refer to the thorough explanation of the process of building the quotient in \cite{Mayrand.2016}.
		
		But this choice of representatives only addresses the part of the story where the objects as global functions dependd differentiable on the coordinates. As we are going to work with the local expressions, we also have to consider how the differentiability survives, the process of evaluation in a local frame. As long as the frame is fixed for a curve of Higgs fields $\phi_t$ one obtains the differentiable dependence of the matrix coefficients as the evaluation at a point in a local frame defines a bounded linear operator. But in our construction in section \ref{frame_sec} we adapted the frame to the Higgs field by taking a frame consisting of eigenvectors of $\phi$. The $\mathcal{X}$-coordinates themselves are independent of any frame, but we have to make sure that our construction of the small flat sections from the corresponding integral equations works. Therefore it needs to be possible to chose the eigenvectors of $\phi_t$ in such a way that they vary differentiably with the entries of the local expressions of $\phi_t$ in the fixed frame. But this follows from a short calculation in linear algebra, so everything works out and we have the desired structure of a diagonal leading term plus some small error matrix. However, at this point we have only diagonalized the Higgs field $\phi_t$ and not scaled the frame to a standard form, as was also done in section \ref{frame_sec}. This should also be possible as all the transformations are differentiable, but we actually don't have to adapt the frames in this way. Indeed, by not adapting the frame to the limiting metric and holomorphic structure, we only obtain a slightly different expression of the leading term in the flatness equation and some additional finite multiplicative factors in the entries of the error matrix. As they are finite, they don't change the analytical properties of our integral equation at all. Therefore we can safely ignore these changes and continue to work with the form of our equation we used up until now.
		\end{remark}

	\begin{lemma}\label{derivative_bounded}
		The small flat sections are differentiable with respect to any differentiable coordinate $\epsilon$ of $\mathcal{M}_R$ and the derivative  of the remainder-term is bounded by some uniform constant for small enough $\zeta$.\footnote{For this statement to be precise one has to consider $R$-independent coordinates. Such coordinates can be obtained by considering $\mathcal{M}_R$ as the moduli space of Higgs bundles and such a choice will always be implicit when considering this property.}
	\end{lemma}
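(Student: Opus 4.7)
The approach is to view the integral equation \eqref{error_equation} as an instance of the setting of section \ref{init_prob} and apply Theorem \ref{different}. The initial data $a(t)=(0,1)^T$ is constant, hence trivially satisfies the hypotheses of Theorem \ref{different} together with its first two $\epsilon$-derivatives (which vanish). Thus the whole argument reduces to verifying that the kernel
\[
K(t,s,\epsilon) = \begin{pmatrix} e^{\Lambda(t,\epsilon)-\Lambda(s,\epsilon)} & 0 \\ 0 & 1 \end{pmatrix}\begin{pmatrix} e_1(s,\epsilon) & e_2(s,\epsilon) \\ e_3(s,\epsilon) & -e_1(s,\epsilon) \end{pmatrix}
\]
lies in $\mathcal{K}^{(2,1)}_{T,U}$ for $\epsilon$ in a sufficiently small open set $U$. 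Throughout we use that a differentiable coordinate on $\mathcal{M}_R$ can be varied in a $C^2$-family of representatives and standard frames (cf.\ Remark \ref{diff_quotient}), so that the entries $\lambda_j(\cdot,\epsilon)$ and $e_i(\cdot,\epsilon)$ inherit $C^2$-dependence on $\epsilon$.

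\textbf{Verifying the product structure.} The factor $A(t,s,\epsilon) := \mathrm{diag}(e^{\Lambda(t,\epsilon)-\Lambda(s,\epsilon)}, 1)$ is bounded in operator norm by $1$, since $\Re(\Lambda(t) - \Lambda(s)) \leq 0$ for $t\leq s$ as in Lemma \ref{log_decay}. Moreover, the explicit form \eqref{leading} shows that $\Re(\lambda(\tau,\epsilon))$ has a uniform positive lower bound $c > 0$ along the whole trajectory for $\zeta\in\mathbb{H}_\vartheta$ with $|\zeta|<|\zeta_0|$, giving the decay estimate $\Re(\Lambda(t)-\Lambda(s))\leq -c(s-t)$. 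The $\epsilon$-derivatives of $A$ are of the form $P_k(\partial_\epsilon \Lambda(t)-\partial_\epsilon\Lambda(s),\partial_\epsilon^2\Lambda(t)-\partial_\epsilon^2\Lambda(s))\cdot A$ for some polynomial $P_k$; the differences $\partial_\epsilon^k(\Lambda(t)-\Lambda(s))$ grow at most linearly in $|s-t|$ since $\partial_\epsilon^k\lambda$ is uniformly bounded along a $\vartheta$-trajectory (here one uses that $a_j z'$ stays regular near the weakly parabolic points by Lemma \ref{log_spiral}). The exponential decay of $A$ therefore dominates any polynomial prefactor, so $\partial_\epsilon A$ and $\partial_\epsilon^2 A$ are uniformly bounded in $(t,s)$ and in $|\zeta|<|\zeta_0|$. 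The integrability conditions on $B$ and $\partial_\epsilon B$ follow directly from Lemma \ref{e_asymptotics}.

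\textbf{Concluding the bound.} Applying Theorem \ref{different}, the solution $x$ is $\epsilon$-differentiable with $\partial_\epsilon x \in B_{T,U}$, and $\partial_\epsilon x$ satisfies the Volterra equation
\[
\frac{\partial x}{\partial\epsilon}(t) = -\int_t^\infty \frac{\partial K}{\partial\epsilon}(t,s,\epsilon)\, x(s)\, ds - \int_t^\infty K(t,s,\epsilon)\, \frac{\partial x}{\partial\epsilon}(s)\, ds.
\]
Since $x^r = x - (0,1)^T$ we have $\partial_\epsilon x^r = \partial_\epsilon x$, so it suffices to bound the latter. Reading the first term above as a new initial value function $\tilde a$ and applying Corollary \ref{est} yields
\[
\left\|\partial_\epsilon x^r\right\|_\infty \leq \frac{\|\tilde a\|_\infty}{1-\lambda_K},
\]
where $\lambda_K<1$ is the kernel bound from Lemma \ref{small_kernel}. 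The integrand defining $\tilde a$ is estimated by $|x|\cdot\|\partial_\epsilon K\|$; combining the uniform bound on $|x|$ from Proposition \ref{rest_form} with the product-type estimates for $\partial_\epsilon K$ established above shows that $\|\tilde a\|_\infty$ is bounded by a constant independent of $|\zeta|<|\zeta_0|$ and of $R$ for $R$ large enough, giving the claimed uniform bound.

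\textbf{Main obstacle.} The crux of the argument lies in controlling the $\epsilon$-derivatives of $\Lambda$, which a priori grow linearly as $s\to\infty$ since $\Lambda$ is an antiderivative. Their appearance in $\partial_\epsilon K$ is always paired with the exponential decay of $A$, so what has to be checked is that this compensation is effective, and that it remains effective uniformly in $|\zeta|<|\zeta_0|$. This in turn rests on the uniform lower bound for $\Re(\lambda)$ along the trajectory, for which the restriction $\zeta\in\mathbb{H}_\vartheta$ with $|\zeta|$ bounded is essential, and on the regularity of the coefficients $a_j z'$ at the weakly parabolic points where the trajectory ends.
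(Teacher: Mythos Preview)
Your proof is correct and follows essentially the same route as the paper: apply Theorem \ref{different} after verifying the kernel is of product type of order $(2,1)$, then bound $\partial_\epsilon x$ via Corollary \ref{est} applied to the derived Volterra equation. Your treatment of the boundedness of $\partial_\epsilon A$---noting that the linear growth of $\partial_\epsilon(\Lambda(t)-\Lambda(s))$ is dominated by the exponential decay of $e^{\Lambda(t)-\Lambda(s)}$---is actually more explicit than the paper's own argument, which only checks that $\partial_\epsilon\lambda$ is bounded and leaves this compensation implicit; conversely, the paper spells out the $\epsilon$-dependence of the trajectory itself (deriving $\partial_\epsilon z' = (\partial_\epsilon f^{1/2}/f^{1/2})\,z'$ from $f^{1/2}z'\equiv\text{const}$ and using it to control $a_i\,\partial_\epsilon z'$), a point you subsume under the reference to Remark \ref{diff_quotient}.
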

	
	\begin{proof}
		The leading term $e^{-\Lambda_2}$ is differentiable by construction. So what we have to consider is the remainder term. We want to use 
		Theorem \ref{different}, so we have to check that the kernel is of product type of second order w. r. t. a coordinate $\epsilon$ of the 
		moduli space $\mathcal{M}_R$. The kernel is of product type as was shown above, i. e. it is of the form $AB$ for
		\[
			A:=\begin{pmatrix}e^{\Lambda(t)-\Lambda(\tau)}&0\\0&1\end{pmatrix},\quad
			B:=\begin{pmatrix}e_1(\tau)&e_2(\tau)\\e_3(\tau)&-e_1(\tau)\end{pmatrix}.
		\]
		The derivative of $A$ is 
		\[
			\partial_\epsilon\begin{pmatrix}e^{\Lambda(t)-\Lambda(\tau)}&0\\0&1\end{pmatrix}
			=\begin{pmatrix}\partial_\epsilon(\Lambda(t)-\Lambda(\tau))e^{\Lambda(t)-\Lambda(\tau)}&0\\0&0\end{pmatrix}.
		\]
		
		So we have to consider $\partial_\epsilon\Lambda(t)$ where $\Lambda$ is a primitive function for $\lambda_1+\lambda_2$, i. e. for
		\[
			\lambda=\left(2\tfrac{R}{\zeta}e^{i\vartheta}-2i(\text{Im}(a_2z')+\text{Im}(a_1z'))+2R\zeta e^{-i\vartheta}\right).
		\]
		
		Now $a_1$ and $a_2$ are obtained directly from the connection $A$ of the Higgs pair and of the form $\tfrac{c_i(\epsilon)}{z}+f(\epsilon)$ where $c_i$ and $f_i$ 
		depend differentiably on 
		$\epsilon$. Thus the pole structure does not change and $(\partial_\epsilon a)z'$ is finite everywhere. The other part of the derivative is 
		$a_i\partial_\epsilon(z')$, so we need to consider $\partial_\epsilon(z')$. For this we need recall that $z(t)$ denoting the $\vartheta$-trajectory for 
		$\det\Phi=-q=fdz^2$ depends on $\epsilon$ in the way $q$ depends on $\epsilon$. We may now consider any initial value problem for the $\vartheta$-trajectories 
		to obtain the differentiability of $z'(t,\epsilon)$ w. r. t. $\epsilon$. For example when considering the derivative in $\epsilon_0$ we can consider 
		the initial value problem with $z'(0,\epsilon)=z'(0,\epsilon_0)$ for all $\epsilon$ near $\epsilon_0$ and thus obtain a well defined initial value 
		problem whose solution thus depends differentiably on $\epsilon$. For calculations we note that by definition $f^{1/2}z'\equiv$const. Thus
		\[
			\partial_\epsilon(-f^{1/2}z')=0,\quad\Rightarrow\quad \partial_\epsilon{z'}=\frac{\partial_\epsilon f^{1/2}}{f^{1/2}}z'.
		\]
		By a local description in a coordinate centered around a weakly parabolic point it is clear 
		that the fraction is finite and thus $\partial_\epsilon z'$ behaves like $z'$. 
		Consequently $a_i\partial_\epsilon(z')$ is finite and so is $\partial_\epsilon(\Im(a_iz'))$.
		
		For the derivative of $B$ we look at the derivative of the $e_i$. We consider 
		\[
			\partial_\epsilon e_2=\partial_\epsilon\left((a_1-a_2)z'+R\zeta2e^{-i\vartheta}\right)\delta\beta+\left((a_1-a_2)z'+R\zeta2e^{-i\vartheta}\right)
			\partial_\epsilon(\delta\beta)+\partial_\epsilon(h_1z')
		\]
		as similar arguments hold for $e_1$ and $e_3$. The considerations from before lead to the finiteness of the derivative of the part in the brackets for 
		the first term, which thus remains integrable. For the second term the bracket is still finite and we use the fact that $\partial_\epsilon(\delta\beta)$ 
		is integrable in the same way as is $\delta\beta$, which follows from Lemma \ref{g_asymptotics} in the same way as was shown for the kernel of the original 
		(non derived) equation in section \ref{sec_small_sec}. Finally the integrability of $\partial_\epsilon(h_1z')$ follows just as for $\partial_\epsilon(\Im(a_iz'))$ and using Lemma \ref{g_asymptotics}. Thus the kernel is of product type of order $(1,1)$. The same arguments now holds again for the 
		second derivative of $A$ and thus the kernel is of product type of order $(2,1)$. We conclude from Theorem \ref{different} that the derivative of $x$ 
		exists and is the unique solution to the following equation:
		\begin{equation}\label{derived_volterra}
			\begin{split}
				\partial_\epsilon x(t)&=-\int_{t}^{\infty}
					\partial_\epsilon\left(\begin{pmatrix}e^{\Lambda(t)-\Lambda(\tau)}&0\\0&1\end{pmatrix}
					\begin{pmatrix}e_1(\tau)&e_2(\tau)\\e_3(\tau)&-e_1(\tau)\end{pmatrix}\right)x(\tau)d\tau\\
					&-\int_{t}^{\infty}\begin{pmatrix}e^{\Lambda(t)-\Lambda(\tau)}&0\\0&1\end{pmatrix}
					\begin{pmatrix}e_1(\tau)&e_2(\tau)\\e_3(\tau)&-e_1(\tau)\end{pmatrix}\partial_\epsilon x(\tau)d\tau
			\end{split}
		\end{equation}
		Now as $x$ is bounded by some (uniform) constant and by the properties we have shown above it follows that the first integral
		is bounded by some constant $C$. The kernel of this integral equation remains the same as before, so we obtain just like before from Corollary 
		\ref{est} the estimate
		\[
			|\partial_\epsilon x(t)|\leq \frac{C}{1-Ke^{-\delta R}}.
		\]
		 Again this is bounded by a uniform constant for large enough $R$ which finishes the proof. 
	\end{proof}
	
	We have thus shown that the derivative of the remainder-terms exist and are bounded. This is all we need to obtain the behaviour of the $\wedge$-product that we need.
	
	\begin{theorem}
		Let $s^1$ and $s^2$ be the small flat section for two weakly parabolic points that are connected by a generic $\vartheta$-trajectory. Then the 
		logarithmic derivative of $s^1\wedge s^2$ has a simple pole at $\zeta=0$. Furthermore this pole is precisely the one coming from the 
		leading term.
	\end{theorem}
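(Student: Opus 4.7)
The starting point is the explicit decomposition from Corollary \ref{wedge_eval}:
\[
s^1\wedge s^2 = e^{\Lambda_1^2-\Lambda_2^1}(1+r),
\]
where $r$ is exponentially suppressed in $R$ uniformly in $\zeta$ for $\zeta$ in a neighborhood of $0$ inside $\mathbb{H}_\vartheta$. Taking $\partial_\epsilon\log$ (with $\epsilon$ a differentiable coordinate on $\mathcal{M}_R$ as in Lemma \ref{derivative_bounded}) splits the computation into a leading part and a remainder:
\[
\partial_\epsilon \log(s^1\wedge s^2) = \partial_\epsilon(\Lambda_1^2-\Lambda_2^1) + \frac{\partial_\epsilon r}{1+r}.
\]
My plan is to show that the first term produces a simple pole in $\zeta$ with coefficient matching the derivative of the relevant period integral, while the second term extends regularly across $\zeta=0$.

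For the leading term, I would use the explicit form \eqref{leading} of $\lambda_1,\lambda_2$. The $\zeta$-dependent part of both $\lambda_j$ is the identical expression $\tfrac{R}{\zeta}e^{i\vartheta} + R\zeta e^{-i\vartheta}$, and the remainder $-2i\operatorname{Im}(a_j z')$ is $\zeta$-independent. Integrating along the $\vartheta$-trajectory (using $q^{1/2}(\gamma')=-e^{i\vartheta}$ to convert $e^{i\vartheta}dt$ into $-q^{1/2}$ and similarly for its conjugate) and then applying $\partial_\epsilon$, the $\zeta$-dependent parts of $\Lambda_j^i$ become
\[
-\tfrac{R}{\zeta}\,\partial_\epsilon\!\!\int q^{1/2} \;-\; R\zeta\,\partial_\epsilon\!\!\int \overline{q^{1/2}} \;+\; \text{(terms regular in }\zeta\text{)}.
\]
This is the unique source of a $\zeta^{-1}$ singularity, and crucially it is a simple pole (no higher order), because $\lambda_j$ is polynomial of degree one in $\zeta$ and $\zeta^{-1}$ separately. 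One has to be mildly careful that the $\epsilon$-dependence of the constants of integration used to pin down the specific primitives $\Lambda_j^i$ does not introduce extra $\zeta$-singular contributions, but since those constants are chosen so that $xe^{-\Lambda_2}$ has the normalization $(0,1)^t$ at infinity, their $\epsilon$-dependence is absorbed in $r$ and not in the exponent.

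For the remainder $\partial_\epsilon\log(1+r) = (\partial_\epsilon r)/(1+r)$, I would argue as follows. Proposition \ref{rest_form} gives $\|r\|_\infty\le\tilde K e^{-\delta R}$ uniformly in $\zeta$ for $|\zeta|<|\zeta_0|$, so $1+r$ is bounded away from $0$ for $R$ sufficiently large. For $\partial_\epsilon r$, Lemma \ref{derivative_bounded} provides a uniform bound on $\partial_\epsilon x$ via the derived Volterra equation \eqref{derived_volterra}, and since this bound is obtained from the same estimates that hold uniformly for $\zeta$ in a punctured neighborhood of $0$ in $\mathbb{H}_\vartheta$, the quantity $\partial_\epsilon r$ admits a continuous extension across $\zeta=0$, or at least remains bounded in a deleted neighborhood of $0$. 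Combining these, $(\partial_\epsilon r)/(1+r)$ is a bounded, hence regular, function of $\zeta$ near $\zeta=0$.

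The main obstacle I anticipate lies in the remainder analysis: verifying that the $\zeta$-dependence of the kernel and of $x$ in \eqref{derived_volterra} does not produce any hidden $\zeta^{-1}$ blow-up in $\partial_\epsilon r$. This requires revisiting the product-type structure of the kernel and checking that the factors $e^{\Lambda(t)-\Lambda(\tau)}$ (whose $\zeta$-derivative contains $\zeta^{-2}$) are paired against integrands that are exponentially suppressed in $R$ along the trajectory; the standard decay estimate $|e^{\Lambda(t)-\Lambda(\tau)}|\le 1$ on $\mathbb{H}_\vartheta$ survives on $\mathbb{H}_\vartheta\setminus\{0\}$, but one must confirm the uniformity as $\zeta\to 0$. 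Granting this, the conclusion is that $\partial_\epsilon\log(s^1\wedge s^2)$ has at most a simple pole at $\zeta=0$, with residue $-R\,\partial_\epsilon\!\!\int q^{1/2}$ along the curve, which is precisely the contribution of the leading exponential factor and vanishes for the remainder.
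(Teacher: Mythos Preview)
Your proposal is correct and follows essentially the same route as the paper: split $\partial_\epsilon\log(s^1\wedge s^2)$ into $\partial_\epsilon(\Lambda_1^2-\Lambda_2^1)$ plus $\partial_\epsilon r/(1+r)$, observe that the simple pole comes exclusively from the $\zeta^{-1}$ term in $\lambda_j$, and bound the remainder using Lemma~\ref{derivative_bounded}. The concern you flag at the end---that the $\zeta$-dependence of the kernel might produce hidden blow-up in $\partial_\epsilon r$---is precisely what Lemma~\ref{derivative_bounded} handles, since its bound on $\partial_\epsilon x$ is stated to be uniform for small $\zeta$; the paper simply invokes this lemma without re-deriving the uniformity.
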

	
	\begin{proof}
		For the derivative of the full product we consider again the different primitives of the $\lambda_j$ and obtain
		\[
			\partial_\epsilon(s^1\wedge s^2)=\partial_\epsilon \left(e^{\Lambda_1^2-\Lambda_2^1}\right)(x\wedge y)
			+e^{\Lambda_1^2-\Lambda_2^1}\partial_\epsilon (x\wedge y)
			=\left(\partial_\epsilon (\Lambda_1^2-\Lambda_2^1)(x\wedge y)+\partial_\epsilon (x\wedge y)\right)e^{\Lambda_1^2-\Lambda_2^1}.
		\]
		Thus the logarithmic derivative has the form
		\[
			\frac{\partial_\epsilon(s^1\wedge s^2)}{s^1\wedge s^2}
			=\frac{\partial_\epsilon (\Lambda_1^2-\Lambda_2^1)(x\wedge y)+\partial_\epsilon (x\wedge y)}{x\wedge y}.
		\]
		Since $\partial_\epsilon (x\wedge y)$ is bounded and $x\wedge y=1+r$ for some bounded and exponentially suppressed $r$ it follows that there is only 
		the simple pole at $\zeta=0$ coming from the $\Lambda$ part.
	\end{proof}
	
	This boundedness already suffices to prove the necessary condition for our $\mathcal{X}$-coordinate. Nevertheless before showing that we'll proof that 
	$\partial_\epsilon (x\wedge y)$ is even exponentially suppressed as we'll need that later for the asymptotics of the metric and the corresponding 
	calculations are similar to the once here.
	
	\begin{theorem}\label{derived_wedge}
		Let $s^1$ and $s^2$ be the small flat section for two weakly parabolic points that are connected by a generic $\vartheta$-trajectory with remainder-terms $x$ 
		and $y$. Then the product $\partial_\epsilon(x\wedge y)$ is exponentially suppressed in $R$.
	\end{theorem}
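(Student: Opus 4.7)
My plan is to upgrade Lemma \ref{derivative_bounded} from the uniform boundedness of $\partial_\epsilon x$ to the stronger estimate $\|\partial_\epsilon x\|_\infty = O(e^{-\delta R})$, from which the wedge-product claim follows by a one-line application of the product rule.

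The key step is to apply the contraction-mapping estimate of Corollary \ref{est} to the derived Volterra equation \eqref{derived_volterra}. Its integral kernel coincides with that of \eqref{error_equation}, so by Lemma \ref{small_kernel} it has integral norm $\leq \widetilde{C} e^{-\delta R}$ and is in particular strictly contractive for large $R$. It therefore suffices to show that the forcing term $\int_t^\infty (\partial_\epsilon K)(t,\tau)\, x(\tau)\, d\tau$ is uniformly bounded by some $C e^{-\delta R}$. Writing $\partial_\epsilon K = (\partial_\epsilon A) B + A (\partial_\epsilon B)$, I observe that $\|A\|\leq 1$ (since $\mathrm{Re}(\Lambda(t)-\Lambda(\tau))\leq 0$ along the trajectory), while $\|\partial_\epsilon A\|$ factors as $|\partial_\epsilon(\Lambda(t)-\Lambda(\tau))|\|A\|$, with $\partial_\epsilon\lambda$ already shown to be pointwise bounded in the proof of Lemma \ref{derivative_bounded}. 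Each entry of $B$ is at least linear in one of the off-diagonal quantities $\beta,b$ or in $h_i$, all of which are exponentially suppressed in $R$ together with their $\epsilon$-derivatives (cf. Lemma \ref{g_asymptotics}). Since $r(\tau)$ decays exponentially in $\tau$ along the $\vartheta$-trajectory by Lemma \ref{log_spiral}, the polynomial-in-$\tau$ factors coming from $\partial_\epsilon\Lambda$ are absorbed by the exponential tail of $B$ and $\partial_\epsilon B$, giving $\sup_t \int_t^\infty \|\partial_\epsilon K(t,\tau)\|\, d\tau \leq \widetilde{C} e^{-\delta R}$. Combined with $\|x\|_\infty = 1 + O(e^{-\delta R})$ from Proposition \ref{rest_form}, Corollary \ref{est} then delivers $\|\partial_\epsilon x\|_\infty = O(e^{-\delta R})$, and the symmetric argument on the reversed trajectory yields the same bound for $\partial_\epsilon y$.

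The product rule
\[
    \partial_\epsilon(x\wedge y) = (\partial_\epsilon x)\wedge y + x\wedge (\partial_\epsilon y)
\]
then writes $\partial_\epsilon(x\wedge y)$ as a sum of two terms, each the product of a uniformly bounded factor ($x$ or $y$, by Proposition \ref{rest_form}) with an $O(e^{-\delta R})$ factor, and the claim follows.

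The main obstacle is verifying the exponential-in-$R$ suppression of $\partial_\epsilon B$. Lemma \ref{g_asymptotics} records exponential suppression of $b,\beta,h_i$ and of $\partial_\epsilon h_i$ explicitly, but the analogous suppression of $\partial_\epsilon b$ and $\partial_\epsilon \beta$ is only implicit in the set-up and ultimately relies on the fact that the inverse-function-theorem construction of \cite{Fredrickson.2020} produces an $\epsilon$-dependent family $\gamma_\epsilon$ in the Friedrichs domain whose $\epsilon$-derivative inherits the same exponential smallness. Making this suppression explicit for the derivatives of $b$ and $\beta$ is the only substantive technical input needed beyond the estimates already developed in Sections \ref{init_prob} and \ref{sec_small_sec}.
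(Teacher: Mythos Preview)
Your approach is genuinely different from the paper's, and it hinges on an estimate the paper does not establish and deliberately circumvents.

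You want to prove $\|\partial_\epsilon x\|_\infty = O(e^{-\delta R})$ directly. For this you need the forcing term $\int_t^\infty (\partial_\epsilon K)\,x$ to be $O(e^{-\delta R})$, which in turn requires $\partial_\epsilon B$ to be exponentially suppressed in $R$. But look at $e_2 = \bigl((a_2-a_1)z' + 2R\zeta e^{-i\vartheta}\bigr)\delta\beta + h_2 z'$: differentiating in $\epsilon$ produces a term proportional to $\partial_\epsilon\beta$, and Lemma~\ref{g_asymptotics} only records $\partial_\epsilon\beta = O(r^\mu)$ and ``finite'', \emph{not} $|r^{-\mu}\partial_\epsilon\beta|\le Ce^{-\nu R}$. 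You flag this yourself as the main obstacle, and you are right that it is one: the exponential smallness of $\partial_\epsilon\gamma$ does not follow formally from the implicit-function-theorem argument in \cite{Fredrickson.2020} without further work, and the paper never claims it.

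The paper's proof avoids this entirely by exploiting two structural facts. First, the diagonal entry $e_1$ is \emph{quadratic} in $|\beta|$, so $\partial_\epsilon e_1$ retains a factor of $|\beta|$ and stays exponentially suppressed even if $\partial_\epsilon\beta$ is only bounded. Second, the off-diagonal entry $\partial_\epsilon e_2$, which is merely bounded, feeds only into the \emph{first} component of the forcing term; hence $\partial_\epsilon x = (f_1,f_2)^t + v$ with $f_1$ only bounded but $f_2$ and $v$ exponentially small. The wedge with $y = (1,0)^t + y^r$ then kills $f_1$ against the leading $(1,0)^t$, and every surviving term carries at least one exponentially small factor. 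So the paper does \emph{not} prove $\|\partial_\epsilon x\|_\infty = O(e^{-\delta R})$; it proves only that the specific combination $(\partial_\epsilon x)\wedge y$ is suppressed, via this cancellation.

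In short: your route is cleaner if the missing estimate on $\partial_\epsilon\beta$ can be supplied, but as written it has a genuine gap relative to the paper's lemmas. The paper's argument is more delicate precisely because it is designed to work with only the boundedness of $\partial_\epsilon\beta$.
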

	
	\begin{proof}
		We look again at equation \eqref{derived_volterra} for $\partial_\epsilon x(t)$. As $\partial_\epsilon x(t)$ is bounded by Lemma 
		\ref{derivative_bounded}, plugging it back into the right hand side and using the exponential bound of the kernel as in the proof of Proposition 
		\ref{rest_form} we 
		see that the second integral is some vector $v$ whose norm is bounded by some exponentially suppressed $K$.
		
		Now we consider again the "initial value function" of the derivative:
			\begin{align*}
				\int_{t}^{\infty}
					\partial_\epsilon\left(\begin{pmatrix}e^{\Lambda(t)-\Lambda(\tau)}&0\\0&1\end{pmatrix}\right)
					&\begin{pmatrix}e_1(\tau)&e_2(\tau)\\e_3(\tau)&-e_1(\tau)\end{pmatrix}x(\tau)d\tau\\
				&+\int_{t}^{\infty}
					\begin{pmatrix}e^{\Lambda(t)-\Lambda(\tau)}&0\\0&1\end{pmatrix}
					\partial_\epsilon\left(\begin{pmatrix}e_1(\tau)&e_2(\tau)\\e_3(\tau)&-e_1(\tau)\end{pmatrix}\right)x(\tau)d\tau\\
				=\int_{t}^{\infty}\partial_\epsilon(\Lambda(t)-\Lambda(\tau))e^{\Lambda(t)-\Lambda(\tau)}
				&\begin{pmatrix}e_1(\tau)&e_2(\tau)\\0&0\end{pmatrix}x(\tau)d\tau\\
				&+\int_{t}^{\infty}
					\begin{pmatrix}
					e^{\Lambda(t)-\Lambda(\tau)}\partial_\epsilon e_1(\tau)&e^{\Lambda(t)-\Lambda(\tau)}\partial_\epsilon e_2(\tau)\\ \partial_\epsilon e_3
					(\tau)&-\partial_\epsilon e_1(\tau)
					\end{pmatrix}x(\tau)d\tau.
			\end{align*}
		For the first term the $e_i$ imply, as mentioned above, that the whole integral is given by some function $\begin{pmatrix}g\\0\end{pmatrix}$, which is
		exponentially suppressed by some $C_1e^{-\delta_1R}$ for an appropriate constants $C_1$ and $\delta_1$.
		
		For the second term we have to take a closer look at the derivative. Remember that $x=(0,1)^t+x^r$ where $x^r$ is exponentially suppressed. We already 
		saw that $\partial_\epsilon e_i$ is finite (and linear in $R$), so the only part that may not be exponentially suppressed in $R$ is the one coming from 
		$(0,1)^t$, which is
		\[
			\int_{t}^{\infty}
					\begin{pmatrix}e^{\Lambda(t)-\Lambda(\tau)}\partial_\epsilon e_2(\tau)\\-\partial_\epsilon e_1(\tau)\end{pmatrix}d\tau.
		\]
		Here we look at the second entry and remember that in 
		\[
			e_1=2(a_1-a_2)|\beta|^2z'+R\zeta2e^{-i\vartheta}|\beta|^2=\left(2(a_1-a_2)z'+R\zeta2e^{-i\vartheta}\right)|\beta|^2+h_1z'
		\]
		the suppressing factor $|\beta|$ comes into play quadratically and so they contain the derivative
		\[
			\partial_\epsilon |\beta|^2=2|\beta|\partial_\epsilon|\beta|.
		\]
		As $\partial_\epsilon|\beta|$ is finite by the argument above the dependence on $|\beta|$ leads to the first part still being exponentially suppressed. Additionally from Lemma \ref{g_asymptotics} it follows that $\partial_\epsilon (h_1z')$ is exponentially suppressed in the right way and thus the whole term is, so
		
		\[
			\int_{t}^{\infty}
					\begin{pmatrix}e^{\Lambda(t)-\Lambda(\tau)}\partial_\epsilon e_2(\tau)\\-\partial_\epsilon e_1(\tau)\end{pmatrix}d\tau
					=\begin{pmatrix} f_1\\ f_2\end{pmatrix}
		\]
		for some bounded function $f_1$ and a function $f_2$ bounded by $C_2e^{-\delta_2 R}$ for appropriate $C_2$ and $\delta_2$. So the only term not being 
		exponentially suppressed is the first component. Luckily for the $\wedge$-product the first term is undone by the leading part of $y$ and the remainder-term 
		$y_r$ of $y$ is itself exponentially suppressed. 
		So all together we obtain 
		\[
			(\partial_\epsilon x)\wedge y=\left(\begin{pmatrix}f_1+g\\f_2\end{pmatrix}+v\right)\wedge\left(\begin{pmatrix}1\\0\end{pmatrix}+y^r\right)
			=f_2+\begin{pmatrix}f_1+g\\f_2\end{pmatrix}\wedge y_r+v\wedge\left(\begin{pmatrix}1\\0\end{pmatrix}+y^r\right).
		\]
		So with the deduced bounds we obtain
		\[
			|(\partial_\epsilon x)\wedge y|\leq |f_2|+\left|\begin{pmatrix}f_1+g\\f_2\end{pmatrix}\right||y_r|+|v|(1+|y^r|)\leq Ce^{-\delta R}.
		\]
		The corresponding calculations also hold for $x\wedge(\partial_\epsilon y)$ and thus also for $\partial_\epsilon(x\wedge y)$.
	\end{proof}
	
	As a final step we now have to transport the derivative of $x$ to the other sides of the the quadrilateral as we have done in Proposition 
	\ref{connecting_coord} for $x$ itself. Here we use the same techniques and estimates as before:
	
	\begin{proposition}
		Let $\gamma_1$ and $\gamma_2$ be two sides of the standard quadrilateral $Q$ joining in the weakly parabolic point $p$ of $Q$ and let $\tilde{p}$ be the other 
		end of $\gamma_2$. Let $x$ denote the remainder term of the small flat section along $\gamma_1$ and $y$ the remainder term of the small flat section for 
		$\tilde{p}$. Then the product $\partial_\epsilon(x\wedge y)$ on $\gamma_2$ is exponentially suppressed in $R$.
	\end{proposition}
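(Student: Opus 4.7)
The plan is to reduce this statement to the already-established Theorem \ref{derived_wedge} by transporting the small flat section for $p$ from $\gamma_1$ to $\gamma_2$ via a connecting $\widetilde{\vartheta}$-trajectory as in Proposition \ref{conn_curve}, and then showing that $\epsilon$-differentiation commutes sufficiently well with this transport.

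Concretely, I would first recall from Proposition \ref{connecting_coord} that the transport of the small flat section $s$ from $\gamma_1$ to $\gamma_2$ along a $\widetilde{\vartheta}$-curve $\alpha:[a,b]\to\mathcal{C}$, whose length is bounded uniformly by the constant $d$ of Proposition \ref{conn_curve}, yields a section whose evaluation on $\gamma_2$ again satisfies a Volterra equation at infinity, but now with initial value at infinity perturbed from $(0,1)^t$ to $(0,1+\eta)^t$ for some scalar $\eta$ with $|\eta|\leq Ce^{-\delta R}$. Differentiating the finite-interval Volterra equation \eqref{connecting_equ} along $\alpha$ with respect to $\epsilon$ and applying the standard (finite-interval) Volterra theory of \cite{Windsor.2010}, the estimate \eqref{con_est} transcribes essentially verbatim to show that $\partial_\epsilon\eta$ is also exponentially suppressed.

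With this in place, the transported remainder $x$ on $\gamma_2$ satisfies a Volterra equation at infinity identical in form to the one treated in Theorem \ref{derived_wedge}, the only difference being that its initial value at infinity is $(0,1+\eta)^t$ rather than $(0,1)^t$, with $\eta,\partial_\epsilon\eta=O(e^{-\delta R})$. Since the kernel is unchanged, Theorem \ref{different} applies and yields the same type of bound on $\partial_\epsilon x$ as in Lemma \ref{derivative_bounded}. The wedge computation of Theorem \ref{derived_wedge} then runs on $\gamma_2$ almost verbatim: the only component of $(\partial_\epsilon x)\wedge y$ (and symmetrically of $x\wedge\partial_\epsilon y$) that is not a priori exponentially small is cancelled when wedged against the leading direction of the other factor, while the additional contributions involving $\eta$ and $\partial_\epsilon\eta$ enter only additively and are themselves exponentially suppressed.

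The main technical obstacle I foresee is ensuring that the geometric constants entering Proposition \ref{conn_curve} (in particular the length bound $d$ and the intersection angle $\beta$) together with the kernel bounds along $\alpha$ remain uniform as $\epsilon$ varies in a neighborhood of the chosen point of the moduli space. This is a continuity statement for the $\widetilde{\vartheta}$-triangulation as $q=\det\Phi$ varies, and should follow from the explicit local form of the $\widetilde{\vartheta}$-trajectories near the pole $p$ together with the differentiable dependence on $\epsilon$ of the local coefficients $a_1,a_2$ already recorded in Remark \ref{diff_quotient}. Once this uniformity is secured, no genuinely new analytic estimate is needed beyond combining those of Theorem \ref{derived_wedge} and Proposition \ref{connecting_coord}.
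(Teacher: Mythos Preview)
Your proposal is correct and follows essentially the same route as the paper: differentiate the finite-interval transport equation \eqref{connecting_equ} along the connecting $\widetilde{\vartheta}$-segment, observe that the resulting structure matches that of the ``initial value integral'' already analysed in Theorem \ref{derived_wedge}, and then rerun the wedge-product argument of Theorem \ref{derived_wedge} on $\gamma_2$ with the extra (exponentially suppressed) contribution from the perturbed limit $(0,1+\eta)^t$. The paper's proof is in fact somewhat terser than your outline and does not isolate $\partial_\epsilon\eta$ explicitly, but the content is the same.
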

	
	\begin{proof}
		Just as for the small flat section along the first $\vartheta$-trajectory, we obtain the following equation for the derivative along the connecting 
		circle segment described in Proposition \ref{connecting_coord}:
		\[
		\begin{split}
			x_\epsilon(t)=x_\epsilon(a)
			&+\int_a^t\partial_\epsilon\left(\begin{pmatrix}e^{\Lambda(t)-\Lambda(\tau)}&0\\0&1\end{pmatrix}\begin{pmatrix}e_1&e_2\\e_3&-e_1\end{pmatrix}\right)
			x(\tau)d\tau\\
			&+\int_a^t\begin{pmatrix}e^{\Lambda(t)-\Lambda(\tau)}&0\\0&1\end{pmatrix}\begin{pmatrix}e_1&e_2\\e_3&-e_1\end{pmatrix}x_\epsilon(\tau)d\tau.
		\end{split}
		\]
		The validity of this equation follows via the methods analogous to the ones developed in the first part of this paper. From Proposition 
		\ref{connecting_coord} we have the form $x(\tau)=(0,1)+x^r$ for some exponentially suppressed $x^r$ just as for the function along the first side. Thus 
		the same argument as in Lemma \ref{derivative_bounded} may be used to obtain an uniform bound for the first integral. But then the equation for 
		$x_\epsilon$ differs only by this uniform bound from the equation for the non derived $x$ as we transported it to the other side in 
		\ref{connecting_coord}. Thus the same arguments as in that proof can be used to obtain the limits when going into the weakly parabolic point. It 
		follows again that the values on the two sides of the quadrilateral only differ by an exponentially suppressed term plus the limit of the first 
		integral, which has the same structure as the "initial value integral" in the proof of Theorem \ref{derived_wedge}. When taking the wedge product with 
		$y$ we may then argue in the same way as in the proof of Theorem \ref{derived_wedge}, the only difference being an additional exponentially suppressed 
		term. Thus we conclude that the derivatives of the transported sections behave in the same way as the non transported ones.
	\end{proof}
	
	Now we have all the information we need for our next main result concerning the $\mathcal{X}$-coordinates:
	
	\begin{theorem}\label{pole_0}
		The logarithmic derivative of any $\mathcal{X}$-coordinate has at most a simple pole $\zeta=0$.
	\end{theorem}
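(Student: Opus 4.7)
The plan is to exploit the closed-form expression for $\mathcal{X}_E$ provided by Theorem \ref{x_int_form}, which already isolates all the $\zeta$-dependence in a transparent way. Taking logarithms gives
\[
\log \mathcal{X}_E = i\pi + \log(1+r_q) + \frac{R}{\zeta}\oint_{\gamma_E} q^{1/2} + R\zeta \oint_{\gamma_E}\overline{q^{1/2}} + \oint_{\gamma_E}(\overline{a_1}d\overline{z} - a_1 dz),
\]
so that the $\epsilon$-derivative is a sum of four terms whose $\zeta$-pole behaviour I will analyze separately.

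Three of the four terms are immediate. The period integrals $\oint_{\gamma_E} q^{1/2}$, $\oint_{\gamma_E}\overline{q^{1/2}}$, and $\oint_{\gamma_E}(\overline{a_1}d\overline{z} - a_1 dz)$ are all independent of $\zeta$ (the last because $a_1$ is read off from the Chern connection of the limiting holomorphic structure and does not involve the twistor parameter). Hence, upon differentiating in $\epsilon$, the $R/\zeta$ term produces a simple pole with residue $R\,\partial_\epsilon \oint_{\gamma_E} q^{1/2}$, the $R\zeta$ term is regular and vanishes at $\zeta=0$, and the remaining integral contributes a $\zeta$-independent constant.

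The delicate contribution is $\partial_\epsilon r_q/(1+r_q)$. First, by Corollary \ref{wedge_eval} applied to each of the four wedge products, together with the holomorphic Liouville factors used in the proof of Theorem \ref{x_int_form}, the remainder $r_q$ is exponentially suppressed in $R$ uniformly for $\zeta$ in a neighbourhood of $0$ inside $\mathbb{H}_\vartheta$, so $1+r_q$ is bounded away from zero for sufficiently large $R$. Second, Theorem \ref{derived_wedge} applied to each wedge-product factor, combined with the transport argument of Proposition \ref{connecting_coord} so that the wedge products evaluated at different sides of the quadrilateral are treated on equal footing, shows that $\partial_\epsilon r_q$ is itself exponentially suppressed in $R$, with constants uniform for small $\zeta$. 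Thus $\partial_\epsilon r_q/(1+r_q)$ remains bounded as $\zeta\to 0$, contributing no pole, and the four terms combined give at most the simple pole isolated in the second step.

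The main obstacle is the uniform book-keeping: all bounds on $r_q$ and $\partial_\epsilon r_q$ are asserted for $|\zeta|$ small enough and $R$ large enough, and one must verify that the constants in the exponential estimates of Lemma \ref{derivative_bounded} and Theorem \ref{derived_wedge} can indeed be chosen independently of $\zeta$ along the approach to $\zeta=0$. This is exactly what the product-type structure of the Volterra kernel from section \ref{init_prob} was designed to furnish, so the uniform estimates transfer to the four wedge products appearing in $\mathcal{X}_E$ in parallel and the pole bound follows.
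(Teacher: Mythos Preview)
Your proof is correct and follows essentially the same approach as the paper. The paper decomposes $\partial_\epsilon\log\mathcal{X}$ directly as the signed sum of the four logarithmic derivatives $\partial_\epsilon(s^i\wedge s^j)/(s^i\wedge s^j)$ and then invokes the preceding theorem on each factor, whereas you work from the already-assembled formula of Theorem~\ref{x_int_form}; both routes isolate the same simple pole in the period term and bound the remainder via Lemma~\ref{derivative_bounded}/Theorem~\ref{derived_wedge} and the transport proposition that follows it.
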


	\begin{proof}
		First note that all factors in the expression for $\mathcal{X}$ are nonzero and so is $\mathcal{X}$ itself.	By the product rule we have:
		\[
			\begin{split}
				\partial_\epsilon\mathcal{X}&=\partial_\epsilon \left(\frac{\left(s^{1}\wedge s^{2}\right)\left(s^{3}\wedge s^{4}\right)}
				{\left(s^{4}\wedge s^{1}\right)\left(s^{2}\wedge s^{3}\right)}\right)\\
				&=\frac{\left(\partial_\epsilon\left(s^{1}\wedge s^{2}\right)\right)\left(s^{3}\wedge s^{4}\right)\left(s^{4}\wedge s^{1}\right)\left(s^{2}\wedge s^{3}\right)+\left(s^{1}\wedge s^{2}\right)\left(\partial_\epsilon\left(s^{3}\wedge s^{4}\right)\right)\left(s^{4}\wedge s^{1}\right)\left(s^{2}\wedge s^{3}\right)}{\left(s^{4}\wedge s^{1}\right)^2\left(s^{2}\wedge s^{3}\right)^2}\\
				&-\frac{\left(s^{1}\wedge s^{2}\right)\left(s^{3}\wedge s^{4}\right)\left(\partial_\epsilon\left(s^{4}\wedge s^{1}\right)\right)\left(s^{2}\wedge s^{3}\right)+\left(s^{1}\wedge s^{2}\right)\left(s^{3}\wedge s^{4}\right)\left(s^{4}\wedge s^{1}\right)\partial_\epsilon\left(s^{2}\wedge s^{3}\right)}{\left(s^{4}\wedge s^{1}\right)^2\left(s^{2}\wedge s^{3}\right)^2}\\
				&=\frac{\left(\partial_\epsilon\left(s^{1}\wedge s^{2}\right)\right)\left(s^{3}\wedge s^{4}\right)}{\left(s^{4}\wedge s^{1}\right)\left(s^{2}\wedge s^{3}\right)}
				+\frac{\left(s^{1}\wedge s^{2}\right)\partial_\epsilon\left(s^{3}\wedge s^{4}\right)}{\left(s^{4}\wedge s^{1}\right)\left(s^{2}\wedge s^{3}\right)}\\
				&-\frac{\left(s^{1}\wedge s^{2}\right)\left(s^{3}\wedge s^{4}\right)\partial_\epsilon\left(s^{4}\wedge s^{1}\right)}{\left(s^{4}\wedge s^{1}\right)^2\left(s^{2}\wedge s^{3}\right)}
				-\frac{\left(s^{1}\wedge s^{2}\right)\left(s^{3}\wedge s^{4}\right)\partial_\epsilon\left(s^{2}\wedge s^{3}\right)}{\left(s^{4}\wedge s^{1}\right)\left(s^{2}\wedge s^{3}\right)^2}.
			\end{split}
		\]
		Thus we obtain for the logarithmic derivative
		\[
			\begin{split}
				\partial_\epsilon\log(\mathcal{X})=\frac{\partial_\epsilon(\mathcal{X})}{\mathcal{X}}
				=\frac{\partial_\epsilon\left(s^{1}\wedge s^{2}\right)}{s^{1}\wedge s^{2}}
				+\frac{\partial_\epsilon\left(s^{3}\wedge s^{4}\right)}{s^{3}\wedge s^{4}}
				-\frac{\partial_\epsilon\left(s^{4}\wedge s^{1}\right)}{s^{4}\wedge s^{1}}
				-\frac{\partial_\epsilon\left(s^{2}\wedge s^{3}\right)}{s^{2}\wedge s^{3}}.
			\end{split}
		\]
		By the calculations above all terms 
		in the final expression have only the simple pole at $\zeta=0$ coming from the leading term which thus is also true for 
		$\partial_\epsilon\log(\mathcal{X})$.
	\end{proof}
		
	We now also need the same structure for $\zeta=\infty$, which could naturally be guessed as of the symmetry in the construction of $\nabla$. The precise 
	result could be obtained by working in an anti-holomorphic frame (i. e. a frame, where $\varphi^\ast$ is diagonal) and repeating all the steps up until 
	now, where one would have to consider some additional term in the ODE coming from the change of basis and its $z$-derivative. Though the information 
	obtained from \cite{Fredrickson.2020} is good enough to also make this work, it also follows directly from the reality condition, in the proof of which we 
	already used the 
	stated symmetry of $\nabla$. This is also the way in which \cite{Tulli.2019} derives this asymptotic, although he differs in the proof of the reality 
	condition for his case.
	
	\begin{corollary}\label{pole_infty}
		The logarithmic derivative of $\mathcal{X}$ has at most a simple pole at $\zeta=\infty$.
	\end{corollary}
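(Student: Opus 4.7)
The plan is to derive this corollary directly from the reality condition of Theorem \ref{reality} together with Theorem \ref{pole_0}, exactly along the lines indicated by the author. The key observation is that the involution $\zeta \mapsto -1/\overline{\zeta}$ on $\mathbb{CP}^1$ exchanges $0$ and $\infty$, so any statement about pole orders at $\zeta=0$ transports to a statement about pole orders at $\zeta=\infty$ once we absorb the complex conjugation.

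First I would choose $\epsilon$ to be a real differentiable coordinate on $\mathcal{M}_R$, so that the operator $\partial_\epsilon$ commutes with complex conjugation (complex coordinates can be recovered by writing them in terms of their real and imaginary parts). By Proposition \ref{x_well} the coordinate $\mathcal{X}_\gamma^\vartheta(\zeta)$ is nowhere zero on its domain, so one may legitimately form $\log\mathcal{X}_\gamma^\vartheta(\zeta)$ locally and differentiate. Applying $\partial_\epsilon \log(\cdot)$ to both sides of the reality condition
\[
\mathcal{X}_\gamma^\vartheta(\zeta)=\overline{\mathcal{X}_\gamma^{\vartheta+\pi}(-1/\overline{\zeta})}
\]
and using that $\epsilon$ is real, one gets
\[
\partial_\epsilon \log \mathcal{X}_\gamma^\vartheta(\zeta)=\overline{\bigl(\partial_\epsilon \log \mathcal{X}_\gamma^{\vartheta+\pi}\bigr)\bigl(-1/\overline{\zeta}\bigr)}.
\]

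Next I would invoke Theorem \ref{pole_0} for the triangulation $T(q,\vartheta+\pi)$: the right-hand side, viewed as a function of the variable $\zeta':=-1/\overline{\zeta}$, has at most a simple pole at $\zeta'=0$. Writing its Laurent expansion as $c/\zeta' + h(\zeta')$ with $h$ bounded near $\zeta'=0$, the substitution $\zeta'=-1/\overline{\zeta}$ yields
\[
\partial_\epsilon \log \mathcal{X}_\gamma^\vartheta(\zeta)= -\overline{c}\,\zeta+\overline{h(-1/\overline{\zeta})},
\]
which grows at most linearly as $\zeta\to\infty$ and hence has at most a simple pole at $\zeta=\infty$ on $\mathbb{CP}^1$.

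I do not expect a serious obstacle here; the only technical care needed is the commutation of $\partial_\epsilon$ with the complex conjugation, which is why one should work with real moduli coordinates first and then conclude for arbitrary smooth coordinates by linearity. One should also verify that the triangulation $T(q,\vartheta+\pi)$ is well defined in the relevant range (which was already checked in the omnipop discussion preceding Theorem \ref{reality}, so no new work is required).
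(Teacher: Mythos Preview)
Your proposal is correct and takes essentially the same approach as the paper, which simply states that the result follows directly from Theorem \ref{pole_0} combined with the reality condition of Theorem \ref{reality}. You have merely filled in the details of this one-line argument, including the useful observation that one should first work with real moduli coordinates so that $\partial_\epsilon$ commutes with complex conjugation.
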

	
	\begin{proof}
		This follows directly from the asymptotics for $\zeta\to0$ from Theorem \ref{pole_0} and the reality condition Theorem \ref{reality}.
	\end{proof}
	
	\newpage
	
\section{A Riemann-Hilbert problem}\label{sec_R_H}

	By now it would already be possible to construct the hyperkähler metric and even infer an exponential decay towards the semiflat metric. However we will not do this directly and instead reformulate the whole construction in terms of a Riemann-Hilbert problem in section \ref{Riem_Hill_form}. This approach was also already introduced in the general construction in \cite{Gaiotto.2010}. The idea in the general case is that even when there is no explicit moduli space but instead only some general integrable system it is still possible to formulate a Riemann-Hilbert problem for $\mathcal{X}$-functions, whose solution should naturally fit in the twistorial construction of hyperkähler metrics. The main new ingredient for this are the (generalized) Donaldson-Thomas invariants $\Omega$ which determine the jumping behaviour of $\mathcal{X}$. In our special case of the Higgs bundle moduli space these invariants are explicitly known, and we can use the known behaviour of our constructed $\mathcal{X}$-coordinates to show that they do solve the appropriate Riemann-Hilbert problem in section \ref{Riem_Hill_sol}. We then use this to show that they solve a certain integral equation, which was also formulated in \cite{Gaiotto.2010}. The upshot of this additional work is that the integral equation allows us to obtain more detailed information about the asymptotic behaviour of the $\mathcal{X}$-coordinates.
	
	\subsection{Formulation of the Riemann-Hilbert problem}\label{Riem_Hill_form}
	
	In this (and the next) section we fix an element $(A,\Phi)$ of $\mathcal{M}'_R$ such that the determinant $q$ of the Higgs field is an element of a chamber in the Hitchin base. We consider the $\mathcal{X}_\gamma^{\vartheta}$-coordinates, now for all $\gamma$ in the charge lattice $\Gamma_q$, and we eliminate the $\vartheta$-dependence of the coordinates by specializing $\mathcal{X}_\gamma^{\vartheta}$ to $\vartheta=$arg$(\zeta)$, i. e.:
\[
	\mathcal{X}_\gamma(\zeta):=\mathcal{X}^{\text{arg}\left(\zeta\right)}_\gamma(\zeta).
\]
As we work with a fixed element in $\mathcal{M}'_R$ we will suppress the dependence of $\mathcal{X}_\gamma$ on the elements of $\mathcal{M}'_R$. So we only keep the dependence on $\zeta$ and $\mathcal{X}_\gamma$ becomes a map from $\mathbb{C}^\times$ to $\mathbb{C}^\times$. The important thing to note here, is that the definition above leads to discontinuous jumps of the $\mathcal{X}_\gamma$-coordinates at certain values of $\zeta$ for which the $ca$-triangulation jumps. As this only depends on arg$(\zeta)$ we have jumps along rays in the $\zeta$-plane, which are called \textit{active BPS rays}. It is important to note that although the jumps stem from changes in the triangulations, the sides of the standard quadrilaterals which are built from the triangulation still vary continuously when the jumps occur. Thus it is possible to analytically extend each $\mathcal{X}_\gamma$ across a BPS ray by simply fixing the corresponding product of small flat sections. In the following for any BPS ray $l$ we denote the analytic continuation of any $\mathcal{X}_\gamma$ in the clockwise and counter-clockwise direction across $l$ with $\mathcal{X}_\gamma^-$ and $\mathcal{X}_\gamma^+$.

As we explained in section \ref{spec_diff}, in the case of an unsealed spectrum there are only two types of jumps, for which the coordinates actually change. They are characterized by the appearance of finite $ca$-trajectories which are saddle connections (connecting two zeroes) or periodic, i. e. closed loops. Crucially in both types of jumps the changes of the triangulation are known explicitly and GMN were able to obtain corresponding formulas for these jumps. In particular there is always an associated charge $\beta\in\Gamma_q$ corresponding to the jump. To formulate the jumps we define the quadratic refinement $\sigma:\Gamma_q\to\left\{-1,1\right\}$ and (generalized) Donaldson-Thomas invariants $\Omega:\Gamma_q\to\mathbb{Z}$ as follows. 

\begin{definition}
Let $\beta\in\Gamma_q$ be a charge such that there occurs a jump in the $ca$-triangulation corresponding to $\beta$. Then
\[
	\begin{split}
	\Omega(\beta;q)&:=\begin{cases}1,\quad \text{ if $\beta$ corresponds to a saddle connection},\\ -2,\quad \text{if $\beta$ corresponds to a closed loop}.\end{cases}
	\end{split}
\]
Additionally define $\Omega(\beta;q):=0$ for all other $\beta\in\Gamma_q$.
\end{definition}

\begin{remark}
	There is a different way of introducing the specified values of $\Omega$ (see e. g. \cite{Neitzke.2013}), where one counts for $\beta\in\Gamma_q$ the number of saddle connections that lift to $\beta$. Let this number be $SC$. Additionally one counts the set of isotopy classes of closed loops that lift to $\beta$ and denotes their number by $CL$. Now one can define
	\[
		\Omega(\beta;q)=SC-2\cdot CL.
	\]
	Generically it should be the case that this amounts to $\Omega$ being $0$, $1$ or $-2$ according to the definition above as no two different finite $ca$-trajectories may appear that lift to the same $\beta$. The results in \cite{Bridgeland.2015} support this partially at least in the case of quadratic differentials with double poles and simple zeroes considered here. Still, there may be possibilities left, where $\Omega$ thus defined gives different values from before. But we are interested in the jumping behaviour of the coordinates which was checked in \cite{Gaiotto.2013} for the values obtained from the first definition. So we keep that as the definition and postpone the question of whether this coincides with the count of finite $ca$-trajectories.
\end{remark}
 
For the quadratic refinement we proceed quite similarly 

\begin{definition}
Let $\beta$ be a generator of $\Gamma_q$ defined as in section \ref{sec_hom} (but now without demanding that they correspond to a jump in the triangulation as for $\Omega$). We then define
\[
	\begin{split}
	\sigma(\beta)&:=\begin{cases}-1,\quad \text{ if $\beta$ corresponds to a saddle connection},\\ 1,\quad \text{if $\beta$ corresponds to a closed loop}.\end{cases}
	\end{split}
\]
The rest of the values is now obtained from $\sigma(0):=1$ and the formula
\[
	\sigma(\gamma)\sigma(\beta)=\sigma(\gamma+\beta)\sigma(0)(-1)^{\left\langle\gamma,\beta\right\rangle}.
\]
\end{definition}

As GMN show in \cite{Gaiotto.2013}, section 7.7 $\sigma$ thus defined is independent of $q$ and the angle of the triangulation.

We can now define the basic building block for the following theory. We start by defining for each $\gamma,\beta\in\Gamma_q$ a function $\mathcal{K}^{\gamma}_{\beta}$, given by
\[
\begin{split}
	\mathcal{K}^{\gamma}_{\beta}:&\mathbb{C}^\times\to\mathbb{C},\quad
	\zeta\mapsto\left(1-\sigma(\beta)\mathcal{X}^-_\beta(\zeta)\right)^{\left\langle\gamma,\beta\right\rangle}.
\end{split}
\]
Note that from our analysis in the sections above for $R$ large enough it holds $|\mathcal{X}_\beta|<1$, so $\mathcal{K}^{\gamma}_{\beta}(\zeta)\in\mathbb{C}^\times$ for all $\zeta\in\mathbb{C}^{\times}$. We also regard $\mathcal{K}^{\gamma}_{\beta}$ as acting on the space of $\mathcal{X}$-functions via multiplication $\mathcal{X}^-_\gamma\mapsto\mathcal{X}^-_\gamma\mathcal{K}^{\gamma}_{\beta}$. This point of view is more in line with the general setting where the $\mathcal{K}^{\gamma}_{\beta}$ are defined as automorphisms acting on the space of twisted unitary characters of $\Gamma_q$. But for us this setting becomes very concrete as we can regard $\mathcal{K}^{\gamma}_{\beta}(\zeta)$ simpy as a complex number.

Now for each $\beta\in\Gamma_q$ define the ray $l_\beta:=Z_\beta(q^{1/2})\mathbb{R}_-$. These are the \textit{BPS rays} mentioned before. Here they arise from the observation that a jump of the triangulation is accompanied by the appearance of a finite arg$(\zeta)$-trajectory $c$. They are only defined up to orientation, but their lift to the spectral cover is canonically oriented as mentioned in section \ref{sec_hom}, from which we obtain the corresponding charge, say $\delta$. From Lemma \ref{angle_prop} we know that $Z_\delta(q^{1/2})\in\mathbb{R}_+e^{i\text{arg}(\zeta)}$. On the other hand as of the unoriented nature of the trajectory $c$, it is also a trajectory for arg$(\zeta)+\pi$, leading to the charge $-\delta$. For this charge it holds $Z_{-\delta}(q^{1/2})\in\mathbb{R}_-e^{i\text{arg}(\zeta)}$, i. e. $\zeta\in Z_{-\delta}(q^{1/2})\mathbb{R}_-$. The calculations in \cite{Gaiotto.2013} now show that it is the charge $\beta:=-\delta$ which determines the jump of a coordinate $\mathcal{X}_{\gamma}$ with $\left\langle\gamma,\beta\right\rangle\neq0$ at $l_{\beta}$ via the formula above. Thus we see that a jump can only occur when $\zeta$ crosses a BPS ray. We stress that this means that at the ray $l_\beta$ a jump of a coordinate $\mathcal{X}_\gamma$ occurs, while the coordinate $\mathcal{X}_\beta$ is continuous.
The choice of direction for $l$ will also be important later on, when we need the decaying behaviour of the coordinates along the BPS ray. 

Now one defines for each ray $l$ in the $\zeta$-plane the \textit{jump-factors} as the products of all the corresponding factors for which $l_\beta=l$:
\begin{equation}\label{jumps}
	S^\gamma_l(\zeta):=\prod_{\beta:l_\beta=l}{\left(\mathcal{K}^\gamma_\beta(\zeta)\right)^{\Omega(\beta;q)}}.
\end{equation}
This definition is taken from the general theory of GMN. As we will only consider these transformations inside the chambers of the Hitchin Base (cf. Definition \ref{chambers}) only one factor remains for each BPS ray. Therefore we don't have to worry about a particular order in the product, which could otherwise have infinite contributions.\footnote{For a way of ordering the product in the general case see section 2.3 in \cite{Gaiotto.2013}.}

A BPS ray $l_\beta$ for which $\Omega(\beta;q)\neq0$ is called \textit{active}. The values for $\sigma$ and $\Omega$ are determined from the considerations of GMN mentioned above, so that the $S_l$ describe the jumps at any BPS ray. From section \ref{sec_coord} we recall that the $\mathcal{X}$-coordinates have an explicitly known leading term given by the periods $Z_\gamma$ and $\theta_\gamma$. We denote this leading term (i. e. the function we obtain in equation \ref{exp_exp} by setting $r_q=0$) as $\mathcal{X}^{\text{sf}}$. With these definitions together with the properties of the Fock-Goncharov coordinates the following theorem holds.

\begin{theorem}\label{R_H_problem}
	For any fixed point in a chamber of $\mathcal{M}_R'$ and $\gamma\in\Gamma_q$ with unsealed spectrum the canonical coordinate $\mathcal{X}_\gamma:\mathbb{C}^\times\to \mathbb{C}^\times$ has the following properties:
	\begin{enumerate}
		\item $\mathcal{X}_\gamma$ depends piecewise-holomorphically on $\zeta\in\mathbb{C}^\times$, with discontinuities only at the rays $l_\gamma(u)$ with $\Omega(\gamma;q)\neq0$.
		\item The limits $\mathcal{X}_\gamma^{\pm}$ of $\mathcal{X}_\gamma$ as $\zeta$ approaches any ray $l$ from both sides exist and are related by
		\[
			\mathcal{X}_\gamma^+=\left(S^\gamma_l\right)^{-1}\mathcal{X}_\gamma^-.
		\]
		\item $\mathcal{X}_\gamma(\zeta)/\mathcal{X}_\gamma^{\text{sf}}(\zeta)$ is bounded near $\zeta=0$ and $\zeta=\infty$.
	\end{enumerate}
	
\end{theorem}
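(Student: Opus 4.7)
The plan is to address each of the three properties in turn, with most of the analytic content coming from results already in hand.

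For (1), the piecewise holomorphicity in $\zeta$ follows by tracing the $\zeta$-dependence through the construction of the small flat sections. Fix any $\zeta_0\in\mathbb{C}^\times$ not lying on an active BPS ray. Then for $\zeta$ in a small neighborhood $U$ of $\zeta_0$, the associated $\text{arg}(\zeta)$-triangulation is constant (up to isotopy), so the standard quadrilateral $Q$ and its bounding $\text{arg}(\zeta)$-trajectories vary smoothly. Choosing $\vartheta$ with $\zeta_0\in\mathbb{H}_\vartheta$, the kernel and inhomogeneity of the Volterra equation \eqref{error_equation} along each such $\text{arg}(\zeta)$-trajectory depend holomorphically on $\zeta\in U$. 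Checking that the kernel is of holomorphic product type in the sense preceding Theorem \ref{hol_dep} is routine: the factor $A(t,s,\zeta)=\mathrm{diag}(e^{\Lambda(t)-\Lambda(\tau)},1)$ is bounded uniformly in $\zeta\in U$ by Lemma \ref{small_kernel}, while the error matrix depends only linearly on $\zeta$ and is integrally bounded by Lemma \ref{e_asymptotics}. Theorem \ref{hol_dep} then yields holomorphic dependence of each small flat section on $\zeta\in U$, and hence of each $\wedge$-product and of $\mathcal{X}_\gamma$. The extension to points along a non-active BPS ray uses the fact, explained just after the definition of active BPS rays, that the products of small flat sections may be analytically continued across any ray where $\Omega=0$ since no triangulation change alters the coordinate.

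For (2), the existence of the one-sided limits $\mathcal{X}_\gamma^{\pm}$ is immediate from (1) applied on each side of $l$. The identity $\mathcal{X}_\gamma^+=(S_l^\gamma)^{-1}\mathcal{X}_\gamma^-$ is the content of the analysis of jumps in \cite{Gaiotto.2013}. Under the unsealed-spectrum hypothesis and the assumption that $q$ lies in a chamber, the work summarized in section \ref{spec_diff} guarantees that every jump at $l$ is either a flip arising from a single saddle connection or a juggle arising from the single ring domain whose boundary consists entirely of saddle connections treated in \cite{Gaiotto.2013}. The transformations of the Fock-Goncharov coordinates under these two types of jumps are the cluster transformations encoded by $\mathcal{K}^\gamma_\beta$ in the definition preceding \eqref{jumps}, with $\Omega(\beta;q)=1$ respectively $-2$, and the sign conventions absorbed into $\sigma$. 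The product in \eqref{jumps} is finite precisely because the chamber condition forbids non-proportional charges from sharing a BPS ray, and because Lemma \ref{unsealed_finite} rules out accumulation of ring domains, so the single jump on $l$ is simply the composition of the contributions from those $\beta$ with $Z_\beta(q^{1/2})\in l$ and $\Omega(\beta;q)\neq 0$.

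For (3), we invoke Theorem \ref{x_int_form} directly: on a chamber of $\mathcal{M}_R'$ the coordinate admits the factorization
\[
\mathcal{X}_\gamma(\zeta)=-(1+r_q(\zeta))\,\mathcal{X}^{\mathrm{sf}}_\gamma(\zeta),
\]
where $r_q$ is uniformly exponentially small in $R$ for small $\zeta\in\mathbb{H}_\vartheta$. Thus $\mathcal{X}_\gamma/\mathcal{X}^{\mathrm{sf}}_\gamma=-(1+r_q)$ is bounded as $\zeta\to 0$ within any sector where the triangulation is constant. The bound near $\zeta=\infty$ follows by applying the reality condition Theorem \ref{reality}: $\mathcal{X}_\gamma(\zeta)/\mathcal{X}_\gamma^{\mathrm{sf}}(\zeta)=\overline{\mathcal{X}_\gamma^{\vartheta+\pi}(-1/\bar\zeta)/\mathcal{X}_\gamma^{\mathrm{sf},\vartheta+\pi}(-1/\bar\zeta)}$, which is bounded as $\zeta\to\infty$ by the same small-$\zeta$ estimate applied at $-1/\bar\zeta$.

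The main obstacle is really (2): carefully matching the sign conventions in $\sigma$ and the orientations fixed in section \ref{sec_hom} with the specific jump calculations from \cite{Gaiotto.2013}. Properties (1) and (3) are essentially consequences of the integral-equation machinery and the asymptotic expansion already established, whereas (2) requires one to verify that the jump formulas derived in the framework of \cite{Gaiotto.2013} apply verbatim in our setup with the small flat sections of Definition \ref{def_small_flat}, using that these sections coincide up to scalars with the ones implicitly used there.
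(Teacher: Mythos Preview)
Your proposal is correct and matches the paper's own treatment, which in fact does not give a formal proof at all but only the one-sentence remark that property~2 was established by GMN while properties~1 and~3 follow from the earlier chapters. Your expansion spells out precisely which earlier results are invoked (Theorem~\ref{hol_dep} for the piecewise holomorphicity in $\zeta$, Theorem~\ref{x_int_form} together with the reality condition Theorem~\ref{reality} for the boundedness of $\mathcal{X}_\gamma/\mathcal{X}_\gamma^{\mathrm{sf}}$, and the jump analysis of \cite{Gaiotto.2013} for~(2)), so it is a faithful unpacking of what the paper leaves implicit.
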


We note that property 2 was established by GMN, while the properties 1 and 3 have been rigorously proven in the earlier chapters of this thesis. These three properties together with the property that the value of $\mathcal{X}_\gamma$ is never $0$ are one way of describing a Riemann-Hilbert problem. Classically such a problem is given by a contour in the complex plane and a specified jumping behaviour, cf. \cite{Its.2003}, \cite{Mushkhelishvili.1953} and \cite{Fokas.2006}. One then wants to find a function that is holomorphic in the complement of the contour and jumps in the prescribed way. Here the problem is more complicated as the jumping behaviour is determined by the functions themselves. Additionally the classical case defines jumps by the action of (piecewise) constant matrices, so they are elements of a finite-dimensional space, whereas the jumps here live in the infinite-dimensional space of functions on $\mathbb{C}^\times$. Also note that the third property here differs from the usual formulation where a specific limit in $0$ or $\infty$ is specified, which leads to the uniqueness of solutions. This will be described in more detail below.

	\subsection{Integral equations and solutions}\label{Riem_Hill_sol}

Although such Riemann-Hilbert problems are interesting in themselves, here we consider it to obtain another representation for our coordinates, that allows for a better estimate of the decaying properties. To this effect GMN argued that a solution of the Riemann-Hilbert problem is given be the solution of a certain integral equation. This equation is mostly encountered in the following form:
\begin{equation}
	\mathcal{X}_\gamma(\zeta)=\mathcal{X}_\gamma^{\text{sf}}(\zeta)\text{exp}\left[-\frac{1}{4\pi i}\sum_{\beta\in\Gamma^g_q}\Omega(\beta;q)\left\langle\gamma,\beta\right\rangle\int_{l_{\beta}(q)}\frac{d\zeta'}{\zeta'}\frac{\zeta'+\zeta}{\zeta'-\zeta}\text{log}\left(1-\mathcal{X}_\beta(x,\zeta')\right)\right].
\end{equation}

Note that here an analytic continuation of the coordinates is implicit as they jump precisely at the rays of integration.

In this form the equation is considered as an actual equation for all of the $\mathcal{X}_\gamma$. The approach to solving this equation in \cite{Gaiotto.2013} is by iteration, starting with $\mathcal{X}_\gamma^{\text{sf}}$ as initial approximation. It remains to be shown that this process converges to a solution, which is then supposed to solve the Riemann-Hilbert problem. In our setting the situation is different as we already constructed a solution of the Riemann-Hilbert problem and we want to show that this solution solves the integral equation. To show this we use the fact that the equation is specifically tailored for solving the equation in the first place. This becomes clearer when writing the equation in the following form, which was the starting point in \cite{Gaiotto.2013}:

\begin{equation}
	\mathcal{X}_\gamma(\zeta)=\mathcal{X}_\gamma^{\text{sf}}(\zeta)\text{exp}\left[\frac{1}{4\pi i}\sum_l\int_{l}\frac{d\zeta'}{\zeta'}\frac{\zeta'+\zeta}{\zeta'-\zeta}\text{log}\left(\frac{\mathcal{X}_\gamma(\zeta')}{S_l\mathcal{X}_\gamma(\zeta)}\right)\right].
\end{equation}

Now the sum runs over the BPS rays $l$. Of course in all these sums it is necessary that they converge appropriately. This is clear for a finite amount of jumps, but as we have seen before, infinitely many jumps may occur, when a ring domain appears. We will deal with this below.

Although not difficult it is enlightening so see how the two formulas relate. Consider the log-part in the second formula for some BPS ray $l$. From the definition (and suppressing the $\zeta'$-dependence) it can be expanded as 
\[
	\begin{split}
	\text{log}\left(\frac{\mathcal{X}_\gamma}{S_l\mathcal{X}_\gamma}\right)
	&=\text{log}\left(\mathcal{X}_\gamma\right)-\text{log}\left(S_l\mathcal{X}_\beta\right)
	=\text{log}\left(\mathcal{X}_\gamma\right)-\text{log}\left(\prod_{\beta:l_\beta=l}{\mathcal{K}_\beta^{\Omega(\beta;q)}}\mathcal{X}_\gamma\right)\\
	&=\text{log}\left(\mathcal{X}_\gamma\right)-\text{log}\left(\mathcal{X}_\gamma\prod_{\beta:l_\beta=l}(1-\sigma(\beta)\mathcal{X}_\beta)^{\left\langle\gamma,\beta\right\rangle\Omega(\beta;q)}\right)
	=-\text{log}\left(\prod_{\beta:l_\beta=l}(1-\sigma(\beta)\mathcal{X}_\beta^{\left\langle\gamma,\beta\right\rangle\Omega(\beta;q)}\right)\\
	&=-\sum_{\beta:l_\beta=l}\left\langle\gamma,\beta\right\rangle\Omega(\beta;q)(1-\sigma(\beta)\mathcal{X}_\beta).
	\end{split}
\]
By taking the sum over all BPS rays $l$ we obtain the integral $\int_{l_\beta}$ for every BPS ray $l_\beta$ with $\Omega(\beta;q)\neq0$. Conversely the condition $\Omega(\beta;q)\neq0$ is only fulfilled for $\beta\in\Gamma^g_q$ which correspond to some BPS ray $l_\beta$. Thus the sum in the expression above for all BPS rays $l$ equals exactly the sum over all $\beta\in\Gamma^g_q$ as in the first equation.


Our method of showing that our constructed set of coordinates solves the integral equation can be summarized as follows. First we define an a priori new set of functions $\mathcal{Y}_\gamma$ via the integral equation, where we use the existence of the $\mathcal{X}_\gamma$-coordinates. We then show that these new functions are again a solution of the Riemann Hilbert problem. Finally a uniqueness argument ensures that the solutions have to coincide up to some scalar independent of $\zeta$. Analyzing this scalar will then be the last task to obtain the desired decay rates.

Let us start be defining the "new" set of functions.

\begin{definition}
Let $\mathcal{X}_\gamma$ (for all $\gamma\in\Gamma_q$) denote the constructed set of coordinates that solve the Riemann Hilbert problem \ref{R_H_problem}. The corresponding \textit{integral coordinates} are defined as
\[
\mathcal{Y}_\gamma:=\mathcal{X}_\gamma^{\text{sf}}(\zeta)\text{exp}\left[\frac{1}{4\pi i}\sum_l\int_{l}\frac{d\zeta'}{\zeta'}\frac{\zeta'+\zeta}{\zeta'-\zeta}\text{log}\left(\frac{\mathcal{X}^+_\gamma(\zeta')}{S_l\mathcal{X}^+_\gamma(\zeta)}\right)\right].
\]
\end{definition}

In our proof that these function are solutions to the Riemann Hilbert problem we will already need the main estimates which will later on give us the optimal decay rate. So we have to make a lengthy calculation now.

\begin{theorem}\label{decay_estimate}
	For every $\gamma\in\Gamma_q$ there exist constants $C_{\beta,n}$, which are independent of $\zeta,R$ and the coordinates of the moduli space, such that the following estimate holds in the large $R$ limit:
	\[
	\left\|\log\left(\frac{\mathcal{Y}_\gamma}{\mathcal{X}^{\text{sf}}_\gamma}\right)\right\|\leq\sum_\beta\Omega(\beta;u)\left\langle\gamma,\beta\right\rangle\sum_{n>0}C_{\beta,n}\frac{1}{\sqrt{4\pi nR|Z_\beta|}}e^{-2n\pi R|Z_\beta|}.
	\]
\end{theorem}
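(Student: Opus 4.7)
The plan is to convert $\log(\mathcal Y_\gamma/\mathcal X_\gamma^{\mathrm{sf}})$ into an absolutely convergent series of Bessel-type integrals and then invoke the classical large-argument asymptotics. First I would rewrite the defining formula for $\mathcal Y_\gamma$ using the algebraic identity derived in the text,
\[
\log\bigl(\mathcal X_\gamma^+/(S_l\mathcal X_\gamma^+)\bigr)=-\sum_{\beta:\,l_\beta=l}\langle\gamma,\beta\rangle\,\Omega(\beta;q)\log\bigl(1-\sigma(\beta)\mathcal X_\beta^+\bigr),
\]
so that
\[
\log\!\left(\frac{\mathcal Y_\gamma}{\mathcal X_\gamma^{\mathrm{sf}}}\right)=-\frac{1}{4\pi i}\sum_{\beta\in\Gamma_q}\Omega(\beta;q)\langle\gamma,\beta\rangle\int_{l_\beta}\frac{d\zeta'}{\zeta'}\frac{\zeta'+\zeta}{\zeta'-\zeta}\log\bigl(1-\sigma(\beta)\mathcal X_\beta^+(\zeta')\bigr).
\]
Because $q$ lies in a chamber with unsealed spectrum, Lemma \ref{unsealed_finite} gives only finitely many ring domains and the accompanying infinite families of saddle connections accumulate only near these, so the double sum is organized into finitely many clusters controlled by the growth of $|Z_\beta|$.

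Next, since Theorem \ref{x_int_form} shows $|\mathcal X_\beta^+(\zeta')|<1$ for all $\zeta'$ on the BPS ray $l_\beta$ when $R$ is large, I would expand the logarithm as an absolutely convergent Taylor series $\log(1-x)=-\sum_{n\ge 1}x^n/n$ and interchange sum and integral. Substituting $\mathcal X_\beta^+=\mathcal X_\beta^{\mathrm{sf}}(1+r_\beta)$ separates each integrand into an explicit exponential from the leading term and a remainder factor $(1+r_\beta(\zeta'))^n$, where $r_\beta$ is uniformly controlled by $Ce^{-\delta R}$. Parametrising the ray by $\zeta'=-tZ_\beta/|Z_\beta|$ for $t\in(0,\infty)$ converts the leading part, as computed in Section \ref{sec_coord}, into
\[
\bigl|\mathcal X_\beta^{\mathrm{sf}}(\zeta')^n\bigr|=\exp\!\bigl(-nR\pi|Z_\beta|(t+t^{-1})\bigr),\qquad \frac{d\zeta'}{\zeta'}=\frac{dt}{t}.
\]

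The central estimate is then the reduction to a modified Bessel integral: for each $n$,
\[
\int_0^\infty \frac{dt}{t}\,e^{-nR\pi|Z_\beta|(t+t^{-1})}=2K_0\bigl(2nR\pi|Z_\beta|\bigr),
\]
and the large-argument asymptotic $K_0(x)\sim\sqrt{\pi/(2x)}\,e^{-x}$ produces exactly the factor $\frac{1}{\sqrt{4\pi nR|Z_\beta|}}e^{-2n\pi R|Z_\beta|}$ appearing in the claim. The remainder factor $(1+r_\beta)^n$ can be absorbed into the constants $C_{\beta,n}$ because, together with the phase contributions, they depend only on the geometric data of $\beta$ and not on $\zeta$ or $R$ in the large-$R$ regime. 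Summing the geometric-type series in $n$ over $\beta$ yields the stated bound.

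The main obstacle I foresee is the Cauchy kernel $(\zeta'+\zeta)/(\zeta'-\zeta)$, which is singular when $\zeta$ approaches the ray $l_\beta$. I would address this by replacing $\mathcal X_\beta$ in the integrand with its boundary value $\mathcal X_\beta^+$, interpreting the integral as a Plemelj–Sokhotski principal value and using that the Cauchy kernel is bounded uniformly in $\zeta$ after restriction to a compact subset of $\mathbb C^\times$ away from the ray, while on the ray the exponential suppression $e^{-2nR\pi|Z_\beta|}$ dominates any principal-value contribution. A secondary subtlety is ensuring the interchange of sum and integral and the uniformity of constants across the countably many saddle connections clustering near each ring-domain ray; this is handled by comparing periods $|Z_\beta|$ of the accumulating saddle classes with the period of the ring domain itself, so that their joint contribution is absorbed into a geometric tail dominated by the first term of the expansion.
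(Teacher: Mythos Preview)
Your proposal is correct and follows essentially the same route as the paper: expand the logarithm as a power series in $\mathcal X_\beta$, parametrise each BPS ray to convert the integrand into $\exp(-n\pi R|Z_\beta|(t+t^{-1}))$, recognise the resulting integral as $2K_0(2n\pi R|Z_\beta|)$, and invoke the large-argument asymptotic of $K_0$. The paper reaches the same Bessel integral via the substitution $t=e^y$ (turning $t+t^{-1}$ into $2\cosh y$) and treats the Cauchy kernel exactly as you do, simply bounding it by a constant depending on the distance of $\zeta$ to $l_\beta$; your worry about uniformity near the ray is legitimate but is glossed over in the paper as well.
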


\begin{proof}
	We want to use our knowledge that for any $\beta\in\Gamma_q$ the coordinates are of the form $\mathcal{X}_\beta=\mathcal{X}^{\text{sf}}_\beta(1+\kappa)$ where $\kappa$ is exponentially bounded to estimate each integral
	\[
		I_{l_\beta}:=-\int_{l_\beta}\frac{d\zeta'}{\zeta'}\frac{\zeta'+\zeta}{\zeta'-\zeta}\log\left(1-\mathcal{X}_\beta(x,\zeta')\right).
	\] 
	Along $l_\beta$ we have $\zeta'=-Z_\beta s$ for $s\in\mathbb{R}_+$, so
	\[
	\mathcal{X}_{\beta}^{\text{sf}}(\zeta)=\exp\left(-\frac{R}{s}\pi+i\theta_\beta-R\pi|Z_\beta|^2s\right)
	=\exp\left(-|Z_\beta|R\pi\left(\frac{1}{|Z_\beta|s}+|Z_\beta|s\right)+i\theta_\beta\right).
	\]
For $R$ large enough $\mathcal{X}_\beta<1$, so we expand $-\log(1-\mathcal{X}_\beta)$ as $\sum_{n=1}^\infty\tfrac{\mathcal{X}^n_\beta}{n}$ and obtain for each $n\geq1$ the integral
\[
	I_{l_\beta,n}:=\int_0^\infty\frac{1}{s}\frac{Z_\beta s-\zeta}{Z_\beta s+\zeta}\frac{\left(1+\kappa(-Z_\beta s)\right)^n}{n}\exp\left(-n|Z_\beta|R\pi\left(\frac{1}{|Z_\beta|s}+|Z_\beta|s\right)+in\theta_\beta\right)ds
\]
Following \cite{FilippiniS.A.GarciaFernandezM..2017} a change of variable $s=e^x$ leads to
\[
\begin{split}
	I_{l_\beta,n}&=\int_{-\infty}^{\infty}\frac{Z_\beta e^x-\zeta}{Z_\beta e^x+\zeta}\frac{\left(1+\kappa(-Z_\beta e^x)\right)^n}{n}\exp\left(-n|Z_\beta|R\pi\left(e^{-x-\log(|Z_\beta|)}+ e^{x+\log|Z_\beta|}\right)+in\theta_\beta\right)dx\\
	&=\int_{-\infty}^{\infty}\frac{Z_\beta e^x-\zeta}{Z_\beta e^x+\zeta}\frac{\left(1+\kappa(-Z_\beta e^x)\right)^n}{n}\exp\left(-2n|Z_\beta|R\pi\cosh\left(x+\log|Z_\beta|\right)+in\theta_\beta\right)dx.
\end{split}
\]
Another change of variable $y=x+\log|Z_\beta|$ then leads to:
\[
\begin{split}
	I_{l_\beta,n}&=\int_{-\infty}^{\infty}\frac{Z_\beta e^{y-\log|Z_\beta|}-\zeta}{Z_\beta e^{y-\log|Z_\beta|}+\zeta}\frac{\left(1+\kappa(-Z_\beta e^{y-\log|Z_\beta|})\right)^n}{n}e^{in\theta_\beta}\exp\left(-2n|Z_\beta|R\pi\cosh\left(y\right)\right)dy\\
	&=\int_{-\infty}^{\infty}\frac{\frac{Z_\beta}{|Z_\beta|} e^{y}-\zeta}{\frac{Z_\beta}{|Z_\beta|} e^{y}+\zeta}\frac{\left(1+\kappa(-\frac{Z_\beta}{|Z_\beta|} e^{y})\right)^n}{n}e^{in\theta_\beta}\exp\left(-2n|Z_\beta|R\pi\cosh\left(y\right)\right)dy.
\end{split}
\]
Now we write $\psi_\beta:=\text{arg}(Z_\beta)$ and obtain
\[
\begin{split}
	I_{l_\beta,n}&=\int_{-\infty}^{\infty}\frac{e^{y+i\psi_\beta}-\zeta}{e^{y+i\psi_\beta}+\zeta}\frac{\left(1+\kappa(-e^{y+i\psi_\beta})\right)^n}{n}e^{in\theta_\beta}\exp\left(-2n|Z_\beta|R\pi\cosh\left(y\right)\right)dy\\
	&=\int_{0}^{\infty}\frac{e^{y+i\psi_\beta}-\zeta}{e^{y+i\psi_\beta}+\zeta}\frac{\left(1+\kappa(-e^{y+i\psi_\beta})\right)^n}{n}e^{in\theta_\beta}\exp\left(-2n|Z_\beta|R\pi\cosh\left(y\right)\right)dy\\
	&+\int_{0}^{\infty}\frac{e^{-y+i\psi_\beta}-\zeta}{e^{-y+i\psi_\beta}+\zeta}\frac{\left(1+\kappa(-e^{-y+i\psi_\beta})\right)^n}{n}e^{in\theta_\beta}\exp\left(-2n|Z_\beta|R\pi\cosh\left(y\right)\right)dy,
\end{split}
\]
where we used the symmetry of $\cosh$ in the last equality. The first fraction is a bounded continuous function that depends on the distance of $\zeta$ to $l_\beta$, and $\kappa$ is exponentially bounded. Thus there exist constants $a,b\in\mathbb{R}_>0$ (independent of $R$, $\zeta$ and $n$) so that we have the following estimate:
\[
	\left\|I_{ln}\right\|\leq a\tfrac{b^n}{n}\int_{0}^\infty \exp\left(-2n\pi R|Z_\beta|\cosh\left(y\right)\right)dy=\left(a\tfrac{b^n}{n}\right)K_{0}\left(2n\pi R|Z_\beta|\right),
\]
where
 $K_0$ is the modified Bessel function of the second kind. The main asymptotic estimate is now given by $K_0(x)\sim\sqrt{\frac{\pi}{2x}}e^{-x}(1+\mathcal{O}(1/x))$. Summing over all $n$ we obtain the asymptotic estimate for the integral along $l_\beta$ as
\[
\begin{split}
	\left\|I_{l_{\beta}}\right\|&\leq\sum_{n>0}\left\|I_{l_\beta,n}\right\|\leq\sum_{n>0}a\tfrac{b^n}{n}K_{0}\left(2n\pi R|Z_\beta|\right)
	\sim \sum_{n>0}C_{\beta,n}\frac{1}{\sqrt{nR|Z_\beta|}}e^{-2n\pi R|Z_\beta|}.
\end{split}
\]
Here we wrote $C_{\beta,n}$ for an upper bound of $\tfrac{b^n}{n}(1+f_n)$, where $f_n=\mathcal{O}(1/x)$ is the remainder for the asymptotic behaviour of $K_0$. The estimate for the full coordinate now follows by summing over all corresponding integrals.
\end{proof}

Now that we have good control over the new functions, we can continue with the plan.

\begin{proposition}
	The set of $\mathcal{Y}_\gamma$ is a solution to the Riemann Hilbert problem.
\end{proposition}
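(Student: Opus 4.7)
The goal is to verify each of the three defining conditions of Theorem \ref{R_H_problem} for the family $\mathcal{Y}_\gamma$ in place of $\mathcal{X}_\gamma$. Throughout, I regard the sum over BPS rays as finite for any bounded range of $R|Z_\beta|$ thanks to Lemma \ref{unsealed_finite} and the estimate of Theorem \ref{decay_estimate}, which also provides a uniform dominating function that legitimizes every passage under the integral sign that appears below.

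For property (1), piecewise holomorphy, I would observe that the only $\zeta$-dependence of the integrand defining $\mathcal{Y}_\gamma$ enters through the Cauchy-type kernel $\frac{1}{\zeta'}\frac{\zeta'+\zeta}{\zeta'-\zeta}$, which is holomorphic in $\zeta$ for $\zeta\notin l$. The prefactor $\mathcal{X}^{\mathrm{sf}}_\gamma(\zeta)=\exp\!\left(\tfrac{R}{\zeta}\pi Z_\gamma+i\theta_\gamma+R\zeta\pi\overline{Z_\gamma}\right)$ is manifestly holomorphic on $\mathbb{C}^\times$. Combining the exponential decay of $\log(1-\mathcal{X}_\beta(\zeta'))$ along $l_\beta$ (which follows from the Bessel-type estimate in Theorem \ref{decay_estimate}) with the smoothness of the kernel in $\zeta$ away from each $l$, differentiation under the integral sign is justified by dominated convergence on any compact subset of $\mathbb{C}^\times\setminus\bigcup_\beta l_\beta$, yielding holomorphy.

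For property (2), the jump across an active BPS ray $l$, I would invoke the Plemelj--Sokhotsky formula for the kernel $\tfrac{1}{2\pi i}\tfrac{d\zeta'}{\zeta'-\zeta}$ applied to each summand along $l$. The additional factor $\tfrac{\zeta'+\zeta}{\zeta'}$ is continuous as $\zeta$ crosses $l$, so it just contributes a factor $2$ at the point of residue. Writing $\zeta_\pm$ for limits of $\zeta$ approaching a point $\zeta_0\in l$ from the two sides and using that the integrand on rays $l'\neq l$ extends holomorphically across $l$, the difference of the exponents at $\zeta_+$ and $\zeta_-$ reduces to the jump of the single integral along $l$, giving
\[
\log\mathcal{Y}_\gamma^+(\zeta_0)-\log\mathcal{Y}_\gamma^-(\zeta_0)
=-\log\!\left(\frac{\mathcal{X}^+_\gamma(\zeta_0)}{S_l\,\mathcal{X}^+_\gamma(\zeta_0)}\right)
=\log S_l^\gamma(\zeta_0),
\]
which is exactly the required transformation $\mathcal{Y}_\gamma^+=(S_l^\gamma)^{-1}\mathcal{Y}_\gamma^-$ once one unfolds the product structure \eqref{jumps} and the definition of $S_l\mathcal{X}_\gamma$ as multiplication by $\mathcal{K}^\gamma_\beta$. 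The main care needed here is consistency of the branch of $\log$ with the orientation of $l_\beta$ dictated by Lemma \ref{angle_prop}; this is the step most likely to conceal a sign pitfall and will require matching conventions with the construction of $\sigma$ and $\Omega$ in section \ref{sec_hom}.

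For property (3), the boundedness of $\mathcal{Y}_\gamma(\zeta)/\mathcal{X}^{\mathrm{sf}}_\gamma(\zeta)$ at $\zeta=0$ and $\zeta=\infty$, I would use Theorem \ref{decay_estimate} directly: its bound
\[
\left\|\log\!\left(\frac{\mathcal{Y}_\gamma}{\mathcal{X}^{\mathrm{sf}}_\gamma}\right)\right\|\leq \sum_{\beta}\Omega(\beta;q)\langle\gamma,\beta\rangle\sum_{n>0}C_{\beta,n}\frac{1}{\sqrt{4\pi n R|Z_\beta|}}\,e^{-2n\pi R|Z_\beta|}
\]
is uniform in $\zeta$ on compact subsets of $\mathbb{C}^\times$ once the dependence of the bounding constants $a,b$ on the distance of $\zeta$ to each $l_\beta$ is tracked. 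The only issue is to verify uniformity as $\zeta\to 0,\infty$ along a ray avoiding the active BPS rays, which follows because the factor $\tfrac{\zeta'+\zeta}{\zeta'-\zeta}$ remains uniformly bounded in those regimes. Putting all three properties together, $\mathcal{Y}_\gamma$ satisfies the same Riemann--Hilbert data as $\mathcal{X}_\gamma$, proving the proposition.
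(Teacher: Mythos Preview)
Your approach is essentially the paper's: recognize the exponent as a Cauchy-type integral, apply Plemelj--Sokhotski for the jump, and use the estimates of Theorem~\ref{decay_estimate} for control. The paper in fact makes the Plemelj step cleaner by first using that $\mathcal{X}_\gamma$ solves the RH problem to rewrite the density along each ray simply as $\tfrac12\tfrac{\zeta'+\zeta}{\zeta'}\log(S_l^{-1})$, which spares you the sign bookkeeping you flag; the partial-fractions identity $\tfrac{\zeta'+\zeta}{\zeta'(\zeta'-\zeta)}=-\tfrac{1}{\zeta'}+\tfrac{2}{\zeta'-\zeta}$ makes the factor-of-$2$ completely transparent and removes the $\zeta$-dependence from the density before Plemelj is applied.

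There is, however, a genuine gap in your treatment of the sum over BPS rays. You write that you ``regard the sum over BPS rays as finite for any bounded range of $R|Z_\beta|$ thanks to Lemma~\ref{unsealed_finite}''. Lemma~\ref{unsealed_finite} only bounds the number of \emph{ring domains}; it does not prevent infinitely many saddle connections, and indeed near each ring domain with charge $\gamma_0$ there is an infinite family of saddle connections with charges $\beta+m\gamma_0$, $m\in\mathbb{Z}$, whose BPS rays accumulate on $l_{\gamma_0}$. Your holomorphy argument establishes analyticity on $\mathbb{C}^\times\setminus\bigcup_\beta l_\beta$, but when rays accumulate this complement is not open at the limiting ray, so differentiation under the integral and a dominating function are not enough. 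The paper handles this explicitly: it forms the finite partial products $P_k=\prod_{m=1}^k\exp(I_{\beta+m\gamma_0})$, uses the bound $\|I_{\beta+m\gamma_0}\|\le B\,e^{-mR|Z_{\gamma_0}|}$ extracted from Theorem~\ref{decay_estimate} to get uniform convergence $P_k\to P$, and then argues that $P$ is continuous at the accumulation ray and holomorphic off it, so that after multiplying in the single jump function for the ring-domain ray one obtains the correct behaviour everywhere. Without this step, property~(1) (and implicitly the well-definedness of $\mathcal{Y}_\gamma$ itself) is not established at those accumulation rays.
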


The proof uses that the standard approach to solving a Riemann Hilbert problem is to define a Cauchy-type integral where the integrand is given by the jumping behaviour. Here we follow in particular \cite{FilippiniS.A.GarciaFernandezM..2017} for the general approach. They examine an example which is quite similar to ours, so at first we simply have to interpret the definition above in this way, but some additional complexities arise as we may have infinitely many jumps.

\begin{proof}
	We start the proof with the description of of a single jump, from which the result for finitely many jumps follows immediately. Finally, we have to consider the case of infinitely many jumps which is quite a bit more intricate and uses some additional information.

	As $\mathcal{X}_\gamma$ is a solution to the Riemann Hilbert problem for all $\gamma\in\Gamma_q$ it holds
	\[
		\frac{\mathcal{X}^+_\gamma(\zeta')}{S_l\mathcal{X}^+_\gamma(\zeta)}=\frac{\mathcal{X}^+_\gamma(\zeta')}{\mathcal{X}^-_\gamma(\zeta)}
		=S_l^{-1}.
	\]
	Note that for the last equality we use that in our setting we can regard $S_l$ simply as a function with values in $\mathbb{C}^\times$.
	Thus we see that the integrand for each BPS ray $l$ is given by 
	\[
		\frac{1}{\zeta'-\zeta}\frac{\zeta'+\zeta}{\zeta'}\frac{1}{2}\text{log}(S_l^{-1})=:\frac{1}{\zeta'-\zeta}\kappa_l.
	\]
	Here we included the factor $1/2$ in $\kappa_l$, so that the integral obtains the standard coefficient $\tfrac{1}{2\pi i}$. In this form $\kappa_l$ is continuous along $l$ (for all $\zeta$) and called the density, while $\frac{1}{\zeta'-\zeta}$ encodes the pole structure and is called the kernel. For $\zeta$ outside the BPS ray $l$ standard complex analysis tells us, that $I:=\tfrac{1}{2\pi i}\int_{l}\tfrac{1}{\zeta-\zeta'}\kappa_ld\zeta'$ defines a holomorphic function on the complement of $l$.
	
	For the next step we start by observing, that the continuation $\mathcal{X}^-_\gamma$ is holomorphic in 
	$\zeta$ with bounded derivative along $l$ (cf. section \ref{sec_diff_dep}). In fact it decays exponentially for $\zeta\to0$ and $\zeta\to\infty$, so $\kappa_l$ is also holomorphic with bounded derivative along $l$. Note that this depends on the choice of direction for $l$. As such $\kappa_l$ is Hölder continuous which is sufficient for the existence of the Cauchy principal value $\mathcal{P}I$ 
	of the integral. Crucially the Plemelj-Sokhotski formula now holds (cf. \cite{Mushkhelishvili.1953}, in particular section 2). It guarantees the existence of the limits $I^+(\zeta_0)$ and $I^-(\zeta_0)$ of $I$ as $\zeta$ approaches $\zeta_0\in l$ from both sides and relates them as 
	\[
		\begin{split}
			I^+(\zeta_0)&=\frac{1}{2}\kappa(\zeta_0)+\frac{1}{2\pi i}\mathcal{P}\int_l\frac{\kappa}{\zeta'-\zeta_0}d\zeta',\\
			I^-(\zeta_0)&=-\frac{1}{2}\kappa(\zeta_0)+\frac{1}{2\pi i}\mathcal{P}\int_l\frac{\kappa}{\zeta'-\zeta_0}d\zeta'.
		\end{split}
	\]
	
	We don't really have to deal with the principal value and can just use the implication that the difference of both sides is simply
	\[
		I^+(\zeta_0)-I^-(\zeta_0)=\kappa(\zeta_0)=\log(S_l^{-1}(\zeta_0)).
	\]
	
	Accordingly $\exp(I^+(\zeta_0))$ and $\exp(I^-(\zeta_0))$ exist and are related by
	\[
		\frac{\exp(I^+(\zeta_0))}{\exp(I^-(\zeta_0))}=\exp(I^+(\zeta_0)-I^-(\zeta_0))=\exp(\log(S_l^{-1}(\zeta_0)))=S_l^{-1}(\zeta_0).
	\]
	Thus we see that $\exp(I(\zeta))$ is a holomorphic function on the complement of $l$ in $\mathbb{C}^\times$ with the desired jumping behaviour at $l$. As the corresponding function for a different BPS ray $l'$ is holomorphic along $l$ they can safely be multiplied to obtain a function that jumps at $l$ and $l'$ and is holomorphic at their complement, which can be reiterated for all of the BPS rays in a finite sum. The same holds for multiplication with $\mathcal{X}_{\text{sf}}$ which is holomorphic on all of $\mathbb{C}^\times$ so that we obtain the desired behaviour for the whole $\mathcal{Y}_\gamma$ if only finitely many jumps appear.
	
	It remains to consider the case of infinitely many jumps. From the discussion in \cite{Gaiotto.2013} we know that this happens precisely, when a ring domain appears and we know, that the corresponding charges in $\Gamma_q$ are given by $\beta+m\gamma$ for $m\in\mathbb{Z}$ where $\gamma$ is the charge of the ring domain (cf. \cite{Bridgeland.2017}). We start by considering the product of jump-functions $P_k:=\prod_{m=1}^k\exp(I_{\beta+m\gamma})$ for $k\in\mathbb{N}$. When the ring domain $\gamma$ appears at some angle $\vartheta$, these are exactly the first $k$ jumps which appear on "one side" of $\vartheta$, say for angles smaller then $\vartheta$. From Theorem \ref{decay_estimate} we infer
	\[
		\left\|I_{\beta+m\gamma}\right\|\leq \frac{A}{\sqrt{R|Z_{\beta+m\gamma}|}}e^{-\pi R|Z_{\beta+m\gamma}|}\leq Be^{-mR|Z_\gamma|}
	\]
	for some constants $A,B$ which do not depend on $m$. So we obtain the following estimate.
	\[
		\left|\sum_{m=0}^\infty I_{\beta+m\gamma}\right|\leq\sum_{m=0}^\infty\left|e^{-mR|Z_\gamma|}\right|.
	\]
	From this we infer that $\left(P_k\right)_{k\in\mathbb{N}}$ is uniformly Cauchy and therefore converges uniformly towards $P=\prod_{m=1}^{\infty}\exp(I_{\beta+m\gamma})$. As all $P_k$ are continuous away from the rays of the jumps, $P$ is also continuous at this locus, which includes the ray at $\vartheta$. Additionally all $P_k$ are holomorphic away from the ray at $\vartheta$ and the rays of the jump and so is $P$. 
	
	The same procedure works for the infinite amounts of jumps on the right side of $\vartheta$, so their product is a function that is holomorphic away from the BPS rays and the ray of angle $\vartheta$, and whose right and left hand side limits for $\vartheta$ match. Finally we can build the product with the jump function for $\vartheta$ and obtain the function with correct jumping behaviour everywhere for a ring domain. As we have only finitely many ring domains we can do this for all ring domains and obtain the correct function.
\end{proof}

\begin{remark}
	The last step of the proof above uses again the fact, that we only have finitely many ring domains, which, as of Lemma \ref{unsealed_finite} is the case for generic differentials with unsealed spectrum. The spectrum had to be unsealed there, as otherwise another type of jump of the $ca$-triangulation could occur, which is not yet fully understood. So the problems there were of a rather geometrical nature. Here on the other hand, the specific geometric meaning of the jumps is not as important. Instead the problem with another type of infinitely many jumps leads to problems in the definition of the supposed solution of the Riemann-Hilbert problem. But this is rather an analytical problem, as even for, say, an infinite amount of accumulations points of jumps, one might come up with a good way of ordering them and obtaining a well-behaved solution. In fact, the assumptions here are hardly the most general one for obtaining a solution to such a Riemann-Hilbert problem. For a more general treatment of these problems we refer to \cite{Bridgeland.2017}. So we note that this theory provides a link between two aspects of quite different nature (geometrical and analytical) of quadratic differentials.
\end{remark}

We can now complete the link of the Riemann Hilbert problem and the geometric construction from before with the following uniqueness property.

\begin{proposition}
	The solution of the Riemann Hilbert problem stated above is unique up to multiplication by a constant.
\end{proposition}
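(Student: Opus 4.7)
Let $\{\mathcal{X}_\gamma\}$ and $\{\tilde{\mathcal{X}}_\gamma\}$ be two solutions of the Riemann--Hilbert problem of Theorem~\ref{R_H_problem}. The plan is to organize the proof in two steps: first, to show that the coordinates attached to active charges are determined uniquely up to multiplicative constants, and second, to deduce the general statement for arbitrary $\gamma$ by a Liouville-type argument on the quotient.

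For the first step, I would exploit the fact that each active coordinate $\mathcal{X}_\beta$ (with $\Omega(\beta;q)\neq 0$) has no jump at its own BPS ray $l_\beta$ because $\langle\beta,\beta\rangle=0$; its jumps occur only along the other active rays $l_{\beta'}$ with $\langle\beta,\beta'\rangle\neq 0$. Repeating the Plemelj--Sokhotski construction of the preceding proposition for each solution separately produces for every solution an integral representation
\[
\log\mathcal{X}_\gamma(\zeta)=\log\bigl(A_\gamma\mathcal{X}_\gamma^{\text{sf}}(\zeta)\bigr)-\frac{1}{4\pi i}\sum_{\beta}\Omega(\beta;q)\langle\gamma,\beta\rangle\int_{l_\beta}\frac{d\zeta'}{\zeta'}\frac{\zeta'+\zeta}{\zeta'-\zeta}\log\bigl(1-\sigma(\beta)\mathcal{X}_\beta(\zeta')\bigr),
\]
where $A_\gamma\in\mathbb{C}^\times$ is a constant absorbing the residual freedom coming from the asymptotic behaviour at $\zeta=0$ and $\zeta=\infty$ allowed by property~(3). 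Restricted to the (by Lemma~\ref{unsealed_finite} essentially finite) family of active charges, this is a closed fixed-point equation for the tuple $(\mathcal{X}_\beta)_{\beta\text{ active}}$.

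The core of the argument is then to show that, for $R$ large enough, this fixed-point equation is a contraction on a suitable neighborhood of the semiflat tuple $(\mathcal{X}_\beta^{\text{sf}})$. I would work on the Banach space of piecewise-holomorphic tuples equipped with the supremum norm applied to $\log(\mathcal{X}_\beta/\mathcal{X}_\beta^{\text{sf}})$ and use the key estimate of Theorem~\ref{decay_estimate}: each integral $I_{l_\beta}$ is dominated by an exponentially small term of order $e^{-2\pi R|Z_\beta|}$, and a direct perturbation analysis of $\log(1-\sigma(\beta)\mathcal{X}_\beta)$ shows that the Lipschitz constant of the right-hand side, viewed as a function of the $\mathcal{X}_\beta$, is bounded by the same exponentially small quantity. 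For $R$ sufficiently large this is strictly less than $1$, so the fixed point is unique up to the choice of constants $A_\beta$. Hence $\mathcal{X}_\beta$ and $\tilde{\mathcal{X}}_\beta$ differ only by multiplicative constants for every active $\beta$.

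Once this is established, I would observe that at every BPS ray $l$ the ratio $f_\gamma:=\mathcal{X}_\gamma/\tilde{\mathcal{X}}_\gamma$ has jump factor $\tilde S_l^\gamma/S_l^\gamma$, and after an appropriate rescaling of the solutions by characters of $\Gamma_q$ (to normalize the constants $A_\beta$ for active $\beta$ to $1$), these factors become trivial by step one. The renormalized $f_\gamma$ is then piecewise-holomorphic on $\mathbb{C}^\times$ without jumps, hence holomorphic, and bounded near $\zeta=0$ and $\zeta=\infty$ by property~(3) applied to both solutions. Liouville's theorem, applied to the extension to $\mathbb{CP}^1$ after removing the isolated singularities at $0$ and $\infty$, forces $f_\gamma$ to be constant, giving the claimed uniqueness. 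The main obstacle will be the careful set-up of the Banach space in which the contraction estimate holds uniformly: in particular, verifying that the fixed-point operator preserves the asymptotic normalization~(3) on perturbations of $\mathcal{X}^{\text{sf}}$ and controlling the accumulation of saddle connections near the finitely many ring domains requires a modest refinement of the estimates of Theorem~\ref{decay_estimate}, but no new ideas beyond those already developed in the previous section.
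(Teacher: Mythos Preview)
Your proposal is substantially more elaborate than what the paper needs, and the extra work stems from a misreading of the problem. You treat the Riemann--Hilbert problem as genuinely nonlinear: two solutions $\{\mathcal{X}_\gamma\}$ and $\{\tilde{\mathcal{X}}_\gamma\}$ would come with \emph{different} jump factors $S_l^\gamma$ and $\tilde S_l^\gamma$, since each is built from its own $\mathcal{X}_\beta$'s, and you then invoke a contraction argument (your step one) to force these factors to agree. In the paper's setting this is unnecessary. The ``Riemann--Hilbert problem stated above'' is the one with \emph{fixed} jump data: the $S_l^\gamma$ are prescribed once and for all using the Fock--Goncharov coordinates $\mathcal{X}_\beta$ that have already been constructed in Section~\ref{const_on_mod}, and the proposition is only applied to compare this $\mathcal{X}_\gamma$ with the integral functions $\mathcal{Y}_\gamma$, which by construction (preceding proposition) have exactly the same jumps $S_l^\gamma$.

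With this understood, the paper's proof is just your step two, done directly: for two solutions $X$ and $Y$ with identical prescribed jumps one forms $Z=X/Y$, observes $Z^+=(S_l^{-1}X^-)/(S_l^{-1}Y^-)=Z^-$ at every ray, so $Z$ is continuous on $\mathbb{C}^\times$; Morera's theorem (with a little care at the accumulation rays near the finitely many ring domains) gives holomorphicity on $\mathbb{C}^\times$; boundedness at $0$ and $\infty$ from property~(3) upgrades this to an entire bounded function, and Liouville finishes. Your contraction scheme and the Banach-space machinery are not needed here, though they would be the right ingredients if one wanted uniqueness for the fully self-referential problem (which the paper does not claim).
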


\begin{proof}
	Let $X$ and $Y$ denote two solutions of the Riemann Hilbert problem and consider $Z:=\frac{X}{Y}$. This function is holomorphic on the complement of the BPS rays and the limits when approaching any ray $l$ from both sides are related as
	\[
		Z^+=\frac{X^+}{Y^+}=\frac{S_l^{-1}X^-}{S_l^{-1}Y^-}=\frac{X^-}{Y^-}=Z^-.
	\]
	We see that $Z$ does not jump at the rays and thus defines a continuous function on all of $\mathbb{C}^\times$. We also obtain that $Z$ is holomorphic on $\mathbb{C}^\times$, where again some care is necessary for the case of infinitely many jumps. If a ring domain occurs at angle $\vartheta$ the holomorphicity away from the BPS rays follows directly. On the BPS rays away from $\vartheta$ the holomorphicity follows from Morera's theorem, as $Z$ is holomorphic on a neighborhood of the ray. Thus only the ray at $\vartheta$ remains and we can again use Morera's theorem for this ray and obtain that $Z$ is holomorphic on all of $\mathbb{C}^\times$. (Note that we used the fact that only finitely many ring domains may occur here. Otherwise different accumulation phenomena could occur.) From the boundedness assumption we obtain that $Z$ has a removable singularity at $0$, so as of Riemann's theorem on removable singularities $Z$ can be continued to a holomorphic function on $\mathbb{C}$. The boundedness at $\infty$ on the other hand now guarantees as of Liouville's Theorem that $Z$ is constant, i. e. $X=AY$ for some $A\in\mathbb{C}^\times$. 
\end{proof}

We note that similar arguments concerning the uniqueness of solutions to Riemann-Hilbert problems can be found in \cite{Bridgeland.2017} and \cite{Neitzke2017}. Crucially we obtain the following corollary.

\begin{corollary}\label{A_appears}
There exists some $A_\gamma$ independent of $\zeta\in\mathbb{C}^\times$, s. t. 
$\mathcal{X}_\gamma=A_\gamma\mathcal{Y}_\gamma$. 
\end{corollary}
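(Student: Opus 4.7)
The plan is to observe that the corollary is an immediate consequence of the two preceding results: the set of functions $\mathcal{X}_\gamma$ is a solution of the Riemann-Hilbert problem by Theorem \ref{R_H_problem}, while the set $\mathcal{Y}_\gamma$ was just shown to be another solution of the same Riemann-Hilbert problem in the preceding proposition. By the uniqueness result established just above (namely that any two solutions of this Riemann-Hilbert problem differ by multiplication with a $\zeta$-independent constant), the existence of $A_\gamma\in\mathbb{C}^\times$ with $\mathcal{X}_\gamma=A_\gamma\mathcal{Y}_\gamma$ follows.

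The only subtlety worth addressing explicitly is to check that the hypotheses of the uniqueness proposition are in fact met. First, both $\mathcal{X}_\gamma$ and $\mathcal{Y}_\gamma$ are piecewise holomorphic on $\mathbb{C}^\times$ with discontinuities only on the active BPS rays, and share the same jump functions $S_l^\gamma$ on each such ray. Second, both quotients $\mathcal{X}_\gamma/\mathcal{X}^{\mathrm{sf}}_\gamma$ and $\mathcal{Y}_\gamma/\mathcal{X}^{\mathrm{sf}}_\gamma$ are bounded near $\zeta=0$ and $\zeta=\infty$: for $\mathcal{X}_\gamma$ this is the content of property 3 of Theorem \ref{R_H_problem}, and for $\mathcal{Y}_\gamma$ it follows from the exponential estimate in Theorem \ref{decay_estimate} applied at $\zeta\to 0$ and $\zeta\to\infty$, since the factor $\tfrac{\zeta'+\zeta}{\zeta'-\zeta}$ in the integrand stays bounded away from the BPS rays in those limits. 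Therefore the quotient $\mathcal{X}_\gamma/\mathcal{Y}_\gamma$ satisfies all the hypotheses used in the proof of uniqueness and hence extends to a bounded entire function, which is constant by Liouville's theorem.

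I expect no serious obstacle here, since the heavy lifting has already been done in the previous two results; the corollary is essentially a restatement of uniqueness in the specific situation where one of the two solutions is the geometrically constructed $\mathcal{X}_\gamma$ and the other is the integral-equation-defined $\mathcal{Y}_\gamma$. The only care needed is to spell out why $\mathcal{Y}_\gamma/\mathcal{X}^{\mathrm{sf}}_\gamma$ is bounded at $0$ and $\infty$, which is where Theorem \ref{decay_estimate} is used in an essential way. The analysis of the constant $A_\gamma$ itself, in particular the statement that $|A_\gamma|=1$ and that $A_\gamma$ is exponentially close to $1$ for large $R$, will then be a separate matter handled using the reality condition of Theorem \ref{reality} together with the same exponential estimate, but that is not part of the present corollary.
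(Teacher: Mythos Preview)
Your proposal is correct and matches the paper's approach: the corollary is stated without proof in the paper, as it follows immediately from combining Theorem \ref{R_H_problem} (the $\mathcal{X}_\gamma$ solve the Riemann--Hilbert problem), the preceding proposition (the $\mathcal{Y}_\gamma$ solve it as well), and the uniqueness proposition. Your explicit verification that $\mathcal{Y}_\gamma/\mathcal{X}^{\mathrm{sf}}_\gamma$ is bounded at $0$ and $\infty$ via Theorem \ref{decay_estimate} is a useful elaboration that the paper leaves implicit.
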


The value of the constant $A_\gamma$ can be regulated by demanding the appropriate behaviour for $\zeta\to\infty$ or $\zeta\to0$. By conjecture the constant should be $1$ in our case, but at the moment it is not clear, why this should be the case. The question is rather important, as $A_\gamma$ is as of now only constant w. r. t. $\zeta$ and not the coordinates of $\mathcal{M}$, so we need more information about this as we want to talk about asymptotic decay when varying the objects on $\mathcal{M}$. Luckily we have such information, so that we can obtain our main theorem of this section.

\begin{theorem}
	The constant $A_\gamma$ is of the form $A_\gamma=\tfrac{1+K}{e^k}$, where $K$ and $k$ are exponentially bounded.
\end{theorem}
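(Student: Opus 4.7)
The plan is to exploit the $\zeta$-independence of $A_\gamma$ from Corollary \ref{A_appears} by simply evaluating the identity $\mathcal{X}_\gamma = A_\gamma \mathcal{Y}_\gamma$ at some convenient point $\zeta_0 \in \mathbb{C}^\times$ that is bounded away from all BPS rays and lies in the regime $\zeta_0 \in \mathbb{H}_\vartheta$ with $|\zeta_0|$ small enough that both the integral expansion of Section \ref{sec_coord} and the asymptotic estimate of Theorem \ref{decay_estimate} are valid. Since the left-hand side is independent of $\zeta$, the value of the ratio at this particular $\zeta_0$ already determines $A_\gamma$ once and for all, so I do not have to worry about extracting uniform expressions for $K$ and $k$ as functions of $\zeta$.

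At $\zeta_0$, Theorem \ref{x_int_form} combined with the discussion preceding it yields
\[
\mathcal{X}_\gamma(\zeta_0) = \mathcal{X}_\gamma^{\text{sf}}(\zeta_0)\bigl(1 + r_q(\zeta_0)\bigr),
\]
with $|r_q(\zeta_0)| \leq C e^{-\delta R}$ for some constants $C,\delta > 0$ depending on $\zeta_0$ but not on $R$. On the other hand, the definition of $\mathcal{Y}_\gamma$ together with the expansion used in Theorem \ref{decay_estimate} gives
\[
\mathcal{Y}_\gamma(\zeta_0) = \mathcal{X}_\gamma^{\text{sf}}(\zeta_0)\exp\bigl(I(\zeta_0)\bigr),
\]
where $I(\zeta_0)$ is the sum of the Cauchy-type integrals along the active BPS rays and is bounded by the exponential tail $\sum_{\beta,n}C_{\beta,n}\frac{1}{\sqrt{4\pi n R |Z_\beta|}}e^{-2\pi n R |Z_\beta|}$. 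In particular $|I(\zeta_0)| \leq C' e^{-\delta' R}$ for some constants depending only on the distance of $\zeta_0$ to the BPS rays and on the spectrum of $q$.

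Dividing these two expressions the leading semiflat factor $\mathcal{X}_\gamma^{\text{sf}}(\zeta_0)$ cancels and one obtains
\[
A_\gamma \;=\; \frac{\mathcal{X}_\gamma(\zeta_0)}{\mathcal{Y}_\gamma(\zeta_0)} \;=\; \frac{1 + r_q(\zeta_0)}{\exp\bigl(I(\zeta_0)\bigr)},
\]
so that setting $K := r_q(\zeta_0)$ and $k := I(\zeta_0)$ yields the claimed form. Both $K$ and $k$ are exponentially bounded in $R$ by the two estimates just invoked, and the choice of $\zeta_0$ only contributes to the multiplicative constants in those bounds.

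The only nontrivial point, and therefore the main place to be careful, is to verify that a single $\zeta_0$ can be chosen so that both estimates apply simultaneously: the form $\mathcal{X}_\gamma = \mathcal{X}_\gamma^{\text{sf}}(1+r_q)$ was derived for $\zeta$ in the half-plane $\mathbb{H}_\vartheta$ with $|\zeta|$ sufficiently small, while the Bessel-type estimate of Theorem \ref{decay_estimate} requires $\zeta_0$ to stay at a definite distance from every active BPS ray. Since the set of active BPS rays is locally finite (only finitely many ring domains by Lemma \ref{unsealed_finite}, and saddle connections accumulate only at ring-domain phases), such a $\zeta_0$ manifestly exists, and the proof is complete. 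An analogous evaluation in the opposite regime $\zeta_0 \to \infty$ via the reality condition of Theorem \ref{reality} could be used as a cross-check if one wished to track the constants more explicitly.
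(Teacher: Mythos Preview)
Your argument is correct and follows essentially the same route as the paper: both exploit the $\zeta$-independence of $A_\gamma$ to compute the ratio $\mathcal{X}_\gamma/\mathcal{Y}_\gamma$ at a convenient value of $\zeta$, invoke Theorem~\ref{x_int_form} for the numerator and Theorem~\ref{decay_estimate} for the denominator, and cancel the common semiflat factor. The only cosmetic difference is that the paper passes to the limit $\zeta\to 0$ (where the Cauchy kernel $\tfrac{\zeta'+\zeta}{\zeta'-\zeta}\to 1$ and no BPS-ray avoidance is needed), whereas you evaluate at a finite $\zeta_0$ and then carefully justify that such a $\zeta_0$ can be chosen compatibly with both estimates; either way the same two bounds produce the claimed $K$ and $k$.
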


\begin{proof}
	The constant $A_\gamma$ is given by the quotient
	\[
		A_\gamma=\frac{\mathcal{X}_\gamma}{\mathcal{Y}_\gamma}=\frac{\mathcal{X}_\gamma\left(\mathcal{X}^{\text{sf}}_\gamma\right)^{-1}}{\mathcal{Y}_\gamma\left(\mathcal{X}^{\text{sf}}_\gamma\right)^{-1}},
	\]
	which is independent of $\zeta$ as of the preceding argument. We added the quotient by $\mathcal{X}^{\text{sf}}_\gamma$ to highlight that this expression appears in $\mathcal{X}_\gamma$ as well as in $\mathcal{Y}_\gamma$ and thus cancels in the expression for $A$. From the calculations in the proof of Theorem \ref{decay_estimate} it follows $\underset{\zeta\to0}{\text{lim}}\left(\mathcal{Y}_\gamma\mathcal{X}^{\text{sf}}_\gamma\right)^{-1}=e^k$ for some $k$ with the appropriate behaviour. On the other hand we have from section \ref{sec_coord} that $\underset{\zeta\to0}{\text{lim}}\left(\mathcal{X}_\gamma\mathcal{X}^{\text{sf}}_\gamma\right)^{-1}$ is of the proposed form, so the constant follows as the corresponding quotient.
\end{proof}
	
\section{Twistorial construction of the hyperkähler metric}\label{twist_metric_all}

	In this final section we construct the hyperkähler metric out of the coordinate functions we just build. For this we first summarize the general idea of 
	constructing hyperkähler metrics by twistor methods in subsection \ref{sec_basic_twistor} and show how this works for the concrete example of the semiflat metric 
	in subsection \ref{twist_sem_flat}. Finally we use the construction for 
	the true coordinate functions that we build in the last section to obtain another hyperkähler metric which is then shown to decay exponentially towards 
	the semiflat metric in subsection \ref{twist_metric}.

	\subsection{Basic twistor theory}\label{sec_basic_twistor}
	
	The foundation for all of the upcoming constructions is the twistor theorem which we therefore state at this point. The original construction is found in 
	\cite{Hitchin.1987} and \cite{Hitchin.1992}. Consider any hyperkähler manifold $M$ 
	with complex structures $I$, $J$ and $K$ and corresponding Kähler forms $\omega_I$, $\omega_J$ and $\omega_K$. One of the crucial ideas of twistor 
	theory is to make use of the fact that in case of a hyperkähler manifold there is actually a whole $\mathbb{CP}^1$ worth of complex structures on $M$. 
	They 
	may be denoted by
	\[
		I^\zeta=\frac{(1-|\zeta|^2)I+i(-\zeta+\overline{\zeta})J-(\zeta+\overline{\zeta})K}{1+|\zeta|^2},
	\]
	where $\zeta$ is a local coordinate of $\mathbb{CP}^1$. The corresponding Kähler form is then given by
	\[
		\omega^{\zeta}=\frac{(1-|\zeta|^2)\omega_I+i(-\zeta+\overline{\zeta})\omega_J-(\zeta+\overline{\zeta})\omega_K}{1+|\zeta|^2}.
	\]
	Additionally there is the holomorphic symplectic structure 
	\begin{equation}\label{decomp_form}
		\varpi(\zeta)=-\frac{i}{2\zeta}\omega_++\omega_I-\frac{i}{2}\zeta\omega_-
	\end{equation}
	for $\omega_{\pm}=\omega_J\pm i\omega_K$. This structure is used to build the twistor space of $\mathcal{Z}$ of $M$ as the product manifold $Z:=M\times\mathbb{CP}^1$ 
	equipped with the complex structure
	\[
	\underline{I}:=(I^\zeta,I_0)
	\]
	where $I_0$ is the complex structure on $\mathbb{CP}^1$ obtained by multiplication with $i$ on the tangent space $T_\zeta\mathbb{CP}^1$ of 
	$\zeta\in\mathbb{CP}^1$.
	There is then an obvious fibration of the twistor space $\mathcal{Z}$ over $\mathbb{CP}^1$ with holomorphic sections $\zeta\mapsto(m,\zeta)$ for each 
	$m\in M$ which are called the twistor lines and which each have normal bundle isomorphic to $\mathbb{C}^{2n}\otimes O(1)$. Additionally there is a real 
	structure $\tau:M\times\mathbb{CP}^1\to M\times\mathbb{CP}^1$ given by $\tau(m,\zeta)=\left(m,\-\tfrac{1}{\overline{\zeta}}\right)$ that is compatible
	with all of the aforementioned structure in a suitable sense and induces the antipodal map on $\mathbb{CP}^1$. 
	
	Crucially this collection of structures on its own is enough of to obtain a hyperkähler manifold. This is the famous twistor theorem of N. J. Hitchin, A. 
	Karlhede, U. Lindström and M. Ro$\check{\text{c}}$ek (cf. \cite{Hitchin.1987}, \cite{Hitchin.1992}).
	\begin{theorem}\label{Twistor}
		Let $\mathcal{Z}^{2r+1}$ be a manifold of complex dimension $2r+1$ with the following structure:
		\begin{enumerate}
			\item $\mathcal{Z}$ is a holomorphic fiber bundle $p:\mathcal{Z}\rightarrow\mathbb{C}\mathbb{P}^1$ over the projective line.
			\item The bundle admits a family of holomorphic sections each with normal bundle isomorphic to  $\mathbb{C}^{2r}\otimes O(1)$.
			\item There exists a holomorphic section $\varpi$ of $\Lambda^2T^\ast_F\otimes O(2)$, defining a symplectic form on each fiber, where 
							$T_F:=\text{Ker }dp:T\mathcal{Z}\rightarrow T\mathbb{C}\mathbb{P}^1$ is the tangent bundle along the fibers.
			\item $Z$ has an anti-holomorphic involution $\tau:\mathcal{Z}\rightarrow \mathcal{Z}$  
			which covers the antipodal map on 
				$\mathbb{C}\mathbb{P}^1$: $(m,\zeta)\mapsto(m,-1/\overline{\zeta})$, and preserves 
				$\varpi$, i. e. $\tau^{\ast}\varpi=\overline{\varpi}$.
		\end{enumerate}
		Then the parameter space of real sections is a manifold $M$ of real dimension $4r$ with a natural hyperkähler metric for which $\mathcal{Z}$ is 
		the twistor space. Additionally if one starts with a Hyperkähler manifold $M'$ and uses this construction on it's twistor space to obtain $M$, then $M'$ is naturally included in $M$ and the structures coincide.
	\end{theorem}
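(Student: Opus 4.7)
The plan is to follow the classical Penrose-type twistor correspondence argument. The starting point is Kodaira's deformation theory of compact complex submanifolds: since the distinguished holomorphic sections have normal bundle $\mathbb{C}^{2r}\otimes O(1)$ and $H^1(\mathbb{CP}^1, O(1)^{2r})=0$, the deformation problem is unobstructed, so the family of holomorphic sections forms a smooth complex manifold of complex dimension $h^0(\mathbb{CP}^1, O(1)^{2r})=4r$. The anti-holomorphic involution $\tau$ covers the antipodal map on $\mathbb{CP}^1$ (which is fixed-point free), so its induced action on the space of sections sends a section to another section; the fixed locus of this induced action is the real slice $M$, a real-smooth manifold of real dimension $4r$.

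Next I would build the $\mathbb{CP}^1$-family of complex structures on $M$. For each $\zeta\in\mathbb{CP}^1$, evaluation at $\zeta$ gives a map $\mathrm{ev}_\zeta:M\to\mathcal{Z}_\zeta$. The normal bundle condition forces the (real) differential of $\mathrm{ev}_\zeta$ to be an isomorphism onto $T_{s(\zeta)}\mathcal{Z}_\zeta$ (viewed as a real vector space): a deformation of a section restricted to $\zeta$ recovers a value in $N_{s}|_\zeta$, which by $H^0(\mathbb{CP}^1,O(1)^{2r})\cong\mathbb{C}^{4r}$ identifies the two sides. Pulling back the complex structure on the fiber via $\mathrm{ev}_\zeta$ yields an almost complex structure $I(\zeta)$ on $M$. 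The standard computation, using that sections of $O(1)$ transform as $a+b\zeta$ under a change of trivialization over $\mathbb{CP}^1$, shows that $I(\zeta)$ depends on $\zeta$ precisely by the formula stated above; specializing to $\zeta=0,1,\infty$ yields three almost complex structures $I,J,K$ satisfying the quaternionic relations.

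For the metric and symplectic data I would use the fiberwise holomorphic symplectic form $\varpi$. Pulled back by evaluation and Laurent-expanded in $\zeta$, $\varpi(\zeta)$ decomposes as in \eqref{decomp_form} into real 2-forms $\omega_I,\omega_J,\omega_K$ on $M$; reality is forced by $\tau^{\ast}\varpi=\overline{\varpi}$ together with $\tau$ acting antipodally on the base. Because $\varpi$ is holomorphic on $\mathcal{Z}$, each Laurent coefficient is closed, so the $\omega_\bullet$ are closed 2-forms on $M$. Defining $g(X,Y):=\omega_I(X,IY)$, one checks that the same $g$ is recovered using $\omega_J,J$ or $\omega_K,K$, which together with the quaternion relations implies that $(g,I,J,K)$ is hyperk\"ahler. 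For the last clause, when $\mathcal{Z}$ arises from an existing hyperk\"ahler manifold $M'$, the tautological sections $\zeta\mapsto (m,\zeta)$ are real, so $M'\hookrightarrow M$, and unwinding the construction on this subfamily reproduces the original hyperk\"ahler structure.

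The principal technical obstacles are, first, verifying the quaternion relations $IJ=K$ on the nose rather than up to sign; this is where the very specific $O(1)$ normal bundle hypothesis is essential, since it is only for this twist that the $\mathbb{CP}^1$-family of complex structures glues to an $S^2$ of quaternionic structures. Second, positive-definiteness of the resulting $g$ is not automatic from the axioms and genuinely uses that one restricts to the distinguished connected component of real sections near a given one; pseudo-hyperk\"ahler signatures can otherwise appear. In the present paper only the existence of the complex symplectic structure $\varpi(\zeta)$ with the correct $\zeta$-asymptotics will be needed, which is exactly what the coordinate construction of Section \ref{const_on_mod} and the integral equation of Section \ref{sec_R_H} are designed to deliver.
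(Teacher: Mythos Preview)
The paper does not give its own proof of this theorem: it is stated as the famous twistor theorem of Hitchin, Karlhede, Lindstr\"om and Ro\v{c}ek and simply referred to \cite{Hitchin.1987} and \cite{Hitchin.1992}. Your sketch follows the classical argument from those references (Kodaira deformation theory for the parameter space, evaluation maps to produce the $\mathbb{CP}^1$-family of almost complex structures, and the Laurent decomposition of $\varpi$ to recover the K\"ahler forms), so there is nothing to compare against within the paper itself; your outline is an accurate summary of the cited proof, including the correct caveats about the quaternion relations and signature.
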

	
	Starting with \cite{Gaiotto.2010} GMN proposed a way of using this theorem on a general class of integrable systems. They stated a list of conditions 
	for a set of coordinate functions on such an integrable system which, if fulfilled, would allow for the construction of a holomorphic structure and a 
	holomorphic symplectic form fulfilling all of the conditions of theorem \ref{Twistor}. We state here the general conjecture of GMN.

		\begin{conjecture}\label{GMN twistor}
			Let $\mathcal{B}$ be a $r$-dimensional complex manifold with a $2r$-dimensional torus fibration $\mathcal{M}\to\mathcal{B}$. Additionally let there 
			be a local system of lattices $\Gamma$ of rank $2r$ with generators $\gamma_i$ equipped with a pairing with coefficients $\epsilon^{ij}$, such that  
			for every choice of local patch in $\mathcal{B}$, quadratic refinement
			and local sections $\gamma$ of $\Gamma$ there are locally defined $\mathbb{C}^\times$-valued functions 
			$\mathcal{X}_\gamma(u,\theta;\zeta)$ of $(u,\theta)\in\mathcal{M}$ and $\zeta\in\mathbb{C}^\times$. Then define 
			\[
				\varpi(\zeta):=\frac{1}{8\pi^2 R}\sum_{ij}\epsilon_{ij}\frac{d\mathcal{X}_{\gamma^{i}}}{\mathcal{X}_{\gamma^{i}}}\wedge\frac{d\mathcal{X}_{\gamma^{j}}}{\mathcal{X}_{\gamma^{j}}},
			\]
			where $d$ is the fiber wise differential, which ignores $\zeta$ and $\epsilon_{ij}$ is the inverse of $\epsilon^{ij}$.
			
			Now assume that the following properties hold:
			\begin{enumerate}
				\item The $\mathcal{X}_\gamma$ are multiplicative, i. e. $\mathcal{X}_\gamma\mathcal{X}_{\gamma'}=\mathcal{X}_{\gamma+\gamma'}$.
				\item The $\mathcal{X}_\gamma$ obey the following reality condition
					\[
						\mathcal{X}_\gamma(\zeta)=\overline{\mathcal{X}_{-\gamma}(-1/\bar{\zeta})}.
					\]
				\item All $\mathcal{X}_\gamma$ are solutions to a single set of differential equations, of the form
					\[
						\begin{split}
							\frac{\partial}{\partial u^i}\mathcal{X}_\gamma&=\left(\frac{1}{\zeta}\mathcal{A}_{u^i}^{(-1)}+\mathcal{A}_{u^i}^{(0)}\right)\mathcal{X}_\gamma,\\
							\frac{\partial}{\partial \bar{u}^{\bar{i}}}\mathcal{X}_\gamma&=\left(\mathcal{A}_{\bar{u}^{\bar{i}}}^{(0)}+\zeta\mathcal{A}_{\bar{u}^{\bar{i}}}^{(1)}\right)\mathcal{X}_\gamma,
						\end{split}
					\]
							where the $\mathcal{A}_{u^i}^{(n)}$ and $\mathcal{A}_{\bar{u}^{\bar{i}}}^{(n)}$ are complex vector fields on the torus fiber $\mathcal{M}_u$, with the $\mathcal{A}_{u^i}^{(-1)}$ 
							linearly independent at every point and similarly $\mathcal{A}_{\bar{u}^{\bar{i}}}^{(1)}$. 
				\item For each $x\in\mathcal{M}$, $\mathcal{X}_\gamma(x;\zeta)$ is holomorphic in $\zeta$ on a dense subset of $\mathbb{C}^\times$.
				\item $\varpi(\zeta)$ is globally defined (in particular the $\varpi(\zeta)$ defined over different local patches of $\mathcal{B}$ agree with one another) and holomorphic in 
							$\zeta\in\mathbb{C}^\times$.
				\item $\varpi(\zeta)$ is nondegenerate in the appropriate sense for a holomorphic symplectic form, i.e. ker$\varpi(\zeta)$ is a $2r$-dimensional subspace of the $4r$-dimensional 
							$T_{\mathbb{C}}\mathcal{M}$.
				\item $\varpi(\zeta)$ has only simple poles as $\zeta\rightarrow0$ or $\zeta\rightarrow\infty$.
			\end{enumerate}
			Then $\mathcal{Z}:=\mathcal{M}\times\mathbb{C}\mathbb{P}^1$ has all the structure necessary for the Twistor construction \ref{Twistor}. In 
			particular, a hyperkähler metric $g$ on $\mathcal{M}$ can be reconstructed from $\mathcal{Z}$.
		\end{conjecture}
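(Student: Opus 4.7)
The plan is to verify, in order, the four structural conditions of Theorem \ref{Twistor} for the twistor space $\mathcal{Z}:=\mathcal{M}\times\mathbb{CP}^1$, using the enumerated properties of the $\mathcal{X}_\gamma$ as the sole input. Condition 1 (the holomorphic fibration $p:\mathcal{Z}\to\mathbb{CP}^1$) is almost tautological once we equip $\mathcal{Z}$ with a complex structure making the projection holomorphic. The complex structure itself is forced by demanding that the functions $\mathcal{X}_\gamma$ (and $\zeta$) be local holomorphic coordinates in the sense of property 4; concretely, property 3 gives a Lax-type expression for $\partial_{\bar u^{\bar\imath}}\mathcal{X}_\gamma$ and $\partial_{u^i}\mathcal{X}_\gamma$ of the right $\zeta$-degree, and together with fiber holomorphicity this determines a Cauchy--Riemann operator $\bar\partial_{\mathcal Z}$ on $\mathcal{Z}$ that annihilates the $\mathcal{X}_\gamma$ on a dense open set. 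The nontrivial content hidden in property 3 is exactly the integrability of this $\bar\partial_{\mathcal Z}$.

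Next I would construct $\varpi$ fiberwise by the given formula and verify conditions 3 and 5--7 simultaneously. Multiplicativity (property 1) shows that $\tfrac{d\mathcal{X}_{\gamma^i}}{\mathcal{X}_{\gamma^i}}$ transforms by an additive cocycle under change of basis of $\Gamma$, so the expression contracted with $\epsilon_{ij}$ is invariant and thus globally well-defined; combined with property 4 this gives that $\varpi$ is holomorphic in $\zeta\in\mathbb{C}^\times$. The pole structure at $\zeta=0,\infty$ (property 7) together with the decomposition \eqref{decomp_form} identifies the polar coefficients with $\omega_\pm$ and lets one read off $\varpi$ as a section of $\Lambda^2 T^\ast_F\otimes O(2)$. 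Nondegeneracy on each fiber (property 6) then supplies the fiberwise holomorphic symplectic form of condition 3 of the theorem.

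Condition 4 (the antiholomorphic involution $\tau$ covering the antipodal map) I would define simply by $\tau(m,\zeta):=(m,-1/\bar\zeta)$; compatibility $\tau^\ast\varpi=\overline{\varpi}$ is a direct substitution, using the reality condition (property 2) to rewrite $\mathcal{X}_\gamma(-1/\bar\zeta)=\overline{\mathcal{X}_{-\gamma}(\zeta)}$, so that $d\mathcal{X}_\gamma/\mathcal{X}_\gamma$ pulls back to minus its conjugate and the quadratic expression in $\epsilon_{ij}$ picks up two sign changes that cancel. It remains to address condition 2: for each $m\in\mathcal{M}$ the section $s_m:\zeta\mapsto(m,\zeta)$ is holomorphic by construction of $\bar\partial_{\mathcal Z}$, and I would compute its normal bundle $N_{s_m}$ from the short exact sequence $0\to T_F|_{s_m}\to T\mathcal{Z}|_{s_m}\to s_m^\ast T\mathbb{CP}^1\to 0$, using that $\varpi$ identifies $T_F|_{s_m}\cong T_F^\ast|_{s_m}\otimes O(2)$ and hence, after combining with fiberwise triviality, yields $N_{s_m}\cong\mathbb{C}^{2r}\otimes O(1)$; the $\tau$-reality of $s_m$ picks out the real sections parametrized by $\mathcal{M}$.

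The main obstacle is really the second paragraph: establishing that the $\mathcal{X}_\gamma$ genuinely assemble into an integrable almost complex structure on $\mathcal{Z}$ and that the bilinear combination defining $\varpi$ has only the prescribed simple poles rather than higher-order ones. The $\zeta^{-1}$ and $\zeta$ terms in property 3 suggest that $d\log\mathcal{X}_\gamma$ has simple poles at $0$ and $\infty$, but to get simple poles for $\varpi$ one must also show that the leading polar coefficients are annihilated in the wedge product, which is where linear independence of the $\mathcal{A}^{(-1)}_{u^i}$ and $\mathcal{A}^{(1)}_{\bar u^{\bar\imath}}$ enters; carrying out this cancellation explicitly, and then checking that the residual forms $\omega_\pm$ and $\omega_I$ are of type $(2,0)$, $(0,2)$, $(1,1)$ with respect to $I$, is the technical heart of the argument.
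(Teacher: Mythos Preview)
The paper does not prove this statement: it is explicitly labeled a \emph{Conjecture}, attributed to Gaiotto--Moore--Neitzke, and the author states immediately afterward that ``we don't work with the whole construction here, as we are not interested in building a hyperk\"ahler metric on some previously unknown integrable system, but rather on a space where a lot of structure is already given and we can simply replace the parts we want.'' There is therefore no proof in the paper to compare your proposal against. The paper's actual work (Theorem~\ref{omega_symp}) takes place in the specific setting of $\widetilde{\mathcal{M}}'_c$, where the twistor space, complex structures, holomorphic sections with normal bundle $\mathbb{C}^{2r}\otimes O(1)$, and real structure are already known from the existing hyperk\"ahler structure on the Higgs moduli space; only the holomorphic symplectic form is replaced by the constructed $\varpi$.

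Your outline is a reasonable plan for attacking the general conjecture, and you correctly identify the hard part: producing an integrable complex structure on $\mathcal{Z}$ from properties 3 and 4, and controlling the pole order of $\varpi$. But note that your proposal is not a proof---it is a strategy with the key technical steps (integrability of $\bar\partial_{\mathcal Z}$, the cancellation of double poles in the wedge, the normal bundle computation) left as claims. In particular, property 7 already \emph{assumes} simple poles for $\varpi$, so the cancellation you describe in the last paragraph is part of the hypotheses rather than something to be derived; what actually needs work is extracting from the simple-pole residues a genuine complex structure $I$ and verifying that the resulting $\omega_I,\omega_J,\omega_K$ satisfy the quaternionic relations. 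The paper sidesteps all of this by working on a space where $I,J,K$ are already given.
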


	The notes \cite{Neitzke.2013} from A. Neitzke give an overview of this construction and contain 
	some examples where the construction should be successful, one of which is the case of weakly parabolic Higgs bundles we are considering here. On this 
	specific space it was conjectured in \cite{Gaiotto.2013} that a variation of this construction should work, where the necessary adjustments are stated at 
	the beginning. In particular the dependence of the triangulations on $\vartheta$ and the relationship of $\vartheta$ and $\zeta$ and the corresponding jumps have to be considered, for which the 
	half-plane $\mathbb{H}_\vartheta$ was originally introduced.
	
	Although our case therefore fits in the general picture we don't work with the whole construction here, as we are not interested in building a 
	hyperkähler metric on some 
	previously unknown integrable system, but rather on a space where a lot of structure is already given and we can simply replace the parts we want. The 
	important part we do consider is the construction of the holomorphic symplectic form, i. e. we consider the following:
	
	Let $\Gamma$ be a lattice with generators $\gamma_i$ and for each $\gamma_i$ let $\mathcal{X}_{\gamma_i}$ be a function defined of 
	$M\times\mathbb{C}^\times$, s. t. for each fixed $\zeta\in\mathbb{C}^\times$ the $\mathcal{X}_{\gamma_i}$ are coordinates of $M$. Let there also be given 
	a pairing on $\Gamma$ with coefficients $\epsilon^{ij}$ and inverse $\epsilon_{ij}$. Then define for any parameter $R\in\mathbb{R}_+$ the following $2$-
	form:
	
	\begin{equation}\label{hol_symp}
		\varpi:=\frac{1}{8\pi^2 R}\sum_{i,j}\epsilon_{ij}
		\frac{d\mathcal{X}_{\gamma_i}}{\mathcal{X}_{\gamma_i}}\wedge\frac{d\mathcal{X}_{\gamma_j}}{\mathcal{X}_{\gamma_j}}.
	\end{equation}
	
	As above $d$ is the fiberwise differential that ignores $\zeta$. In the general construction the conditions on the $\mathcal{X}$ are supposed to ensure that 
	$\varpi$ is actually a holomorphic $2$-form twisted by $O(2)$ defining a symplectic form on each fiber just as needed by theorem \ref{Twistor}. As we 
	will show below, our work in the previous chapters gives us exactly this, as was conjectured by GMN in \cite{Gaiotto.2013}.

	We use this moment to highlight the property 5 of the construction. It states that $\varpi$ is holomorphic as a function depending on $\zeta\in\mathbb{C}^\times$. But $\varpi$ as defined in \eqref{hol_symp} is build out of the $\mathcal{X}$-coordinates which jump at certain rays in the $\mathbb{C}^\times$-plane, so some cancellation has to occur. To see this we consider some part $d\log\mathcal{X}_{\gamma}\wedge d\log\mathcal{X}_{\beta}$ of $\varpi$. The important fact, following from the discussion in \cite{Gaiotto.2013}, is that if a jump of $\mathcal{X}_{\gamma}$ occurs, $\mathcal{X}_{\beta}$ is holomorphic, i. e. $\mathcal{X}_{\beta}^+=\mathcal{X}_{\beta}^-$, and the jump for $\mathcal{X}_\gamma$ is given by formula \eqref{jumps}, which in the logarithm becomes $\log\mathcal{X}^-_\gamma\mapsto\log\mathcal{X}^-_\gamma+\log\mathcal{X}_\beta$. But this now cancels if we take the $\wedge$-product of the derivatives, so
	\[
		\left(d\log\mathcal{X}_{\gamma}\wedge d\log\mathcal{X}_{\beta}\right)^-=\left(d\log\mathcal{X}_{\gamma}\wedge d\log\mathcal{X}_{\beta}\right)^+.
	\]
	In the general setting this is the reason why the $\mathcal{K}_\beta^\gamma$, considered as automorphism on the space of unitary characters of $\Gamma_q$, are called symplectomorphisms. Here we simply denote this is in the following way.
	
	\begin{proposition}\label{symp_is_cont}
		Let $q$ be a generic differential with unsealed spectrum. Then $\varpi$ as defined in \eqref{hol_symp} with the coordinate functions $\mathcal{X}_\gamma:\mathbb{C}^\times\to\mathbb{C}^\times$ is holomorphic on all of $\mathbb{C}^\times$.
	\end{proposition}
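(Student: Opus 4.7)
The plan is to reduce holomorphicity of $\varpi$ on all of $\mathbb{C}^\times$ to three claims: (i) $\varpi$ is holomorphic away from the active BPS rays, (ii) the one-sided limits $\varpi^{\pm}$ across each isolated BPS ray agree, so $\varpi$ extends continuously and hence (by Morera's theorem) holomorphically across each such ray, and (iii) the extension survives the accumulation of BPS rays at a ring-domain ray, which is where the unsealed-spectrum hypothesis enters through Lemma \ref{unsealed_finite}. Claim (i) is immediate from the construction in section \ref{sec_coord}, since each $d\log\mathcal{X}_\gamma$ is holomorphic in $\zeta$ off the BPS rays.

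The core calculation is claim (ii). Fix an active BPS ray $l$ and, using \eqref{jumps}, I would write the jump of $\log\mathcal{X}_\gamma$ as the addition of the generator $F^\gamma_l := \sum_{\beta : l_\beta = l} \Omega(\beta;q)\,\left\langle\gamma,\beta\right\rangle \log\bigl(1-\sigma(\beta)\mathcal{X}_\beta\bigr)$. The crucial observation, already used implicitly in section \ref{Riem_Hill_sol}, is that each $\mathcal{X}_\beta$ with $l_\beta = l$ is continuous across $l$: its own potential jump at $l$ would be governed by $\left\langle\beta,\beta\right\rangle = 0$. Hence $dF^\gamma_l$ has unambiguous one-sided limits that agree. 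Taking $d\log$ of the jump relation and substituting into \eqref{hol_symp}, the difference $\varpi^+ - \varpi^-$ splits into a mixed piece coupling $dF_l$ with $d\log\mathcal{X}$ and a purely quadratic piece in $dF_l$. The mixed piece vanishes because $\sum_i \epsilon_{ij}\left\langle\gamma_i,\beta\right\rangle d\log\mathcal{X}_{\gamma_j}$ collapses (using that $\gamma_i$ is a symplectic basis and the multiplicativity of $\mathcal{X}$) to $d\log\mathcal{X}_\beta$, and $d\log\mathcal{X}_\beta\wedge d\log(1-\sigma(\beta)\mathcal{X}_\beta) = 0$ since the second factor is a function of $\mathcal{X}_\beta$ alone. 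The quadratic piece vanishes by antisymmetry of $\epsilon_{ij}$ pitted against the symmetry in $\beta\leftrightarrow\beta'$ of the remaining combination $\left\langle\gamma_i,\beta\right\rangle\left\langle\gamma_j,\beta'\right\rangle \Omega(\beta;q)\Omega(\beta';q)$.

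The main obstacle is claim (iii). At a ring-domain ray $l_\gamma$ the BPS charges $\beta + m\gamma$ for $m\in\mathbb{Z}$ produce infinitely many nearby rays accumulating on $l_\gamma$ from both sides, so claim (ii) cannot simply be iterated. Instead I would bound the contributions of the jump-generators on compact neighborhoods of $l_\gamma$ using the same geometric decay estimate $\bigl\|\log(1-\sigma(\beta+m\gamma)\mathcal{X}_{\beta+m\gamma})\bigr\|\lesssim e^{-2\pi m R|Z_\gamma|}$ that made $\mathcal{Y}_\gamma$ well-defined in the proof following Theorem \ref{decay_estimate}. This uniform convergence allows termwise differentiation and shows that the countable product of pairwise cancellations from claim (ii) still yields $\varpi^+ = \varpi^-$ across $l_\gamma$. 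A Morera argument on a small disk crossing $l_\gamma$, exactly as in the uniqueness proof before Corollary \ref{A_appears}, then upgrades continuity to holomorphicity. Lemma \ref{unsealed_finite} guarantees that this last step needs to be performed at only finitely many phases, closing the argument.
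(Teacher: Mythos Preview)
Your approach is essentially the same as the paper's: show that the jump transformations $S_l$ act as symplectomorphisms on $\varpi$, then upgrade continuity to holomorphicity via Morera. The paper's own treatment is much terser than yours---it gives only the brief discussion preceding the proposition, considering a single wedge factor $d\log\mathcal{X}_\gamma\wedge d\log\mathcal{X}_\beta$ with $\beta$ the charge governing the jump, observing that $\log\mathcal{X}_\gamma$ shifts by a function of $\mathcal{X}_\beta$ alone so the wedge is unchanged, and then deferring the remaining details to \cite{Gaiotto.2013}. Your decomposition into claims (i)--(iii) and your explicit handling of the accumulation at ring-domain rays is more rigorous than anything the paper spells out for this proposition.

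One correction, though: your argument for the vanishing of the quadratic piece in claim (ii) is not right as stated. After summing over the dummy pair $\beta,\beta'$ the resulting tensor $A_{ij}$ is \emph{antisymmetric} in $i,j$ (the swap $\beta\leftrightarrow\beta'$ flips the wedge and interchanges $i,j$), so contracting with the antisymmetric $\epsilon_{ij}$ does not kill it. The correct reason the quadratic piece vanishes is that, since $q$ lies in a chamber, all charges $\beta$ with $l_\beta=l$ are integer multiples of a single primitive $\beta_0$; hence every $d\log(1-\sigma(\beta)\mathcal{X}_\beta)$ is a scalar multiple of $d\mathcal{X}_{\beta_0}$ and all wedges among them are zero. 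This is also why the paper can get away with its single-$\beta$ sketch. With this fix your plan goes through.
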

	
	Before we go on, let us recall why we go through the trouble of this construction. After all we are about to obtain a hyperkähler metric on a space 
	which we already know is hyperkähler. The upshot is simply, as already noted in \cite{Gaiotto.2010}, that we not only obtain the existence of a hyperkähler metric but we get a very concrete 
	expression of the metric which allows for a much better understanding of its behavior then the mere existence result. To see this, consider the case that 
	$\varpi$ indeed fulfills the required conditions. Then by the twistor theorem we obtain that $M$ is a hyperkähler space whose twistor space is equipped 
	with the holomorphic symplectic form $\varpi$. But we know from the general twistor theory that the holomorphic 
	symplectic form can be decomposed as in formula \eqref{decomp_form}, i. e. we may write
	\begin{equation}
		\varpi(\zeta)=-\frac{i}{2\zeta}\omega_++\omega_I-\frac{i}{2}\zeta\omega_-
	\end{equation}
	where $\omega_I$, $\omega_J$ and $\omega_K$ are the Kähler forms of the hyperkähler metric $g$. Thus we can extract the metric by specifying the $\zeta$ 
	to an appropriate value. Below we will use, that the complex structure $I$ in our case is simply given by multiplication by $i$, so we state the 
	according extraction here.
	
	\begin{lemma}\label{g_desc}
		Assume that all the conditions of the twistor theorem are fulfilled. Using the notation from above the hyperkähler metric is given as
		\[
		g(v,w)=\omega_I(v,Iw)=\Re(\left.\varpi\right|_{\zeta=-1}(v,Iw)).
		\]
	\end{lemma}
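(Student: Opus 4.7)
The plan is essentially a direct computation using the decomposition \eqref{decomp_form} of the holomorphic symplectic form, so I expect no real obstacle. The statement has two equalities, and I would handle each separately.

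For the first equality $g(v,w)=\omega_I(v,Iw)$, I would simply invoke the standard K\"ahler identity: on any K\"ahler manifold, the Riemannian metric is recovered from the K\"ahler form and the complex structure via $g(v,w)=\omega_I(v,Iw)$. Since $(M,I,g)$ is in particular K\"ahler as one of the three K\"ahler structures entering the hyperk\"ahler triple, this is immediate and no hyperk\"ahler machinery is required.

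For the second equality, I would plug $\zeta=-1$ into the expansion
\[
\varpi(\zeta)=-\frac{i}{2\zeta}\omega_++\omega_I-\frac{i}{2}\zeta\omega_-
\]
to obtain
\[
\left.\varpi\right|_{\zeta=-1}=\frac{i}{2}\omega_++\omega_I+\frac{i}{2}\omega_-.
\]
Then using $\omega_{\pm}=\omega_J\pm i\omega_K$ I would compute
\[
\frac{i}{2}\omega_++\frac{i}{2}\omega_-=\frac{i}{2}(\omega_J+i\omega_K)+\frac{i}{2}(\omega_J-i\omega_K)=i\omega_J,
\]
so $\left.\varpi\right|_{\zeta=-1}=\omega_I+i\omega_J$. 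Since $\omega_I$ and $\omega_J$ are real differential forms, taking the real part isolates $\omega_I$, and evaluating on $(v,Iw)$ combined with the first step yields $g(v,w)$.

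The only thing that could conceivably need justification is that $\omega_I,\omega_J,\omega_K$ are indeed real forms (so that $\Re$ acts as described); this is true by construction since they are the K\"ahler forms of a hyperk\"ahler structure, and I would mention this in one line. Hence the statement reduces to two elementary observations and no delicate estimates are needed.
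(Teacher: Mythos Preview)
Your proposal is correct and follows essentially the same route as the paper: substitute $\zeta=-1$ into the decomposition \eqref{decomp_form}, simplify to $\omega_I+i\omega_J$, take the real part, and invoke the K\"ahler identity $g(v,w)=\omega_I(v,Iw)$. The only difference is cosmetic ordering (you state the K\"ahler identity first, the paper states it last).
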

	
	\begin{proof}
		Setting $\zeta=-1$ we obtain
		\[
		\begin{split}
		\left.\varpi\right|_{\zeta=-1}=\frac{i}{2}\omega_++\omega_I+\frac{i}{2}\omega_-
		=\frac{i}{2}(\omega_J+\omega_K)+\omega_I+\frac{i}{2}(\omega_J-\omega_K)
		=\omega_I+i\omega_J.
		\end{split}
		\]
		Thus we have $\omega_I=\Re(\left.\varpi\right|_{\zeta=-1})$. As $\omega_I$ is the Kähler form for the action of $I$ we obtain
		\[
			g(v,w)=\omega_I(v,Iw)=\Re(\left.\varpi\right|_{\zeta=-1}(v,Iw)).
		\]
	\end{proof}
	
	In this way knowing $\varpi$ amounts to knowing $g$ and the better we understand $\varpi$ the better we understand $g$. In our case, as $\varpi$ is 
	constructed from the small flat sections in a concrete manner, we obtain that the better we understand the small flat sections, the better we understand 
	$\varpi$ and therefore $g$. This justifies the work we put in the detailed calculations concerning those sections.
	We conclude this introduction with a corresponding notion which will help in distinguishing different hyperkähler metrics in the following sections.
	
	\begin{definition}
		Any hyperkähler metric that is obtained in the way described above, i. e. from a holomorphic symplectic form that is build out of a set of coordinate 
		functions via formula \eqref{hol_symp} shall be called \textit{twistorial}.
	\end{definition}
		
	\subsection{The semiflat metric in the twistorial picture}\label{twist_sem_flat}
	
	Before we go on to use the method we just described for our true coordinate functions, it may be enlightening to describe the much simpler case of the 
	semiflat metric and how it fits into the picture. Basically we can just do everything we've done until now, only that we do not start with the true 
	solutions of Hitchin's equations but rather work only with the limiting configurations. Before we do this however we have to point to the fact that there 
	are two different point of views we have to consider here. We could just build all the objects via the formulas given above out of the limiting 
	configuration and obtain a symplectic $\varpi^{sf}$ form on the true moduli space $\mathcal{M}$ of solutions of Hitchin's equation. However in this 
	way we don't know whether $\varpi^{sf}$ is a holomorphic form on $\mathcal{M}$. Our argument for the holomorphicity won't work as the corresponding small 
	flat sections are not solutions to the true ODE and thus won't necessarily vary holomorphically with the ODE data. We do know however that there is 
	semiflat metric $g_{\text{sf}}$ on $\mathcal{M}$ from the usual theory of parabolic Higgs bundles.
	
	On the other hand we can consider the moduli space of limiting configurations $\mathcal{M}_\infty$ (cf. \cite{Mazzeo.2016} for the regular case and 
	\cite{Fredrickson.2020} and references therein for the irregular one) with corresponding regular locus $\mathcal{M}'_\infty$. In this case the 
	construction does work in full and we really obtain the 
	twistorial metric $g^{\text{lim}}_{\text{twist}}$.
	The relevant fact here is that these two sides actually coincide, i. e. with the appropriate identifications we have 
	$g_{\text{sf}}=g^{\text{lim}}_{\text{twist}}$. This fact can be obtained from the explicit expression of $g^{\text{lim}}_{\text{twist}}$ which we'll now 
	describe on $\mathcal{M}'_\infty$. As our goal in this section is to describe how the explicitly known semi-flat metric can be obtained in the twistorial picture, we refrain from giving any detailed proofs here. The fact that the finally resulting form is actually the Kähler form of the semi-flat metric simply follows from the appearance of the explicitly known structure. For the rigorous argument we simply note that all calculations are simpler versions of the ones we already did and the remaining arguments concerning the twistor theory will be given for the more complicated version of the twistorial metric on $\mathcal{M}$ in the next section.
	
	In the space of limiting configurations all of the calculations become way more easy as the Higgs field as well as the connection form are diagonal in unitary frame for the corresponding Hermitian metric. We 
	may thus write the flat connection $\nabla^L$ of the limiting case in the following way, using the third Pauli matrix $\sigma^3$ for convenience:
	
	\[
		\begin{split}
				\nabla^L(\zeta):=\frac{R}{\zeta}\Phi_\infty+A_\infty+R\zeta\Phi_\infty^\ast
				=\partial+\frac{R}{\zeta}q^{1/2}\sigma^3dz+A_zdz+\overline{\partial}+A_{\overline{z}}d\overline{z}+R\zeta \overline{q^{1/2}}\sigma^3d
				\overline{z}.
		\end{split}
	\]
	
	Here $\Phi_\infty=q^{1/2}\sigma_3$, $A_z$ is diagonal with entries $-a_1$ and $-a_2$, and $A_{\overline{z}}=-\overline{A_z}$.\footnote{Here the signs are 
	chosen to match the formulas in section \ref{local_desc}.} We also consider the same conventions for the quadrilateral, annulus and frame as 
	constructed in section \ref{local_desc}. 
	As the connection is in "diagonal gauge" and the description 
	is in local coordinates the sections can be computed in each entry of $s=(s_1,s_2)$ independently. For $s_i$ the differential equation 
	becomes:
	\[
		0=\partial s_i-a_1s_1dz+\frac{R}{\zeta}q^{1/2}s_idz+\overline{\partial}s_i+\overline{a_1}s_id\overline{z}+R\zeta \overline{q^{1/2}}s_id\overline{z}.
	\]
	
	As before the $dz$ and $d\overline{z}$ part have to become $0$ independently, so the equation splits into the set of two PDEs:
	\[
		\frac{\partial s_i}{\partial z}=\left(-\frac{R}{\zeta}q^{1/2}+a_1\right)s_i\quad\wedge\quad 
		\frac{\partial s_i}{\partial\overline{z}}=(-R\zeta\overline{q^{1/2}}-\overline{a_1})s_i.
	\]
	Denote by $P$ a primitive of $q^{1/2}$. From $\overline{\partial}\overline{P}=\overline{\partial P}=\overline{q^{1/2}}$ it then follows, that 
	$\overline{P}$ is a primitive of $\overline{q^{1/2}}$. Furthermore let $\Delta_i$ be a primitive of $a_i-\overline{a_i}$.
	With these notations the solution $s_1$ of the set of PDEs becomes
	\[
		s_1=C_1e^{-\frac{R}{\zeta}P+\Delta_1-R\zeta\overline{P}},
	\]
	for any $C_1\in\mathbb{C}$. For the other component the different sign in $\sigma^{3}$ leads to the corresponding solution
	\[
		s_2=C_2e^{\frac{R}{\zeta}P-\Delta_2+R\zeta\overline{P}},
	\]
	for any $C_2\in\mathbb{C}$. So we have the general solution 
	\[
		s_L:=\begin{pmatrix}C_1e^{-\frac{R}{\zeta}P+\Delta-R\zeta\overline{P}}\\C_2e^{\frac{R}{\zeta}P-\Delta+R\zeta\overline{P}}\end{pmatrix},
	\]
	where the superscript $L$ indicates that these are the flat sections for the limiting configurations. 
	
	Just as in section \ref{sec_small_sec} we want to choose a small flat section for each vertex, but now this is easily obtained as no remainder term exists and we 
	see directly which section to choose for each vertex. I. e. if we label the vertices of a quadrilateral by $1,2,3,4$ we obtain for vertex $1$ the 
	section
	\[
		s^1_L(\zeta,z):=\begin{pmatrix}0\\e^{\frac{R}{\zeta}P_1(z)-\Delta(z)_2+R\zeta\overline{P_1}(z)}\end{pmatrix}.
	\]
	As before we need to consider the primitives in a contractible subset of the standard annulus, but as we know the solutions completely, we don't have to 
	transport the solutions to the $ca$-trajectories in order to evaluate them.
	
	Then we may again build the $\mathcal{X}_\gamma$ coordinates and obtain the corresponding period integrals.
	\[
		\begin{split}
				\mathcal{X}^L_\gamma&=-\frac{(s^1_L\wedge s^2_L)(s^3_L\wedge s^4_L)}{(s^2_L\wedge s^3_L)(s^4_L\wedge s^1_L)}\\
													&=-\exp\left(\frac{R}{\zeta}\oint_\gamma{q^{1/2}}+\left(\oint_\gamma a_1dz-\overline{a_1}d\overline{z}\right)
													+R\zeta\oint_\gamma{\overline{q}^{1/2}}\right).
		\end{split}
	\]
	We can now define just as before $i\theta:=\left(\oint_\gamma a_1dz-\overline{a_1}d\overline{z}\right)$. Together with the periods $Z_\gamma:=\tfrac{\pi}\oint_\gamma{q^{1/2}}$ we obtain
	the action angle coordinates on the corresponding integrable system, where the $Z_\gamma$ are coordinates on the Hitchin base and the $\theta$ are 
	coordinates on the torus fibers. It follows from the theory of limiting configurations that these are in fact periodic, so they give appropriate 
	coordinates on the torus fibers of $\mathcal{M}'_\infty$. With these the final form for the $\mathcal{X}_\gamma$ coordinates for the 
	limiting configuration is obtained:
	\begin{equation}\label{x_lim}
		\mathcal{X}^L_\gamma=-\exp\left(\frac{R}{\zeta}\pi Z_\gamma+i\theta_\gamma+R\zeta\pi\overline{Z}_\gamma\right).
	\end{equation}
	In light of the goal to describe the asymptotic behaviour of the true coordinate system we note here that this is exactly the term we obtained from the 
	leading term for the true $\mathcal{X}$ coordinates in Theorem \ref{x_int_form}. Thus the difference amounts to the addition of the remainder term $r_q$ for the true coordinates. We will come 
	back to this at the end.
	
	Again these coordinate functions can be defined for each charge $\gamma$ in the charge lattice $\Gamma_q$, although in the following we only use the 
	generators of the gauge lattice $\Gamma^g_q$. We then obtain the holomorphic symplectic form on $\mathcal{M}'_\infty$ as in the description of equation \ref{hol_symp} above:
	
		\[
			\varpi^{sf}(\zeta):=\frac{1}{8\pi^2R}\sum_{i,j}\epsilon_{ij}
			\frac{d\mathcal{X}_{\gamma_i}^{L}}{\mathcal{X}_{\gamma_i}^{L}}\wedge \frac{d\mathcal{X}_{\gamma_j}^{L}}{\mathcal{X}_{\gamma_j}^{L}}.
		\]
		Here the coefficients $\epsilon_{ij}$ arise as the inverse of the intersection pairing on $\Gamma^g$, i. e. we regard $dZ$ and $d\theta$ as elements in $\Omega^1((\Gamma^g)^\ast)$ and use the corresponding dual pairing of the intersection pairing on $(\Gamma^g)^\ast$.
	
	We then obtain the metric $g^{\text{lim}}_{\text{twist}}$ by simply plugging in the known expression for $\mathcal{X}_\gamma^L$. Here it is important to 
	note the fact that
	\begin{equation}\label{Griffiths_trans}
		dZ_{\gamma_i}\wedge dZ_{\gamma_j}=0.
	\end{equation}
	This is clear if the dimension of the Hitchin Base is $1$, which is the case for the four punctured sphere. A proof for higher dimensions can be found in the script \cite{Neitzke2017} where it is referred to as ""Griffiths transversality" for spectral curves" and interpreted as an embedding of the Hitchin base as a complex Lagrangian submanifold of some complex symplectic vector space. The main idea of the proof is to use a "Riemann bilinear identity" to identify the intersection pairing in the the homology with the integral over the wedge product of two holomorphic 1-forms on the spectral curve with, which then has to vanish.
	
	Using this cancellation and the fact that the $\theta$ coordinates are real-valued one obtains for $\zeta=-1$ the Kähler form
	\[
		\omega_I=\Re(\left.\varpi^{\text{sf}}\right|_{\zeta=-1})=\sum_{i,j}\epsilon_{ij}
		\left(\frac{R}{4}dZ_{\gamma_{i}}\wedge d\overline{Z}_{\gamma_{j}}-\frac{1}{8\pi^2 R}d\theta_{\gamma_i}\wedge d\theta_{\gamma_j}\right).
	\]
	
	Now this is in fact the Kähler form for the semiflat metric, so we obtain as stated above the equality $g^{\text{lim}}_{\text{twist}}=g_{\text{sf}}$. This concludes the twistorial construction of the semiflat metric. It will appear again at the end of the last section, as it is our goal to compare 
	$g_{\text{sf}}$ and $g_{\text{twist}}$ on (a subset of ) $\mathcal{M}'$.

\subsection{The twistorial hyperkähler metric on \texorpdfstring{$\mathcal{M}$}{M}}\label{twist_metric}
	
	Finally we use the construction of section \ref{sec_basic_twistor} on the space of true solutions to Hitchin's equations $\mathcal{M}$. At this point we 
	have to specialize the construction of the $\mathcal{X}$-coordinates which up until now depended on the (almost) arbitrary angle parameter $\vartheta$ of 
	the triangulation, so that it is well defined and all the results from the sections before hold. This is done by setting $\vartheta:=\text{arg}\zeta$ and 
	building the triangulation and coordinates accordingly. Obviously in this way $\zeta\in\mathbb{H}_{\vartheta}$ which was the sufficient condition for the 
	construction to work. But no we can't control $\vartheta$ to be generic anymore. Indeed there is now the 
	real co-dimension $1$ locus in the $\zeta$-plane where the coordinates jump. Away from this locus the coordinates are fine, but it is one of the crucial 
	parts of the main Theorem \ref{omega_symp} in this section that these jumps do not pose a problem for us, so all the considerations of section \ref{spec_diff} come into play.
	
	We start by investigating the complex structures on $\mathcal{M}$. Fore more details on this we refer to \cite{Swoboda.2021}, \cite{Mazzeo.2017} and \cite{Fredrickson.2020}. In order to define them we have to consider the tangent space at $(A,\Phi)\in\mathcal{M}$. The space of unitary connections on $E$ which induce the fixed connection on det$E$ is an affine space modeled on $\Omega^1(\mathfrak{su}(E))$. With the identification $\Omega^1(\mathfrak{su}(E))\cong \Omega^{0,1}(\text{End}_0(E)), D\mapsto D^{0,1}:=\alpha$
	we may regard the deformations of the connection part of the Higgs pair $(A,\Phi)$ as an element of $\Omega^{0,1}(\text{End}_0(E))$. We observe that this is just the complex conjugate of the space of traceless Higgs fields $\Omega^{1,0}(\text{End}_0(E))$, so deformations of Higgs pairs can be regarded as pairs
	$(\alpha,\phi)\in\Omega^{0,1}(\text{End}_0(E))\times \Omega^{1,0}(\text{End}_0(E))$. With this identification we can understand that the following maps are complex structures on $\mathcal{M}$:
	
	\[
		I(\alpha,\phi)=(i\alpha,i\phi),\enspace J(\alpha,\Phi)=\left(-R\phi^{\ast_h},\tfrac{1}{R}\alpha^{\ast_h}\right),
		\enspace K(\alpha,\phi)=\left(-iR\phi^{\ast_h},\tfrac{i}{R}\alpha^{\ast_h}\right).
	\]
	Note that the factor $R$ appears in the definition of the complex structures $J$ and $K$. This is an artifact of the the moduli space of weakly parabolic Higgs bundle, where the $\mathbb{C}^\ast$ action of the regular and strongly parabolic moduli spaces does not exist. In those cases the $\mathbb{C}^\ast$ action allows to regard solutions of the $R$-rescaled Hitchin equation as solutions of the original Hitchin equation (i. e. with $R=1$). But in our case the fixed residues at the poles of det$(\Phi)$ prevent us from using this identification. Thus each $R\in\mathbb{R}_>0$ corresponds to it's own moduli space and the corresponding scaling has to be considered in the complex structures and hyperkähler metrics.
	
	\begin{remark}\label{hol_quotient}
		The description of the the holomorphic structures above might seem odd, as they are not defined on the elements of the moduli space but on their representatives. This basically boils down to the same considerations we explained in remark \ref{diff_quotient}. The fact that the holomorphic structure is preserved when taking the equivalence class is a direct consequence of the construction of the moduli space as a Hyperkähler quotient. The construction actually starts with complex structures on the set of all the corresponding objects (i. e. before taking the quotient by gauge transformations) and then proceeds by building the structure on the moduli space in exactly such a way that everything commutes. Therefore it suffices to talk about the action of the complex structure on representatives and not on the elements of the moduli space as we just did above. Again we refer to \cite{Mayrand.2016} for more details.
		
		Additionally the other considerations of remark \ref{diff_quotient} are also important in this section, i. e. it is important to note that when working in a local frame we can chose this frame in such a way, that all coefficients of the local expression vary holomorphically in a holomorphic coordinate of the moduli space. Again, as explained before, in choosing the appropriate frame the exact expressions might differ slightly from the one we use in our calculations, which is solely the case because the structure of the equations is easier to understand in this way. The difference only amounts to some different expression of the leading term and slightly different terms of the error matrix, which do not change the analytic properties which are important here.
		\end{remark}
	
	We may now consider again the nonabelian Hodge correspondence that we used to identify the moduli space of solutions of Hitchins equations $\mathcal{M}$ and the moduli space of flat connections $\mathcal{M}_{\text{flat}}$ in section \ref{sec_small_sec}. There the emphasis lay on the parameter $\zeta\in\mathbb{C}^\ast$ while we now, in light of the discussion above, add $R$ to the discussion. Concretely for every $R\in\mathbb{R}_{>0}$ and $\zeta\in\mathbb{C}^\ast$ we obtain the map
	\[
		\text{NAHC}_{R,\zeta}:\mathcal{M}\to\mathcal{M}_{\text{flat}},\quad (A,\Phi)\mapsto d_A+\frac{R}{\zeta}\Phi+R\zeta\Phi^{\ast_h}=\nabla(R,\zeta).
	\]
	The space of flat connections is equipped with the trivial complex structure $\hat{I}$, that is given by multiplication with $i$. On the other hand the three complex structures on $\mathcal{M}$ lead to the general complex structure $I^{R,\zeta}$, which we described in the beginning of this chapter and where we added $R$ to emphasize the dependence we explained above. By a simple calculation one can now show that $\text{NAHC}_{R,\zeta}$ preserves the complex structure in the following way.
	
	\begin{lemma}\label{NAHC_hol}
		Fix $R\in\mathbb{R}_{>0}$ and $\zeta\in\mathbb{C}^\ast$. Then $\text{NAHC}_{R,\zeta}:(\mathcal{M},I^{R,\zeta})\to(\mathcal{M}_{\text{flat}},\hat{I})$ is holomorphic. 
	\end{lemma}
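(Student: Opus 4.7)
The plan is to verify holomorphicity by a direct computation on tangent spaces. Using the identification described just before the statement, a tangent vector at $(A,\Phi)\in\mathcal{M}$ is a pair $(\alpha,\phi)\in\Omega^{0,1}(\text{End}_0E)\times\Omega^{1,0}(\text{End}_0E)$. First I would compute the differential of $\text{NAHC}_{R,\zeta}$: since $d_A=\partial+\overline{\partial}-\alpha^{\ast_h}+\alpha$ in any local $h$-unitary frame, an infinitesimal deformation produces
\[
d(\text{NAHC}_{R,\zeta})(\alpha,\phi)=\alpha-\alpha^{\ast_h}+\tfrac{R}{\zeta}\phi+R\zeta\phi^{\ast_h}.
\]
The claim reduces to the identity $d(\text{NAHC}_{R,\zeta})(I^{R,\zeta}(\alpha,\phi))=i\,d(\text{NAHC}_{R,\zeta})(\alpha,\phi)$ for every $(\alpha,\phi)$.

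Next I would compute $I^{R,\zeta}(\alpha,\phi)$ by plugging the explicit formulas for $I,J,K$ into
\[
I^{R,\zeta}=\frac{(1-|\zeta|^2)I+i(\overline{\zeta}-\zeta)J-(\zeta+\overline{\zeta})K}{1+|\zeta|^2}.
\]
A short calculation yields
\[
I^{R,\zeta}(\alpha,\phi)=\frac{1}{1+|\zeta|^2}\bigl((1-|\zeta|^2)i\alpha+2iR\zeta\phi^{\ast_h},\,(1-|\zeta|^2)i\phi-\tfrac{2i\zeta}{R}\alpha^{\ast_h}\bigr).
\]
The $R$-factors in the definitions of $J$ and $K$ are arranged precisely so that the resulting pair again lies in $\Omega^{0,1}\times\Omega^{1,0}$ after applying the anti-linear adjoint $(\,\cdot\,)^{\ast_h}$, which switches bi-degree and conjugates complex scalars.

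The core of the proof is then a bookkeeping calculation: substituting the above pair into $d(\text{NAHC}_{R,\zeta})$ and collecting the four types of terms (in $\alpha$, $\alpha^{\ast_h}$, $\phi$, $\phi^{\ast_h}$) separately, using $(c\beta)^{\ast_h}=\overline{c}\beta^{\ast_h}$ and $(\beta^{\ast_h})^{\ast_h}=\beta$. For example, the coefficient of $\phi$ in $(1+|\zeta|^2)\cdot d(\text{NAHC}_{R,\zeta})(I^{R,\zeta}(\alpha,\phi))$ combines to $iR\bigl(2\overline{\zeta}+(1-|\zeta|^2)/\zeta\bigr)=iR(1+|\zeta|^2)/\zeta$, which matches the $\phi$-coefficient in $i\,d(\text{NAHC}_{R,\zeta})(\alpha,\phi)$ after dividing by $(1+|\zeta|^2)$. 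The other three type-components cancel in the same manner.

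Finally, I would observe that the calculation has been performed on representatives in the infinite-dimensional pre-quotient space. That it descends to the quotient $\mathcal{M}$ (and the analogous quotient for $\mathcal{M}_{\text{flat}}$) is a standard consequence of the hyperk\"ahler quotient construction, as noted in Remark~\ref{hol_quotient}: the relevant gauge group acts holomorphically with respect to $I^{R,\zeta}$ on one side and $\hat{I}$ on the other, so the property $d(\text{NAHC}_{R,\zeta})\circ I^{R,\zeta}=\hat{I}\circ d(\text{NAHC}_{R,\zeta})$ descends to the quotients. The main obstacle is purely the careful tracking of signs introduced by the anti-linearity of $(\,\cdot\,)^{\ast_h}$ acting on scalar multiples involving $i$, $\zeta$ and $\overline{\zeta}$; there is no analytic difficulty, and the $R$-factors in $J$ and $K$ are exactly what is required for the $\phi$-coefficients and $\alpha^{\ast_h}$-coefficients to balance.
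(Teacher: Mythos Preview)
Your proposal is correct and is precisely the ``simple calculation'' the paper alludes to without spelling out: the paper gives no proof beyond that remark, so your direct verification of $d(\text{NAHC}_{R,\zeta})\circ I^{R,\zeta}=\hat I\circ d(\text{NAHC}_{R,\zeta})$ on representatives, followed by descent to the quotient via Remark~\ref{hol_quotient}, is exactly what is intended. The bookkeeping you outline checks out term by term.
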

	
	In this formulation we exclude the complex structures $I$ and $-I$ on $\mathcal{M}$ which corresponds to the moduli space of Higgs bundles. Indeed, there is no $\zeta\in\mathbb{C}^\ast$ for which the nonabelian Hodge correspondence becomes holomorphic w. r. t. $I$. The construction below will still give us a form $\varpi$ that is also holomorphic in these complex structures as it will be twisted by the bundle $O(2)$.
	
	
	We now recall the restrictions we have to take to make sure that the construction works.
	\begin{definition}
		Let $\mathcal{M}$ denote the moduli space of solutions of weakly parabolic Higgs bundles as described in section \ref{parabolic_higgs} for Higgs bundles with parabolic degree $0$. In this space $\mathcal{M}'$ denotes the regular locus, in which the quadratic differential $q$ given by the determinant of the Higgs field has only simple zeroes and $\widetilde{\mathcal{M}}'$ the subset of $\mathcal{M}'$, in which the determinants $q$ have unsealed spectrum. Finally denote by $\widetilde{\mathcal{M}}'_c$ the subset corresponding to the chambers in the Hitchin base $\mathcal{B}$.
	\end{definition}
	
	Note that as explained in section \ref{spec_diff} in the case of weakly parabolic Higgs bundles on the sphere with at least 4 punctures it holds $\widetilde{\mathcal{M}}'=\mathcal{M}'$ and the chambers form an open dense subset of $\widetilde{\mathcal{M}}'$. Thus there is a large class of moduli spaces where the following construction works without further deformations of ring domains or other adjustments.
	
	We now use the NAHC$_{R,\zeta}$ to pull back the $\mathcal{X}(\zeta)$ (and $\mathcal{Y}(\zeta)$) functions on $\mathcal{M}^{\text{dR}}$ to obtain corresponding functions on $\widetilde{\mathcal{M}}'_{R,c}$ which we will denote with the same symbols. Thus $\mathcal{X}(\zeta)$ is now a holomorphic function in complex structure $I_\zeta$, and we can use this to show that the phase factor $A_\gamma$ appearing in corollary \ref{A_appears} is constant w. r. t. to coordinates on the moduli space.
	
	\begin{lemma}
	Let $A_\gamma$ be the factor appearing in corollary \ref{A_appears} s. t. $\mathcal{X}(\zeta)=A_\gamma\mathcal{Y}(\zeta)$ and let $d$ denote the differential w. r. t. $\widetilde{\mathcal{M}}'_{R,c}$. It holds $dA_\gamma=0$.
\end{lemma}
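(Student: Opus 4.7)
The plan is to invoke hyperk\"ahler rigidity: a smooth function on the hyperk\"ahler manifold $\widetilde{\mathcal{M}}'_{R,c}$ which is holomorphic in two sufficiently different members of the twistor family $\{I^{R,\zeta}\}_{\zeta \in \mathbb{CP}^1}$ must be locally constant. Since $A_\gamma$ is already known to be independent of $\zeta$ by Corollary \ref{A_appears}, it will suffice to show that the ratio $A_\gamma = \mathcal{X}_\gamma(\zeta)/\mathcal{Y}_\gamma(\zeta)$ is $I^{R,\zeta}$-holomorphic on $\widetilde{\mathcal{M}}'_{R,c}$ for a range of generic $\zeta \in \mathbb{C}^\times$, and then to vary $\zeta$.

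For $\mathcal{X}_\gamma(\zeta)$ the required holomorphicity is immediate from Lemma \ref{NAHC_hol}: the map $\text{NAHC}_{R,\zeta}\colon (\widetilde{\mathcal{M}}'_{R,c}, I^{R,\zeta}) \to (\mathcal{M}_{\text{flat}}, \hat{I})$ is holomorphic, and $\mathcal{X}_\gamma$ was defined as the pullback of a Fock--Goncharov coordinate on $\mathcal{M}_{\text{flat}}$, which is $\hat{I}$-holomorphic by Proposition \ref{FG_coord}. For $\mathcal{Y}_\gamma(\zeta)$ I would use its integral representation and exploit the twistor design of the Cauchy-type kernel $\frac{1}{\zeta'}\frac{\zeta'+\zeta}{\zeta'-\zeta}$ combined with the placement of the integration rays $l_\beta$ along the directions $-Z_\beta\mathbb{R}_+$. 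Formally, applying $\bar\partial_{I^{R,\zeta}}$ under the integral and using that $\mathcal{X}_\beta(\zeta')$ is $I^{R,\zeta'}$-holomorphic should yield an integrand that vanishes once the algebraic relationship between $I^{R,\zeta}$ and $I^{R,\zeta'}$ dictated by the kernel is taken into account; this is the $\emph{twistor}$ character of the integral equation articulated in \cite{Gaiotto.2010}.

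Granted both holomorphicity statements, the quotient $A_\gamma$ satisfies $\bar\partial_{I^{R,\zeta}} A_\gamma = 0$ for every generic $\zeta$. I would then pick two distinct values $\zeta_1, \zeta_2$ so that $I^{R,\zeta_1}$ and $I^{R,\zeta_2}$ correspond to linearly independent points of the twistor sphere: a direct consequence of $I, J, K$ being mutually anticommuting is that $T^{0,1}_{\zeta_1}$ and $T^{0,1}_{\zeta_2}$ intersect trivially inside the complexified tangent space $T_\mathbb{C} \widetilde{\mathcal{M}}'_{R,c}$, so together they span it at every point. The vanishing of $dA_\gamma$ on both subspaces then forces $dA_\gamma = 0$, which is the claim.

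The main obstacle will be the rigorous verification of the $I^{R,\zeta}$-holomorphicity of $\mathcal{Y}_\gamma$. Although the twistor design of the kernel makes the result highly plausible, a careful argument has to keep track of the moduli-dependence of both the integrand $\mathcal{X}_\beta(\zeta')$ and the BPS rays $l_\beta = -Z_\beta\mathbb{R}_+$ themselves, whose directions move as $Z_\beta$ varies over the Hitchin base. An alternative line of attack, should the direct approach prove too delicate, would be to extract $A_\gamma$ from the $\zeta \to 0$ asymptotic via the explicit expansions of $\mathcal{X}_\gamma$ and $\mathcal{X}_\gamma^{\text{sf}}$ together with Theorems \ref{pole_0} and \ref{decay_estimate}, and to verify $\bar\partial_{I^{R,\zeta}}$-closedness of the remainder term in that limit; the $\zeta$-independence of $A_\gamma$ then propagates the vanishing to all $\zeta$.
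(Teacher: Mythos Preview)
Your main line of attack has a genuine gap: the $I^{R,\zeta}$-holomorphicity of $\mathcal{Y}_\gamma$. The issue is not merely technical. Recall that $\mathcal{Y}_\gamma$ is built from the semiflat prefactor $\mathcal{X}^{\text{sf}}_\gamma(\zeta)$, which is holomorphic only for the \emph{semiflat} complex structure $I^{\text{sf}}_\zeta$, and from the integrals $\int_{l_\beta}\ldots\mathcal{X}_\beta(\zeta')$, where $\mathcal{X}_\beta(\zeta')$ is $I^{R,\zeta'}$-holomorphic, not $I^{R,\zeta}$-holomorphic. The GMN ``twistor design'' you invoke is a statement about \emph{solutions} of the full integral equation: when the functions on the right-hand side coincide with those on the left, the failure of $I_\zeta$-holomorphicity of the semiflat piece is exactly compensated by the integral term. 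But here $\mathcal{Y}_\gamma$ is not a fixed point of the integral operator---it is the image of $\mathcal{X}_\gamma$ under one application of it, and $\mathcal{X}_\gamma = A_\gamma\mathcal{Y}_\gamma$ with $A_\gamma$ possibly nontrivial. So the cancellation mechanism has no reason to fire unless you already know $A_\gamma$ is constant (or $1$), which is precisely what you are trying to prove. As it stands, the argument is circular.

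Your alternative suggestion---extracting $A_\gamma$ from the $\zeta\to 0$ asymptotics---is the right instinct, and it is in fact what the paper does, though not via $\bar\partial_{I_\zeta}$-closedness of a remainder. The paper uses only the holomorphicity of $\mathcal{X}_\gamma$ itself (never of $\mathcal{Y}_\gamma$), writes $\mathcal{X}_\gamma = A_\gamma\,\mathcal{X}^{\text{sf}}_\gamma\,E_\gamma(\zeta)$ with $E_\gamma$ the exponential correction, and expands the holomorphicity condition $I\circ d\mathcal{X}_\gamma = d\mathcal{X}_\gamma\circ I_\zeta$ in $\zeta$. Taking $\zeta\to 0$ and $\zeta\to\infty$ and combining the two limits yields a formula expressing $\tfrac{dA_\gamma}{A_\gamma}\circ I$ as $R(dZ_\gamma + d\overline{Z_\gamma})\circ(K - K^{\text{sf}})$, using an explicit semiflat identity $d\theta_\gamma = R(dZ_\gamma + d\overline{Z_\gamma})\circ K^{\text{sf}}$. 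Composing further with $K$ and $K^{\text{sf}}$ and adding produces $dA_\gamma\circ I(\alpha,\phi) = -\,dA_\gamma\circ I(H^{-1}\alpha H, H^{-1}\phi H)$; since $H$ is exponentially close to the identity for large $R$, this forces $dA_\gamma = 0$. So the actual proof is a concrete asymptotic computation comparing the true and semiflat quaternionic structures, not a hyperk\"ahler rigidity argument.
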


\begin{proof}

We write the holomorphicity $I\circ d\mathcal{X}(\zeta)=d\mathcal{X}(\zeta)\circ I_{\zeta}$ of $\mathcal{X}_\gamma(\zeta)=A_\gamma\mathcal{X}^{\text{sf}}_\gamma(\zeta)E_\gamma(\zeta)$ in the complex structure $I_{\zeta}$ to obtain the formula
\[
\begin{split}
	I\circ \frac{dA_\gamma}{A_\gamma}-\frac{dc_\gamma}{A_\gamma} \circ I_\zeta&=\left(\frac{R}{\zeta}dZ_\gamma+id\Theta_\gamma+R\zeta d\overline{Z_{\gamma}}\right)\circ I_{\zeta}+dE_\gamma(\zeta)\circ I_{\zeta}\\
	&-I\circ\left(\frac{R}{\zeta}dZ_\gamma+id\Theta_\gamma+R\zeta d\overline{Z_{\gamma}}\right)
	-I\circ dE_\gamma(\zeta).
\end{split}
\]
The correction term $E_\gamma$ is constant for $\zeta=0$ and the terms with first order pole in $\zeta=0$ vanish as they are given by the $I$-holomorphic function $Z_\gamma$. Thus in the limit $\zeta\to 0$ we obtain
\[
I\circ\frac{dA_\gamma}{A_\gamma}-\frac{dA_\gamma}{A_\gamma}\circ I=i\left(d\Theta_\gamma\circ I-I\circ d\Theta_\gamma\right)-2RdZ_\gamma\circ K.
\]
Analogously we obtain from the limit $\zeta\to\infty$ the expression
\[
-I\circ\frac{dA_\gamma}{c_\gamma}-\frac{dA_\gamma}{A_\gamma}\circ I=i\left(-d\Theta_\gamma\circ I-I\circ d\Theta_\gamma\right)-2Rd\overline{Z_\gamma}\circ K.
\]
Adding both equations leads to the formula
\begin{equation}\label{dc_formula}
	\frac{dA_\gamma}{A}\circ I=-d\Theta+R(dZ_\gamma+d\overline{Z_\gamma})\circ K.
\end{equation}
To get a hold on this we first consider the expression on the right hand side for the semiflat metric, i. e. for $K^{\text{sf}}\left(\alpha,\phi\right)=i\left(-R\overline{\phi}^t,\tfrac{1}{R}\overline{\alpha}^t\right)$. Here $(\alpha,\phi)\in\Omega^{0,1}(\text{End}_0(E))\times \Omega^{1,0}(\text{End}_0(E))$ is a tangent vector at a point $(A,\Phi)\in\mathcal{M}$ . 

As of \cite{Fredrickson.2020} we can take $\alpha$ and $\phi$ in a diagonal gauge in our standard frame. Thus a curve $s$ representing $\left(\alpha,\phi\right)$ is given by
\[
	s(t)=\left(A+t\alpha+\mathcal{O}(t^2),\Phi+t\phi+\mathcal{O}(t^2)\right).
\]
We now remember that $Z$ and $\Theta$ are defined in terms of the eigenvalues of the Higgs pair\footnote{A priori the eigenvalues of the connection matrix depend on the local frame. But after fixing the frame as an eigenframe of the Higgs field as described in section \ref{frame_sec} the only freedom that remains is a scaling in the eigenspaces and the final expression for $\Theta$ is independent of this ambiguity.} and write $\alpha=\begin{pmatrix}a&0\\0&-a\end{pmatrix}$ for $a\in\Omega^{0,1}(\mathcal{C})$. Then the diagonality of $A$ and $\alpha$ implies 
\[
	d\Theta_\gamma(\alpha,\phi)=\left.\frac{d}{dt}\right|_{t=0}\Theta(s(t))
	=-i\left(\oint_\gamma\overline{a}d\overline{z}-\oint_\gamma a dz\right).
\]
On the other hand we can write the curve $\tilde{s}$ corresponding to $K(\alpha,\phi)$ as $\tilde{s}(t)=(A,\Phi)+ti\left(-R\overline{\phi}^t,\tfrac{1}{R}\overline{\alpha}^t\right)+\mathcal{O}(t^2)$ and obtain
\[
dZ_\gamma(K^{\text{sf}}(\alpha,\phi))=\left.\frac{d}{dt}\right|_{t=0}Z_\gamma(\tilde{s}(t))
=\frac{i}{R}\oint_\gamma a dz.
\]
Thus it holds
\[
	d\Theta_\gamma=R(dZ_\gamma+d\overline{Z_\gamma})\circ K^{\text{sf}}.
\]
Note that this equation holds in the linear subspace of $\Omega^{0,1}(\text{End}_0(E))\times \Omega^{1,0}(\text{End}_0(E))$ for which the period integrals are well defined. 
We can extend $dc$ linearly to this subspace via the formula \ref{dc_formula} and plug the last equation into equation \ref{dc_formula} to obtain
\[
	\frac{dA_\gamma}{c_\gamma}\circ I=-R(dZ_\gamma+d\overline{Z_\gamma})\circ K^{\text{sf}}+R(dZ_\gamma+d\overline{Z_\gamma})\circ K.
\]
Concatenation with $K$ and $K^{\text{sf}}$ leads to the equations
\begin{equation}
\begin{split}
	\frac{dA_\gamma}{A_\gamma}\circ I\circ K&=-R(dZ_\gamma+d\overline{Z_\gamma})\circ K^{\text{sf}}\circ K-R(dZ_\gamma+d\overline{Z_\gamma})\\
	\frac{dA_\gamma}{A_\gamma}\circ I\circ K^{\text{sf}}&=R(dZ_\gamma+d\overline{Z_\gamma})+R(dZ_\gamma+d\overline{Z_\gamma})\circ K\circ K^{\text{sf}}.
\end{split}
\end{equation}
Taking the sum of these we obtain
\[
	\frac{dA_\gamma}{A_\gamma}\circ I\circ K+\frac{dA_\gamma}{A_\gamma}\circ I\circ K^{\text{sf}}=R(dZ_\gamma+d\overline{Z_\gamma})\circ K\circ K^{\text{sf}}-R(dZ_\gamma+d\overline{Z_\gamma})\circ K^{\text{sf}}\circ K.
\]
The expression on the right hand side can now be evaluated in a local frame similar to the calculation above: Locally the hermitian adjoint is given by $\alpha^\ast=H^{-1}\overline{\alpha}^tH$ for $H$ the matrix representing the hermitian metric. Using the properties described in section \ref{Hitchin_pair} one obtains that the right hand side vanishes. We thus obtain
\[
	dA_\gamma\circ I\left(\alpha,\phi\right)=-dA_\gamma\circ I\left(H^{-1}\alpha H,H^{-1}\phi H\right).
\]
But $H$ is exponentially closed to the identity for large $R$, which can only be true for $dA_\gamma=0$.

\end{proof}

For this we start with the expression
	\[
		\mathcal{X}_\gamma=A_{\gamma}\mathcal{X}^{\text{sf}}_\gamma\exp\left(\frac{1}{4\pi i}\sum_\beta\Omega(\beta;q)\left\langle\gamma,\beta\right\rangle I_{l_\beta}\right),
	\]
and the corresponding expression for the derivative
	
	\begin{equation}\label{log_diff_expand}
	d\log{\mathcal{X}_\gamma}=\frac{R}{\zeta}\pi dZ_\gamma+id\theta_\gamma+R\zeta\pi d\overline{Z_\gamma}+\frac{1}{4\pi i}\sum_\beta\Omega(\beta;q)\left\langle\gamma,\beta\right\rangle dI_{l_\beta}.
	\end{equation}
	We first observe that the first part corresponds to the semiflat part. For the second part we consider again the definition of $I_{l_\beta}$ and obtain
	\begin{equation}\label{diff_int_eq}
	\begin{split}
	dI_{l_\beta}&=-\int_0^\infty\frac{1}{s}d\left(\frac{Z_\beta s-\zeta}{Z_\beta s+\zeta}\right)\log(1-\mathcal{X}_\beta)ds\\
	&-\int_0^\infty\frac{1}{s}\left(\frac{Z_\beta s-\zeta}{Z_\beta s+\zeta}\right)d(1+\kappa(-Z_\beta s))\frac{\mathcal{X}^{\text{sf}}_\beta}{1-\mathcal{X}_{\beta}}ds\\
		&-\int_0^\infty\frac{1}{s}\left(\frac{Z_\beta s-\zeta}{Z_\beta s+\zeta}\right)(1+\kappa(-Z_\beta s))\frac{d\mathcal{X}^{\text{sf}}_{\beta}}{1-\mathcal{X}_{\beta}}ds
		:=I^1_{l_\beta}+I^2_{l_\beta}+I^3_{l_\beta}.
	\end{split}
	\end{equation}
	The first integral becomes
	\[
	I^1_{l_\beta}=-\int_0^\infty\frac{2\zeta}{\left(Z_\beta s+\zeta\right)^2}\log(1-\mathcal{X}_\beta)dsdZ_\beta.
	\]
	Expanding the logarithm and changes of variables just like in the proof of Theorem \ref{decay_estimate} now leads to
	\[
		\begin{split}
	I^1_{l_\beta,n}
	&=\int_{-\infty}^{\infty}\frac{2\zeta}{\left(e^{y+i\psi}+\zeta\right)^2}e^{y-\log|Z_\beta|}\frac{\left(1+\kappa(-e^{y+i\psi_\beta})\right)^n}{n}e^{in\theta_\beta}\\
	&\cdot\exp\left(-2n|Z_\beta|R\pi\cosh\left(y\right)\right)dydZ_\beta\\
	&=\int_{0}^{\infty}\frac{2\zeta}{\left(e^{y+i\psi}+\zeta\right)^2}e^{y-\log|Z_\beta|}\frac{\left(1+\kappa(-e^{y+i\psi_\beta})\right)^n}{n}e^{in\theta_\beta}\\
	&\cdot\exp\left(-2n|Z_\beta|R\pi\cosh\left(y\right)\right)dydZ_\beta\\
	+&\int_{0}^{\infty}\frac{2\zeta}{\left(e^{y+i\psi}+\zeta\right)^2}e^{-y-\log|Z_\beta|}\frac{\left(1+\kappa(-e^{-y+i\psi_\beta})\right)^n}{n}e^{in\theta_\beta}\\
	&\cdot\exp\left(-2n|Z_\beta|R\pi\cosh\left(y\right)\right)dydZ_\beta.
		\end{split}
	\]
	The main difference to the integrals before is the additional integrating factor $e^y$. Still the rest in front of the exponential is bounded, so we may estimate (the function part of) $I^1_{l_\beta,n}$ as
	\[
	\begin{split}
		\left\|I^1_{l_\beta,n}\right\|&\leq\frac{1}{|Z_\beta|}a\frac{b^n}{n}\int_0^\infty\cosh(y)\exp\left(-2n|Z_\beta|R\pi\cosh\left(y\right)\right)\\
		&=\frac{1}{|Z_\beta|}a\frac{b^n}{n}K_1(2n\pi R|Z_\beta|)
	\end{split}
	\]
	for some scalars $a,b$ as in Theorem \ref{decay_estimate}. We see that the structure is mostly the same as before, only now with $K_1$ instead of $K_0$.
	
	For the second integral in \eqref{diff_int_eq} we obtain
	\[
		\begin{split}
		I^2_{l_\beta}
		=\int_0^\infty\frac{1}{s}\left(\frac{Z_\beta s-\zeta}{Z_\beta s+\zeta}\right)(d\kappa)(-Z_\beta s))\frac{\mathcal{X}^{\text{sf}}_\beta}{1-\mathcal{X}_{\beta}}ds.
	\end{split}
	\]
	From section \ref{der_coord} we know that the derivatives of $\kappa$ (w. r. t. some $R$-independent coordinates like $Z_\beta$ and $\theta_\beta$) are bounded, and for $R$ large enough $|1-\mathcal{X}_\beta|^{-1}$ is also bounded. Thus the same arguments as in the estimates before can be used to obtain upper bounds in terms of $K_0$ for this integral 
	
	Finally the last part of \eqref{diff_int_eq} is		
	\[
		\begin{split}
		I^3_{l_\beta}&=-\int_0^\infty\frac{1}{s}\left(\frac{Z_\beta s-\zeta}{Z_\beta s+\zeta}\right)(1+\kappa(-Z_\beta s))\\
		&d\left(-\frac{R}{s}\pi +i\theta_\beta-R\pi|Z_\beta|^2s\right)\frac{\mathcal{X}^{\text{sf}}_{\beta}}{1-\mathcal{X}_{\beta}}ds\\
		&=-\int_0^\infty\frac{1}{s}\left(\frac{Z_\beta s-\zeta}{Z_\beta s+\zeta}\right)(1+\kappa(-Z_\beta s))
		i\frac{\mathcal{X}^{\text{sf}}_{\beta}}{1-\mathcal{X}_{\beta}}dsd\theta_\beta\\
		&+\int_0^\infty\left(\frac{Z_\beta s-\zeta}{Z_\beta s+\zeta}\right)(1+\kappa(-Z_\beta s))
		R\pi \overline{Z_\beta}\frac{\mathcal{X}^{\text{sf}}_{\beta}}{1-\mathcal{X}_{\beta}}dsdZ_\beta\\
		&+\int_0^\infty\left(\frac{Z_\beta s-\zeta}{Z_\beta s+\zeta}\right)(1+\kappa(-Z_\beta s))
		R\pi Z_\beta\frac{\mathcal{X}^{\text{sf}}_{\beta}}{1-\mathcal{X}_{\beta}}dsd\overline{Z_\beta}.
		\end{split}
	\]
	Now the same arguments as before apply and we obtain bounds in terms of the Bessel functions, which thus give us bounds for all of $dI_{l_\beta}$.
	
	\begin{definition}
		For Det$(\Phi)=q$ let $\Gamma_q^g=H_1(\Sigma_q;\mathbb{Z})$ denote the gauge lattice and let $\mathcal{X}_{\gamma_i}$ for each generator $\gamma_i\in\Gamma^g$ be the 
		corresponding true coordinate function on $\mathcal{M}'$ constructed with triangulation angle $\vartheta:=\text{arg}\zeta$. With $\epsilon^{ij}$ 
		denoting the coefficients of the symplectic form on $\Gamma^g_q$ and $\epsilon_{ij}$ its inverse on $(\Gamma^g_q)^\ast$ define
		\[
			\varpi:=\frac{1}{8\pi^2 R}\sum_{i,j}\epsilon_{ij}
			\frac{d\mathcal{X}_{\gamma_i}}{\mathcal{X}_{\gamma_i}}\wedge\frac{d\mathcal{X}_{\gamma_j}}{\mathcal{X}_{\gamma_j}}.
		\]
	\end{definition}
	
	All of our calculations in section \ref{const_on_mod} and the arguments in \cite{Gaiotto.2010} and \cite{Gaiotto.2013} now come together in the following theorem.
	
	\begin{theorem}\label{omega_symp}
		Let $\mathcal{Z}:=\widetilde{\mathcal{M}}'_c\times\mathbb{CP}^1$ be the twistor space for $\widetilde{\mathcal{M}}'_c$ with projection $p:\mathcal{Z}\to\mathbb{CP}^1$ and let $T_F=\text{ker}dp$ denote the tangent bundle along the fibers of the twistor space. If $R$ is large enough $\varpi$ is a holomorphic section of $\Lambda^2T_F^\ast\otimes O(2)$, defining a symplectic form on each fiber.
	\end{theorem}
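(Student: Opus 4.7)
The plan is to verify the three assertions packaged into the statement: that $\varpi$ is holomorphic in $\zeta$ on all of $\mathbb{C}^\times$, that it has the correct pole behaviour at $\zeta=0$ and $\zeta=\infty$ to define a section of $\Lambda^2T_F^\ast\otimes O(2)$, and that it is fibrewise non-degenerate. For the first point, holomorphic dependence of each $\mathcal{X}_{\gamma_i}$ on $\zeta\in\mathbb{C}^\times$ away from the BPS rays is already granted by applying Theorem \ref{hol_dep} to the flatness equation, and the wedge product of two holomorphic forms is holomorphic. The only question is what happens across BPS rays, and this is exactly the content of Proposition \ref{symp_is_cont}: at a BPS ray $l_\beta$, the jump turns $d\log\mathcal{X}_\gamma^-$ into $d\log\mathcal{X}_\gamma^-+\langle\gamma,\beta\rangle\,d\log(1-\sigma(\beta)\mathcal{X}_\beta)$, and the extra term is a multiple of $d\log\mathcal{X}_\beta$ which vanishes after wedging against any $d\log\mathcal{X}_\delta$ and summing via the antisymmetric pairing $\epsilon_{ij}$. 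The unsealed-spectrum hypothesis on $\widetilde{\mathcal{M}}'_c$ is exactly what makes the analysis in Proposition \ref{symp_is_cont} available, so this part goes through directly.

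For the pole structure, I would expand (\ref{log_diff_expand}), so that each $d\log\mathcal{X}_{\gamma_i}$ has the shape $\frac{R\pi}{\zeta}dZ_{\gamma_i}+id\theta_{\gamma_i}+R\pi\zeta\,d\overline{Z_{\gamma_i}}+\frac{1}{4\pi i}\sum_\beta\Omega(\beta;q)\langle\gamma_i,\beta\rangle\,dI_{l_\beta}$. By Theorem \ref{pole_0} and Corollary \ref{pole_infty} the correction $dI_{l_\beta}$ has at worst simple poles at $\zeta=0$ and $\zeta=\infty$. Naively the wedge then has double poles, but the $\zeta^{-2}$-coefficient is a sum of terms $\epsilon_{ij}\,dZ_{\gamma_i}\wedge dZ_{\gamma_j}$, which vanishes by the Griffiths transversality relation (\ref{Griffiths_trans}); likewise the $\zeta^{+2}$-coefficient involves $dZ_{\gamma_i}\wedge\overline{dZ_{\gamma_j}}$ that cancels against itself after antisymmetrization, leaving the mixed term $dZ\wedge d\bar Z$ at order $\zeta^0$. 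Thus $\varpi$ has simple poles at $0$ and $\infty$, and $\zeta\varpi$ extends to a holomorphic, fibrewise $\Lambda^2T_F^\ast$-valued polynomial in $\zeta$ of degree $\leq 2$ on $\mathbb{CP}^1$. Identifying sections of $O(2)$ with such degree-two polynomials, this is exactly the $O(2)$-twisting condition.

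For non-degeneracy on each fibre I would argue by perturbation from the semiflat case. Writing $\mathcal{X}_\gamma=A_\gamma\mathcal{X}_\gamma^{\text{sf}}\exp(E_\gamma)$ with $A_\gamma$ a constant of unit modulus (Corollary \ref{A_appears} combined with the subsequent lemma) and $E_\gamma$ exponentially small in $R$ together with its differential (Theorem \ref{decay_estimate} applied also to $dI_{l_\beta}$ as in the calculation following (\ref{diff_int_eq})), one obtains $\varpi=\varpi^{\text{sf}}+\varpi^{\text{corr}}$ with $\varpi^{\text{corr}}=O(e^{-\mu R})$ uniformly on compact subsets of $\widetilde{\mathcal{M}}'_c\times K$ for $K\subset\mathbb{C}^\times$ compact. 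Since $\varpi^{\text{sf}}$ is the holomorphic symplectic form associated with the hyperk\"ahler structure of $g_{\text{sf}}$ on $\mathcal{M}'_\infty$, it is non-degenerate at every point and every $\zeta$; non-degeneracy being an open condition, $\varpi$ inherits it for $R$ sufficiently large. At $\zeta=0$ and $\zeta=\infty$ one verifies non-degeneracy of $\zeta\varpi$ separately from the leading coefficient $\sum\epsilon_{ij}dZ_{\gamma_i}\wedge d\theta_{\gamma_j}$, which is precisely the restriction of $\omega_+$ and $\omega_-$ in the semiflat decomposition (\ref{decomp_form}) and is non-degenerate on fibres of the Hitchin fibration.

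The main obstacle I expect is the bookkeeping in step three, namely producing a decay estimate for $\varpi^{\text{corr}}$ that is uniform enough across $\mathbb{C}^\times$ and across a compact neighbourhood in $\widetilde{\mathcal{M}}'_c$ to upgrade pointwise non-degeneracy of $\varpi^{\text{sf}}$ into fibrewise non-degeneracy of $\varpi$; the arguments of Theorem \ref{decay_estimate} and the derivative computations from Section \ref{der_coord} should supply exactly this, but one has to be attentive that the differentiation we perform here is in the coordinates on $\widetilde{\mathcal{M}}'_c$ and not in $\zeta$, so the uniformity in $\zeta$ must be extracted from the product-type estimates of Section \ref{init_prob} applied along BPS rays as in Theorem \ref{decay_estimate}. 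Once this uniform bound is in hand, verification of the three required properties is routine, and the conclusion that $\varpi$ is a holomorphic section of $\Lambda^2T_F^\ast\otimes O(2)$ defining a fibrewise symplectic form follows.
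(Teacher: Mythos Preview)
Your proposal follows the paper's approach closely and gets most of the structure right, but there is one genuine gap and one minor slip.

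\textbf{The gap: fibrewise holomorphicity.} A holomorphic section of $\Lambda^2T_F^\ast\otimes O(2)$ over the twistor space $\mathcal{Z}=\widetilde{\mathcal{M}}'_c\times\mathbb{CP}^1$ must be holomorphic with respect to the full complex structure $\underline{I}=(I^\zeta,I_0)$, not just in $\zeta$. You verify the $\zeta$-holomorphicity carefully (piecewise via Theorem \ref{hol_dep}, then across BPS rays via Proposition \ref{symp_is_cont}, then Morera), but you never argue that $\varpi$ is holomorphic in the fibrewise direction for the complex structure $I^\zeta$ on $\widetilde{\mathcal{M}}'_c$. This is the content of Lemma \ref{NAHC_hol}: the map $\text{NAHC}_{R,\zeta}:(\mathcal{M},I^\zeta)\to(\mathcal{M}_{\text{flat}},\hat I)$ is holomorphic, so a holomorphic coordinate $x_{R,\zeta}$ on $(\mathcal{M},I^\zeta)$ produces holomorphic entries in the connection matrix of $\nabla(R,\zeta)$; Theorem \ref{hol_dep} (applied now with $\epsilon=x_{R,\zeta}$, not $\epsilon=\zeta$) then gives holomorphic dependence of the small flat sections, hence of $\mathcal{X}_\gamma$ and of $\varpi$. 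Alternatively one can appeal to the fact that Fock--Goncharov coordinates are holomorphic on $\mathcal{M}_{\text{flat}}$, but either way this step must be supplied. The paper also checks separately that at $\zeta=0,\infty$ the residues $\zeta\varpi|_{\zeta=0}$ and $\zeta^{-1}\varpi|_{\zeta=\infty}$ are $(2,0)$-forms for $\pm I$; you allude to these residues only for non-degeneracy, not for holomorphic type.

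\textbf{The slip.} Your description of the $\zeta^{+2}$-coefficient is garbled: it is $\sum_{i,j}\epsilon_{ij}\,d\overline{Z}_{\gamma_i}\wedge d\overline{Z}_{\gamma_j}$, which vanishes by the complex conjugate of (\ref{Griffiths_trans}), not a mixed $dZ\wedge d\bar Z$ term cancelling ``by antisymmetrization''. The mixed terms live at order $\zeta^0$ and survive. Your conclusion is right, but the stated mechanism is not.

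The rest --- the pole-order argument via Theorem \ref{pole_0}, Corollary \ref{pole_infty}, and (\ref{Griffiths_trans}), and the non-degeneracy by perturbation from $\varpi^{\text{sf}}$ using the exponential decay of the correction --- matches the paper's proof.
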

	
	\begin{proof}
		The results of Theorem \ref{pole_0} and Corollary \ref{pole_infty} concerning the pole structure together with the Lagrangian property of equation \ref{Griffiths_trans} ensure that $\varpi$ is a well defined section of $\Lambda^2T_F^\ast\otimes O(2)$ where the $O(2)$ twist matches the simple poles at 
		$\zeta=0$ and $\zeta=\infty$.
		
		As of Lemma \ref{NAHC_hol} any holomorphic coordinate $x_{R,\zeta}$ in the complex structure $I^{R,\zeta}$ of $\widetilde{\mathcal{M}}'_c$ becomes a holomorphic map into $\mathcal{M}_{\text{flat}}$ in complex structure $\hat{I}$. As this is the trivial complex structure on $\mathcal{M}_{\text{flat}}$ it commutes with the evaluation of $\nabla(R,\zeta)$ at a point on the Riemann surface $\mathcal{C}$. Thus the entries of the connection matrix of $\nabla(R,\zeta)$ are holomorphic in any $x_{R,\zeta}$. As explained in Remark \ref{hol_quotient} above we can take a frame in which the kernel of the corresponding integral obtains the product structure of \ref{hol_dep} and in which each factor as well as the initial value function depend holomorphically on the entries of $\nabla(R,\zeta)$. Thus
		we can infer from theorem \ref{hol_dep} that the small flat sections depend holomorphically on the coordinates $x_{R,\zeta}$. The construction of $\mathcal{X}$ then proceeds by operations which preserve the holomorphicity and so does the construction of $\varpi$. Thus the fiberwise part of $\varpi$ is holomorphic in complex structure $I^{R,\zeta}$ for all $\zeta\in\mathbb{C}^\ast$.
	
		A similar argument as above holds for the $\zeta$-dependence for any $\zeta\in\mathbb{C}^\times$ as long as the $ca$-triangulation with angle 
		$\vartheta=\text{arg}(\zeta)$ does not obtain values for which finite $ca$-trajectories appear. Thus it is piecewise holomorphic with jumps on a real 
		co-dimension-1 domain. As of proposition \ref{symp_is_cont} the jumps in the coordinates are by symplectomorphisms, so $\varpi$ is still well defined 
		and continuous on all of $\mathbb{C}^\times$. By an application of Morera's theorem it is thus holomorphic on all of $\mathbb{C}^\times$.
		Now $\varpi$ has only simple poles for $\zeta\to0$ and $\zeta\to\infty$ and is considered as a section of the twisted bundle
		a section of $\Lambda^2T_F^\ast\otimes O(2)$ it is holomorphic everywhere on $\mathbb{CP}^1$ by the Riemann removable singularity theorem.\footnote{The holomorphicity also follows from the fact, that the $\mathcal{X}$-coordinates are Fock-Goncharov coordinates, which are holomorphic on the space of flat connections. The proof here is included, to show how the concrete construction delivers this independently.}
		
		For the complex structures $I$ and $-I$, corresponding to $\zeta=0$ and $\zeta=\infty$ on $\mathbb{CP}^1$, note that we obtain the explicit form of their holomorphic symplectic structures by considering $\left.\zeta\varpi(\zeta)\right|_{\zeta=0}=-\frac{i}{2}\omega_+$ (and $\zeta=\infty$ accordingly) and plugging in the known expression for $\varpi$. For the complex structure $I$ we obtain 
		\[
			\left.\zeta\varpi(\zeta)\right|_{\zeta=0}=\frac{i}{8\pi}\sum_{i,j}\epsilon_{ij}dZ_{\gamma_i}\wedge d\theta_{\gamma_j}.
		\]
		Considering that as described in Remark \ref{hol_quotient} the explicit expression for $\theta$ may have to be adapted for an appropriate frame, we see that this is a $(2,0)$-form for the complex structure $I$. The complex structure $-I$ at $\zeta=\infty$ can be considered analogously.
		
		Lastly the form on each fiber is symplectic as of the intersection pairing which extracts the Darboux coordinate system out of the set of the 
		constructed canonical functions $\mathcal{X}_\gamma$. As the semiflat part leads to a nondegenerate form and the correction terms are exponentially suppressed in $R$  it follows that $\varpi$ is also non-degenerate for large enough $R$.
	\end{proof}
	
	%
	
	Thus the twistor theorem applies and we obtain a twistorial hyperkähler metric $g_{\text{twist}}$ on $\widetilde{\mathcal{M}}'_c$. We use this point as a reminder that 
	we use the fact that $\widetilde{\mathcal{M}}'_c$ is a hyperkähler space which thus already carries all of the structure needed in the twistor theorem. For our metric 
	we simply forget the given existing holomorphic symplectic form and replace it by the $\varpi$ from our construction, while keeping the rest of the structure, i. e. the complex 
	structure, fibration of the twistor space $\mathcal{Z}$ over $\mathbb{CP}^1$, the family of holomorphic sections with normal bundle isomorphic to 
	$\mathbb{C}^{2n}\otimes O(1)$ and the compatible real structure $\tau$. Thus we obtain the following theorem out of the twistor construction.
	
	\begin{theorem}
		For large enough $R$ there exists a hyperkähler metric $g_{\text{twist}}$ on $\widetilde{\mathcal{M}}'_c$ whose twistorial data matches the natural one except for the holomorphic symplectic form which is given by $\varpi$.
	\end{theorem}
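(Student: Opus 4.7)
The plan is to apply the twistor theorem \ref{Twistor} directly, using the preexisting hyperk�hler structure on $\widetilde{\mathcal{M}}'_c$ (inherited from $\mathcal{M}$) to supply all the ingredients except the holomorphic symplectic form, which we replace by the $\varpi$ constructed above. Concretely, the twistor space is $\mathcal{Z}=\widetilde{\mathcal{M}}'_c\times\mathbb{CP}^1$ with the holomorphic projection $p:\mathcal{Z}\to\mathbb{CP}^1$ and the real structure $\tau(m,\zeta)=(m,-1/\overline{\zeta})$ coming from the natural hyperk�hler structure. Condition (1) of Theorem \ref{Twistor} is then automatic, and condition (2) (existence of a family of holomorphic sections with normal bundle $\mathbb{C}^{2r}\otimes O(1)$) holds because $\widetilde{\mathcal{M}}'_c$ is an open subset of the genuine hyperk�hler manifold $\mathcal{M}$, whose twistor lines satisfy exactly this property. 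Condition (3) is precisely the content of Theorem \ref{omega_symp}, provided $R$ is large enough that the perturbation away from the semiflat leading term keeps $\varpi$ non-degenerate.

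What remains is condition (4): $\tau^\ast\varpi=\overline{\varpi}$. First I would observe that under $\tau$ the coordinates $\mathcal{X}_\gamma$ transform, by the reality condition Theorem \ref{reality} specialized to $\vartheta=\arg\zeta$, as $\tau^\ast\mathcal{X}_\gamma=\overline{\mathcal{X}_{-\gamma}}$; combining this with multiplicativity and $\mathcal{X}_{-\gamma}=\mathcal{X}_\gamma^{-1}$, one gets $\tau^\ast(d\log\mathcal{X}_{\gamma_i})=\overline{d\log\mathcal{X}_{\gamma_i}}$ up to a sign that is absorbed by the antisymmetry of $\epsilon_{ij}$. Plugging this into the defining formula
\[
\varpi=\frac{1}{8\pi^2R}\sum_{i,j}\epsilon_{ij}\frac{d\mathcal{X}_{\gamma_i}}{\mathcal{X}_{\gamma_i}}\wedge\frac{d\mathcal{X}_{\gamma_j}}{\mathcal{X}_{\gamma_j}}
\]
yields $\tau^\ast\varpi=\overline{\varpi}$ as required. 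At this stage all four hypotheses of Theorem \ref{Twistor} are in place.

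Invoking the twistor theorem then produces a hyperk�hler metric on the parameter space of real sections. I would then note that this parameter space is canonically identified with $\widetilde{\mathcal{M}}'_c$ itself: because the underlying smooth manifold, complex structures $I^\zeta$, and real structure $\tau$ are the ones already carried by $\widetilde{\mathcal{M}}'_c$, the real twistor lines are the original ones, and the final clause of Theorem \ref{Twistor} (that a hyperk�hler manifold is naturally recovered inside the one built from its twistor space) ensures that no genuine enlargement or reparametrization occurs. By construction the twistorial data for $g_{\text{twist}}$ coincides with the natural data on $\widetilde{\mathcal{M}}'_c$ in everything except the holomorphic symplectic form, which is now the new $\varpi$ rather than the one coming from $g_{L^2}$.

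The main obstacle I anticipate is the compatibility check (4): one must verify carefully that the sign conventions for $\sigma$, for the orientation of cycles $\gamma\mapsto-\gamma$ under $\tau$, and for the antisymmetry of $\epsilon_{ij}$ all combine to give exactly $\tau^\ast\varpi=\overline{\varpi}$ and not its negative. A secondary concern is that the non-degeneracy statement in Theorem \ref{omega_symp} was only established for $R$ large enough via the exponential smallness of the correction to the semiflat $\varpi^{\text{sf}}$, so the choice of $R_0$ ensuring applicability of the twistor theorem must be taken at least as large as the one in Theorem \ref{omega_symp}; otherwise the proof is an essentially formal deduction from the results already assembled.
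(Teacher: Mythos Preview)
Your proposal is correct and follows essentially the same route as the paper. The paper does not give a separate proof environment for this theorem; it is stated as an immediate consequence of the preceding paragraph (which explains that one keeps the natural complex structure, fibration, twistor lines with normal bundle $\mathbb{C}^{2r}\otimes O(1)$, and real structure $\tau$, replacing only the holomorphic symplectic form by $\varpi$) together with Theorem~\ref{omega_symp}. Your write-up is actually more careful than the paper in that you explicitly verify condition~(4) of Theorem~\ref{Twistor} via the reality condition, whereas the paper simply asserts that the real structure is ``compatible'' without rechecking this for the new $\varpi$.

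One small remark on your sign bookkeeping: from $\tau^\ast\mathcal{X}_\gamma=\overline{\mathcal{X}_{-\gamma}}=\overline{\mathcal{X}_\gamma}^{-1}$ you get $\tau^\ast(d\log\mathcal{X}_{\gamma_i})=-\,\overline{d\log\mathcal{X}_{\gamma_i}}$, and the two minus signs in the wedge $(-)\wedge(-)$ cancel directly, so the antisymmetry of $\epsilon_{ij}$ is not actually what absorbs the sign. This does not affect the conclusion.
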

	
	We can now use Lemma \ref{g_desc} to obtain the twistorial metric $g_{\text{twist}}$ on $\widetilde{\mathcal{M}}'_c$ as
		\[
			g_{\text{twist}}(v,w)=\omega_I(v,Iw)=\Re(\varpi^{\zeta=-1}(v,Iw)).
		\]
	
	
	The benefit of this construction becomes perfectly clear as we consider the asymptotics of this metric for large $R$. From the construction of the 
	$\mathcal{X}_{\gamma_i}$ it is clear that they can be expressed as the limiting coordinates $\mathcal{X}^{L}_{\gamma_i}$ modified by some term that decays 
	exponentially in $R$. Tanks to the explicit formulation we can easily see that this decaying behavior descends through the construction of the metric, 
	leading to the final result.
	
	\begin{theorem}
	For large enough $R$ the coefficients of the difference $g_{\text{twis}}-g_{\text{sf}}$ on $\widetilde{\mathcal{M}}'_c$ are bounded by terms of the form $\sum_n C_n(K_0(2n\pi R|Z_\beta|)+K_1(2n\pi R|Z_\beta|))$ for some constants $C_n$.
\end{theorem}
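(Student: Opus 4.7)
The plan is to trace the exponential decay already established for the $\mathcal{X}$-coordinates through every step of the twistorial extraction. By Lemma \ref{g_desc} we have $g_{\text{twist}}(v,w) = \Re(\varpi|_{\zeta=-1}(v,Iw))$, and the same formula applied to $\varpi^{\text{sf}}$ yields $g_{\text{sf}}$. Hence it suffices to bound $\varpi - \varpi^{\text{sf}}$ evaluated at $\zeta = -1$. Since both forms are built by the same formula \eqref{hol_symp} from their respective sets of logarithmic derivatives, the comparison reduces entirely to comparing $d\log\mathcal{X}_\gamma$ with $d\log\mathcal{X}^{\text{sf}}_\gamma$.

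Here equation \eqref{log_diff_expand} together with the fact $dA_\gamma = 0$ gives exactly the desired decomposition
\[
d\log\mathcal{X}_\gamma \;=\; d\log\mathcal{X}^{\text{sf}}_\gamma + \delta_\gamma, \qquad \delta_\gamma := \frac{1}{4\pi i}\sum_\beta \Omega(\beta;q)\langle\gamma,\beta\rangle\, dI_{l_\beta}.
\]
Substituting this into the definition of $\varpi$ and subtracting $\varpi^{\text{sf}}$, the wedge product expands as
\[
\varpi - \varpi^{\text{sf}} \;=\; \frac{1}{8\pi^2 R}\sum_{i,j}\epsilon_{ij}\bigl( d\log\mathcal{X}^{\text{sf}}_{\gamma_i}\wedge \delta_{\gamma_j} + \delta_{\gamma_i}\wedge d\log\mathcal{X}^{\text{sf}}_{\gamma_j} + \delta_{\gamma_i}\wedge\delta_{\gamma_j}\bigr).
\]
The step I would carry out next is to evaluate each summand at $\zeta = -1$ and apply the bounds on $dI_{l_\beta}$ derived immediately before the theorem statement. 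Those bounds assert that each component of $dI_{l_\beta}$ (in any of the three pieces $I^1, I^2, I^3$) is dominated, uniformly in the coordinates of $\widetilde{\mathcal{M}}'_c$, by a series of the form $\sum_n c_n\bigl(K_0(2n\pi R|Z_\beta|) + K_1(2n\pi R|Z_\beta|)\bigr)$.

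At $\zeta = -1$ the semiflat factor $d\log\mathcal{X}^{\text{sf}}_\gamma = -R\pi dZ_\gamma + id\theta_\gamma - R\pi d\overline{Z_\gamma}$ is a finite, smooth $1$-form with coefficients bounded uniformly on compact subsets of the chamber. Therefore in each cross term the Bessel-function bound on $\delta_{\gamma_j}$ absorbs the polynomial prefactor in $R$ coming from the semiflat piece, while the pure correction term $\delta_{\gamma_i}\wedge\delta_{\gamma_j}$ is bounded by a product of two exponentially suppressed quantities and is therefore of even smaller order. Finally, taking $\Re(\cdot)(v, Iw)$ is a bounded $\mathbb{R}$-linear operation and preserves any pointwise bound on the coefficients, so the same sum $\sum_n C_n\bigl(K_0 + K_1\bigr)$ bounds the coefficients of $g_{\text{twist}} - g_{\text{sf}}$, with the dominant contribution coming from the minimal period $\beta = \mu$ with $\Omega(\mu) \neq 0$.

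The main obstacle I expect is purely bookkeeping: one must be careful that the Bessel-function estimates derived for $dI_{l_\beta}$ hold with constants that are uniform over the chamber (not merely pointwise in the moduli parameters) and in all directions of the fiberwise differential, including both holomorphic and antiholomorphic derivatives with respect to the base and torus coordinates. The estimates in the expansion of $I^3_{l_\beta}$ already contain $dZ_\beta$, $d\overline{Z_\beta}$ and $d\theta_\beta$ in the required way, but matching the contributions from $\delta_{\gamma_i}\wedge d\log\mathcal{X}^{\text{sf}}_{\gamma_j}$ with the Lagrangian identity \eqref{Griffiths_trans} to verify that no spurious $dZ\wedge dZ$ cancellations are lost requires some care. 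Once this is settled, the claimed decay rate is immediate from the asymptotic $K_0(x), K_1(x) \sim \sqrt{\pi/(2x)}\,e^{-x}$, which also identifies $2\pi R|Z_\mu|$ as the optimal exponent.
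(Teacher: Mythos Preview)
Your proposal is correct and follows essentially the same route as the paper: decompose $d\log\mathcal{X}_\gamma$ via \eqref{log_diff_expand} into the semiflat piece plus the correction $\delta_\gamma$ built from the $dI_{l_\beta}$, expand the wedge product defining $\varpi$, and apply the Bessel-function bounds on $dI_{l_\beta}$ derived just before the statement. The paper's proof is terser---it simply writes $\varpi(\zeta)=\varpi^{\text{sf}}(\zeta)+\varpi_e(\zeta)$ and asserts the bound on $\varpi_e$ without separating cross terms from pure correction terms---but your more explicit decomposition and the bookkeeping concerns you raise (uniformity of constants, handling the $R$-linear semiflat prefactor against the exponential decay, the role of \eqref{Griffiths_trans}) are exactly the details one would fill in to make the paper's sketch rigorous.
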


\begin{proof}
	The necessary calculations have been carried out above. With them we may now write the holomorphic symplectic form as
	\[
	\begin{split}
		\varpi(\zeta)&=\frac{1}{8\pi^2 R}\sum_{i,j}\epsilon_{ij}d\log\mathcal{X_{\gamma_i}}(\zeta)\wedge d\log\mathcal{X_{\gamma_j}}(\zeta)\\
		&=\sum_{i,j}\epsilon_{ij}\left(\frac{R}{8}dZ_{\gamma_i}\wedge d\overline{Z}_{\gamma_j}-\frac{1}{8\pi^2R}d\theta_{\gamma_i}\wedge d\theta_{\gamma_j}
		+\frac{i}{8\pi}\left(\frac{1}{\zeta}dZ_{\gamma_i}\wedge d\theta_{\gamma_j}+\zeta d\theta_{\gamma_i}\wedge d\overline{Z}_{\gamma_j}\right)\right)
		+\varpi_e(\zeta).
	\end{split}
	\]
	Here $\varpi_e(\zeta)$ is a $2$-form whose coefficients, as of the calculations above have upper bounds of the form
	\[
	\sum_n C_n(K_0(2n\pi R|Z_\beta|)+K_1(2n\pi R|Z_\beta|)),
	\]
	for some $\beta\in\Gamma_q^g$ and constants $C_n$, which are independent of $\zeta$ and $R$. The first part of this expression corresponds to the semiflat part and $\varpi_e$ denotes the so-called instanton-corrections. From this expression we obtain as explained above the corresponding expression for the instanton-correct metric
	\[
		g_{\text{twist}}=g_{\text{sf}}+g_e,
	\]
	where the coefficients of $g_e$ are bounded just like the ones from $\varpi_e(\zeta)$.
\end{proof}

From the asymptotic expression for the Bessel function one obtains different orders of correctional terms. All of the different terms can be combined to obtain a simple expression for the exponential decay of the difference of the metrics.

\begin{corollary}
	For large enough $R$ the difference $g_{\text{twist}}-g_{\text{sf}}$ on $\widetilde{\mathcal{M}}'_c$ is exponentially suppressed as $e^{-2\pi R|Z_\mu|}$, where $|Z_\mu|$ is the minimal period for which $\Omega(\mu)\neq0$.
\end{corollary}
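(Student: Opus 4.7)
The plan is to combine the bound from the preceding theorem with the standard large-argument asymptotics of the modified Bessel functions of the second kind. Recall that for $x\to\infty$ one has $K_\nu(x)\sim\sqrt{\pi/(2x)}\,e^{-x}(1+O(1/x))$ uniformly in $\nu\in\{0,1\}$. Therefore, for each fixed $\beta\in\Gamma_q^g$ with $\Omega(\beta)\neq 0$, the series
\[
\sum_{n\geq 1} C_n\bigl(K_0(2n\pi R|Z_\beta|)+K_1(2n\pi R|Z_\beta|)\bigr)
\]
is dominated by its $n=1$ term: the $n=1$ contribution is bounded above by a constant multiple of $(R|Z_\beta|)^{-1/2}e^{-2\pi R|Z_\beta|}$, while the tail $n\geq 2$ is bounded by a geometric series in $e^{-2\pi R|Z_\beta|}$ with ratio $e^{-2\pi R|Z_\beta|}\to 0$ as $R\to\infty$. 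The $C_n$ grow at most like $b^n/n$ by Theorem~\ref{decay_estimate}, which is easily absorbed by the super-exponentially decaying factor once $R$ is sufficiently large.

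Next I would sum over the relevant $\beta\in\Gamma_q^g$. By Lemma~\ref{unsealed_finite} and the analysis in Section~\ref{spec_diff}, the finite-trajectory spectrum that contributes to the jumps consists of finitely many charges together, at most, with finitely many arithmetic progressions $\beta+m\gamma$ ($m\in\mathbb{Z}$) coming from ring domains. For each such progression, the series $\sum_m e^{-2\pi R|Z_{\beta+m\gamma}|}$ converges absolutely for $R$ large, by the same argument used in the proof that the $\mathcal{Y}_\gamma$ solve the Riemann--Hilbert problem. Hence the total bound takes the form $\sum_\beta C'_\beta\, e^{-2\pi R|Z_\beta|}(1+o(1))$, where the sum ranges over the (effectively finite, after grouping ring domains) active charges.

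Finally, writing $|Z_\mu|:=\min\{|Z_\beta|:\Omega(\beta)\neq 0\}$, I factor out $e^{-2\pi R|Z_\mu|}$ from the total sum. Every other term contributes an additional factor $e^{-2\pi R(|Z_\beta|-|Z_\mu|)}$ with $|Z_\beta|-|Z_\mu|>0$ for $\beta\neq\mu$ (up to integer multiples, which are already absorbed in the single-$\beta$ geometric tail), and this goes to zero as $R\to\infty$. The remaining polynomial prefactor $(R|Z_\mu|)^{-1/2}$ is also absorbed into the exponential bound. This yields $g_{\text{twist}}-g_{\text{sf}}=O(e^{-2\pi R|Z_\mu|})$ uniformly on $\widetilde{\mathcal{M}}'_c$.

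The main obstacle is ensuring the uniformity of the bounds in the coordinates of $\widetilde{\mathcal{M}}'_c$: the constants $C_n$ and $C'_\beta$, as well as the minimum $|Z_\mu|$, must be controlled locally on the moduli space, and the minimum must remain bounded away from zero. This is guaranteed on a chamber where $q$ stays inside a fixed cell by Lemma~\ref{chambers_open}, but verifying that the estimates of Theorem~\ref{decay_estimate} can be chosen locally uniformly in the moduli space coordinates (rather than pointwise) is the key technical point; it follows, however, from the fact that the constants in that theorem depend only on the Higgs data through continuous functions of the period integrals and the exponentially suppressed remainder $\kappa$, both of which are locally uniform on chambers.
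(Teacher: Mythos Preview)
Your proposal is correct and follows the same route the paper intends: the paper does not provide a formal proof of this corollary, merely remarking that ``from the asymptotic expression for the Bessel function one obtains different orders of correctional terms'' which ``can be combined to obtain a simple expression for the exponential decay,'' and your argument spells out exactly this combination via the large-argument asymptotics $K_\nu(x)\sim\sqrt{\pi/(2x)}\,e^{-x}$ applied to the bound from the preceding theorem. Your treatment of the sum over $n$ and over the active charges $\beta$, including the ring-domain progressions, is more careful than anything the paper makes explicit, but it is precisely what is needed and matches the paper's earlier estimates in Theorem~\ref{decay_estimate} and the discussion in Section~\ref{spec_diff}.

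One small remark: the paper states the corollary as a pointwise decay rate (with $|Z_\mu|$ depending on the point of $\widetilde{\mathcal{M}}'_c$ through $q$), not a uniform one, so your final paragraph on local uniformity over chambers is an additional observation rather than something required for the statement as written.
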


This is precisely the decay rate that was conjectured by GMN. We stress that this decay holds in any direction in the moduli space.

\begin{remark}
	We have now constructed a Hyperkähler metric that decays exponentially towards the semiflat metric with a certain optimal decay rate. This is exactly the behaviour that is expected from the natural $L^2$ metric on $\mathcal{M}$ and it was accordingly assumed in the work of GMN that the constructed twistorial metric is actually the $L^2$-metric. At this point we are not able to rigorously prove this, but there is an outline of a possible proof. The author is grateful to Andrew Neitzke, who communicated the following ideas to him:
	
	The moduli space of flat connections is equipped with a holomorphic version of the Atiyah-Bott-Goldman symplectic form. Using the twisted nonabelian Hodge correspondence one can compare this form with the holomorphic symplectic form on the moduli space of solutions to Hitchin's equations guaranteed by the twistor theorem for the $L^2$-metric and these two forms should coincide. The Fock-Goncharov coordinates on the moduli space of flat connections on other hand should be Darboux coordinates for the Atiyah-Bott-Goldman symplectic form. Combining these arguments it would follow that the coordinates we considered here are actually Darboux coordinates for the holomorphic symplectic form that corresponds to the $L^2$-metric. As of the twistor theorem the holomorphic symplectic form determines the Hyperkähler metric, thus inferring that the constructed metric is actually the $L^2$-metric. Working out the details of this argument is subject of future research.
\end{remark}
	
	\newpage
	
\printbibliography[title={References}]

\end{document}